\crefname{observation}{Observation}{Observations}
\keywords{Forbidden submatrices, saturation}
\title{An exact characterization of saturation \\ for permutation matrices}
\author[1]{Benjamin Aram Berendsohn\thanks{Supported by DFG grant KO 6140/1-1.}}
\affil[1]{%
Freie Universit\"at Berlin, Institut f\"ur Informatik, Berlin, Germany

\email{benjamin.berendsohn@fu-berlin.de}%
}
\newcommand{\N}{\mathbb{N}}
\newcommand{\fO}{\mathcal{O}}
\newcommand{\fP}{\mathcal{P}}
\newcommand{\rh}{\mathrm{h}}
\newcommand{\rv}{\mathrm{v}}
\newcommand{\rL}{\mathrm{L}}
\newcommand{\rR}{\mathrm{R}}
\newcommand{\rT}{\mathrm{T}}
\newcommand{\rB}{\mathrm{B}}
\newcommand{\rM}{\mathrm{M}}
\newcommand{\rH}{\mathrm{H}}
\newcommand{\rV}{\mathrm{V}}
\newcommand{\ex}{\mathrm{ex}}
\newcommand{\sat}{\mathrm{sat}}
\newcommand{\rot}{\mathrm{rot}}
\newcommand{\rev}{\mathrm{rev}}
\newcommand{\trans}{\mathrm{trans}}
\newcommand{\vd}{\mathrm{d}^\rv}
\newcommand{\hd}{\mathrm{d}^\rh}
\newcommand{\width}{\mathrm{width}}
\newcommand{\height}{\mathrm{height}}
\newcommand{\ltv}{<_\rv}
\newcommand{\lth}{<_\rh}
\newcommand{\lev}{\le_\rv}
\newcommand{\leh}{\le_\rh}
\newcommand{\zeromat}{\mathbf{0}}
\newenvironment{smallbulletmatrix}{%
	\renewcommand{\o}{\bullet}
	\renewcommand{\d}{\cdot}

	\left(\begin{smallmatrix}%
	}{%
	\end{smallmatrix}\right)%
}
\newcommand{\decmat}{\left(\begin{smallmatrix}A&\zeromat\\\zeromat&B\end{smallmatrix}\right)}
\newcommand{\rdecmat}{\left(\begin{smallmatrix}\zeromat&B\\A&\zeromat\end{smallmatrix}\right)}
\newenvironment{casedist}{%
	\begin{enumerate}[\hspace{2mm}\emph{Case} 1:]%
	}{%
	\end{enumerate}%
}
\newcounter{resumeEnum}
\tikzset{
	S/.style = {red},
	L/.style = {blue},
	R/.style = {green}
}
\newcommand{\tPoint}[1]{\node[circle,fill,inner sep=1.2pt] at (#1) {};}
\newcommand{\tNamedStyledPoint}[4]{\tPoint{#1}\node[#2] at (#1) {#3};}
\newcommand{\tNamedPoint}[3]{\tNamedStyledPoint{#1}{#2}{#3}{}}
\begin{document}

\maketitle


\begin{abstract}
	A 0-1 matrix $M$ \emph{contains} a 0-1 matrix \emph{pattern} $P$ if we can obtain $P$ from $M$ by deleting rows and/or columns and turning arbitrary 1-entries into 0s.
	The saturation function $\mathrm{sat}(P,n)$ for a 0-1 matrix pattern $P$ indicates the minimum number of 1s in an $n \times n$ 0-1 matrix that does not contain $P$, but changing any 0-entry into a 1-entry creates an occurrence of $P$. Fulek and Keszegh recently showed that each pattern has a saturation function either in $\mathcal{O}(1)$ or in $\Theta(n)$.
	We fully classify the saturation functions of \emph{permutation matrices}.
\end{abstract}


	
	\section{Introduction}
	
	In this paper, all matrices are  0-1 matrices. For cleaner presentation, we write matrices with dots~($\begin{smallmatrix}\bullet\end{smallmatrix}$) instead of 1s and spaces instead of 0s, for example:
	\begin{align*}
		\left(
		\begin{smallmatrix}
			0&1&0\\
			0&0&1\\
			1&0&0
		\end{smallmatrix}\right)
		=
		\begin{smallbulletmatrix}
			&\o&  \\
			&  &\o\\
			\o&  &  
		\end{smallbulletmatrix}
	\end{align*}
	In line with this notation, we call a row or column \emph{empty} if it only contains 0s. Furthermore, we refer to changing an entry from 0 to 1 as \emph{adding} a 1-entry, and to the reverse as \emph{removing} a 1-entry.

	A \emph{pattern} is a matrix that is not all-zero. A matrix $M$ \emph{contains} a pattern $P$ if we can obtain~$P$ from $M$ by deleting rows and/or columns, and removing arbitrary 1-entries. If $M$ does not contain $P$, we say $M$ \emph{avoids} $P$.
	Matrix pattern avoidance can be seen as a generalization of two other well-known areas in extremal combinatorics. Pattern avoidance in permutations (see, e.g., Vatter's survey \cite{Vatter2014}) corresponds to the case where both $M$ and $P$ are permutation matrices; and forbidden subgraphs in bipartite graphs correspond to avoiding a pattern $P$ and all other patterns obtained from $P$ by permutation of rows and/or columns.\footnote{For this, we interpret the $M$ and $P$ as adjacency matrices of bipartite graphs.} There are also close connections to the extremal theory of ordered graphs~\cite{PachTardos2006} and posets~\cite{GerbnerEtAl2022}.
	
	A classical question in extremal graph theory is to determine the maximum number of edges in an $n$-vertex graph avoiding a fixed pattern graph $H$. The corresponding problem in forbidden submatrix theory is determining the maximum \emph{weight} (number of 1s) of an $m \times n$ matrix avoiding the pattern $P$, denoted by $\ex(P, m, n)$. We call $\ex(P,n) = \ex(P,n,n)$ the \emph{extremal function} of the pattern $P$. The study of the extremal function originates in its applications to (computational) geometry \cite{Mitchell1987,Fueredi1990,BienstockGyoeri1991}. A systematic study initiated by Füredi and Hajnal \cite{FuerediHajnal1992} has produced numerous results (e.g., \cite{Klazar2000,Klazar2001,MarcusTardos2004,Tardos2005,Keszegh2009,Fulek2009,Geneson2009,Pettie2011,Pettie2011a}), and further applications in the analysis of algorithms have been discovered \cite{Pettie2010,ChalermsookEtAl2015}.
	
	A natural counterpart to the extremal problem is the \emph{saturation problem}. A matrix $M$ is \emph{saturating} for a pattern $P$, or \emph{$P$-saturating} if it avoids $P$ and is maximal in this respect, i.e., adding a 1-entry anywhere creates an occurrence of $P$. Clearly, $\ex(P,m,n)$ can also be defined as the maximum weight of an $m \times n$ matrix that is $P$-saturating. The function $\sat(P,m,n)$ indicates the \emph{minimum} weight of an $m \times n$ matrix that is $P$-saturating. We focus on square matrices and the \emph{saturation function} $\sat(P,n) = \sat(P,n,n)$.
	
	The saturation problem for matrix patterns was first considered by Brualdi and Cao \cite{BrualdiCao2021} as a counterpart of saturation problems in graph theory.\footnote{We refer to \cite{FulekKeszegh2021} for references to graph saturation results.} Fulek and Keszegh~\cite{FulekKeszegh2021} started a systematic study. They proved that, perhaps surprisingly, every pattern $P$ satisfies ${\sat(P,n) \in \fO(1)}$ or $\sat(P,n) \in \Theta(n)$, where the hidden constants depend on $P$. This is in stark contrast to the extremal problem, where a wide range of different orders of magnitude is attained by various patterns (from linear and quasi-linear~\cite{Keszegh2009,Pettie2011}, to nearly quadratic~\cite{AlonEtAl1999}). Fulek and Keszegh also present large classes of patterns with linear saturation functions. For our purposes, their most important result is that every \emph{decomposable} pattern has linear saturation function. We call a pattern $P$ decomposable if it has the form
	\begin{align*}
		\begin{pmatrix} A & \zeromat \\ \zeromat & B \end{pmatrix} \text{ or } \begin{pmatrix} \zeromat & A \\ B & \zeromat \end{pmatrix}
	\end{align*}
	for two matrices $A, B \neq \zeromat$, where $\zeromat$ denotes an all-0 matrix of the appropriate size. Otherwise, we call $P$ \emph{indecomposable}. Patterns of the first form $\decmat$ are called \emph{sum decomposable}, and patterns not of that form are called \emph{sum indecomposable}.\footnote{These terms are derived from the theory of permutation patterns (see, e.g., Vatter \cite{Vatter2014}). We are not aware of a standard term for this property in the context of 0-1 matrices.}
	
	\begin{figure}
		\begin{align*}
			Q = \begin{smallbulletmatrix}
				  &\o&  &  &  \\
				  &  &  &  &\o\\
				  &  &\o&  &  \\
				\o&  &  &  &  \\
				  &  &  &\o&  
			\end{smallbulletmatrix}
		\end{align*}
		\caption{The matrix with saturation function $\fO(1)$ found by Fulek and Keszegh~\cite{FulekKeszegh2021}.}\label{fig:mat-FK}
	\end{figure}
	
	Fulek and Keszegh also found a single non-trivial pattern with bounded saturation function ($Q$, pictured in \cref{fig:mat-FK}), and conjectured that there are many more. Geneson~\cite{Geneson2021} recently confirmed this by proving that almost all \emph{permutation matrices} have bounded saturation function. A permutation matrix is a matrix with exactly one 1-entry in each row and each column. A different class of matrices with bounded saturation function, containing both permutation matrices and non-permutation matrices were found recently by the author~\cite{Berendsohn2020}.\footnote{These results have been incorporated into this paper in \cref{sec:witnesses,sec:4-trav}.}
	
	In this paper, we show that, in fact, \emph{all} indecomposable permutation matrices have bounded saturation function. This completes the characterization of permutation matrices in terms of their saturation function.
	
	\begin{theorem}\label{p:main}
		A permutation matrix has linear saturation function if and only if it is decomposable.
	\end{theorem}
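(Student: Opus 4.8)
The plan is to prove the two directions of \cref{p:main} separately. The ``if'' direction is immediate: a decomposable pattern has linear saturation function by the theorem of Fulek and Keszegh quoted above, so no work is needed there. The substance is the ``only if'' direction, which I will prove in contrapositive form: every \emph{indecomposable} permutation matrix $P$ has $\sat(P,n) \in \fO(1)$, i.e.\ a bounded saturation function. By the Fulek--Keszegh dichotomy ($\sat(P,n)$ is either $\fO(1)$ or $\Theta(n)$), it suffices to exhibit, for each indecomposable permutation matrix $P$, an infinite family of $n \times n$ matrices $M_n$ that are $P$-saturating and whose weights stay bounded (or at least grow sublinearly) as $n \to \infty$.

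\textbf{The construction strategy.} For a single permutation matrix $P$ of size $k \times k$, I would look for a small ``gadget'' matrix $W$ (a \emph{witness}, presumably the object formalized in the referenced \cref{sec:witnesses}) with the property that padding $W$ with empty rows and columns into an $n \times n$ matrix keeps it $P$-avoiding, while \emph{every} empty cell, once filled, completes a copy of $P$. The key structural fact to exploit is indecomposability: because $P$ is sum indecomposable and, being a permutation matrix, its ``reverse'' (rows flipped) is also relevant, the one-entries of $P$ cannot be separated by a single horizontal-plus-vertical cut in either the $\decmat$ or the $\rdecmat$ sense. I expect the argument to proceed by distinguishing cases on the ``shape'' of $P$ near its corners and to build the witness out of a constant number of one-entries arranged so that (a) any newly added one-entry lies in some row and column that, together with a fixed sub-configuration of $W$, realizes all $k$ one-entries of $P$ in the correct relative order, and (b) no such copy already exists in $W$ itself. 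The indecomposability hypothesis is precisely what should guarantee that such a ``universal completer'' configuration of bounded size exists — in the decomposable case the two blocks can be placed arbitrarily far apart, which is exactly what forces linear weight.

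\textbf{Carrying it out.} First I would recall or re-prove the witness framework from \cref{sec:witnesses}: a lemma saying that if there is a fixed finite matrix $W$ such that every $n\times n$ matrix obtained by inserting empty rows/columns into $W$ is $P$-avoiding and saturated, then $\sat(P,n)=\fO(1)$. Second, I would handle the permutation matrices already covered by Geneson~\cite{Geneson2020} and by the author's earlier work~\cite{Berendsohn2020} (incorporated as \cref{sec:witnesses,sec:4-trav}), so that only the remaining indecomposable permutation matrices need new witnesses. Third — the main case analysis — I would classify the remaining indecomposable permutation matrices, likely by the positions of the entries in the first and last rows/columns and by reductions under the symmetries $\rot,\rev,\trans$ (the paper defines exactly these), and for each class construct an explicit constant-size witness, verifying avoidance and saturation.

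\textbf{Main obstacle.} The hard part will be the saturation verification for the general witness: showing that filling \emph{any} of the unboundedly many empty cells in the padded matrix creates a copy of $P$. Avoidance of a fixed small $W$ is a finite check, but saturation is a statement about an infinite family, so I will need a clean combinatorial invariant — some monotone ``staircase'' or interval structure of the one-entries of $W$ relative to those of $P$ — that makes the completion argument uniform in $n$. Getting a single witness design that works simultaneously for all indecomposable $P$ (rather than a sprawling case analysis) is where I expect the real difficulty to lie, and I suspect indecomposability will have to be used twice: once to place the ``core'' of the witness so that every row-position and every column-position of an inserted entry is covered, and once to rule out a pre-existing copy of $P$ inside the core.
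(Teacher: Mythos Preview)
Your high-level framework is right --- the ``if'' direction is Fulek--Keszegh, and for the ``only if'' direction one wants a bounded-weight witness for each indecomposable permutation matrix, using symmetries to reduce the casework. But the proposal is missing the central structural idea, and it misidentifies where the real work lies.

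The paper does \emph{not} classify indecomposable permutation matrices by the corner entries alone. The key tool is the notion of a \emph{spanning oscillation}: an induced path in the permutation graph running from $\{\ell_P,t_P\}$ to $\{b_P,r_P\}$. \Cref{p:indec-equiv} shows that $P$ is indecomposable iff $P$ or $\rev(P)$ has a spanning oscillation (or $P$ is $1\times 1$). All three witness constructions (\cref{sec:4-trav,sec:surround,sec:complicated}) are built \emph{around} a fixed spanning oscillation of $P$, and the case split is by the length and starting point of a minimum-length (tall/wide) spanning oscillation, not by the corner configuration. Without this invariant you have no handle on the global shape of an arbitrary indecomposable $P$, and the ``classify by first/last row/column entries'' plan will not close.

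You also have the difficulty backwards. In the paper's constructions, saturation (or rather, expandability of the designated empty row) is essentially free: the witness is assembled from several near-copies of $P$, each missing one entry in the expandable row, arranged so that any new $1$ in that row completes one of them. The hard part --- and the bulk of \cref{sec:surround,sec:complicated} --- is proving that the witness \emph{avoids} $P$, i.e.\ ruling out an unintended embedding $\phi$ of $P$ into the assembled matrix. This is where tallness/wideness of the oscillation and the careful vertical shifting of blocks (the passage from $S'(P,X)$ to $S(P,X)$) are used, via a long case analysis on where $\phi(t_P)$ and $\phi(b_P)$ can land. There is no single uniform witness; three distinct constructions are needed, and for the longest case one must further pass to a \emph{maximally tall traversal} (a generalization of spanning oscillations) to make the avoidance argument go through.
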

	
	A simple generalization of the technique that Fulek and Keszegh used to prove that $\sat(Q,n)$ is bounded implies the following: To prove \cref{p:main}, it is sufficient to find a \emph{vertical witness} for every indecomposable permutation matrix $P$, where we define a vertical witness for $P$ to be a matrix $M$ (of arbitrary size) that avoids $P$, has an empty row, and adding a 1-entry in that empty row creates an occurrence of $P$ in $M$.
	
	We therefore construct vertical witnesses for all permutation matrices. Our constructions are based on the fact that every indecomposable permutation matrix contains a \emph{spanning oscillation} (defined in \cref{sec:spanosc}).
	
	We also generalize a partial result to a class that contains non-permutation patterns:
	
	\begin{restatable}{theorem}{restateFourTravGen}\label{p:4-trav-gen}
		Let $P$ be a pattern that contains four 1-entries $x_1, x_2, x_3, x_4$ such that for \linebreak each~${i \in [4]}$, there are no other 1-entries in the same row or column as $x_i$, and $x_i$ is in the first or last row or column,
		and $x_1, x_2, x_3, x_4$ form one of the two patterns
		\begin{align*}
			\begin{smallbulletmatrix}
				&\o&  &  \\
				&  &  &\o\\
				\o&  &  &  \\
				&  &\o&  
			\end{smallbulletmatrix},
			\begin{smallbulletmatrix}
				&  &\o&  \\
				\o&  &  &  \\
				&  &  &\o\\
				&\o&  &  
			\end{smallbulletmatrix}.
		\end{align*}
		Then $\sat(P,n) \in \fO(1)$.
	\end{restatable}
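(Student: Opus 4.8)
The plan is to reduce the statement, via the argument of Fulek and Keszegh recalled in the introduction, to the construction of a single bounded vertical witness. Observe first that the class of patterns meeting the hypothesis of \cref{p:4-trav-gen} is closed under transposition and under reversing the order of the columns: either operation carries the first displayed configuration to the second and vice versa, and both preserve the requirement that each $x_i$ be alone in its row and column and lie in the first or last row or column. Recall also that, by a simple generalization of that technique, a pattern has saturation function in $\fO(1)$ provided both it and its transpose admit a vertical witness; since our class is closed under transposition, it therefore suffices to exhibit a vertical witness for each of its members, and by the column-reversal symmetry (which swaps the two configurations and preserves vertical witnesses) we may assume the four entries form the first configuration. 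Since each $x_i$ is isolated and on the boundary, this forces $x_1$ into the first row, $x_2$ into the last column, $x_3$ into the first column, and $x_4$ into the last row, with the entries reading $x_1,x_2,x_3,x_4$ from top to bottom and $x_3,x_1,x_4,x_2$ from left to right. Write $x_1=(1,c_1)$, $x_2=(r_2,q)$, $x_3=(r_3,1)$, $x_4=(p,c_4)$, so $1<r_2<r_3<p$ and $1<c_1<c_4<q$; let $P'$ be $P$ with its first and last rows and columns deleted, and note that, as $x_1$ is the only $1$-entry of column $c_1$ of $P$, the column of $P'$ inherited from column $c_1$ is empty.

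We build $M$ around a single empty row $\rho$. Below $\rho$ we place a copy of $\tilde P := P\setminus\{x_1\}$, occupying $p$ rows all strictly below $\rho$; since $\tilde P$ has an empty first row and an empty column $c_1$, we may \emph{widen} its $c_1$-slot into a run of several consecutive empty columns without disturbing any $1$-entry. Adding a $1$ to $\rho$ anywhere in that run completes an occurrence of $P$ in which the new entry plays $x_1$: the rows of the $\tilde P$-copy together with $\rho$ are the $p$ rows of the occurrence, its columns with the new column slotted into the run are the $q$ columns, and the $1$s match because $\tilde P$ already supplies every entry of $P$ but $x_1$, and column $c_1$ of $P$ supplies nothing else. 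As $x_3$ lies strictly left of $x_1$ and $x_2$ strictly right of it, the run reaches neither end of $M$, so the cells of $\rho$ still to be covered lie in two short end-intervals of total length bounded in terms of $P$. A cell near the left end is handled by letting the added $1$ play $x_3$ (admissible because $x_3$ is isolated and in the first column): a bounded auxiliary configuration placed above $\rho$ (realizing rows $1,\dots,r_3-1$ of $P$) and below $\rho$ (realizing rows $r_3+1,\dots,p$), all of it in columns to the right of that cell, completes the occurrence; symmetrically, a cell near the right end plays $x_2$. Each copy of $P'$ occurring here fits into a narrow band, so $M$ has $\fO(1)$ ones and bounded size, which is all the reduction needs.

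The heart of the proof is that $M$ avoids $P$. In any occurrence of $P$ in $M$, the images of $x_1,x_2,x_3,x_4$ must occupy the extreme rows and columns of the occurrence dictated by their positions in $P$. Using that $\rho$ is empty, such an occurrence either lies entirely below $\rho$, or has the image of $x_1$ --- the unique topmost entry of $P$ --- among the few $1$-entries placed above $\rho$. In the first case one shows the structures below $\rho$ cannot be assembled into $P$; in the second, the auxiliary entries above $\rho$ were deliberately placed in columns disjoint from the widened run over the bulk copy of $\tilde P$ (the only place a $1$ lying above that copy could complete it to a $P$), and a finite case analysis over the possible images of $x_2,x_3,x_4$ rules out every remaining configuration. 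I expect this verification --- in particular choosing the column positions so that the bulk copy of $\tilde P$ and the auxiliary end-configurations can never be combined into a stray occurrence of $P$ --- to be the main obstacle; it is straightforward when $P$ is a permutation matrix (one can essentially count nonempty rows), but needs care for a general $P$, which may itself have empty rows.

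Finally, for a member of the class whose four entries form the second configuration, applying the above to its column-reversal (which has the first configuration) and then reversing the columns of the resulting witness produces a vertical witness for it. Hence every member of the (transposition-closed) class admits a vertical witness, and therefore $\sat(P,n)\in\fO(1)$, proving \cref{p:4-trav-gen}.
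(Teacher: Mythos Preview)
Your reduction is sound and matches the paper's: the class is closed under transposition and under column reversal, so by \cref{p:vert_wit_suff} it suffices to construct a vertical witness for every pattern of the first type. From that point on, however, your argument diverges from the paper's and is left genuinely incomplete.

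Your witness places one copy of $P\setminus\{x_1\}$ below $\rho$ with a widened empty column, and then patches the two short end-intervals of $\rho$ with further ``auxiliary configurations'' above and below $\rho$. You never specify these auxiliary pieces precisely, and you explicitly defer the avoidance proof, writing that you ``expect this verification \dots\ to be the main obstacle'' and that it ``needs care for a general $P$''. That is the gap: the heart of the lemma is exactly this avoidance check, and your sketch gives no mechanism for ruling out that the auxiliary entries above $\rho$ combine with parts of the main copy below $\rho$ (or with each other) to form a stray occurrence of $P$. Without a concrete placement and an argument, this is not a proof.

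The paper's construction sidesteps all of this with a much simpler idea that avoids auxiliary pieces entirely. Let $\ell$ and $r$ be the unique leftmost and rightmost $1$-entries (your $x_3$ and $x_2$), with $\ell$ above $r$. Remove the last column of $P$ to get $P_\rL$ (its $r$-row becomes empty) and the first column to get $P_\rR$ (its $\ell$-row becomes empty). Place a copy $L$ of $P_\rL$ to the \emph{left} of a copy $R$ of $P_\rR$, shifted vertically so that the two empty rows coincide in a single row $\rho$. Now every column of $\rho$ lies either to the right of $L$ (and adding a $1$ there plays $r$ for the copy $L$) or to the left of $R$ (and plays $\ell$ for the copy $R$); no end-patches are needed. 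Avoidance is then a two-line height count on where the bottommost entry $b$ of a putative occurrence lands: if $\phi(b)\in L$ then $\height_\phi(L)<\vd(t,b)$ because $\rho$ is empty, and if $\phi(b)\in R$ then $\phi(t)\in R$ as well (since $t$ is right of $b$), giving the same contradiction. This is \cref{p:4_trav_wit} in the paper; it replaces your unspecified case analysis with a single clean argument.
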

	
	In \cref{sec:witnesses}, we define and discuss (vertical) witnesses, and in \cref{sec:spanosc}, we define spanning oscillations. In \cref{sec:structure}, we present the structure of the proof of \cref{p:main}. In \cref{sec:embeddings} we introduce an alternative characterization of pattern containment that simplifies our proofs.
	
	\Cref{sec:construct_examples} gives an introduction to the witness-construction techniques used in the following chapters. In \cref{sec:4-trav,sec:surround,sec:complicated}, we construct vertical witnesses for all permutation matrices, based on different types of spanning oscillations, which proves \cref{p:main}. We also prove \cref{p:4-trav-gen} in \cref{sec:4-trav}.
	
	We now introduce conventions and notations used throughout the paper. Some more definitions needed for \cref{sec:4-trav,sec:surround,sec:complicated} will be introduced in \cref{sec:embeddings}.
	
	We identify 1-entries in an $m \times n$ matrix $M$ as their positions $(i,j) \in [m] \times [n]$, where $i$ is the row of the 1-entry (from top to bottom), and $j$ is its column (from left to right). $E(M)$ denotes the set of 1-entries in $M$. For two 1-entries $x = (i,j) \in E(M)$ and $x' = (i', j') \in E(M)$, we write $x \ltv x'$ if $i<i'$ and $x \lth x'$ if $j<j'$. Define $x \lev x'$ and $x \leh x'$ analogously. We also say $x$ is \emph{above} $x'$ if $x \ltv x'$, and use \emph{below}, \emph{to the right}, and \emph{to the left} similarly.
	
	In a permutation matrix $P$, we denote the leftmost (rightmost, topmost, bottommost) 1-entry of $P$ by $\ell_P$ ($r_P$, $t_P$, $b_P$). Note that if $P$ is an indecomposable $k \times k$ permutation matrix with~$k > 1$, then these four 1-entries are pairwise distinct.
	
	Let $M$ be an arbitrary matrix. Denote by $\rot(M)$ the matrix obtained by rotating $M$ 90 degrees clockwise, denote by $\rev(M)$ the matrix obtained by reversing all rows of $M$, and denote by $\trans(M)$ the transpose of $M$, i.e., the matrix obtained by swapping the roles of rows and columns.\footnote{We do not use the common superscript $^\rT$, as it will later be used with the meaning ``top''.}
	
	If $A$ is a $k \times m$ matrix and $B$ is a $k \times n$ matrix, then the \emph{horizontal concatenation}\label{def:concatenation} $(A,B)$ is the $k \times (m+n)$ matrix $M$ where $E(M) = E(A) \cup \{(i,j+m) \mid (i,j) \in E(B)\}$. Intuitively, $M$ is obtained by placing $A$ to the left of $B$. The horizontal concatenation $(A_1, A_2, \dots)$ of a sequence of matrices with the same height is defined accordingly.

	\subsection{Witnesses}\label{sec:witnesses}
	
	Let $P$ be a matrix pattern without empty rows or columns. An \emph{explicit witness}\footnote{An explicit witness is what Fulek and Keszegh~\cite{FulekKeszegh2021} simply call a \emph{witness}.} for $P$ is a matrix~$M$ that is $P$-saturating and contains at least one empty row and at least one empty column.
	
	\begin{lemma}[{\cite{FulekKeszegh2021}}]\label{p:const-then-witness}
		For each pattern $P$ without empty rows and columns, we \linebreak have ${\sat(P,n) \in \fO(1)}$ if and only if $P$ has an explicit witness.
	\end{lemma}
	\begin{proof}
		Suppose $\sat(P,n) \le c_P$ for all $n \in \N$. Then there exists a $(c_P+1) \times (c_P+1)$ $P$-saturating matrix $M$ with at most $c_P$ 1-entries. Clearly, $M$ has an empty row and an empty column, so $M$ is an explicit witness for $P$.
		
		Now suppose that $P$ has an $m_0 \times n_0$ explicit witness $M$ of weight $w$. We can replace the empty row (column) in $M$ by an arbitrary number of empty rows (columns), and the resulting (arbitrarily large) matrix will still be $P$-saturating. As such, $\sat(P,m,n) \le w$ for all $m \ge m_0$ and $n \ge n_0$. Note that it is critical here that $P$ has no empty rows or columns. Otherwise, inserting empty rows or columns into $M$ might create an occurrence of $P$.
	\end{proof}
	
	We call a row (column) of a matrix $M$ \emph{$P$-expandable} if the row (column) is empty and adding a single 1-entry anywhere in that row (column) creates a new occurrence of $P$ in $M$. An explicit witness for $P$ is thus a saturating matrix with at least one $P$-expandable row and at least one $P$-expandable column. We define a \emph{witness} for $P$ (used implicitly by Fulek and Keszegh) as a matrix that avoids $P$ and has at least one $P$-expandable row and at least one $P$-expandable column. Clearly, an explicit witness is a witness. The following lemma shows that finding a witness is sufficient to show that $\sat(P,n) \in \fO(1)$.
	
	\begin{lemma}\label{p:gwitness}
		If a pattern $P$ without empty rows or columns has an $m_0 \times n_0$ witness, then $P$ has an $m_0 \times n_0$ explicit witness.
	\end{lemma}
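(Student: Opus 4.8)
The plan is to saturate the given witness greedily and observe that the expandable row and column are never touched. Let $M$ be an $m_0 \times n_0$ witness for $P$, let $R$ be a $P$-expandable row of $M$, and let $C$ be a $P$-expandable column of $M$. I would form $M'$ from $M$ by repeatedly selecting an entry that is currently $0$ whose change to $1$ keeps the matrix $P$-avoiding, and changing it to $1$, stopping when no such entry remains. Since $M$ has finitely many entries the process halts, and by construction $M'$ avoids $P$ and is maximal with this property, hence $P$-saturating; moreover $M'$ is still $m_0 \times n_0$.

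The crux is to show that no $1$-entry is ever added inside $R$ or inside $C$, so that both remain empty in $M'$ and $M'$ is therefore an explicit witness. I would deduce this from the monotonicity of matrix pattern containment: if a matrix $N$ contains $P$ and $N^+$ is obtained from $N$ by adding $1$-entries, then $N^+$ also contains $P$, since the rows and columns one deletes to extract $P$ from $N$ still extract $P$ from $N^+$. Because $R$ is $P$-expandable in $M$, adding a $1$ at any position of $R$ to $M$ creates an occurrence of $P$. Every matrix $N$ appearing in the greedy process satisfies $E(M) \subseteq E(N)$, so by monotonicity adding a $1$ at any position of $R$ to $N$ likewise creates an occurrence of $P$. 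Hence the greedy step --- which adds a $1$ only when $P$-avoidance is preserved --- never places a $1$ in $R$; the same argument handles $C$. Thus $M'$ has empty row $R$ and empty column $C$, so it is an explicit witness for $P$ of the desired size.

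I do not expect a real obstacle here; the points to state carefully are just the one-line monotonicity claim and the (trivial but worth noting) fact that a matrix can have an empty row and an empty column simultaneously, even though $R$ and $C$ share a position. The hypothesis that $P$ has no empty rows or columns plays no role in this particular reduction --- it is what makes explicit witnesses useful through padding --- so I would not invoke it.
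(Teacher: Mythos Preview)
Your proposal is correct and is essentially the same argument as the paper's: greedily add $1$-entries while preserving $P$-avoidance, and observe that the expandable row and column are never touched because any $1$ placed there would (by monotonicity) create a copy of $P$. The paper phrases the invariant as ``the resulting matrix is still a witness'' after each step, but the content is identical.
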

	\begin{proof}
		Let $M$ be an $m_0 \times n_0$ witness for $P$. If $M$ is $P$-saturating, then we are done. Otherwise, there must be a 0-entry $(i,j)$ in $M$ that can be changed to 1 without creating an occurrence $P$. Choose one such 0-entry and turn it into 1. Note that $(i,j)$ cannot be contained in an expandable row or column of $M$, so the resulting matrix is still a witness. Thus, we obtain an explicit witness after repeating this step at most $m_0 \cdot n_0$ times.
	\end{proof}
	
	\subsubsection{Vertical and horizontal witnesses}
	
	Fulek and Keszegh also considered the asymptotic behavior of the functions $\sat(P,m_0,n)$ \linebreak and $\sat(P,m,n_0)$, where $m_0$ and $n_0$ are fixed. The dichotomy of $\sat(P,n)$ also holds in this setting:
	
	\begin{theorem}[{\cite[Parts of Theorem 1.3]{FulekKeszegh2021}}]\label{p:dichotomy}
		For every pattern $P$, and constants $m_0, n_0$,
		\begin{enumerate}[(i)]
			\item either $\sat(P,m_0,n) \in \fO(1)$ or $\sat(P,m_0,n) \in \Theta(n)$;\label{item:dich_hor}
			\item either $\sat(P,m,n_0) \in \fO(1)$ or $\sat(P,m,n_0) \in \Theta(m)$.\label{item:dich_vert}
		\end{enumerate}
	\end{theorem}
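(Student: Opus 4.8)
The plan is to reduce both parts to a single, clean case distinction. First I would observe that (i) and (ii) are equivalent: an $m \times n_0$ matrix $M$ is $P$-saturating if and only if $\trans(M)$ is $\trans(P)$-saturating (pattern containment commutes with transposition, and ``saturating'' is phrased purely in terms of containment), so $\sat(P, m, n_0) = \sat(\trans(P), n_0, m)$, and (ii) for $P$ is literally (i) for $\trans(P)$. Hence it suffices to prove (i). I would also assume that $P$ has no empty rows or columns — the case relevant here — and remark that the general case needs only routine extra bookkeeping.

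Next I would dispatch the upper bound: for a fixed $m_0$, starting from the all-$0$ $m_0 \times n$ matrix (which avoids $P$, since $P$ is a pattern) and repeatedly adding a $1$-entry that does not create an occurrence of $P$ until this is no longer possible yields a $P$-saturating matrix of weight at most $m_0 n$, so $\sat(P, m_0, n) \le m_0 n \in \fO(n)$ automatically. Thus the real content of (i) is the implication: if $\sat(P, m_0, n) \notin \fO(1)$, then $\sat(P, m_0, n) \in \Omega(n)$. For this I would split into two exhaustive cases according to whether there exists any $P$-saturating matrix with exactly $m_0$ rows that has an empty column. In the first case — say an $m_0 \times n_1$ $P$-saturating matrix $M$ of weight $w$ has an empty column — I would invoke the remark preceding \cref{p:gwitness}, that an empty column of a $P$-saturating matrix may be replaced by arbitrarily many empty columns without destroying $P$-saturation; this gives $\sat(P, m_0, n) \le w$ for all $n \ge n_1$ and $\sat(P, m_0, n) \le m_0 n_1$ for smaller $n$, hence $\sat(P, m_0, n) \in \fO(1)$. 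In the second case, every $P$-saturating $m_0 \times n$ matrix has all $n$ columns nonempty and therefore at least $n$ $1$-entries, so $\sat(P, m_0, n) \ge n$; combined with the trivial upper bound this is $\Theta(n)$. Since the cases are exhaustive, (i) follows.

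The step I expect to require the most care is the column-duplication claim used in the first case: that inserting copies of an empty column into a $P$-saturating matrix preserves both $P$-avoidance and $P$-saturation. Avoidance relies on $P$ having no empty column — an occurrence of $P$ in the enlarged matrix using a newly inserted (empty) column would force $P$ to have an empty column, while an occurrence using none of the new columns already occurs in $M$; saturation is then inherited column by column, each new empty column sitting in the same combinatorial position as the original one. This is precisely the fact already stated in \cref{sec:witnesses}, so in practice I would just cite it. Everything beyond that point is elementary counting, so I do not anticipate any further obstacle.
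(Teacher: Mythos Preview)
The paper does not give its own proof of \cref{p:dichotomy}; it simply quotes the result from Fulek and Keszegh~\cite{FulekKeszegh2020}. So there is nothing in the present paper to compare against, and I can only assess your argument on its own merits.

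Your argument is correct for patterns without empty rows or columns, and this is the only case the present paper needs. The reduction of (ii) to (i) via transposition is clean, the trivial $\fO(n)$ upper bound is fine, and the dichotomy ``some saturating $m_0\times n$ matrix has an empty column $\Rightarrow \fO(1)$ via column duplication; otherwise every column is nonempty $\Rightarrow \ge n$'' is exactly the standard proof and matches what Fulek and Keszegh do. The column-duplication step is precisely the fact stated just before \cref{p:gwitness}, so citing it is appropriate.

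The only soft point is your remark that the general case (where $P$ may have empty rows or columns) is ``routine extra bookkeeping''. It is not quite immediate: if $P$ itself has empty columns, then inserting additional empty columns into $M$ can create an occurrence of $P$, so the duplication step as stated breaks down and one has to argue a bit more carefully (roughly, by keeping track of how many empty columns $P$ has). This is not a deep issue, and for the purposes of this paper---which only ever invokes the dichotomy for permutation matrices---your restriction to patterns without empty rows or columns is entirely harmless; but I would not wave it away quite so casually.
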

	
	We can adapt the notion of witnesses in order to classify $\sat(P,m_0, n)$ and $\sat(P,m,n_0)$. Let~$P$ be a matrix pattern without empty rows or columns. A \emph{horizontal (vertical) witness} for~$P$ is a matrix $M$ that avoids $P$ and contains an expandable column (row).\footnote{A horizontal witness can be expanded horizontally, a vertical witness can be expanded vertically.} Clearly, $P$ has a horizontal witness with $m_0$ rows if and only if $\sat(P, m_0, n)$ is bounded; and $P$ has a vertical witness with~$n_0$ columns if and only if $\sat(P,m,n_0)$ is bounded. Further note that $M$ is a witness for $P$ if and only if $M$ is both a horizontal witness and a vertical witness.
	
	We now prove that we can essentially restrict our attention to the classification of the functions $\sat(P,m_0, n)$ and $\sat(P,m,n_0)$. The following two lemmas are a generalization of the technique used by Fulek and Keszegh to prove that $\sat(Q,n) \in \fO(1)$ for the pattern $Q$ depicted in \cref{fig:mat-FK}.
	
	\begin{lemma}\label{p:ext-hor-wit}
		Let $P$ be a matrix pattern without empty rows or columns, and only one 1-entry in the last row (column). Let $W$ be a horizontal (vertical) witness for $P$. Then, appending an empty row (column) to $W$ again yields a horizontal (vertical) witness.
	\end{lemma}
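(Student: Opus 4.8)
I plan to prove the horizontal version; the vertical version follows by transposing, which swaps rows with columns and so turns ``unique 1-entry in the last row'' into ``unique 1-entry in the last column'', ``horizontal witness'' into ``vertical witness'', and ``appending an empty row'' into ``appending an empty column''. So let $P$ be a $k \times \ell$ pattern without empty rows or columns whose last row contains a single 1-entry, say in column $j^*$, let $W$ be a horizontal witness for $P$ with $m_0$ rows, let $c$ be a $P$-expandable column of $W$, and let $W'$ be $W$ with an empty row appended, so that this new row is row $m_0+1$. Since $c$ is still an empty column of $W'$, it suffices to show that $W'$ avoids $P$ and that adding a single 1-entry anywhere in column $c$ of $W'$ creates an occurrence of $P$.

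For avoidance, an occurrence of $P$ in $W'$ selects rows $r_1 < \dots < r_k$ of $W'$; since no row of $P$ is empty, none of the $r_a$ can equal $m_0+1$, so the occurrence lies entirely inside $W$, contradicting that $W$ avoids $P$. For expandability, adding a 1-entry at some $(i,c)$ with $i \le m_0$ is easy: the top $m_0$ rows of the modified $W'$ are precisely $W$ with that 1-entry added, which contains $P$ because $c$ is $P$-expandable in $W$. The remaining case --- adding a 1-entry $x$ at $(m_0+1,c)$, in the new bottom row --- is the heart of the argument, and I would handle it by applying $P$-expandability of $c$ in $W$ \emph{at row $m_0$}: adding a 1-entry at $(m_0,c)$ to $W$ creates an occurrence of $P$, and as $W$ avoids $P$ this occurrence must use the entry $(m_0,c)$. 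Since $m_0$ is the last row of $W$, the row of $P$ mapped onto row $m_0$ is then forced to be the last row of $P$, and because that row has only the one 1-entry (in column $j^*$), this 1-entry is the one mapped to $(m_0,c)$, while all other rows of $P$ are mapped into rows $1,\dots,m_0-1$ of $W$. To finish, in $W'$ with $x$ added I keep exactly the same column choice and the same row choice for all rows of $P$ except the last, for which I use row $m_0+1$ instead of row $m_0$: rows $1,\dots,m_0-1$ of $W'$ agree with $W$, so every non-final 1-entry of $P$ still lands on a 1-entry, and the unique final 1-entry of $P$ lands on $x=(m_0+1,c)$. This is a valid occurrence of $P$, so $c$ is $P$-expandable in $W'$ and $W'$ is a horizontal witness.

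The main obstacle is exactly this last case, which is also the only step that uses the hypothesis on $P$: the argument works because the image of $P$'s last row can be slid down by one into the appended empty row without disturbing any other part of the embedding, and that is possible precisely because that row of $P$ contains a single 1-entry. With two or more 1-entries in the last row of $P$, an existing 1-entry of $W$ in its bottom row could be forced into the occurrence, and sliding the image down would destroy it, so the restriction on $P$ cannot simply be dropped.
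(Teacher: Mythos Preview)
Your proof is correct and follows essentially the same approach as the paper's: reduce to the horizontal case by symmetry, note that $W'$ still avoids $P$, and handle the new bottom-row position by taking the occurrence created at $(m_0,c)$ and sliding its last-row entry down to $(m_0+1,c)$, which works precisely because $P$ has a unique 1-entry in its last row. If anything, your write-up is more careful than the paper's in justifying the avoidance step and in explaining why the occurrence at $(m_0,c)$ uses only row $m_0$ for that single entry.
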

	\begin{proof}
		We prove the lemma for horizontal witnesses with a row appended. The other case follows by symmetry.
		Let $W$ be an $m_0 \times n_0$ horizontal witness for $P$, where the $j$-th column of~$W$ is expandable. Let $W'$ be the matrix obtained by appending a row to $W$. Clearly, $W'$ still does not contain $P$. Moreover, adding an entry in $W'$ at $(i,j)$ for any $i \neq n_0+1$ creates a new occurrence of $P$. It remains to show that adding an entry at $(n_0+1,j)$ creates an occurrence of~$P$.
		
		We know that adding an entry at $(n_0,j)$ in $W'$ creates an occurrence of $P$, say at \linebreak positions~$I \subseteq [n_0]^2$. Since $P$ has only one entry in the last row, all positions $(i', j') \in I \setminus \{(n_0,j)\}$ satisfy $i' < n_0+1$. Thus, adding a 1-entry at $(n_0+1,j)$ instead of $(n_0,j)$ creates an occurrence of $P$ at positions $I \setminus \{(n_0,j)\} \cup \{(n_0+1,j)\}$. Thus, $W'$ is a horizontal witness.
	\end{proof}
	
	\begin{lemma}\label{p:vert_hor_wit}
		Let $P$ be a indecomposable pattern without empty rows or columns, and with only one 1-entry in the last row and one 1-entry in the last column. Then $\sat(P,n) \in \fO(1)$ if and only if there exist constants $m_0, n_0$ such that $\sat(P,m_0, n) \in \fO(1)$ and $\sat(P,m, n_0) \in \fO(1)$.
	\end{lemma}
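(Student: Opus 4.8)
\emph{Proof plan.}
The forward implication is immediate: if $\sat(P,n)\in\fO(1)$ then $P$ has an explicit witness (as noted before \cref{p:gwitness}), say of size $m_0\times n_0$, and this matrix is simultaneously a horizontal witness with $m_0$ rows and a vertical witness with $n_0$ columns, so $\sat(P,m_0,n)\in\fO(1)$ and $\sat(P,m,n_0)\in\fO(1)$. The content is in the converse, which I plan to prove by a single explicit construction.

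Assuming the two bounded one-dimensional saturation functions, the correspondences recorded in \cref{sec:witnesses} yield a horizontal witness $W_\rh$ for $P$ (a $P$-avoiding matrix with an expandable column $c$) and a vertical witness $W_\rv$ for $P$ (a $P$-avoiding matrix with an expandable row $r$). By \cref{p:gwitness} it is enough to assemble these into a single \emph{witness} for $P$, i.e.\ a $P$-avoiding matrix that has both an expandable row and an expandable column. The plan is to take the skew block matrix
\[
	W=\left(\begin{smallmatrix}\zeromat&W_\rh\\ W_\rv&\zeromat\end{smallmatrix}\right)
\]
with $W_\rh$ in the top-right block and $W_\rv$ in the bottom-left block, and to verify the two properties directly.

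First I would show that $W$ avoids $P$, by the standard argument: in a putative occurrence of $P$ in $W$, split the used rows according to which of the two nonzero blocks they meet, and likewise the used columns; since the other two blocks of $W$ are zero, every $1$-entry of the occurrence in a ``top'' row lies in a ``right'' column and every one in a ``bottom'' row lies in a ``left'' column, so---using that $P$ has no empty row or column and that the occurrence is contained in neither $W_\rh$ nor $W_\rv$ alone (these avoid $P$)---$P$ would be of the form $\left(\begin{smallmatrix}\zeromat&A\\ B&\zeromat\end{smallmatrix}\right)$ with $A,B\neq\zeromat$, i.e.\ decomposable, a contradiction. Next I would check the expandable column: the column of $W$ coming from column $c$ of $W_\rh$ is empty throughout $W$, since a zero block sits below it. Adding a $1$ to it in a row of $W_\rh$ creates an occurrence of $P$ by expandability of $c$ in $W_\rh$; adding a $1$ to it in a row below $W_\rh$ does too, because placing a $1$ in the \emph{last} row of $W_\rh$ in column $c$ already creates an occurrence of $P$ in $W_\rh$, and since $P$ has exactly one $1$-entry in its last row, in that occurrence this $1$ is the only entry of the bottom-most used row while all others lie strictly above it; sliding this lone entry down into the new (still lower) row changes no relative order, so the occurrence survives. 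An analogous argument with the roles of rows and columns (and of $W_\rh$ and $W_\rv$) interchanged---using that $P$ has exactly one $1$-entry in its last column and that a zero block lies to the right of $W_\rv$---shows that the row of $W$ coming from row $r$ of $W_\rv$ is expandable. Hence $W$ is a witness, and \cref{p:gwitness} together with the observations before it gives $\sat(P,n)\in\fO(1)$.

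The one point that needs care is the compatibility of the two ``cross-term'' extensions: sliding a $1$ out of $W_\rh$'s last row downward requires the region below $W_\rh$ to be empty and the unique-last-row hypothesis, whereas sliding a $1$ out of $W_\rv$'s last column rightward requires the region to the right of $W_\rv$ to be empty and the unique-last-column hypothesis. The skew placement is precisely what makes both emptiness conditions hold at once; the naive block-diagonal placement $\left(\begin{smallmatrix}W_\rh&\zeromat\\ \zeromat&W_\rv\end{smallmatrix}\right)$ would force one slid entry onto the wrong side, turning a last-row/column entry into a first-row/column entry and destroying the pattern. I expect this bookkeeping, rather than any genuinely hard step, to be the main thing to get right.
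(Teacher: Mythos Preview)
Your proposal is correct and follows essentially the same approach as the paper: the same skew block matrix $W=\left(\begin{smallmatrix}\zeromat&W_\rh\\ W_\rv&\zeromat\end{smallmatrix}\right)$, the same indecomposability argument for avoidance, and the same ``slide the unique last-row entry downward'' argument for expandability. The only cosmetic difference is that the paper factors the sliding step into a separate lemma (\cref{p:ext-hor-wit}) and then invokes it, whereas you carry it out inline; your justification that the newly added $1$ must realize the unique last-row entry of $P$ (since the bottom row of $W_\rh$ is the largest chosen row index and $P$ has only one $1$ there) is exactly the content of that lemma.
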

	\begin{proof}
		Suppose that $\sat(P,n) \in \fO(1)$. Then $P$ has an $m_0 \times n_0$ witness $M$ by \cref{p:const-then-witness}, and thus $\sat(P,m_0, n)$ is at most the weight of $M$, for every $n \ge n_0$. Similarly, ${\sat(P,m, n_0) \! \in \! \fO(1)}$.
		
		Now suppose that $\sat(P,m_0, n) \in \fO(1)$ and $\sat(P,m, n_0) \in \fO(1)$. Then, for some $m_1, n_1$, there exists an $m_0 \times n_1$ horizontal witness $W_\rH$ and an $m_1 \times n_0$ vertical witness $W_\rV$. Consider the following $(m_0+m_1) \times (n_0+n_1)$ matrix, where $\zeromat_{m \times n}$ denotes the all-0 $m \times n$ matrix:
		\begin{align*}
			W = \begin{pmatrix} \zeromat_{m_0 \times n_0} & W_\rH \\ W_\rV & \zeromat_{m_1 \times n_1} \end{pmatrix}
		\end{align*}
		
		We first show that $W$ does not contain $P$. Suppose it does. Since $P$ is contained neither in~$W_\rH$ nor in $W_\rV$, an occurrence of $P$ in $W$ must contain 1-entries in both the bottom left and top right quadrant. But then $P$ is decomposable, a contradiction.
		
		By \Cref{p:ext-hor-wit}, $W_\rV' = (W_\rV, \zeromat_{m_1 \times n_1})$ is a vertical witness, and $W_\rH' = \binom{W_\rH}{\zeromat_{m_1 \times n_1}}$ is a horizontal witness. The expandable row in $W_\rV'$ and the expandable column in $W_\rH'$ are both also present in~$W$. This implies that $W$ is a witness for $P$, so $\sat(P, n) \in \fO(1)$.
	\end{proof}
	
	\Cref{fig:wit-genwit} shows an example of a witness, constructed with \Cref{p:vert_hor_wit}, using vertical and horizontal witnesses presented later in \cref{sec:4-trav}, and an explicit witness constructed using \cref{p:gwitness}.
	
	\begin{figure}
		\centering
		\begin{align*}
			\begin{smallbulletmatrix}
				  &  &\o&  \\
				\o&  &  &  \\
				  &  &  &\o\\
				  &\o&  &  
			\end{smallbulletmatrix}
			\hspace{10mm}
			\begin{smallbulletmatrix}
				  &  &  &  &  &  &  &  &\d&\o&  \\
				  &  &  &  &  &  &  &\o&\d&  &  \\
				  &  &  &  &  &  &  &  &\d&  &\o\\
				  &  &  &  &  &  &\o&  &\d&  &  \\
				  &  &  &  &  &  &  &  &\d&\o&  \\
				  &  &  &  &  &  &  &\o&\d&  &  \\
				  &  &\o&  &  &  &  &  &\d&  &  \\
				\o&  &  &  &\o&  &  &  &\d&  &  \\
				\d&\d&\d&\d&\d&\d&\d&\d&\d&\d&\d\\
				  &\o&  &  &  &\o&  &  &\d&  &  \\
				  &  &  &\o&  &  &  &  &\d&  &  
			\end{smallbulletmatrix}
			\hspace{10mm}
			\begin{smallbulletmatrix}
				\o&\o&\o&\o&  &\o&\o&\o&\d&\o&\o\\
				  &  &  &  &  &  &  &\o&\d&\o&\o\\
				  &  &  &  &  &\o&\o&\o&\d&\o&\o\\
				  &  &  &  &  &\o&\o&\o&\d&\o&  \\
				  &  &  &  &  &\o&  &\o&\d&\o&  \\
				  &\o&\o&\o&  &\o&  &\o&\d&\o&\o\\
				  &\o&\o&\o&  &\o&  &  &\d&  &  \\
				\o&\o&  &\o&\o&\o&  &  &\d&  &  \\
				\d&\d&\d&\d&\d&\d&\d&\d&\d&\d&\d\\
				\o&\o&  &\o&\o&\o&  &  &\d&  &  \\
				\o&  &  &\o&\o&\o&  &  &\d&\o&\o
			\end{smallbulletmatrix}
		\end{align*}
		\caption{A pattern (left), a witness (middle) and an explicit witness (right) for the pattern. The small dots indicate the expandable row/column.}\label{fig:wit-genwit}
	\end{figure}
	
	Observe that the transformations $\rev$, $\rot$, and $\trans$ all preserve witnesses. However, the latter two change vertical witnesses to horizontal witnesses, and vice versa. Formally:
	
	\begin{observation}\label{p:transform_effects}
		Let $P$ be a matrix with a vertical witness $W$. Then $\rev(W)$ is a vertical witness of $\rev(P)$, $\rot(W)$ is a horizontal witness of $\rot(P)$, and $\trans(W)$ is a horizontal witness of $\trans(P)$.\qed
	\end{observation}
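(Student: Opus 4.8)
The plan is to reduce all three claims to a single fact about the transformations $\tau \in \{\rev, \rot, \trans\}$: each is a bijection on $0$-$1$ matrices of a fixed shape, and it carries occurrences of a pattern $Q$ in a matrix $M$ bijectively to occurrences of $\tau(Q)$ in $\tau(M)$. Indeed, $\rev$ sends the rows of $M$ to the rows of $\tau(M)$ and the columns to the columns, while $\rot$ and $\trans$ send rows to columns and columns to rows; in every case deleting a line of $M$ corresponds to deleting the matching line of $\tau(M)$, and removing a $1$-entry corresponds to removing its image. Hence $M$ contains $Q$ if and only if $\tau(M)$ contains $\tau(Q)$. In particular, since $W$ avoids $P$, the matrix $\tau(W)$ avoids $\tau(P)$, which is the first requirement for a witness.

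Next I would track where the expandable row of $W$ goes under each $\tau$. For $\rev$, the $i$-th row of $W$ is again the $i$-th row of $\rev(W)$, with its entries reversed, so an empty row remains an empty row; and if $W'$ denotes $W$ with a $1$ added at $(i,j)$, then $\rev(W')$ is exactly $\rev(W)$ with a $1$ added at the reflected position in the same row. As $j$ ranges over all columns, so does the reflected index, so "adding a $1$-entry anywhere in that row" is preserved. By the previous paragraph, each such addition creates an occurrence of $\rev(P)$ in $\rev(W)$ (a new one, since $\rev(W)$ avoids $\rev(P)$), so that row is expandable for $\rev(P)$, and $\rev(W)$ is a vertical witness for $\rev(P)$.

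For $\rot$ and $\trans$ the argument is identical after noting that the $i$-th row of $W$ is sent to a \emph{column} of the image: under the $90^\circ$ clockwise rotation of an $m \times n$ matrix the entry $(i,j)$ goes to $(j, m+1-i)$, and under the transpose it goes to $(j,i)$, and in both cases this is a bijection of positions. Thus the empty expandable row of $W$ becomes an empty column of $\rot(W)$ (resp. $\trans(W)$), and "adding a $1$-entry anywhere in that row of $W$" corresponds to "adding a $1$-entry anywhere in that column of the image". The same reasoning as for $\rev$ shows this column is expandable for $\rot(P)$ (resp. $\trans(P)$), so $\rot(W)$ and $\trans(W)$ are horizontal witnesses for $\rot(P)$ and $\trans(P)$.

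The statement is essentially bookkeeping, and I do not expect a genuine obstacle; the only point that warrants a moment's care is writing out the position correspondence for the $90^\circ$ rotation and confirming it is a bijection, so that the phrase "anywhere in the empty line" is indeed transported faithfully.
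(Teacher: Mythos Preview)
Your proposal is correct and is precisely the natural unpacking of this observation; the paper itself provides no proof (the \qed marks it as immediate), so there is nothing further to compare against.
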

	
	Recall that our goal is to show that every indecomposable permutation matrix has a witness. Since indecomposable permutation matrices are closed under transposition, \cref{p:vert_hor_wit,p:transform_effects} imply that it suffices to find a \emph{vertical} witness for each indecomposable permutation matrix. The same is true for every class of matrices satisfying the conditions of \cref{p:vert_hor_wit} that is closed under transposition or 90-degree clockwise rotation. This is useful to prove \cref{p:4-trav-gen}.
	
	\begin{lemma}\label{p:vert_wit_suff}
		Let $\fP$ be a class of indecomposable patterns without empty rows or columns, and with only one 1-entry in the last row and one 1-entry in the last column. If $\fP$ is closed under transposition or 90-degree clockwise rotation and each pattern in $\fP$ has a vertical witness, then~$\sat(P,n) \in \fO(1)$ for each $P \in \fP$.
	\end{lemma}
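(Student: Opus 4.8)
The plan is to combine the reduction in \cref{p:vert_hor_wit} with the transformation behaviour recorded in \cref{p:transform_effects}. Fix an arbitrary $P \in \fP$. By hypothesis $P$ has a vertical witness, say one with $n_0$ columns; as noted just before \cref{p:ext-hor-wit}, a vertical witness with $n_0$ columns certifies $\sat(P,m,n_0) \in \fO(1)$. Since $P \in \fP$, the pattern $P$ satisfies the hypotheses of \cref{p:vert_hor_wit}, so it suffices to produce, for some constant $m_0$, a horizontal witness for $P$ (equivalently, to show $\sat(P,m_0,n) \in \fO(1)$): then \cref{p:vert_hor_wit} immediately yields $\sat(P,n) \in \fO(1)$.

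To obtain the horizontal witness I would split into the two closure cases. If $\fP$ is closed under transposition, then $\trans(P) \in \fP$, so by hypothesis $\trans(P)$ has a vertical witness $W$, and \cref{p:transform_effects} tells us that $\trans(W)$ is a horizontal witness of $\trans(\trans(P)) = P$. If instead $\fP$ is closed under $90$-degree clockwise rotation, then applying the closure three times gives that the pattern $P'$ obtained by rotating $P$ three times lies in $\fP$; hence $P'$ has a vertical witness $W$, and \cref{p:transform_effects} says $\rot(W)$ is a horizontal witness of $\rot(P')$, which is $P$ since four $90$-degree rotations return the original matrix. In both cases $P$ has a horizontal witness, hence $\sat(P,m_0,n) \in \fO(1)$ for the corresponding $m_0$, and we conclude as in the previous paragraph.

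There is essentially no hard step here. The only points requiring care are: using \cref{p:transform_effects} in the correct direction — it converts a vertical witness of $Q$ into a horizontal witness of $\trans(Q)$ (resp.\ of $\rot(Q)$), so one must feed it $\trans(P)$ (resp.\ the threefold rotation of $P$), not $P$ itself; and recording that $\trans$ is an involution while $\rot$ has order $4$, so that the transformed witness really is a witness for $P$. One should also observe that the intermediate patterns are used only through the single property ``has a vertical witness'', which holds because they lie in $\fP$, so no additional structural hypotheses on them are needed.
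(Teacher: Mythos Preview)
Your proof is correct and follows essentially the same approach as the paper: use \cref{p:vert_hor_wit} to reduce to finding a horizontal witness, then obtain one by applying the closure hypothesis and \cref{p:transform_effects}. The only difference is that you spell out the rotation case explicitly (applying closure three times so that $\rot$ brings you back to $P$), whereas the paper simply says this case is handled analogously.
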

	\begin{proof}
		Suppose that $\fP$ is closed under transposition and each $P \in \fP$ has a vertical witness. By \cref{p:vert_hor_wit}, it suffices to show that each pattern in $\fP$ also has a horizontal witness. Let $P \in \fP$. Then $\trans(P) \in \fP$ has a vertical witness $W$. By \cref{p:transform_effects}, $\trans(W)$ is a horizontal witness for $\trans(\trans(P)) = P$.
		
		The case that $\fP$ is closed under 90-degree rotation can be handled analogously.
	\end{proof}
	
	\subsection{Spanning oscillations}\label{sec:spanosc}
	
	We now introduce \emph{spanning oscillations}, a class of substructures that characterizes indecomposable permutation matrices.
	
	For a permutation matrix $P$, the \emph{permutation graph} $G_P$ of the underlying permutation can be defined as follows: The vertex set is $E(P)$, and two 1-entries $x, y \in E(P)$ have an edge between them if $x$ is below and to the left of $y$ (or vice versa).
	
	An \emph{oscillation} in a permutation matrix of $P$ is a sequence $X = (x_1, x_2, \dots, x_m)$ of distinct 1-entries in $P$ such that $X$ forms an induced path in $G_P$, i.e., there is an edge between $x_i$ and~$x_{i+1}$ for each $i \in [m-1]$, and no other edges between 1-entries in $X$. Oscillations have been studied before in several contexts \cite{Pratt1973,BrignallEtAl2008,Vatter2011}.
	Vatter showed that a permutation matrix $P$ is sum indecomposable if and only if it has an oscillation that starts with $\ell_P$ and ends with $r_P$~\cite[Propositions 1.4, 1.7]{Vatter2011}. Our characterization of indecomposable permutations is very similar. Call an oscillation $X = (x_1, x_2, \dots, x_m)$ \emph{spanning} if $\{x_1, x_2\} = \{\ell_P, t_P\}$ and~$\{x_{m-1}, x_m\} = \{b_P, r_P\}$.
	
	\begin{lemma}\label{p:indec-spanosc}
		Let $P$ be a sum indecomposable permutation matrix such that $t_P$ is to the left of~$b_P$ or $\ell_P$ is above $r_P$. Then $P$ has a spanning oscillation.
	\end{lemma}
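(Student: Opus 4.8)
The plan is to build on Vatter's characterization. Since $P$ is sum indecomposable it has an oscillation $X=(x_1,\dots,x_m)$ with $x_1=\ell_P$ and $x_m=r_P$ \cite{Vatter2011}, and the idea is to surgically modify $X$ near its two ends so that its first two entries form the set $\{\ell_P,t_P\}$ and its last two form $\{b_P,r_P\}$, while it remains an induced path in $G_P$. Before that I would record a few elementary facts, writing $x\sim y$ for adjacency in $G_P$. Under the hypothesis the entries $\ell_P,r_P,t_P,b_P$ are pairwise distinct: if two of them coincided, then $P$ would have a $1$-entry in its top-left or bottom-right corner and hence be sum decomposable (a contradiction), or else one of the two inequalities in the hypothesis would be violated. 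One checks directly from the definitions that $\ell_P\sim t_P$, $r_P\sim b_P$, $\ell_P\not\sim b_P$, and $t_P\not\sim r_P$. Finally, since $t_P$ lies above $b_P$ and $\ell_P$ lies to the left of $r_P$, the pair $\{t_P,b_P\}$ (resp.\ $\{\ell_P,r_P\}$) is an edge of $G_P$ exactly when $t_P$ lies to the right of $b_P$ (resp.\ $r_P$ lies above $\ell_P$); hence the hypothesis is equivalent to saying that at least one of $\{t_P,b_P\}$, $\{\ell_P,r_P\}$ is a non-edge.

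Now I would split into two cases. If $\{t_P,b_P\}$ is an edge, then $\{\ell_P,r_P\}$ is a non-edge, and $(\ell_P,t_P,b_P,r_P)$ is at once a spanning oscillation: its three consecutive pairs are edges, and its three non-consecutive pairs $\{\ell_P,b_P\}$, $\{\ell_P,r_P\}$, $\{t_P,r_P\}$ are non-edges. Otherwise $t_P\not\sim b_P$, and I take Vatter's oscillation $X$ and repair its two ends. To repair the start: if $t_P$ already occurs in $X$, then since $t_P\sim x_1=\ell_P$ and $x_1$ has $x_2$ as its only neighbour on the path, necessarily $t_P=x_2$. If $t_P\notin X$, let $j$ be the largest index with $x_j\sim t_P$; then $1\le j\le m-1$ because $x_1\sim t_P$ and $x_m=r_P\not\sim t_P$, and a short argument about the relative positions of $\ell_P,t_P,x_2,x_3$ rules out $j=2$. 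If $j=1$, replace $X$ by $(t_P,\ell_P,x_2,\dots,x_m)$; if $j\ge 3$, replace $X$ by $(\ell_P,t_P,x_j,x_{j+1},\dots,x_m)$; in both cases one checks this is again an induced path ending at $r_P$ whose first two entries form $\{\ell_P,t_P\}$. Repairing the end is the left-right mirror of this: on the current oscillation $(y_1,\dots,y_{m'})$ with $y_{m'}=r_P$, either $b_P$ already occurs (and is then forced to equal $y_{m'-1}$), or we take the smallest index $j'$ with $y_{j'}\sim b_P$, rule out $j'=m'-1$ by the mirrored position argument, and either append $b_P$ (if $j'=m'$) or splice $b_P,r_P$ in right after $y_{j'}$ (if $j'\le m'-2$).

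The delicate point --- and the reason for isolating the case that $\{t_P,b_P\}$ is an edge --- is that the end-repair could in principle overwrite the entries $y_1,y_2$ that were fixed at the start; this happens exactly when $j'=1$, i.e.\ when $y_1\sim b_P$. But $\ell_P\not\sim b_P$, so $j'=1$ forces $y_1=t_P$ and $t_P\sim b_P$, which is precisely the case already disposed of directly. The rest is bookkeeping: verifying the induced-path property after each surgery, and handling the short configurations (for instance $m'=3$), where some of the subcases above are vacuous. I expect this bookkeeping, together with the two mirror-image position arguments that exclude $j=2$ and $j'=m'-1$, to be the fiddliest part of the write-up; the structural content is entirely contained in the adjacency facts of the first paragraph.
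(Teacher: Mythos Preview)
Your approach is correct and essentially the same as the paper's: start from Vatter's $\ell$-to-$r$ oscillation and surgically repair each end. The paper's proof differs only in organization. Instead of your case split on whether $t_P\sim b_P$, the paper first reduces by transposition to the situation $t_P\lth b_P$ (equivalently $t_P\not\sim b_P$), and then runs exactly your Case~2 argument; your Case~1, where you write down the explicit oscillation $(\ell_P,t_P,b_P,r_P)$, is absorbed by this symmetry reduction. The paper is also terser about the bookkeeping you flag: it does not explicitly exclude the index $j=2$ at the start (this follows, as you note, from the forced relative positions $x_3'\lth x_2'$ in any oscillation beginning at $\ell_P$), and it handles the interaction between the two repairs with the single observation that both $\ell_P$ and $t_P$ lie to the left of $b_P$, which forces the end-repair splice point to be at index at least~$3$ and hence to leave the first two entries intact. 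Your more explicit treatment of these points is a reasonable way to write it up.
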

	\begin{proof}
		We write $\ell, t, b, r$ for $\ell_P, t_P, b_P, r_P$. By symmetry, we can assume that $t$ is to the left of $b$ (otherwise, replace $P$ by $\trans(P)$, noting that $G_P = G_{\trans(P)}$). Recall that  $\ell, t, b, r$ are pairwise distinct, as $P$ is indecomposable and not $1 \times 1$.
		
		Since $P$ is sum indecomposable, by the result of Vatter mentioned above, it has an oscilla\-tion~$X' = (x_1', x_2', \dots, x_m')$ with $x_1' = \ell$, $x_m' = r$. Suppose first that $t$ occurs in $X'$. Since~$G_P$ has an edge between $\ell$ and $t$, and $X$ is an \emph{induced} path in $G_P$, this means that $x_2' = t$. Otherwise, note that $t$ is connected in $G_P$ to precisely those 1-entries that are to the left of $t$. Let $i$ be maximal such that $x_i'$ is to the left of $t$. If $i = 1$, then $(t, \ell, x_2', \dots, x_m')$ is an induced path in~$G_P$. Otherwise, $\ell, t, x_i', \dots, x_m'$ is an induced path in~$G_P$. In either case, we have an oscilla\-tion~$X'' = (x_1'', x_2'', \dots, x_m'')$ that starts with $\{\ell, t\}$ and ends with $r$.
		
		It remains to make sure that $b$ is among the last two 1-entries in the oscillation. If $b$ occurs in~$X''$, then $x_{m-1}'' = b$, since $X''$ is an induced path. Otherwise, let $j$ be minimal such that~$x_j$ is to the right of $b$. If $j = m$, then $X = (x_1'', x_2'', \dots, x_{m-1}'', r, b)$ is an induced path in $G_P$. Otherwise, $X = (x_1'', x_2'', \dots, x_j'', b, r)$ is an induced path in $G_P$. Since $\ell, t$ are both to the left of~$b$, we have~$j > 2$, so $X$ is a spanning oscillation.
	\end{proof}
	
	We obtain the following characterization of indecomposable permutation matrices.
	\begin{corollary}\label{p:indec-equiv}
		A permutation matrix $P$ is indecomposable if and only if $P$ or $\rev(P)$ has a spanning oscillation or $P$ is the $1 \times 1$ permutation matrix.
	\end{corollary}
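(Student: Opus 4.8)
The plan is to deduce the corollary from \Cref{p:indec-spanosc} together with elementary properties of $\rev$ and of permutation graphs. The $1\times 1$ permutation matrix is indecomposable (any decomposable matrix has size at least $2\times 2$) and it admits no spanning oscillation, so it is matched exactly by the explicit clause of the statement, and we set it aside; henceforth assume $P$ is $k\times k$ with $k\ge 2$.

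For the ``only if'' direction, suppose $P$ is indecomposable. The key preliminary fact is that $\rev$ interchanges the two kinds of decomposability: it turns a matrix of the form $\decmat$ into one of the form $\rdecmat$ (with $A$ and $B$ replaced by $\rev(B)$ and $\rev(A)$) and conversely. Hence $P$ is sum indecomposable, $\rev(P)$ is sum indecomposable (since $P$ is skew indecomposable), and $\rev$ preserves indecomposability; recall also that $\ell_P,t_P,b_P,r_P$ are pairwise distinct. Since $t_P\ne b_P$, either $t_P$ is to the left of $b_P$ --- in which case \Cref{p:indec-spanosc} applied to $P$ yields a spanning oscillation of $P$ --- or $t_P$ is to the right of $b_P$. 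In the latter case, $\rev$ leaves $t_P$ topmost and $b_P$ bottommost while reversing their horizontal order, so the topmost entry of $\rev(P)$ lies to the left of its bottommost entry; as $\rev(P)$ is sum indecomposable, \Cref{p:indec-spanosc} yields a spanning oscillation of $\rev(P)$.

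For the ``if'' direction, because $\rev$ preserves indecomposability and $\rev(\rev(P))=P$, it is enough to show that a spanning oscillation $X=(x_1,\dots,x_m)$ of $P$ forces $P$ to be indecomposable; note $m\ge 2$ since $x_1\ne x_2$. Assume $P$ is decomposable. If $P=\decmat$, then $\ell_P,t_P$ lie in the top-left block and $b_P,r_P$ in the bottom-right block, while $G_P$ has no edge between an entry of the top-left block and an entry of the bottom-right block (the former is strictly above and strictly to the left of the latter); but $x_1\in\{\ell_P,t_P\}$ and $x_m\in\{b_P,r_P\}$, so $X$ would be a path in $G_P$ joining the two blocks --- impossible. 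If $P=\rdecmat$, then $\ell_P,b_P$ lie in the bottom-left block $A$ and $t_P,r_P$ in the top-right block $B$, and now $G_P$ contains \emph{every} edge between an entry of $A$ and an entry of $B$ (the former is strictly below and strictly to the left of the latter). Thus $G_P$ restricted to $E(P)$ is the join of $G_A$ and $G_B$, and an induced path in a join can use at most one vertex from one of the two parts (two on each side would span a $4$-cycle); since $X$ contains the four distinct entries $\ell_P,b_P\in A$ and $t_P,r_P\in B$, this is again impossible.

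The only step requiring genuine thought is the ``only if'' direction: \Cref{p:indec-spanosc} needs one of its corner inequalities, and exactly when it fails for $P$ it holds for $\rev(P)$, so the interplay between the case split on the horizontal order of $t_P$ and $b_P$ and the fact that $\rev$ converts skew indecomposability of $P$ into the sum-indecomposability hypothesis of the lemma is the crux. The remainder is a routine translation of the block form of a decomposable matrix into a (non-)connectivity property of its permutation graph.
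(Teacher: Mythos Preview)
Your proof is correct and follows essentially the same approach as the paper: apply \cref{p:indec-spanosc} to $P$ or to $\rev(P)$ depending on the horizontal order of $t_P$ and $b_P$, and for the converse show that either block decomposition is incompatible with an induced path in $G_P$ meeting all four extreme entries. You are a bit more explicit than the paper in checking that $\rev(P)$ is sum indecomposable and in treating the $\decmat$ case separately (the paper absorbs this into the one-line remark ``then $P$ is sum indecomposable''), but the substance is identical.
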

	\begin{proof}
		First, assume $P$ is indecomposable. If $t_P$ is to the left of $b_P$, then \cref{p:indec-spanosc} implies that $P$ has a spanning oscillation. If $t_P$ is to the right of $b_P$, then \cref{p:indec-spanosc} implies that $\rev(P)$ has a spanning oscillation. If $t_P = b_P$, then $P$ is $1 \times 1$.
		
		Second, assume $P$ has a spanning oscillation. Then $P$ is sum indecomposable. Suppose~$P$ is decomposable, then $P$ has the form $\rdecmat$, so $t$ is to the right of $b$ and $\ell$ is below $r$. But then $\ell, b, t, r$ form the complete bipartite graph $K_{2,2}$ in $G_P$, implying that $P$ has no spanning oscillation, a contradiction. A symmetric argument shows that $P$ is indecomposable if $\rev(P)$ has a spanning oscillation.
	\end{proof}
	
	Spanning oscillations have a very rigid structure, which we now describe more explicitly, in terms of relative positions of 1-entries. Let $P$ be a permutation matrix and $X = (x_1, x_2, \dots, x_m)$ be a spanning oscillation of $P$. For $2 \le i \le m-1$, call $x_i$ an \emph{upper} 1-entry if $x_i$ is above and to the right of $x_{i-1}$ and $x_{i+1}$, and call $x_i$ a \emph{lower} 1-entry if $x_i$ is below and to the left of $x_{i-1}$ and~$x_{i+1}$. Since $G_P$ contains the edges $\{x_{i-1}, x_i\}$ and $\{x_i, x_{i+1}\}$, but not the edge $\{x_{i-1}, x_{i+1}\}$, every 1-entry (except $x_1, x_m$) is either upper or lower. Clearly, upper and lower 1-entries alternate, i.e.,~$x_i$~is upper if and only if $x_{i+1}$ is lower, for $2 \le i < m-1$. It is convenient to also call $\ell_P, b_P$ lower 1-entries and $t_P, r_P$ upper 1-entries. We then have:
	
	\begin{observation}\label{p:spanosc-upper-lower}
		Let $P$ be a permutation matrix and $X = (x_1, x_2, \dots, x_m)$ be a spanning oscillation of $P$. If $x_1 = \ell_P$, then all $x_i$ with odd $i$ are lower 1-entries, and all $x_i$ with even $i$ are upper 1-entries. If $x_1 = t_P$, then all $x_i$ with odd $i$ are upper 1-entries, and all $x_i$ with even $i$ are lower 1-entries.\qed
	\end{observation}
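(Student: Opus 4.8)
The plan is to reduce to one of the two displayed cases by symmetry, isolate the single genuinely structural ingredient (an extremality remark about the four corner 1-entries), and then read off the conclusion from the alternation property already recorded just above the statement.

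First I would reduce to the case $x_1 = \ell_P$, $x_2 = t_P$. The map $(i,j) \mapsto (j,i)$ is an isomorphism from $G_P$ to $G_{\trans(P)}$: it preserves the relation ``one 1-entry lies below and to the left of the other'', it carries $\ell_P, t_P, b_P, r_P$ to $t_{\trans(P)}, \ell_{\trans(P)}, r_{\trans(P)}, b_{\trans(P)}$, and it interchanges ``upper'' with ``lower''. Hence a spanning oscillation of $P$ beginning with $t_P$ corresponds, entry by entry, to a spanning oscillation of $\trans(P)$ beginning with $\ell_{\trans(P)}$, so proving the first assertion yields the second for free.

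Next I would establish the following extremality claim: whenever $t_P$ (resp.\ $r_P$) occurs as an internal entry $x_i$ of $X$ it is an upper 1-entry, and whenever $\ell_P$ (resp.\ $b_P$) occurs as an internal entry it is a lower 1-entry. For $t_P$: it has the smallest row index, so it cannot lie below any other 1-entry; since $x_i$ is adjacent in $G_P$ to each of $x_{i-1}$ and $x_{i+1}$, and every edge of $G_P$ forces one endpoint below and to the left of the other, $t_P$ must lie above and to the right of both neighbours, i.e.\ it is upper. The other three cases are symmetric (use the smallest column index for $\ell_P$, the largest row index for $b_P$, the largest column index for $r_P$). This step is where I expect the only real care to be needed: its purpose is to guarantee that the \emph{convention} of calling $\ell_P, b_P$ lower and $t_P, r_P$ upper agrees with the \emph{structural} notion of upper/lower whenever one of these four 1-entries actually occupies an internal position of the oscillation; without this agreement the endpoint bookkeeping below would not close up.

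Finally I would assemble the conclusion in the case $x_1 = \ell_P$, $x_2 = t_P$. If $m = 2$ the statement is exactly the convention, so assume $m \ge 3$. Then $x_2 = t_P$ is internal, hence upper by the previous step, consistent with the convention. The alternation ``$x_i$ upper $\iff$ $x_{i+1}$ lower'' recorded above the statement then gives, by a trivial induction, that $x_i$ is upper precisely when $i$ is even throughout $2 \le i \le m-1$. For $i = 1$ the convention makes $x_1 = \ell_P$ lower, matching the odd parity. For $i = m$: the internal entry $x_{m-1}$ is upper iff $m-1$ is even iff $m$ is odd, and by the extremality claim the element of $\{b_P, r_P\}$ at position $m-1$ is then forced ($b_P$ if $x_{m-1}$ is lower, $r_P$ if it is upper), so $x_m = r_P$ (upper) when $m$ is even and $x_m = b_P$ (lower) when $m$ is odd, again matching the parity of $m$. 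This shows that for $x_1 = \ell_P$ the odd-indexed entries are exactly the lower ones and the even-indexed entries exactly the upper ones, and the case $x_1 = t_P$ follows from the reduction in the first step.
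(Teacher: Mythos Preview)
Your proposal is correct. The paper itself provides no proof at all: the statement is an Observation terminated immediately by \qed, relying on the alternation property (``$x_i$ is upper if and only if $x_{i+1}$ is lower'') and the convention for $\ell_P, t_P, b_P, r_P$ stated just before it. Your argument is thus a fleshed-out version of what the paper leaves implicit.

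A few remarks on where you do more work than necessary. The case $m=2$ (and likewise $m=3$) cannot occur: since $\{x_1,x_2\}=\{\ell_P,t_P\}$ and $\{x_{m-1},x_m\}=\{b_P,r_P\}$ are disjoint pairs of distinct entries, any spanning oscillation has $m\ge 4$. Also, your extremality claim is stronger than required: once you know $x_2=t_P$ is upper (immediate, as $t_P$ cannot lie below any 1-entry), alternation fixes $x_2,\dots,x_{m-1}$, and then the endpoint $x_m$ is determined by which of $b_P,r_P$ was forced into position $m-1$; you use this, but the separate extremality statements for $\ell_P$ and $r_P$ at internal positions are never actually invoked. None of this is wrong, just slightly redundant compared to the one-line justification the paper has in mind.
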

	
	It is easy to see that, if $x_1 = \ell_P$, then $x_3$, $x_4$ must be below and to the right of $x_1$. By induction, and by considering symmetric cases, we can prove:
	\begin{observation}\label{p:spanosc-rigidity}
		Let $P$ be a permutation matrix and $X = (x_1, x_2, \dots, x_m)$ be a spanning oscillation of $P$. Then $x_i$ is above and to the left of $x_j$ for each $i \in [m-2]$ and $i+2 \le j \le m$.
	\end{observation}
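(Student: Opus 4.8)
The plan is to prove the statement by induction on $j-i$, using \cref{p:spanosc-upper-lower} and the defining local structure of upper/lower 1-entries. First I would handle the base case $j = i+2$. By \cref{p:spanosc-upper-lower}, exactly one of $x_i, x_{i+2}$ is upper and the other is lower (they are two apart, hence of the same parity as $x_{i}$ and $x_{i+2}$ are... no: $i$ and $i+2$ have the same parity, so both are upper or both are lower). Hmm — so the base case needs $x_{i+1}$ as an intermediary: $x_{i+1}$ is, say, upper, meaning it is above and to the right of both $x_i$ and $x_{i+2}$, while $x_i, x_{i+2}$ are both lower. Since $G_P$ has no edge $\{x_i, x_{i+2}\}$ and $x_i$ is below-left of $x_{i+1}$ which is below-left of... wait, $x_{i+1}$ is above-right of $x_i$, so the non-edge $\{x_i,x_{i+2}\}$ together with $x_i, x_{i+2}$ both being below-left of $x_{i+1}$ forces $x_{i+2}$ to be below-\emph{right} of $x_i$ (if it were below-left of $x_i$ we would still need to rule out an edge; the point is that two incomparable-looking entries that are both SW of a common point and have no edge must be in NW–SE position, i.e.\ one is strictly to the right and strictly below the other). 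I would spell this out carefully: from the non-edge and the relative position to $x_{i+1}$, deduce $x_i \lth x_{i+2}$ and $x_i \ltv x_{i+2}$, i.e.\ $x_i$ is above and to the left of $x_{i+2}$. The symmetric case ($x_{i+1}$ lower) is identical after reflecting.

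For the inductive step, suppose the claim holds for all pairs with smaller index gap, and consider $x_i$ and $x_j$ with $j \ge i+3$. I would like to chain: $x_i$ is above and to the left of $x_{j-2}$ by induction, and I want to conclude the same for $x_j$. Here the key is that $x_{j-1}$ is adjacent to both $x_{j-2}$ and $x_j$ in $G_P$ but $x_{j-2}x_j$ is a non-edge, so $x_{j-2}$ and $x_j$ lie in NW–SE position relative to each other; combined with knowing which of them is upper and which is lower (from \cref{p:spanosc-upper-lower}, since $j-2$ and $j$ have the same parity they are the same type — say both lower — and $x_{j-1}$ upper sits NE of both), one gets that $x_j$ is below and to the \emph{right} of $x_{j-2}$, or that $x_{j-2}$ and $x_j$ are comparable in the needed direction. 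Then transitivity of $\ltv$ and $\lth$ gives $x_i \ltv x_{j-2} \lev x_j$ and $x_i \lth x_{j-2} \leh x_j$... but I actually need strict, and more importantly I need $x_j$ to be genuinely SE of $x_{j-2}$, not just weakly. The cleanest route is probably to first establish a refined local lemma: \emph{for every consecutive triple $x_{k-1}, x_k, x_{k+1}$ in a spanning oscillation, the three entries occur in one of exactly two configurations} (the ``upper V'' and the ``lower V''), and moreover $x_{k-1}$ and $x_{k+1}$ are always in strict NW–SE position with the one of smaller column being the one of smaller row, oriented according to type. Once that local fact is in hand, the global statement follows by a short induction gluing consecutive triples, using that the ``corners'' $x_i, x_{i+2}, x_{i+4}, \dots$ march monotonically down-and-right (and interleaved peaks march the same way).

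The main obstacle I anticipate is the careful bookkeeping of the two mirror-image cases (oscillation starting at $\ell_P$ versus at $t_P$, or equivalently upper-first versus lower-first) together with the boundary behaviour at $x_1$ and $x_m$, where $x_1, x_m$ are not ``between'' other entries and the local triple lemma degenerates. I would dispose of the boundary by the convention already introduced in the paper ($\ell_P, b_P$ counted as lower, $t_P, r_P$ as upper), which makes the endpoints behave like ordinary lower/upper entries for the purpose of the triple argument, and then invoke symmetry (reversal and transposition, which both preserve $G_P$ and the oscillation structure) to reduce the number of cases that need explicit verification to essentially one. The rest is routine: each configuration claim is a finite case check on the relative order of a few entries, driven entirely by the presence of the two path-edges and the absence of the one chord.
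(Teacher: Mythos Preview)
Your proposal follows the same general strategy the paper hints at --- an induction driven by the local triple structure and the non-edges of the induced path --- but there is a genuine gap in your base case. You write that ``from the non-edge and the relative position to $x_{i+1}$, deduce $x_i \lth x_{i+2}$ and $x_i \ltv x_{i+2}$.'' This deduction does not go through for a general $i$: the non-edge $\{x_i, x_{i+2}\}$ together with both entries lying southwest of $x_{i+1}$ (when $x_{i+1}$ is upper) only tells you that $x_i$ and $x_{i+2}$ are in NW--SE position \emph{relative to each other}; it does not tell you which of the two is to the northwest. An induced path in $G_P$ can be traversed in either direction, and your local triple argument is symmetric in $x_i$ and $x_{i+2}$, so nothing in the data you invoke breaks that symmetry.

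What actually fixes the orientation is the boundary condition $\{x_1, x_2\} = \{\ell_P, t_P\}$: since $x_1$ is the leftmost or the topmost 1-entry of $P$, every other entry lies to its right or below it, and combined with the non-edge $\{x_1, x_3\}$ this forces $x_1$ to be strictly NW of $x_3$. This is precisely the anchor the paper's hint uses (``if $x_1 = \ell_P$, then $x_3, x_4$ must be below and to the right of $x_1$''), and the induction then propagates the orientation forward along the path. You mention the boundary only as a bookkeeping obstacle to be absorbed by the upper/lower convention, but it is in fact the load-bearing step: the natural induction is on $i$, starting from the anchored end, not on the gap $j - i$. Once you anchor correctly, the remainder of your plan --- chaining the same-parity entries by transitivity and filling in the opposite-parity gaps via the adjacent-edge relations --- goes through without difficulty.
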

	
	This leaves us with only two possible forms of spanning oscillations for each length $m$, see \cref{fig:min-spanosc}. Observe that spanning oscillations are preserved by transposition and 180-degree rotation, in the following sense.
	
	Let $P$ be a $k \times k$ permutation matrix and $X = (i_1,j_1), (i_2,j_2), \dots, (i_\ell,j_\ell)$ a spanning oscillation of $P$. Define
	\begin{align*}
		& \trans(X) = (j_1, i_1), (j_2, i_2), \dots, (j_\ell,i_\ell), \text{ and}\\
		& \rot^2(X) = (k-i_\ell,k-j_\ell), (k-i_{\ell-1},k-j_{\ell-1}), \dots, (k-i_1, k-j_1).
	\end{align*}

	It is easy to see that $\trans(X)$ is a spanning oscillation of $\trans(P)$ and $\rot^2(X)$ is a spanning oscillation of $\rot^2(P)$.
	
	\begin{figure}
		\centering
		\begin{tikzpicture}[
				scale=0.3,
				graph/.style={densely dashed}
			]
			\footnotesize
			\begin{scope}
				\draw (1,-1) rectangle (4,-4);
				\tNamedPoint{1,-2}{left}{$x_1$}
				\tNamedPoint{3,-1}{above}{$x_2$}
				\tNamedPoint{2,-4}{below}{$x_3$}
				\tNamedPoint{4,-3}{right}{$x_4$}
				\draw[graph] (1,-2) -- (3,-1) -- (2,-4) -- (4,-3);
			\end{scope}
			\begin{scope}[shift={(7,0.5)}]
				\draw (1,-1) -- (5,-1) -- (5,-5) -- (3,-5) -- (3,-4) -- (1,-4) -- (1,-1);
				\tNamedPoint{1,-2}{left}{$x_1$}
				\tNamedPoint{3,-1}{above}{$x_2$}
				\tNamedPoint{2,-4}{below}{$x_3$}
				\tNamedPoint{5,-3}{right}{$x_4$}
				\tNamedPoint{4,-5}{below}{$x_5$}
				\draw[graph] (1,-2) -- (3,-1) -- (2,-4) -- (5,-3) -- (4,-5);
			\end{scope}
			\begin{scope}[shift={(15,1)}]
				\draw (1,-1) -- (4,-1) -- (4,-3) -- (6,-3) -- (6,-6) -- (3,-6) -- (3,-4) -- (1,-4) -- (1,-1);
				\tNamedPoint{1,-2}{left}{$x_1$}
				\tNamedPoint{3,-1}{above}{$x_2$}
				\tNamedPoint{2,-4}{below}{$x_3$}
				\tNamedPoint{5,-3}{above}{$x_4$}
				\tNamedPoint{4,-6}{below}{$x_5$}
				\tNamedPoint{6,-5}{right}{$x_6$}
				\draw[graph] (1,-2) -- (3,-1) -- (2,-4) -- (5,-3) -- (4,-6) -- (6,-5);
			\end{scope}
			\begin{scope}[shift={(24,1.5)}]
				\draw (1,-1) -- (4,-1) -- (4,-3) -- (7,-3) -- (7,-7) -- (5,-7) -- (5,-6) -- (3,-6) -- (3,-4) -- (1,-4) -- (1,-1);
				\tNamedPoint{1,-2}{left}{$x_1$}
				\tNamedPoint{3,-1}{above}{$x_2$}
				\tNamedPoint{2,-4}{below}{$x_3$}
				\tNamedPoint{5,-3}{above}{$x_4$}
				\tNamedPoint{4,-6}{below}{$x_5$}
				\tNamedPoint{7,-5}{right}{$x_6$}
				\tNamedPoint{6,-7}{below}{$x_7$}
				\draw[graph] (1,-2) -- (3,-1) -- (2,-4) -- (5,-3) -- (4,-6) -- (7,-5) -- (6,-7);
			\end{scope}
			\begin{scope}[shift={(0,-8)}]
				\begin{scope}
					\draw (1,-1) rectangle (4,-4);
					\tNamedPoint{2,-1}{above}{$x_1$}
					\tNamedPoint{1,-3}{left}{$x_2$}
					\tNamedPoint{4,-2}{right}{$x_3$}
					\tNamedPoint{3,-4}{below}{$x_4$}
					\draw[graph] (2,-1) -- (1,-3) -- (4,-2) -- (3,-4);
				\end{scope}
				\begin{scope}[shift={(7,0.5)}]
					\begin{scope}[yscale=-1, shift={(0,6)}, rotate around={90:(3,-3)}]
						\draw (1,-1) -- (5,-1) -- (5,-5) -- (3,-5) -- (3,-4) -- (1,-4) -- (1,-1);
					\end{scope}
					\tNamedPoint{2,-1}{above}{$x_1$}
					\tNamedPoint{1,-3}{left}{$x_2$}
					\tNamedPoint{4,-2}{right}{$x_3$}
					\tNamedPoint{3,-5}{below}{$x_4$}
					\tNamedPoint{5,-4}{right}{$x_5$}
					\draw[graph] (2,-1) -- (1,-3) -- (4,-2) -- (3,-5) -- (5,-4);
				\end{scope}
				\begin{scope}[shift={(15,1)}]
					\begin{scope}[yscale=-1, shift={(0,7)}, rotate around={90:(3.5,-3.5)}]
						\draw (1,-1) -- (4,-1) -- (4,-3) -- (6,-3) -- (6,-6) -- (3,-6) -- (3,-4) -- (1,-4) -- (1,-1);
					\end{scope}
					\tNamedPoint{2,-1}{above}{$x_1$}
					\tNamedPoint{1,-3}{left}{$x_2$}
					\tNamedPoint{4,-2}{right}{$x_3$}
					\tNamedPoint{3,-5}{left}{$x_4$}
					\tNamedPoint{6,-4}{right}{$x_5$}
					\tNamedPoint{5,-6}{below}{$x_6$}
					\draw[graph] (2,-1) -- (1,-3) -- (4,-2) -- (3,-5) -- (6,-4) -- (5,-6);
				\end{scope}
				\begin{scope}[shift={(24,1.5)}]
					\begin{scope}[yscale=-1, shift={(0,8)}, rotate around={90:(4,-4)}]
						\draw (1,-1) -- (4,-1) -- (4,-3) -- (7,-3) -- (7,-7) -- (5,-7) -- (5,-6) -- (3,-6) -- (3,-4) -- (1,-4) -- (1,-1);
					\end{scope}
					\tNamedPoint{2,-1}{above}{$x_1$}
					\tNamedPoint{1,-3}{left}{$x_2$}
					\tNamedPoint{4,-2}{right}{$x_3$}
					\tNamedPoint{3,-5}{left}{$x_4$}
					\tNamedPoint{6,-4}{right}{$x_5$}
					\tNamedPoint{5,-7}{below}{$x_6$}
					\tNamedPoint{7,-6}{right}{$x_7$}
					\draw[graph] (2,-1) -- (1,-3) -- (4,-2) -- (3,-5) -- (6,-4) -- (5,-7) -- (7,-6);
				\end{scope}
			\end{scope}
		\end{tikzpicture}
		\caption{The spanning oscillations of length $m$, for $m = 4, 5, 6, 7$. The dashed line segments indicate the edges of the permutation graph. The borders indicate the possible positions for other 1-entries if the spanning oscillation is tall (top row) or wide (bottom row).}\label{fig:min-spanosc}
	\end{figure}
	
	A spanning oscillation $X = (x_1, x_2, \dots, x_m)$ is \emph{tall} if the following two properties are satisfied for each $2 \le i \le m-2$ where $x_i$ is an upper 1-entry.
	\begin{enumerate}[(i)]
		\item $P$ has no 1-entry that is below $x_{i+1}$ and to the left of $x_i$.\label{prop:tall-below}
		\item $P$ has no 1-entry that is above $x_i$ and to the right of $x_{i+1}$.\label{prop:tall-above}
		\setcounter{resumeEnum}{\value{enumi}}
	\end{enumerate}

	A spanning oscillation $X$ is \emph{wide} if $\trans(X)$ is tall. We now show that we can always assume that a minimum-length spanning oscillation is tall (or wide).
	
	\begin{lemma}\label{p:min-osc-tall}
		Let $P$ be a permutation matrix and $X = (x_1, x_2, \dots, x_m)$ be a spanning oscillation of $P$ of minimum length $m$. Then $P$ has a tall spanning oscillation of length $m$ that starts with $x_1, x_2$ and ends with $x_{m-1}, x_m$.
	\end{lemma}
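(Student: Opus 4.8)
The plan is to start from a spanning oscillation of $P$ of minimum length and repeatedly \emph{straighten} it by local surgeries, each keeping it a spanning oscillation of the same length with the same first two and last two entries, until it is tall. To track progress I would work inside the finite, nonempty set $\mathcal{S}$ of all spanning oscillations of $P$ of length $m$ that begin with $x_1,x_2$ and end with $x_{m-1},x_m$, and fix an element $Y=(y_1,\dots,y_m)\in\mathcal{S}$ that is lexicographically extremal, reading the internal entries $y_3,\dots,y_{m-2}$ in index order, under the potential that prefers each lower entry to have the largest possible row index and each upper entry the smallest possible row index. This is exactly the direction in which tallness pushes things, since a tall oscillation keeps its upper entries high and its lower entries low. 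As $\rot^2$ maps spanning oscillations to spanning oscillations and, as a short check against the definition shows, preserves tallness (its two defining conditions get interchanged), we may assume $x_1=\ell_P$, so that by \Cref{p:spanosc-upper-lower} the lower entries of every member of $\mathcal{S}$ are precisely the odd-indexed ones; the case $x_1=t_P$ is symmetric.

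Now suppose $Y$ were not tall, and let $y_i$ be an upper entry ($i$ even, $2\le i\le m-2$) violating one of the two conditions of tallness, with $i$ chosen minimal. Say the first condition fails, so $P$ has a $1$-entry $z$ lying below $y_{i+1}$ and to the left of $y_i$ (the second condition is handled by the mirror surgery, and the boundary cases in which $y_{i+1}$ or $y_{i+2}$ equals $b_P$, resp.\ $y_i=t_P$, do not occur since then the forbidden region is empty; in particular $y_{i+1}$ is an internal entry). From \Cref{p:spanosc-upper-lower,p:spanosc-rigidity} one gets that $z$ lies strictly below every $y_j$ with $j\le i+2$ — so adjacency of $z$ to such a $y_j$ in $G_P$ is governed entirely by columns — that $z$ is adjacent to both $y_i$ and $y_{i+2}$, and that $z$ differs from every $y_j$. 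In the \emph{generic} case, in which moreover $z$ is to the right of $y_{i-2}$ and above $y_{i+4}$, these facts imply that $z$ has no other neighbours on the path, so replacing $y_{i+1}$ by $z$ yields another induced path, still an element of $\mathcal{S}$; the swap changes the potential only in coordinate $i+1$, and there it moves a lower entry downward, contradicting the extremality of $Y$.

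The main obstacle is the \emph{non-generic} case, in which the naive swap creates a chord: if $z$ is to the left of $y_{i-2}$ then $z$ is adjacent to $y_{i-2}$, and if $z$ is below $y_{i+4}$ then $z$ is adjacent to $y_{i+4}$. In either event $z$ is adjacent to two entries of $Y$ whose indices differ by at least $3$ (namely $y_{i-2},y_{i+2}$ or $y_i,y_{i+4}$); taking $a$ and $b$ to be the least and greatest indices with $z$ adjacent to $y_a$ and to $y_b$, the sequence $(y_1,\dots,y_a,z,y_b,\dots,y_m)$ is then an induced path of length $m-(b-a)+2<m$, and since $z$ is adjacent neither to $\ell_P$ nor to $b_P$ one has $a\ge 2$ and $b\neq m-1$, making it a spanning oscillation and contradicting the minimality of $m$ (the single remaining nuisance, $b=m$ with $z$ adjacent to $r_P$ and $m$ even, is absorbed by replacing the tail $y_b,\dots,y_m$ by $r_P,b_P$). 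Hence $Y$ is tall, and by construction it has the prescribed first two and last two entries. I expect the bulk of the write-up to be precisely this endpoint bookkeeping in the non-generic case, together with checking that the chosen potential makes the generic surgery monotone; the underlying idea — an extremal minimum-length oscillation plus a short-cut forced by any chord — is robust.
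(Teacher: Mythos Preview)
Your argument is essentially the paper's, repackaged: both exploit that a tallness-violating entry $z$ (below $y_{i+1}$, left of the upper entry $y_i$) is adjacent in $G_P$ to $y_i$ and $y_{i+2}$, so either swapping it for $y_{i+1}$ yields another length-$m$ spanning oscillation, or $z$ has a farther neighbour on the path and one obtains a strictly shorter spanning oscillation, contradicting minimality of $m$. The paper avoids your generic/non-generic split by defining $j$ minimal with $x_j$ to the right of $y$ and $k$ maximal with $x_k$ above $y$, forming $(x_1, \dots, x_j, y, x_k, \dots, x_m)$ in one stroke, and letting minimality force $j = i$, $k = i+2$; progress is tracked by the total number of tallness violations (with $y$ chosen bottommost so the violation at index $i$ is repaired), which is simpler than your lexicographic potential and makes your choice of minimal $i$ unnecessary. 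Your endpoint bookkeeping for the $b = m$ nuisance is in fact more careful than the paper's brief justification of $k \le m-1$. One small slip: your reduction to $x_1 = \ell_P$ via $\rot^2$ fails when $x_1 = t_P$ and $m$ is even, since then $x_m = b_P$ and the rotated-and-reversed oscillation again starts with $t$; just handle $x_1 = t_P$ by the evident symmetry between conditions (i) and (ii) instead, as the paper implicitly does by never fixing $x_1$.
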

	\begin{proof}
		Suppose $X$ is not tall, so it violates (\ref{prop:tall-below}) or (\ref{prop:tall-above}) at some index $i$ with $2 \le i \le m-2$. We now show how to construct a spanning oscillation $X'$ of length $m$ that starts with $x_1, x_2$, ends with $x_{m-1}, x_m$, and violates (\ref{prop:tall-below}) or (\ref{prop:tall-above}) less often than $X$. Repeating this, we eventually obtain a tall spanning oscillation.
		
		Suppose first that $X$ violates (\ref{prop:tall-below}) at index $i$. Then $x_i$ is an upper 1-entry, and there is a~${y \! \in \! E(P)}$ such that $y$ is below $x_{i+1}$ and to the left of $x_i$. Assume $y$ is the bottommost such 1-entry. Note~${y \notin \{\ell_P, b_P\}}$, and that $x_{i+2}$ is above $x_{i+1}$ by \cref{p:spanosc-upper-lower}.
		
		Let $j$ be minimal such that $x_j$ is to the right of $y$. Since $\ell_P \lth y \lth x_i$, we have $2 \le j \le i$. Let $k$ be maximal such that $x_k$ is above $y$. Since $x_{i+2} \ltv y \ltv b_P$, we have $i+2 \le k \le m-1$.
		
		By \cref{p:spanosc-rigidity}, $x_j$ is above and to the left of $x_k$, meaning that both $x_j$ and $x_k$ are above and to the right of $y$. Thus, the sequence $X' = (x_1, x_2, \dots, x_j, y, x_k, x_{k+1}, \dots, x_m)$ is a path. We now show that $X'$ is an \emph{induced} path. Let $j' < j$. By definition of $j$, we know that $x_{j'}$ is to the left of $y$. By \cref{p:spanosc-rigidity}, $x_{j'}$ is above $x_{i+1}$, implying that $x_{j'}$ is above $y$. Thus, $G_P$ has no edge between $x_{j'}$ and $y$. Similarly, we can prove that there is no edge between $y$ and $x_{k'}$ for each $k' > k$.
		
		Since $2 \le j$ and $k \le m-1$, we know that $X'$ starts with $x_1, x_2$ and ends with $x_{m-1}, x_m$, implying that $X'$ is a spanning oscillation.
		
		By assumption, $P$ has no spanning oscillation shorter than $P$, so $X$ must have length $m$, implying that $j = i$ and $k = i+2$.  Further, $X'$ does not violate (\ref{prop:tall-below}) at index $i$, since, by choice of $y$, there are no 1-entries below $y$ and to the left of $x_j = x_i$. Thus, $X'$ has strictly less overall violations of (\ref{prop:tall-below}) or (\ref{prop:tall-above}) than $X$.
		
		The second case, where $X$ violates (\ref{prop:tall-above}), can be proven symmetrically.
	\end{proof}
	
	Clearly, the statement of \cref{p:min-osc-tall} is also true when replacing ``tall'' with ``wide'', using the same proof on $\trans(P)$.
	
	\subsection{Structure of the main proof}\label{sec:structure}
	
	We divide the proof of \cref{p:main} into three cases, proven in \cref{sec:4-trav,sec:surround,sec:complicated}. In \cref{sec:4-trav}, we handle the special case of length-4 spanning oscillations:
	\begin{lemma}\label{p:4-osc}
		Each permutation matrix with a spanning oscillation of length 4 has a vertical witness.
	\end{lemma}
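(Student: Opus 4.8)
The plan is to build the witness explicitly, after first normalising the oscillation. Every spanning oscillation has length at least $4$, so a length-$4$ one has minimum length and \cref{p:min-osc-tall} lets us assume $X=(x_1,x_2,x_3,x_4)$ is tall. Writing $P$ as a $k\times k$ matrix with $\ell_P=(a,1)$, $t_P=(1,c)$, $b_P=(k,d)$ and $r_P=(e,k)$, a short computation from \cref{p:spanosc-upper-lower,p:spanosc-rigidity} (together with the fact that $\{x_2,x_3\}$ is an edge of $G_P$) leaves exactly two possibilities: either $x_1=\ell_P$, with $1<a<e<k$ and $1<d<c<k$, or $x_1=t_P$, with the mirrored inequalities $1<e<a<k$ and $1<c<d<k$; in both cases every remaining $1$-entry of $P$ lies in $[2,k-1]\times[2,k-1]$. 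The map $\rev$ interchanges these two configurations and maps the length-$4$ spanning oscillation of one onto one of the other, and by \cref{p:transform_effects} it carries vertical witnesses to vertical witnesses; hence it suffices to construct a vertical witness in the case $x_1=\ell_P$.

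In that case I would construct $M$ around a single empty ``middle'' row $r_0$, with the other $1$-entries split into a part $U$ above $r_0$ and a part $L$ below it, using as building blocks the three patterns obtained from $P$ by deleting a single one of $b_P$, $\ell_P$, $r_P$. The crucial observation is that a copy of ``$P$ with $b_P$ deleted'' placed inside $U$ and stretched horizontally, so that between the columns of the copy corresponding to columns $d-1$ and $d+1$ of $P$ there is an interval of unused columns, can be completed to an occurrence of $P$ by putting a $1$ at $(r_0,j)$ for \emph{any} $j$ in that interval: the new $1$ is below the whole copy, hence bottommost, and lands in the correct column, so it plays the role of $b_P$. I would stack a telescoping sequence of such stretched copies inside $U$ whose insertion intervals jointly cover a range $[j_L+1,\,j_R-1]$ of columns. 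The remaining columns cannot be reached this way — since $b_P$ has columns of $P$ to its left, $(r_0,j)$ for $j\le j_L$ is forced to play $\ell_P$, the only $1$-entry of $P$ that can be leftmost — so these are covered by a single stretched copy of ``$P$ with $\ell_P$ deleted'' straddling $r_0$ (its top $a-1$ rows in $U$, its bottom $k-a$ rows in $L$, all of its entries to the right of column $j_L$), placed at the far left of $M$; symmetrically, a straddling copy of ``$P$ with $r_P$ deleted'' at the far right covers the columns $j\ge j_R$. Finally, everything is arranged along one monotone, non-crossing staircase. Since $P$ has a spanning oscillation it is sum-indecomposable, so an occurrence of $P$ in a genuinely direct-sum-like arrangement must sit inside a single block; as each block has fewer $1$-entries than $P$, no block can contain $P$, so $M$ avoids $P$.

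The main obstacle is the coordinate bookkeeping needed to make these mechanisms coexist. On the one hand the stretched ``$P$ without $b_P$'' copies must be shifted relative to one another so that their insertion intervals tile $[j_L+1,\,j_R-1]$ with no gap, which forces them to overlap in columns; on the other hand such overlaps break the clean ``non-crossing implies $P$-free'' picture, so the overlaps must be controlled — for instance by keeping the copies in vertically separated bands and limiting the horizontal shift — precisely enough that no occurrence of $P$ can straddle two copies. Likewise the straddling ``without $\ell_P$'' and ``without $r_P$'' copies must be positioned so that their $U$-parts and $L$-parts do not combine, with one another or with the stretched copies, into an occurrence of $P$. Verifying that every column $j\in[n]$ is covered by exactly one of the three routes and that $M$ contains no unintended occurrence of $P$ is the technical core; the case $x_1=t_P$ then follows by applying $\rev$. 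The same construction, using only the four corner entries and treating the rest of the pattern generically, is also what I would use to obtain \cref{p:4-trav-gen}.
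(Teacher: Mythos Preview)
There is a genuine gap: you never carry out the avoidance argument, and the sum-indecomposability-plus-staircase heuristic does not apply once blocks overlap in columns, as you yourself acknowledge. But the deeper issue is that the telescoping middle is entirely superfluous --- your two straddling end pieces already suffice, and they are exactly the paper's construction. In the normalised case $x_1=\ell_P$ (so $\ell_P\ltv r_P$ and $b_P\lth t_P$), let $P_\rL$ be $P$ with its rightmost column deleted and $P_\rR$ be $P$ with its leftmost column deleted, and place a copy of $P_\rL$ immediately to the left of a copy of $P_\rR$, shifted vertically so that the row of $P_\rL$ vacated by $r_P$ and the row of $P_\rR$ vacated by $\ell_P$ both coincide with $r_0$. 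Every column of the resulting $(2k-2)$-column matrix $S(P)$ is either to the left of the right block (so a new $1$ in $r_0$ there plays $\ell_P$ and completes that block to a copy of $P$) or to the right of the left block (and the new $1$ plays $r_P$); there is no interval left over for a ``$P$ without $b_P$'' mechanism to cover.

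Avoidance of $P$ in $S(P)$ is then a two-line height count, not a staircase argument. Suppose $\phi$ embeds $P$ into $S(P)$. If $\phi(b_P)$ lies in the left block, note that this block occupies rows $1$ through $k$ of $S(P)$, one of which is the empty row $r_0$; the $k$ entries of $P$ must map to $k$ distinct nonempty rows at or above $\phi(b_P)$, hence within rows $1,\dots,k$, which is impossible. If $\phi(b_P)$ lies in the right block then so does $\phi(t_P)$ (because $b_P\lth t_P$), and the same height deficit applies. This argument uses only the four extreme entries and works verbatim for the general patterns of \cref{p:4-trav-gen}.
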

	
	In \cref{sec:surround}, we prove:
	\begin{restatable}{lemma}{restateInvTravWit}\label{p:5-t-osc}
		Each permutation matrix $P$ with a wide spanning oscillation of length $m \ge 5$ that starts with $t_P$ has a vertical witness.
	\end{restatable}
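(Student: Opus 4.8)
The goal is to produce a \emph{vertical witness} for $P$ --- a matrix $M$ that avoids $P$ and has an empty row in which placing a $1$-entry anywhere creates a copy of $P$. Fix a wide spanning oscillation $X=(x_1,\dots,x_m)$ of $P$ with $x_1=t_P$ (so $x_2=\ell_P$) and $m\ge 5$; by the ``wide'' analogue of \cref{p:min-osc-tall} I may take $m$ minimal. First I would extract the rigid geometry of $X$: by \cref{p:spanosc-upper-lower} the $x_i$ with odd $i$ are upper $1$-entries and those with even $i$ are lower; by \cref{p:spanosc-rigidity} the columns of the $x_i$ occur in the order $x_2,x_1,x_4,x_3,x_6,x_5,\dots$ and the rows in the order $x_1,x_3,x_2,x_5,x_4,\dots$; and the wideness condition, transported through $\trans$, says that for every even $i$ with $2\le i\le m-2$, $P$ has no $1$-entry both to the right of $x_{i+1}$ and above $x_i$, and none both to the left of $x_i$ and below $x_{i+1}$. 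The consequence I want is that every $1$-entry of $P$ lies in the narrow ``band'' around $X$ shown in the bottom row of \cref{fig:min-spanosc}; I would record precisely which cells of the grid cut out by the rows and columns of the $x_i$ can still carry $1$-entries, as this bookkeeping governs everything that follows.

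For the witness itself, I would single out one empty row $R$ and place around it a bounded number of overlapping ``gadgets'', each a copy of a sub-pattern of $P$ obtained by deleting one or more $1$-entries along the oscillation, some sitting above $R$ and some below, horizontally offset from one another, and arranged so that (i) each gadget's deleted positions all lie in $R$, and (ii) as $c$ ranges over all columns of $M$, the new $1$-entry at $(R,c)$ can be completed to a copy of $P$ by some gadget, the role played by $(R,c)$ depending on where $c$ falls. Condition (ii) amounts to choosing the horizontal offsets so that the column ranges of the gadgets' ``slots'' tile the columns of $M$; the band analysis above tells me how much room this needs. This nesting of gadgets around the free row $R$ is the ``surrounding'' construction of the section title, and by design $R$ is then $P$-expandable.

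What remains --- and what I expect to be the heart of the argument --- is verifying that $M$ avoids $P$, which I would do using the embedding characterisation of \cref{sec:embeddings}. Since each gadget has strictly fewer $1$-entries than $P$, an embedding of $P$ into $M$ must use entries from at least two gadgets. The horizontal offsetting then forces one used gadget to sit weakly above and to one side of the others, so the image of $P$ decomposes into a part supported inside that gadget and a part supported in the remaining ones, with the columns shared by the two parts forming one contiguous block; chasing this through, $P$ would have a block shape of type $\left(\begin{smallmatrix}A&B&\zeromat\\\zeromat&C&D\end{smallmatrix}\right)$ with $A$ in a corner and $D$ in the opposite corner. If $B$ or $C$ is empty, such a $P$ is sum- or skew-decomposable, contradicting that $P$, having a spanning oscillation, is indecomposable (\cref{p:indec-equiv}); if both $B$ and $C$ are nonempty, the two parts must realise two overlapping pieces of $P$ inside the deleted-entry gadgets simultaneously, and here the wideness of $X$ together with the normalisation $x_1=t_P$ forces the shared middle block to be too narrow to allow it. The delicate point throughout is the tension already visible in the construction: the offsets must be small enough that the slots tile $R$ (needed for expandability), yet large enough --- measured against the width of the band of non-oscillation entries --- that no copy of $P$ can reach across two gadgets (needed for avoidance), and reconciling these two demands is exactly where the structural hypotheses on $X$ get used.
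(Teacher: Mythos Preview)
Your high-level plan --- surround an empty row by a few partial copies of $P$ so that every column of the row completes one of them --- is indeed what the paper does, but your proposal stops short of the two ideas that make the argument go through, and your avoidance sketch would not survive contact with the actual construction.

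First, the construction. The paper does not use the whole oscillation to build the witness; it uses only $q=x_3$. There are exactly three partial copies: $P$ minus its leftmost column (called $R$), $P$ minus its rightmost column ($L$), and $P$ minus the column of $q$ (split into $L'$ on the far left and $R'$ on the far right), all aligned at the empty row. Crucially, this alone does \emph{not} avoid $P$ in general; the paper then applies a second step, pushing the part of $L'\cup R'$ that lies below the copy of $\ell$ down past everything in $R\cup L$. You do not identify this ``move down'' modification, and without it your tiling construction will typically contain $P$.

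Second, the avoidance argument. Your plan asserts that if an embedding $\phi$ touches two gadgets, then one gadget sits ``weakly above and to one side'' of the others and $P$ acquires a block shape $\left(\begin{smallmatrix}A&B&\zeromat\\\zeromat&C&D\end{smallmatrix}\right)$, whence a decomposability contradiction. This fails here: in the paper's construction the gadgets are horizontally separated but vertically \emph{interleaved} (apart from the pushed-down portion), so an embedding can and does thread through several of them without forcing any block shape on $P$. The actual argument first pins down $\phi(b)\in R'$ and $\phi(t)\notin L'$ by height-counting (this is where the move-down step is used), and then, for $m\ge 6$, needs a separate structural lemma (\cref{p:prop-type-2}): the pattern obtained from $P$ by deleting $t$, the columns left of $t$, and the rows above $x_3$ is still indecomposable. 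Only with this derived indecomposability can one finish, because $\phi$ maps $P_0$ into a region that itself splits into two blocks. Bare indecomposability of $P$, which is all your sketch invokes, is not enough.
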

	
	The final and most involved case is treated in \cref{sec:complicated}:
	\begin{restatable}{lemma}{restateTravWit}\label{p:even-6-l-osc}
		Each permutation matrix $P$ with a tall spanning oscillation of even length $m \ge 6$ that starts with $\ell_P$ has a vertical witness.
	\end{restatable}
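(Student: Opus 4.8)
The plan is to build an explicit vertical witness by gluing together two incomplete copies of $P$.

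\textbf{Setup.} Write $\ell,t,b,r$ for $\ell_P,t_P,b_P,r_P$, and let $X=(x_1,\dots,x_m)$ be the given tall spanning oscillation with $x_1=\ell$. Having a spanning oscillation, $P$ is indecomposable (\cref{p:indec-equiv}), so $\ell,t,b,r$ are pairwise distinct; since $m$ is even, \cref{p:spanosc-upper-lower} forces $x_2=t$, $x_{m-1}=b$, $x_m=r$, and \cref{p:spanosc-rigidity} places $\ell$ strictly above $r$. Let $k\ge m\ge 6$ be the order of $P$, let $p,q$ be the rows of $\ell,r$, and set $\delta:=q-p$; then $1\le\delta\le k-3$, because $\ell$ is not in the first row and $r$ is not in the last row. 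Tallness will be used only in the avoidance step.

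\textbf{The witness and its expandability.} Let $L$ be the copy of $P$ with the $1$-entry $r$ deleted, placed so that it occupies columns $1,\dots,k-1$ and exactly the rows of $P$ (hence $L$ skips row $q$); let $R$ be the copy of $P$ with $\ell$ deleted, placed in columns $k,\dots,2k-2$ with every row shifted down by $\delta$ (hence $R$ occupies rows $1+\delta,\dots,k+\delta$ and skips row $(\text{row of }\ell)+\delta=q$). Put $M:=L\cup R$. The two row ranges together cover $\{1,\dots,k+\delta\}$ and both omit row $q$, so $M$ has a single empty row $r^\ast:=q$. For any column $j$, adding a $1$ at $(r^\ast,j)$ creates a copy of $P$: if $j\le k-1$, then $j$ is strictly left of every column of $R$ and $r^\ast$ is precisely the row from which $\ell$ was deleted in $R$, so $\{(r^\ast,j)\}\cup E(R)$, compressed, is $P$ (as $\ell$ is the leftmost entry of $P$); if $j\ge k$, then symmetrically $\{(r^\ast,j)\}\cup E(L)$ is $P$. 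Thus $M$ is a vertical witness as soon as it avoids $P$.

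\textbf{Avoidance --- the main obstacle.} Suppose $M$ contains $P$ via a set $F$ of $k$ $1$-entries. Since $|E(L)|=|E(R)|=k-1$, $F$ meets both copies, and as every column of $L$ lies left of every column of $R$, $F$ splits at some $s\in\{1,\dots,k-1\}$: its $s$ leftmost entries lie in $L$ and play columns $1,\dots,s$ of $P$, its $k-s$ rightmost lie in $R$ and play columns $s+1,\dots,k$. Were all $L$-entries of $F$ above all $R$-entries of $F$, then $P$ would be the direct sum of these two blocks, contradicting indecomposability; hence some $R$-entry of $F$ lies above some $L$-entry of $F$. Unwinding the construction, the existence of $F$ becomes an internal statement about $P$: the columns of $P$ can be cut into a left and a right part which, when reassembled with the right part pushed down by exactly $\delta$ rows, are again order-isomorphic to $P$ with the two parts genuinely interleaved vertically. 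To rule this out I would, for the given $s$, track the two oscillation entries $u\lth v$ that are consecutive in the horizontal order of $X$ and bracket the cut; by \cref{p:spanosc-rigidity} and the identifications above, that horizontal order is exactly $\ell=x_1,\,x_3,\,x_2,\,x_5,\,x_4,\,\dots,\,x_{m-1}=b,\,x_{m-2},\,x_m=r$, so only $\fO(m)$ cases occur. In each, \cref{p:spanosc-rigidity} fixes the relative positions of $u,v$ and their oscillation-neighbours, the two conditions defining a tall oscillation forbid exactly the auxiliary $1$-entries of $P$ that could bridge the cut, and comparing the vertical distances that arise with $\delta=(\text{row of }r)-(\text{row of }\ell)$ forces a row collision or a reversal of vertical order, contradicting $F\cong P$. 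This case analysis --- over the choices of $(u,v)$ and over whether the interleaving entries lie on the oscillation --- is the technical core, and it is exactly here that tallness, the parity of $m$, and the hypothesis $x_1=\ell$ are all needed (the last two to pin down the horizontal order of $X$ and the value of $\delta$). Should two copies ever fail to enforce enough rigidity, the same scheme goes through after chaining a constant number of copies of $P$ head to tail along the oscillation and deleting a single oscillation row.
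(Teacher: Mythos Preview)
Your two-copy construction is exactly the witness the paper builds in \cref{sec:4-trav} for length-$4$ spanning oscillations, and it does \emph{not} avoid $P$ once $m\ge 6$. Here is a concrete failure. Take the $7\times 7$ permutation matrix with column-to-row map $\pi=3,5,1,2,7,4,6$; then $\ell=(3,1)$, $t=(1,3)$, $b=(7,5)$, $r=(6,7)$, and
\[
X=\big((3,1),(1,3),(5,2),(4,6),(7,5),(6,7)\big)
\]
is a tall spanning oscillation of length $6$ starting with $\ell$ (the single extra entry $(2,4)$ lies in none of the regions forbidden by tallness). You have $\delta=6-3=3$, so $L$ has entries $(3,1),(5,2),(1,3),(2,4),(7,5),(4,6)$ and $R$ has entries $(8,7),(4,8),(5,9),(10,10),(7,11),(9,12)$. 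Now the seven $1$-entries
\[
(3,1),\ (5,2),\ (1,3),\ (2,4),\ (8,7),\ (4,8),\ (7,11)
\]
of $M=L\cup R$ have increasing columns $1,2,3,4,7,8,11$ and row sequence $3,5,1,2,8,4,7$, whose relative order is precisely $3,5,1,2,7,4,6=\pi$. Hence $M$ contains $P$ and is not a witness. The case analysis you defer as ``the technical core'' therefore cannot be completed, and your fallback (``chaining a constant number of copies'') is not a construction.

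The paper's route is substantially different. It first relaxes tall spanning oscillations to \emph{traversals} and passes to a \emph{maximally tall} one, obtaining a non-extendability property (\cref{p:max-tall}) that is indispensable in the avoidance proof. The witness $S(P,X)$ is then assembled from $2(m-2)$ column-blocks $L_s,R_s$ (partial copies of $P$ split at each $x_s$), with an additional vertical shearing that staggers the top and bottom halves into a staircase (see \cref{fig:big-constr-sketch}). The avoidance argument (\cref{p:big-S-avoids}) is a lengthy case analysis on the location of $\phi(t)$ and $\phi(b)$ among these blocks, repeatedly exploiting tallness, indecomposability, and ultimately non-extendability (in \cref{p:not-L_s+1}). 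None of this structure is present in a two-block gluing; the paper even notes that its $m=4$ construction is the degenerate case of $S(P,X)$, which is why the latter is needed here.
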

	
	It is not immediately obvious that \cref{p:4-osc,p:5-t-osc,p:even-6-l-osc} cover all indecomposable permutation matrices. We now show that this is the case.
	\begin{corollary}\label{p:main-wit}
		Every indecomposable permutation matrix has a vertical witness.
	\end{corollary}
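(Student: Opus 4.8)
The plan is to feed every indecomposable permutation matrix into one of \cref{p:4-osc,p:5-t-osc,p:even-6-l-osc}, after normalizing a shortest spanning oscillation and exploiting how vertical witnesses behave under the symmetries of \cref{p:transform_effects}. First I would handle the $1 \times 1$ permutation matrix on its own: the $1 \times 1$ all-zero matrix avoids it and its single empty row is expandable, so it is a vertical witness. For any other indecomposable permutation matrix $P$, \cref{p:indec-equiv} gives that $P$ or $\rev(P)$ has a spanning oscillation; since $\rev(W)$ is a vertical witness of $\rev(P)$ whenever $W$ is one of $P$ (\cref{p:transform_effects}), we may assume $P$ itself has a spanning oscillation, and we fix one, $X = (x_1, \dots, x_m)$, of minimum length $m$. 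Because $\ell_P, t_P, b_P, r_P$ are pairwise distinct while $\{x_1, x_2\} = \{\ell_P, t_P\}$ and $\{x_{m-1}, x_m\} = \{b_P, r_P\}$, we must have $m \ge 4$, and the case $m = 4$ is precisely \cref{p:4-osc}.

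Now suppose $m \ge 5$, so $x_1$ is $\ell_P$ or $t_P$. If $x_1 = t_P$, I would apply the wide version of \cref{p:min-osc-tall} to replace $X$ by a \emph{wide} spanning oscillation of length $m$ that still begins with $t_P$, and then invoke \cref{p:5-t-osc}. If $x_1 = \ell_P$ and $m$ is even --- hence $m \ge 6$ --- I would instead use \cref{p:min-osc-tall} to make $X$ \emph{tall} without disturbing its first two entries, and invoke \cref{p:even-6-l-osc}. In either case $P$ has a vertical witness, as desired.

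The one configuration the three lemmas do not reach directly is a shortest spanning oscillation with $x_1 = \ell_P$ and $m$ odd, and routing this case to \cref{p:5-t-osc} is the delicate step. By \cref{p:spanosc-upper-lower}, when $x_1 = \ell_P$ and $m$ is odd the last entry $x_m$ is a lower $1$-entry, so $x_m = b_P$. I would pass to $P' = \rot^2(P)$, which is again an indecomposable permutation matrix whose shortest spanning oscillations have length $m$ (rotation is a length-preserving bijection between the spanning oscillations of $P$ and of $P'$): the spanning oscillation $\rot^2(X)$ of $P'$ begins with the image of $x_m = b_P$ under the rotation, namely the topmost entry $t_{P'}$ of $P'$. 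Thus $P'$ falls into the $x_1 = t_{P'}$ case already handled, yielding a vertical witness $W'$ of $P'$. Finally, \cref{p:transform_effects} (together with the fact that $\rot$ likewise sends horizontal witnesses to vertical witnesses) shows that $\rot^2$ preserves vertical witnesses, so $\rot^2(W')$ is a vertical witness of $\rot^2(P') = P$. This exhausts all indecomposable permutation matrices. The main obstacle is essentially organizational: noticing that the trio of lemmas leaves exactly the odd-length, $\ell_P$-starting case open and that composing the $\rot$-part of \cref{p:transform_effects} with itself is what repairs it.
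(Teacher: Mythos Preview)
Your proposal is correct and follows essentially the same approach as the paper's proof: reduce to $P$ having a minimum-length spanning oscillation via \cref{p:indec-equiv} and \cref{p:transform_effects}, dispatch the cases $m=4$, $x_1=t_P$, and $x_1=\ell_P$ with $m$ even directly to \cref{p:4-osc,p:5-t-osc,p:even-6-l-osc} (using \cref{p:min-osc-tall} to obtain the required wide/tall oscillation), and handle the remaining odd-$m$, $\ell_P$-starting case by passing to $\rot^2(P)$ and invoking \cref{p:5-t-osc} there. Your write-up is in fact a bit more careful than the paper's in justifying that $\rot^2(X)$ is still of minimum length and that $\rot^2$ preserves vertical witnesses (the latter needing the symmetric counterpart of \cref{p:transform_effects}).
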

	\begin{proof}
		Let $P$ be an indecomposable permutation matrix. If $P$ is $1 \times 1$, any all-zero matrix is a witness of $P$. Otherwise, one of $P$ and $\rev(P)$ has a spanning oscillation $X$ by \cref{p:indec-equiv}. By \cref{p:transform_effects}, it suffices to find a vertical witness for either $P$ or $\rev(P)$, so without loss of generality, assume that $X$ is a spanning oscillation of $P$, and that $X$ has minimum length~$m$. If~$m = 4$, we can apply \cref{p:4-osc}. If $m \ge 5$ and $X$ starts with $t_P$, then \cref{p:min-osc-tall} implies that $P$ also has a wide spanning oscillation of size $m$ that starts with $t_P$, so we can apply \cref{p:5-t-osc}.
		
		Now assume $m \ge 5$ and $X$ starts with $\ell_P$. If $m$ is even, we can apply \cref{p:even-6-l-osc}, since by \cref{p:min-osc-tall} we can assume that $X$ is tall. Otherwise, if $m$ is odd, \cref{p:spanosc-upper-lower} implies that~$X$ ends with $b_P$. This means that the spanning oscillation $\rot^2(X)$ of $\rot^2(P)$ starts with~$t_{\rot^2(P)}$, so we can apply \cref{p:5-t-osc} to obtain a witness $W'$ of $\rot^2(P)$. \Cref{p:transform_effects} implies that $\rot^2(W')$ is a witness of $P$.
	\end{proof}

	\subsection{Embeddings}\label{sec:embeddings}
	
	In the following sections, we use an alternative definition of pattern containment based on sets of 1-entries. Let $P$ be a pattern and $M$ be a matrix. We say a function ${\phi \colon E(P) \rightarrow E(M)}$ \linebreak is an \emph{embedding} of $P$ into $M$ if for $x, y \in E(P)$ we have $x \lth y \Leftrightarrow \phi(x) \lth \phi(y)$ \linebreak and~$x \ltv y \Leftrightarrow \phi(x) \ltv \phi(y)$.
	
	Note that if we allow empty rows or columns in $P$, then $E(P)$ does not determine $P$, since appending an empty row or column to $P$ does not change $E(P)$. This means that the existence of an embedding of $P$ into $M$ does not necessarily imply that $P$ is contained in $M$. However, we only consider patterns without empty rows or columns in this paper, and in that case, equivalence holds.
	\begin{restatable}{lemma}{restateEquivContainment}\label{p:equiv-containment}
		Let $P$, $M$ be matrices, and let $P$ have no empty rows or columns. Then $P$ is contained in $M$ if and only if there is an embedding of $P$ into $M$.
	\end{restatable}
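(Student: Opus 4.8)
The plan is to show both directions by directly relating occurrences of $P$ as a submatrix to embeddings $\phi\colon E(P)\to E(M)$, using the assumption that $P$ has no empty rows or columns to recover the full row/column structure from $E(P)$ alone.

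For the forward direction, suppose $P$ is contained in $M$. By definition there is a choice of row indices $i_1<i_2<\dots<i_p$ and column indices $j_1<j_2<\dots<j_q$ of $M$ such that deleting all other rows and columns and then removing some $1$-entries yields exactly $P$; here $p,q$ are the numbers of rows and columns of $P$. This gives a bijection between the rows (columns) of $P$ and the selected rows (columns) of $M$ that is order-preserving. Now define $\phi$ on $E(P)$: a $1$-entry $x=(a,b)\in E(P)$ corresponds to the entry in row $i_a$, column $j_b$ of the selected submatrix, and since $P$ is obtained from that submatrix only by turning $1$s into $0$s, that entry of $M$ is a $1$, so $\phi(x):=(i_a,j_b)\in E(M)$. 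Because $a<a'\iff i_a<i_{a'}$ and $b<b'\iff j_b<j_{b'}$, the map $\phi$ satisfies $x\lth y\iff\phi(x)\lth\phi(y)$ and $x\ltv y\iff\phi(x)\ltv\phi(y)$, i.e.\ $\phi$ is an embedding.

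For the reverse direction, suppose $\phi\colon E(P)\to E(M)$ is an embedding. First I would check that $\phi$ is injective: if $x\neq y$ in $E(P)$, then (since distinct $1$-entries of a matrix differ in at least one coordinate) $x\lth y$, $y\lth x$, $x\ltv y$, or $y\ltv x$ holds, and the embedding condition forces the corresponding strict inequality on $\phi(x),\phi(y)$, so $\phi(x)\neq\phi(y)$. The key point is now to use that $P$ has no empty rows or columns: every row of $P$ contains some $1$-entry, so the rows of $P$ are linearly ordered by ``the row containing a given $1$-entry is above/below the row containing another,'' and this order on rows of $P$ is captured exactly by the $\ltv$-relation on representatives in $E(P)$; likewise for columns via $\lth$. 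Concretely, let the rows of $P$ be $1,\dots,p$; pick for row $a$ any $x_a\in E(P)$ in that row, and set $i_a$ to be the row of $\phi(x_a)$ in $M$. The embedding condition gives $a<a'\iff i_a<i_{a'}$, and one must check $i_a$ does not depend on the choice of $x_a$ — if $x,x'$ are in the same row of $P$, they satisfy neither $x\ltv x'$ nor $x'\ltv x$, hence the same holds for $\phi(x),\phi(x')$, so they lie in the same row of $M$. Define column indices $j_1<\dots<j_q$ analogously. Restricting $M$ to rows $i_1,\dots,i_p$ and columns $j_1,\dots,j_q$ yields a $p\times q$ matrix $M'$; for each $x=(a,b)\in E(P)$, $\phi(x)$ lies in row $i_a$ and column $j_b$ (by the row/column independence just noted applied to the row-representative $x_a$ and $x$, which share a row, and similarly for columns), so $M'$ has a $1$ in position $(a,b)$ whenever $P$ does. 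Turning the remaining $1$s of $M'$ into $0$s produces exactly $P$, so $P$ is contained in $M$.

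I expect the main obstacle to be the bookkeeping in the reverse direction: making rigorous the claim that the embedding condition, which is only stated for pairs of $1$-entries, actually determines an order-preserving correspondence on \emph{all} rows and columns of $P$ and $M$. This is precisely where the ``no empty rows or columns'' hypothesis is essential, since it guarantees every row and column of $P$ has a $1$-entry witness; the well-definedness argument (independence of the choice of representative $1$-entry in a row or column) is the technical heart, and I would state it as a short preliminary claim before assembling the submatrix $M'$.
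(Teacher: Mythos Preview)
Your proposal is correct and follows essentially the same approach as the paper's proof: both directions construct the correspondence between rows/columns of $P$ and selected rows/columns of $M$ in the obvious way, with the reverse direction relying on the observation that two $1$-entries of $P$ share a row (column) if and only if their images under $\phi$ do. The only cosmetic difference is that the paper, after naming the rows $r_1<\dots<r_q$ hit by $\phi(E(P))$, uses a short two-sided induction (squeeze) to show that the representative of the $i$-th row of $P$ lands in row $r_i$, whereas you define $i_a$ directly via a representative and verify well-definedness; these are the same argument organized differently.
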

	
	A proof of \cref{p:equiv-containment} is provided in \cref{app:proof-equiv-containment}. We now introduce some notations used in the following sections.
	
	Let $x =(i,j)$, $y = (i',j')$ be two 1-entries. The \emph{horizontal distance} between $x$ and $y$ is~$\hd( x, y) = |i-i'|$, and the \emph{vertical distance} between $x$ and $y$ is $\vd( x, y) = |j-j'|$. The \emph{width} $\width(A)$ (resp. \emph{height} $\height(A)$) of a set $A \subseteq E(M)$ is the maximum horizontal (resp. vertical) distance between two 1-entries in $A$.
	
	Let $\phi$ be an embedding of $P$ into $M$. We say a row (column) is \emph{hit} by $\phi$ if $\phi(x)$ is in that row (column) for some 1-entry $x \in E(P)$.
	We define variants of the above notions that only ``count'' rows and columns of $M$ that are hit by $\phi$. This will be useful when we have partial information about $\phi$, or when we know that certain rows/columns are empty and thus cannot be hit by $\phi$.
	Let~$\vd_\phi((i,j),(i',j'))$ be the number of rows $i''$ such that $i$ is hit by $\phi$ and $i < i'' \le i'$. Similarly, let $\hd_\phi((i,j),(i',j'))$ be the number of columns $j''$ such that $j$ is hit by $\phi$ and $j < j'' \le j'$. For~$A \subseteq E(M)$, let $\width_\phi(A) = \max_{x,y \in A} \hd_\phi(x,y)$, and $\height_\phi(A) = \max_{x,y \in A} \vd_\phi(x,y)$.
	\begin{observation}\label{p:phi_obs}
		Let $\phi$ be an embedding of $P$ into $M$, let $x, y \in E(P)$, and let $\phi(x), \phi(y) \in A \subseteq E(M)$. Then
		\begin{align*}
			& \hd(x,y) = \hd_\phi( \phi(x), \phi(y) ) \le \hd(\phi(x), \phi(y) ) \le \width(A); \text{ and}\\
			& \vd(x,y) = \vd_\phi( \phi(x), \phi(y) ) \le \vd(\phi(x), \phi(y) ) \le \height(A).\tag*{\qed}
		\end{align*}
	\end{observation}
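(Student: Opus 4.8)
The statement only involves unwinding the definitions of $\hd$, $\vd$, $\width$, $\height$ and of an embedding, so the plan is to verify each link of the two displayed chains in turn.

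I would begin with the two leftmost equalities. The point is that $z \mapsto \phi(z)$ restricts to a bijection from $\{z \in E(P) : x \lth z \leh y\}$ onto $\{z \in E(P) : \phi(x) \lth \phi(z) \leh \phi(y)\}$: that $\phi$ carries $\lth$ to $\lth$ is part of the definition of an embedding, and since any two distinct 1-entries of $P$ differ in a row or in a column, $\phi$ is injective, so together with order-preservation it carries $\leh$ to $\leh$ as well. By definition the first of these sets has size $\hd(x,y)$ and the second has size $\hd_\phi(\phi(x),\phi(y))$, which gives $\hd(x,y) = \hd_\phi(\phi(x),\phi(y))$; the vertical identity is obtained in exactly the same way, with $\ltv$, $\lev$ playing the roles of $\lth$, $\leh$.

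Next come the middle inequalities. Here the observation is that $\hd_\phi(\phi(x),\phi(y))$ counts only those 1-entries that lie in the image $\phi(E(P)) \subseteq E(M)$, strictly to the right of $\phi(x)$ and weakly to the left of $\phi(y)$; this is a subset of the 1-entries of $M$ accounted for by $\hd(\phi(x),\phi(y))$, so $\hd_\phi(\phi(x),\phi(y)) \le \hd(\phi(x),\phi(y))$, and symmetrically $\vd_\phi(\phi(x),\phi(y)) \le \vd(\phi(x),\phi(y))$. The two rightmost inequalities are then immediate: $\width(A)$ and $\height(A)$ are by definition the maxima of $\hd$ and $\vd$ over all pairs of 1-entries of $A$, and $(\phi(x),\phi(y))$ is one such pair since $\phi(x), \phi(y) \in A$.

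I do not expect any genuine difficulty, since every step is a one-line appeal to a definition. The only subtlety worth flagging is that the leftmost equality needs $\phi$ to be injective, not merely order-preserving, in order to transport the non-strict relations $\leh$ and $\lev$ across $\phi$; this is why the argument starts by recording that distinct 1-entries of $P$ have distinct images.
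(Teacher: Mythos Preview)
The paper gives no proof of this observation (it ends with a bare \qed), so there is nothing to compare your approach against. Your outline has the right shape, but there is a definitional slip worth fixing. You write that ``by definition the first of these sets has size $\hd(x,y)$'', and later that $\hd(\phi(x),\phi(y))$ ``accounts for'' a set of 1-entries of $M$. Neither is true by definition: in the paper $\hd$ is a difference of column indices, not the cardinality of any set of 1-entries. The two quantities agree only because $P$ is a permutation matrix, so each column strictly between that of $x$ and (inclusively) that of $y$ contributes exactly one $z\in E(P)$; for a general pattern the equality can fail. Likewise, the middle inequality is not a set inclusion argument: the correct reason is that the $\phi(z)$ lie in pairwise distinct columns of $M$ (since the $z$ lie in distinct columns of $P$ and $\phi$ preserves $\lth$), all within the column range from $\phi(x)$ to $\phi(y)$, so their number is at most the number of such columns, namely $\hd(\phi(x),\phi(y))$. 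Once you insert these one-line justifications (and tacitly assume $x\leh y$, so that the asymmetric definition of $\hd_\phi$ matches the absolute value in $\hd$), your argument is complete.
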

	
	\subsection{Constructing witnesses}\label{sec:construct_examples}
	
	\begin{figure}[t]
		\centering
		\newcommand{\doubleNode}[3]{\node[#2] at (#1) {}; \node[#3] at (#1) {};}
\newcommand{\capDist}{0.5}
\newcommand{\boxDist}{4}
\newcommand{\vboxDist}{2}
\newcommand{\capnode}[2]{\node[left] at (-\capDist,#2) {\emph{#1}};}
\begin{tikzpicture}[
		scale=0.28,
		circlePoint/.style={circle, inner sep=1.2pt},
		rectPoint/.style={rectangle, inner sep=1.6pt},
		diamPoint/.style={diamond, inner sep=1.1pt},
		starPoint/.style={star, star points=5, star point ratio=2, inner sep=1pt},
		colorA/.style={blue},
		colorB/.style={orange!80!red},
		colorC/.style={green!60!black},
		colorD/.style={cyan},
		point/.style={circlePoint, fill},
		expRowPoint/.style={circlePoint, draw},
		pointA/.style={circlePoint, fill, colorA},
		expRowPointA/.style={circlePoint, draw, colorA},
		pointB/.style={rectPoint, fill, colorB},
		expRowPointB/.style={rectPoint, draw, colorB},
		pointC/.style={diamPoint, fill, colorC},
		expRowPointC/.style={diamPoint, draw, colorC},
		pointD/.style={starPoint, fill, colorD},
		expRowPointD/.style={starPoint, draw, colorD},
		pointH/.style={circle, draw, red, inner sep=2pt},
		sepline/.style={dashed, gray}
	]
	\begin{scope}[shift={(0,0)}]
		\begin{scope}[shift={(0,0)}]
			\capnode{a}{2.5}
			\draw (0,0) rectangle (5,5);
			\node[point] at (1,3) {};
			\node[point] at (2,1) {};
			\node[point] at (3,4) {};
			\node[point] at (4,2) {};
		\end{scope}
		\begin{scope}[shift={(5+\boxDist,0)}]
			\capnode{b}{2.5}
			\draw (0,0) rectangle (8,5);
			
			\begin{scope}[shift={(3,0)}]
				\node[point] at (2,1) {};
				\node[point] at (3,4) {};
				\node[point] at (4,2) {};
			\end{scope}
			\foreach \i in {0,1,...,3}{
				\node[expRowPoint] at (\i+1, 3) {};
			}
		\end{scope}
		\begin{scope}[shift={(13+2*\boxDist,0)}]
			\capnode{c}{2.5}
			\draw (0,0) rectangle (8,5);
			
			\begin{scope}[shift={(0,0)}]
				\node[point] at (1,3) {};
				\node[point] at (2,1) {};
				\node[point] at (3,4) {};
			\end{scope}
			\foreach \i in {4,...,7}{
				\node[expRowPoint] at (\i, 2) {};
			}
		\end{scope}
		\begin{scope}[shift={(21+3*\boxDist,-0.5)}]
			\capnode{d}{3}
			\draw (0,0) rectangle (11,6);
			
			\begin{scope}[shift={(4,0)}]
				\node[pointA] at (2,1) {};
				\node[pointA] at (3,4) {};
				\node[pointA] at (4,2) {};
			\end{scope}
			\foreach \i in {1, ..., 5}{
				\node[expRowPointA] at (\i, 3) {};
			}
			
			\begin{scope}[shift={(2,1)}]
				\node[pointB] at (1,3) {};
				\node[pointB] at (2,1) {};
				\node[pointB] at (3,4) {};
			\end{scope}
			\foreach \i in {6,...,10}{
				\node[expRowPointB] at (\i, 3) {};
			}
		\end{scope}
	\end{scope}
	\begin{scope}[shift={(0,-8-\vboxDist)}]
		\begin{scope}[shift={(0,0)}]
			\draw (0,0) rectangle (7,7);
			\capnode{e}{3.5}
			
			\node[point] at (1,4) {};
			\node[point] at (2,6) {};
			\node[point] at (3,2) {};
			\node[point] at (4,5) {};
			\node[point] at (5,1) {};
			\node[point] at (6,3) {};
		\end{scope}
		\begin{scope}[shift={(7+\boxDist,-0.5)}]
			\draw (0,0) rectangle (11,8);
			\capnode{f}{4}
			
			\doubleNode{1,5}{point}{pointH}
			\doubleNode{2,7}{point}{pointH}
			\node [point] at (3,3) {};
			\node [point] at (4,6) {};
			\doubleNode{5,2}{point}{pointH}
			\doubleNode{6,6}{point}{pointH}
			\node [point] at (7,2) {};
			\node [point] at (8,5) {};
			\doubleNode{9,1}{point}{pointH}
			\doubleNode{10,3}{point}{pointH}
			\foreach \i in {1, ..., 10}{
				\node[expRowPoint] at (\i, 4) {};
			}
		\end{scope}
		\begin{scope}[shift={(18+2*\boxDist,-0.5)}]
			\draw (0,0) rectangle (11,8);
			\capnode{g}{4}
			
			\node [pointB] at (1,5) {};
			\node [pointB] at (2,7) {};
			\node [pointB] at (3,3) {};
			\node [pointB] at (4,6) {};
			\node [pointB] at (5,2) {};
			\foreach \i in {6, ..., 10}{
				\node[expRowPointB] at (\i, 4) {};
			}
			
			\node [pointA] (moved) at (6,7) {};
			\node [pointA] at (7,2) {};
			\node [pointA] at (8,5) {};
			\node [pointA] at (9,1) {};
			\node [pointA] at (10,3) {};
			\foreach \i in {1, ..., 5}{
				\node[expRowPointA] at (\i, 4) {};
			}
						
			\draw [-latex,red] (6,6) -- (moved);
		\end{scope}
	\end{scope}
	
	\begin{scope}[shift={(0,-15.5-2*\vboxDist)}]
		\begin{scope}[shift={(0,0)}]
			\draw (0,0) rectangle (6,6);
			\capnode{h}{3}
			
			\node[point] at (1,4) {};
			\node[point] at (2,1) {};
			\node[point] at (3,3) {}; \node[below] at (3,3) {\small $x$};
			\node[point] at (4,5) {};
			\node[point] at (5,2) {};
		\end{scope}
		
		\begin{scope}[shift={(6+\boxDist,0)}]
			\draw (0,0) rectangle (9,6);
			\capnode{i}{3}
			
			\node[point] at (1,4) {};
			\node[point] at (2,1) {};
			\foreach \i in {3, ..., 6}{
				\node[expRowPoint] at (\i, 3) {};
			}
			\node[point] at (7,5) {};
			\node[point] at (8,2) {};
		\end{scope}
		
		\begin{scope}[shift={(15+2*\boxDist,-1)}]
			\draw(0,0) rectangle (13,8);
			\capnode{j}{4}
			
			\node[pointC] at (1,5) {};
			\node[pointC] at (2,2) {};
			\foreach \i in {3, ..., 10}{
				\node[expRowPointC] at (\i, 4) {};
			}
			\node[pointC] at (11,6) {};
			\node[pointC] at (12,3) {};
			
			\node[pointB] at (7,6) {};
			\node[pointB] at (8,3) {};
			\node[pointB] at (9,5) {};
			\node[pointB] at (10,7) {};
			\foreach \i in {11,12}{
				\node[expRowPointB] at (\i, 4) {};
			}
			
			\foreach \i in {1,2}{
				\node[expRowPointA] at (\i, 4) {};
			}
			\node[pointA] at (3,1) {};
			\node[pointA] at (4,3) {};
			\node[pointA] at (5,5) {};
			\node[pointA] at (6,2) {};
		\end{scope}
	\end{scope}
	
	\begin{scope}[shift={(0,-26.5-3*\vboxDist)}]
		\begin{scope}
			\draw(0,0) rectangle (21,10);
			\capnode{k}{5}
			
			\node[pointC] at (1,4) {};
			\node[pointC] at (2,6) {};
			\node[pointC] at (3,2) {};
			\node[pointA] at (4,7) {};
			\node[pointA] at (5,3) {};
			\node[pointA] at (6,6) {};
			\node[pointA] at (7,2) {};
			\node[pointA] at (8,4) {};
			\node[pointD] at (9,7) {};
			\node[pointD] at (10,9) {};
			\node[pointC] at (11,1) {};
			\node[pointC] at (12,3) {};
			\node[pointB] at (13,6) {};
			\node[pointB] at (14,8) {};
			\node[pointB] at (15,4) {};
			\node[pointB] at (16,7) {};
			\node[pointB] at (17,3) {};
			\node[pointD] at (18,8) {};
			\node[pointD] at (19,4) {};
			\node[pointD] at (20,6) {};
			
			\foreach \i in {1,...,3}{ \node[expRowPointA] at (\i, 5) {}; }
			\foreach \i in {4,...,10}{ \node[expRowPointC] at (\i, 5) {}; }
			\foreach \i in {11,...,17}{ \node[expRowPointD] at (\i, 5) {}; }
			\foreach \i in {18,...,20}{ \node[expRowPointB] at (\i, 5) {}; }
		\end{scope}
	\end{scope}
\end{tikzpicture}
		\caption{Constructing witnesses as described in \cref{sec:construct_examples}. Empty dots indicate positions that complete the pattern, i.e., (partial) expandable rows. Colors/shapes of dots (1-entries) in \emph{d, g, j}, and \emph{k} indicate different parts of the construction. Red circles in \emph{f} indicate the occurrence of the pattern.}\label{fig:cons_ex}
	\end{figure}
	
	In this subsection, we describe intuitively how our various constructions work. Recall that a vertical witness for a matrix $P$ avoids $P$ and has a $P$-expandable row. As we will see in the following, there are many different ways of constructing a matrix with a $P$-expandable row; ensuring that it also avoids $P$ is the hard part.
	
	Suppose we have a permutation matrix $P$. Place a copy of $P$ in the middle of a large matrix, and remove the leftmost 1-entry $\ell$ of the copy (\cref{fig:cons_ex} \emph{a, b}). Then, adding a 1-entry at the position of $\ell$ or in the same row to the left of $\ell$ will complete an occurrence of $P$. Thus, we have constructed the ``left part'' of an expandable row. We can similarly use a copy of $P$ missing the rightmost 1-entry $r$ to get the ``right part'' of an expandable row (\cref{fig:cons_ex} \emph{c}). If we now place the copy without $\ell$ (call it $L$) to the right of the copy without $r$ (call it $R$), we can create a whole expandable row (\cref{fig:cons_ex} \emph{d}).
	
	It turns out that the resulting matrix avoids $P$ for a large class of patterns, in particular for patterns with a spanning oscillation of length 4. We prove this in \cref{sec:4-trav}.
	
	The smallest example where the construction does contain $P$ is shown in \cref{fig:cons_ex}~\emph{e,~f}.
	However, observe that changing the vertical position of a 1-entry preserves the expandable row as long as the vertical order within both $L$ and $R$ is maintained. Thus, we can try to vertically ``stretch'' $L$ and/or $R$ to make the matrix avoid $P$. In the given example, moving the top entry of~$L$ up one row suffices (\cref{fig:cons_ex}~\emph{g}).
	
	When stretching does not help, another option is to construct larger matrices in the following way. Consider some 1-entry $x$ of~$P$ that is not the leftmost or rightmost 1-entry. Place a copy of~$P$ into a large matrix $M$, remove $x$, and then move all 1-entries to the left of $x$ further to the left. This creates a ``middle part'' of an expandable row (\cref{fig:cons_ex}~\emph{h, i}). Call that modified matrix $P_x$. Arranging $L$, $R$, and $P_x$ as in \cref{fig:cons_ex}~\emph{j} completes an expandable row. In \cref{sec:surround} we use this construction, except that $P_x$ is additionally stretched in a certain way.
	
	More generally, we can obtain an expandable row by interleaving copies of $L$, $R$ \linebreak and $P_{x_1},\allowbreak P_{x_2}, \dots$ for several 1-entries $x_1, x_2, \dots$ of $P$. \Cref{fig:cons_ex}~\emph{k} shows another witness for the matrix in \cref{fig:cons_ex}~\emph{e} constructed in this way. Observe that each partial copy of $P$ takes care of a certain part of the expandable row. This idea forms the basis for the construction in \cref{sec:complicated}, where we use a number of different partial copies $P_{x_i}$ depending on the pattern $P$.
	
	\section{Spanning oscillations of length 4}\label{sec:4-trav}
	
	In this section, we show \cref{p:4-trav-gen}, which immediately implies \cref{p:4-osc}.
	
	\restateFourTravGen*
	
	Let $\fP$ denote the class of patterns defined in \cref{p:4-trav-gen}. Note that $\fP$ is closed under transposition. Thus, by \cref{p:vert_wit_suff}, it is sufficient to prove that each $P \in \fP$ has a vertical witness.
	
	Let $\fP'$ be the subset of patterns $P \in \fP$ where the unique leftmost 1-entry $\ell$ of $P$ is above the unique rightmost 1-entry $r$ of $P$. It is easy to see that each $P \in \fP'$ has the following form, where the boxes contain arbitrarily many 1-entries:
	\begin{center}
		\begin{tikzpicture}[scale=0.5]
			\tNamedPoint{1,3}{left}{$\ell$}
			\tNamedPoint{3,4}{above}{$t$}
			\tNamedPoint{2,1}{below}{$b$}
			\tNamedPoint{4,2}{right}{$r$}
			
			\newcommand{\tMargin}{0.2}
			\newcommand{\tSmallBox}[2]{\draw (#1+\tMargin,#2+\tMargin) rectangle (#1+1-\tMargin,#2+1-\tMargin);}
			\tSmallBox{1}{1}
			\tSmallBox{1}{2}
			\tSmallBox{1}{3}
			\tSmallBox{2}{1}
			\tSmallBox{2}{2}
			\tSmallBox{2}{3}
			\tSmallBox{3}{1}
			\tSmallBox{3}{2}
			\tSmallBox{3}{3}
		\end{tikzpicture}
	\end{center}
	
	Since for each $P \in \fP \setminus \fP'$, we have $\rev(P) \in \fP'$, \cref{p:transform_effects} implies that it is sufficient to prove that each $P \in \fP'$ has a vertical witness.
	
	\begin{lemma}\label{p:4_trav_wit}
		Each $P \in \fP'$ has a vertical witness.
	\end{lemma}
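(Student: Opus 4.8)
The plan is to construct, for each $P\in\fP'$, a vertical witness $S(P)$ by gluing two truncated copies of $P$ along one shared empty row (the construction is illustrated in \cref{fig:SP}). Write $P$ as a $p\times q$ matrix (with no empty rows or columns, as required for vertical witnesses to make sense) and set $\ell=\ell_P$, $t=t_P$, $b=b_P$, $r=r_P$. From the form of $P\in\fP'$ described above, column $1$ contains only $\ell$, column $q$ only $r$, row $1$ only $t$, and row $p$ only $b$; moreover the row $i_\ell$ of $\ell$ and the row $i_r$ of $r$ each contain no other $1$-entry, $\ell$ is above $r$ (so $i_\ell<i_r$), and — since $\ell,t,b,r$ occupy in $P$ the relative arrangement $\begin{smallbulletmatrix}&&\o&\\\o&&&\\&&&\o\\&\o&&\end{smallbulletmatrix}$, whose columns from left to right carry $\ell,b,t,r$ — the column of $b$ is strictly to the left of the column of $t$. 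Put $d:=i_r-i_\ell\ge 1$.

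Now define $S(P)$. Let $P^{\rL}$ be $P$ with its last column deleted (equivalently, with the $1$-entry $r$ removed) and $P^{\rR}$ be $P$ with its first column deleted (with $\ell$ removed); both are $p\times(q-1)$ matrices, $P^{\rL}$ has an empty row $i_r$, and $P^{\rR}$ has an empty row $i_\ell$. Let $S(P)$ be the $(p+d)\times(2q-2)$ matrix obtained by placing $P^{\rL}$ in rows $1,\dots,p$ and columns $1,\dots,q-1$, placing $P^{\rR}$ in rows $d+1,\dots,d+p$ and columns $q,\dots,2q-2$, and setting all other entries to $0$; write $\rT$ and $\rB$ for the two resulting sets of $1$-entries, so $\rT$ lies in columns $\le q-1$ and $\rB$ in columns $\ge q$. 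Since $i_\ell+d=i_r$, the empty rows of the two copies both land in row $i_r$ of $S(P)$, so that row is empty. I would then show it is $P$-expandable by two symmetric checks: adding a $1$ at $(i_r,j)$ with $j\le q-1$ yields an embedding of $P$ that sends $\ell$ to the new entry and every other $1$-entry of $P$ to its copy in $\rB$ (the new entry lies to the left of all of $\rB$, and because $i_\ell+d=i_r$ its row is correctly placed relative to $\rB$), while adding a $1$ at $(i_r,j)$ with $j\ge q$ yields an embedding sending $r$ to the new entry and the rest of $P$ to $\rT$. By \cref{p:equiv-containment}, in either case a copy of $P$ is created.

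It remains to show $S(P)$ avoids $P$, which is the step I expect to need the most care and which is exactly where the inequality $i_\ell<i_r$ enters. Suppose $\phi$ is an embedding of $P$ into $S(P)$. Since $P$ has no empty row, $\phi$ meets $p$ distinct rows of $S(P)$. The block $\rT$ lies within rows $1,\dots,p$ but misses row $i_r$, so it meets only $p-1$ rows, all of them $\le p$; symmetrically $\rB$ meets only $p-1$ rows, all of them $\ge d+1$. As $t$ is the unique topmost $1$-entry of $P$, $\phi(t)$ is the topmost entry of the image; if $\phi(t)\in\rB$ then the whole image lies in rows $\ge d+1$, but only $p-1$ nonempty rows of $S(P)$ are $\ge d+1$, too few for $p$ distinct ones — so $\phi(t)\in\rT$, hence sits in a column $\le q-1$. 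Symmetrically, $b$ being the unique bottommost $1$-entry forces $\phi(b)\in\rB$, hence in a column $\ge q$. Therefore $\phi(t)\lth\phi(b)$, so $t\lth b$; but the column of $t$ is strictly to the right of the column of $b$ — a contradiction. Hence $S(P)$ avoids $P$, and with the expandability of row $i_r$ this exhibits $S(P)$ as a vertical witness for $P$. The only genuinely fiddly part is the bookkeeping of which rows and columns each of $\rT,\rB$ occupies; everything else reduces to comparisons of positions via the definition of an embedding.
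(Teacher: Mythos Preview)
Your proof is correct and follows essentially the same approach as the paper: the construction $S(P)$ is identical to the paper's, and your avoidance argument---counting that each of the two copies occupies only $p-1$ nonempty rows, forcing $\phi(t)$ into the left copy and $\phi(b)$ into the right copy and hence $t\lth b$---is a clean repackaging of the paper's $\height_\phi$ argument. The paper instead does a short case split on whether $\phi(b)\in L$ or $\phi(b)\in R$ and invokes $\height_\phi(L),\height_\phi(R)<\vd(t,b)$, but the content is the same.
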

	\begin{proof}
		Let $P \in \fP'$ be a $k_1 \times k_2$ pattern, let $\ell = (i,j)$ be the unique leftmost 1-entry in $P$, and let $r = (i',j')$ be the unique rightmost 1-entry in $P$. Note that $i < i'$.
		
		We essentially use the construction shown in \cref{fig:cons_ex} \emph{d}.
		Let $P_\rL$ and $P_\rR$ be the sub\-ma\-tri\-ces of $P$ obtained by removing the rightmost, resp. leftmost, column. Note that in $P_\rL$, the $i'$-th row is empty, and in $P_\rR$, the $i$-th row is empty. As described in \cref{sec:construct_examples}, the idea is to place a copy of $P_\rL$ to the left of $P_\rR$, so that the two empty rows coincide. More formally, obtain $L$ from~$P_\rL$ by appending $i'-i > 0$ rows (at the bottom), obtain $R$ from $P_\rR$ by prepending $i' - i > 0$ rows (at the top), and define $S(P)$ as the horizontal concatenation $(L, R)$. Note that $S(P)$ is \linebreak a $(k_1 + i'-i) \times (2k_2-2)$ matrix, and that the $i'$-th row of $S(P)$ is empty.
		In the following, we use $L$ and $R$ interchangeably with the corresponding subsets of $E(S(P))$.
		
		We claim that the $i'$-th row is $P$-expandable. Indeed, adding a 1-entry in the $i'$-th row in the first $k-1$ columns (to the left of $R$) completes an occurrence of $P$ with $R$, and adding a 1-entry in the last $k-1$ columns (to the right of $L$) completes an occurrence of $P$ with $L$.
		
		It remains to show that $S(P)$ avoids $P$. Suppose $S(P)$ contains $P$, so there is an embedding~$\phi$ of $P$ into $S(P)$. Let $t,b \in E(P)$ be the unique topmost, respectively bottommost, 1-entry in~$P$.
		
		Suppose first that $\phi(b) \in L$. Since $\height(L) = \vd(t,b) = k-1$, and the $i'$-th row of $P$ is empty, we have $\height_\phi(L) < \vd(t,b)$. This implies that $\phi(t)$ is above $L$. But $S(P)$ has no 1-entries above $L$, a contradiction.
		
		Otherwise, $\phi(b) \in R$. Since $t$ is to the right of $b$, this implies that $\phi(t) \in R$. But a similar argument as above shows that $\height_\phi(R) < \vd(t,b)$, a contradiction.
	\end{proof}
	
	\section{Spanning oscillations starting with \texorpdfstring{$t$}{t}}\label{sec:surround}
	
	In this section, we prove:
	\restateInvTravWit*
	
	In \Cref{sec:sur-wit}, we present a construction of (possible) witnesses, which we use for the case~$m = 5$ in \Cref{sec:sur-5}, and for the case $m > 5$ in \Cref{sec:sur-long}.
	
	\subsection{Witness construction}\label{sec:sur-wit}
	
	Let $P$ be an $k \times k$ permutation matrix such that $\ell = \ell_P$ is above $r = r_P$, and  \linebreak let ${q = (i_q, j_q) \in E(P)}$, such that $q$ is above $\ell$. We first construct a matrix $S'(P,q)$ with a \linebreak $P$-expandable row, in the way shown in \cref{fig:cons_ex} \emph{j}. Then, we modify $S'(P,q)$ to obtain the matrix $S(P,q)$, which retains the expandable row and will be shown to avoid $P$ if $P$ has a wide spanning oscillation $(t_P, \ell_P, x_3, x_4, \dots, x_m)$ with $m \ge 5$ and we choose $q = x_3$.
	
	Let $P_\rR$ ($P_\rL$) be the submatrix of $P$ obtained by removing the leftmost (rightmost) column. Both $P_\rR$ and $P_\rL$ have an empty row. To start the construction of $S'(P,q)$, we place a copy of~$P_\rR$ to the \emph{left} of a copy of $P_\rL$, such that the two copies do not intersect, and the empty rows are aligned. We denote the copy of $P_\rR$ in the construction with $R$ and the copy of $P_\rL$ with $L$. Note that, compared to the construction in \cref{sec:4-trav}, $L$ and $R$ switch places.
	
	Let $P_\rL'$ consist of all columns of $P$ to the left of $q$, and let $P_\rR'$ consist of all columns of $P$ to the right of $q$. To finish the construction of $S'(P,q)$, we place a copy of $P_\rL'$ to the left of $R$ and a copy of $P_\rR'$ to the right of $L$, such that the empty $i_q$-th rows of $P_\rL'$ and $P_\rR'$ are aligned with the empty row in $R$ and $L$. Denote the copies of $P_\rL'$ and $P_\rR'$ as $L'$ and $R'$ and let $P' = L' \cup R'$.
	
	Clearly, the empty row in $S'(P,q)$ is expandable: Adding a 1-entry to the left of $R$ will complete the partial occurrence $R$ of $P$, adding a 1-entry to the right of $L$ will complete $L$, and adding a 1-entry within $R$ or $L$ will complete $P'$.
	
	We modify $S'(P,q)$ to obtain $S(P,q)$ as follows.\footnote{This idea comes from Geneson's construction.~\cite{Geneson2021}} Let $B$ be the set of entries in $P' = L' \cup R'$ that are below the leftmost 1-entry in $P'$ (the copy of $\ell$ in $P'$). Move $B$ down by a fixed number of rows, such that each 1-entry in $B$ is lower than all 1-entries in $R \cup L$. Clearly, the expandable row stays expandable after this change.
	
	\Cref{fig:surround-constr} sketches the constructions. In the following sections, we denote the 1-entries in~$S(P,q)$ as follows. If $x$ is a 1-entry in $P$, then let $x^\rR$ be the copy of $x$ in $R$, let $x^\rL$ be the copy of $x$ in $L$, and let $x'$ be the copy of $x$ in $P'$. For subsets $X \subseteq E(P)$, we use $X^\rR$, $X^\rL$ and $X'$ similarly.
	
	\begin{figure}
		\centering
		\begin{tikzpicture}[scale=0.3]
			\draw (0,0) rectangle (4,4);
			\tNamedPoint{2,3}{right}{$q$}
			\tNamedPoint{0,2}{left}{$\ell$}
			\tNamedPoint{4,1}{right}{$r$}
			\node[below] at (2,0) {$P$};
		\end{tikzpicture}
		\hspace{9mm}
		\begin{tikzpicture}[scale=0.3,rotate=180]
			\draw (0,-3) rectangle (4,0);
			\draw (0,1) rectangle (4,2);
			\node[above] at (2,-3) {$L$};
			
			\draw (5,-2) rectangle (9,0);
			\draw (5,1) rectangle (9,3);
			\node[above] at (7,-2) {$R$};
			
			\draw (-3,-1) rectangle (-1,0);
			\draw (-3,1) rectangle (-1,4);
			\node[above] at (-2,-1) {$R'$};
			
			\draw (10,-1) rectangle (12,0);
			\draw (10,1) rectangle (12,4);
			\node[above] at (11,-1) {$L'$};
			\tNamedPoint{12,2}{left}{$\ell'$}
			
			\node at (4.5,8) {$S'(P,q)$};
		\end{tikzpicture}
		\hspace{9mm}
		\begin{tikzpicture}[scale=0.3,rotate=180]
			\draw (0,-3) rectangle (4,0);
			\draw (0,1) rectangle (4,2);
			\node[above] at (2,-3) {$L$};
			
			\draw (5,-2) rectangle (9,0);
			\draw (5,1) rectangle (9,3);
			\node[above] at (7,-2) {$R$};
			
			\draw (-3,-1) rectangle (-1,0);
			\draw (-3,1) rectangle (-1,2);
			\draw (-3,4) rectangle (-1,6);
			\node[above] at (-2,-1) {$R'$};
			
			\draw (10,-1) rectangle (12,0);
			\draw (10,1) rectangle (12,2);
			\draw (10,4) rectangle (12,6);
			\node[above] at (11,-1) {$L'$};
			\node[left] at (12,0.5) {$L'_1$};
			\node[left] at (12,5) {$L'_2$};
			\tNamedPoint{12,2}{left}{$\ell'$}
			
			\node at (4.5,8) {$S(P,q)$};
		\end{tikzpicture}
		\caption{A sketch of $P$ and the two witness constructions $S'(P,q)$ and $S(P,q)$.}\label{fig:surround-constr}
	\end{figure}
	
	We now show a property of $S(P,q)$ that is useful in both of the following subsections.
	
	\begin{lemma}\label{p:surr-t-b-loc}
		Let $P$ be a $k \times k$ permutation matrix and $q \in E(P)$ such that $q \ltv \ell_P \ltv r$ and~$t_P$ is to the left of $b_P$. If $\phi$ is an embedding of $P$ into $S(P,q)$, then $\phi(t_P) \notin L'$ and $\phi(b_P) \in R'$.
	\end{lemma}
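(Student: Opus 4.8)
The plan is to read off the coarse vertical layout of $S(P,q)$ and then rule out every forbidden placement with one recurring trick: each of the blocks $R$, $L$, and $L'\setminus B$ is ``one row too short'', because the (globally empty) expandable row runs through its interior. Index the rows of $S(P,q)$ so that the expandable row is row $0$, rows increasing downward, and write $i_q<i_\ell<i_r$ for the rows of $q,\ell,r$ in $P$ and $j_t$ for the column of $t$ (we may take $t,\ell,r,b$ pairwise distinct). Then the $1$-entries of $R$ sit in rows $\{j-i_\ell:j\in[k]\setminus\{i_\ell\}\}$ and those of $L$ in rows $\{j-i_r:j\in[k]\setminus\{i_r\}\}$, so the topmost and bottommost $1$-entries of $R$ are $t^{\rR}$ (row $1-i_\ell$) and $b^{\rR}$ (row $k-i_\ell$) and those of $L$ are $t^{\rL},b^{\rL}$ (rows $1-i_r,k-i_r$); the entries of $L'\setminus B$ and $R'\setminus B$ all sit in rows between $1-i_q$ and $i_\ell-i_q$ (the rows of $t'$ and $\ell'$); and every $1$-entry of $B$ lies strictly below every $1$-entry of $R\cup L\cup(L'\setminus B)\cup(R'\setminus B)$. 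Also $|B|=k-i_\ell$ (the copies of the $1$-entries of $P$ below $\ell$), and for any embedding $\phi$, since $t,b$ are topmost/bottommost, $\vd_\phi(\phi(t),\phi(b))=\#\{z\in E(P):z\ne t\}=k-1$, so the rows of $\phi(t)$ and $\phi(b)$ differ by at least $k-1$. The recurring trick is then: if $\phi(t)$ and $\phi(b)$ both lay in $R$, their rows would lie in $[1-i_\ell,k-i_\ell]$, an interval of exactly $k$ rows one of which is the empty row $0$; with the gap $\ge k-1$ this forces $\phi(t),\phi(b)$ to the two ends and every intermediate row to carry an image — but row $0$ carries none. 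So $\phi(t),\phi(b)$ are not both in $R$, and likewise not both in $L$.

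$\phi(t)\notin L'$: suppose $\phi(t)\in L'$. Since $\ell$ is the leftmost entry of $P$ and $L'$ the leftmost block, $\phi(\ell)\in L'$ and lies to the left of $\phi(t)$; as the $j_t$ entries of $P$ in columns $1,\dots,j_t$ have distinct images in columns at most the column of $\phi(t)$, which is one of $L'$'s $j_q-1$ columns, we get $j_t\le j_q-1$. Hence $t$ lies in $L'$'s columns and in a row $\le i_\ell$, so $t'\in L'\setminus B$, and $i_q\ge2$ (else $q=t$ and $j_q=j_t\le j_q-1$). Next $\phi(t)\notin L'\cap B$: otherwise $\phi(t)$, being the topmost image, lies below every $1$-entry outside $B$, forcing all $k$ images into $B$, impossible since $|B|=k-i_\ell<k$. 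And $\phi(\ell)\notin L'\cap B$: otherwise $\phi(\ell)$ together with the images of the $k-i_\ell$ entries of $P$ below $\ell$ (which lie below $\phi(\ell)$, hence in $B$) gives $k-i_\ell+1$ images in $B$, again impossible. So $\phi(t),\phi(\ell)\in L'\setminus B$, whose entries occupy rows between $1-i_q$ and $i_\ell-i_q$. But the $i_\ell-1$ entries of $P$ in rows $2,\dots,i_\ell$ have distinct images in rows strictly below $\phi(t)$ and weakly above $\phi(\ell)$, so the interval between those two rows has $\ge i_\ell-1$ rows; since it also has $\le i_\ell-1$ rows it must be exactly $\{2-i_q,\dots,i_\ell-i_q\}$ with every row occupied — but row $0$ lies in it (using $i_q\ge2$) and carries no image. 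Contradiction; hence $\phi(t)\notin L'$.

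$\phi(b)\in R'$: we have $\phi(t)\notin L'$, $t\lth b$, and the blocks occur left to right as $L',R,L,R'$. If $\phi(b)\in L'$, then $\phi(t)$, lying to the left of $\phi(b)$, is in $L'$ — impossible. If $\phi(b)\in R$, then $\phi(t)$ is to the left of $\phi(b)$ and not in $L'$, hence in $R$, contradicting the recurring trick. If $\phi(b)\in L$, then $\phi(t)$ is to the left of $\phi(b)$, not in $L'$, and not in $L$ (recurring trick), so $\phi(t)\in R$; then the row of $\phi(t)$ is $\ge1-i_\ell$ and the row of $\phi(b)$ is $\le k-i_r$, while the gap is $\ge k-1$, giving $k-i_r\ge(1-i_\ell)+(k-1)=k-i_\ell$, i.e.\ $i_\ell\ge i_r$ — contradiction. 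Hence $\phi(b)\in R'$.

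The step I expect to need the most care is pinning down the layout facts rigorously: that the expandable row really is globally empty and really does lie strictly inside the row-windows of $R$, $L$, and $L'\setminus B$ (this is where $i_q<i_\ell<i_r$ and the derived $i_q\ge2$ are used), and that the downward shift defining $B$ places it strictly below all of $R\cup L$ and therefore, automatically, below $L'\setminus B$ and $R'\setminus B$ as well — so that ``landing in $B$'' really is the extreme low position exploited above. Once this bookkeeping is in place, everything else is just counting images with $\vd_\phi$ and $\hd_\phi$ via \cref{p:phi_obs}.
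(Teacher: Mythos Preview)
Your proof is correct and follows essentially the same approach as the paper's. Both arguments first show $\phi(t),\phi(\ell)\in L'\setminus B$ (the paper calls this $L'_1$) by a size count on $B$, then derive a contradiction from the fact that the row-window of $L'_1$ loses the expandable row; and both rule out $\phi(b)\in L',R,L$ in turn. Your treatment is more explicit about the row indexing and about deriving $i_q\ge 2$ (equivalently $t\ne q$, $j_t<j_q$) before invoking the empty-row-in-the-window trick, which the paper leaves implicit. For $\phi(b)\notin L$ the paper simply counts nonempty rows above $\phi(b)$, whereas you first force $\phi(t)\in R$ and then compare the top of $R$ with the bottom of $L$; both are fine, the paper's route being slightly shorter.
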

	\begin{proof}
		We write $\ell, t, b, r$ for $\ell_P, t_P, b_P, r_P$. Let $L'_2$ denote the portion of $L'$ below $\ell'$, and \linebreak let~$L'_1 = L' \setminus L'_2$.
		
		We first show that $\phi(t) \notin L'$. Suppose $\phi(t) \in L'$. Then also $\phi(\ell) \in L'$. Since \linebreak $\height(L'_2) < \vd(\ell, b)$, and there are no nonempty rows below $L'_2$, we know that $\phi(\ell) \notin L'_2$, and therefore $\phi(t), \phi(\ell) \in L'_1$. But $\height_\phi(L'_1) \le \vd(t,\ell)-1$, a contradiction.
		
		$\phi(t) \notin L'$ already shows that $\phi(b) \notin L'$, since $b$ is to the right of $t$. It remains to show that~$\phi(b) \notin R \cup L$. First, suppose that $\phi(b) \in L$. Then there are at most $k-2$ nonempty rows above $\phi(b)$, but $\vd(t,b) = k-1$, a contradiction.
		
		Second, suppose that $\phi(b) \in R$. Then also $\phi(t) \in R$, because $t$ is to the left of $b$ \linebreak and~$\phi(t) \notin L'$. But $\height_\phi(R) \le \vd(t,b)-1$, a contradiction.
	\end{proof}
	
	\subsection{Spanning oscillations of length five}\label{sec:sur-5}
	
	\begin{lemma}\label{p:corner}
		Let $P$ be a permutation matrix and $X = (t_P, x_2, x_3, x_4, x_5)$ be a a spanning oscillation of $P$. Then $S(P,x_3)$ avoids $P$.
	\end{lemma}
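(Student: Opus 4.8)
The plan is to show that an embedding $\phi$ of $P$ into $S(P,x_3)$ cannot exist, by tracking where $\phi$ sends the four extreme $1$-entries $t = t_P$, $\ell = \ell_P$, $b = b_P$, $r = r_P$, together with $q = x_3$. Here the spanning oscillation of length $5$ starting with $t_P$ means, by \cref{p:spanosc-upper-lower,p:spanosc-rigidity}, that $x_2 = \ell$ is a lower $1$-entry, $x_3 = q$ is an upper $1$-entry, $x_4 = b$ is lower, and $x_5 = r$ is upper; moreover the oscillation is wide (or we may take it so by the remark after \cref{p:min-osc-tall}). The five points $t, \ell, q, b, r$ sit in the very rigid configuration pictured in \cref{fig:P-corner}: $t$ is topmost, $\ell$ leftmost, $q$ above $\ell$ and to the right of $t$, $b$ bottommost and to the right of $\ell$ but left of $q$, and $r$ rightmost and below $q$. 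I would first record these relative positions explicitly, since the argument is a sequence of distance-counting contradictions using \cref{p:phi_obs}.

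Next I would invoke \cref{p:surr-t-b-loc}: its hypotheses ($q \ltv \ell \ltv r$ and $t$ to the left of $b$) hold here, so any embedding satisfies $\phi(t) \notin L'$ and $\phi(b) \in R'$. The key consequence is that $\phi(b)$ lands in $R'$, hence (by the construction) in the shifted block $B \cap R'$ that was pushed far below $R \cup L$, or in the part of $R'$ above it. I would then analyze $\phi(r)$: since $r$ is to the right of $b$ and below $q$ but the copy $R'$ is the \emph{leftmost} block, $\phi(r)$ cannot stay in $R'$ — horizontally there is not enough room to the right inside $R'$, because $\width_\phi(R') \le \width(R') = \width(P'_{\mathrm{R}}) $, which counts too few $1$-entries to separate $\phi(b)$ from $\phi(r)$ (the column of $r$ lies strictly to the right of $q$'s column, while $R' = L'_{\mathrm{R}}$ only contains columns to the \emph{right of} $q$ in $P$ — wait, one must check the bookkeeping carefully). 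So $\phi(r)$ must move rightward out of $R'$, into $R$, $L$, or $L'$. I would then chase $\phi(q)$ and $\phi(t)$ similarly: $q$ lies between $\ell$ and $r$ horizontally and above $\ell$, so $\phi(q)$ is squeezed; combined with $\phi(t)\notin L'$ and the height bound $\height_\phi(L) < \vd(t,b)$ (there are too few nonempty rows in $L$ above the empty row, as in the proof of \cref{p:surr-t-b-loc}), every remaining placement of the quintuple $\phi(t),\phi(\ell),\phi(q),\phi(b),\phi(r)$ across the blocks $L', R', R, L$ violates one of the width or height inequalities of \cref{p:phi_obs}. The outcome is a contradiction in each case, so no embedding exists and $S(P,x_3)$ avoids $P$.

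The main obstacle I anticipate is the case analysis in the middle: once \cref{p:surr-t-b-loc} pins $\phi(b)$ to $R'$ and keeps $\phi(t)$ out of $L'$, there are still several ways the images of $t$, $q$, $r$ can be distributed among $L'_1, L'_2, R, L, R'$ (recall $L'$ was split into $L'_1$ above $\ell'$ and $L'_2$ below, with $L'_2$ shifted down), and for each distribution one must identify the right pair of $1$-entries whose horizontal or vertical $\phi$-distance is provably too small. The bookkeeping of which columns land in $L'$ versus $R'$ versus $L$ versus $R$ — governed by the position of $q$ inside $P$ and by the fact that $L$ is a copy of $P_{\mathrm{L}}$ while $R$ is a copy of $P_{\mathrm{R}}$ — is fiddly but entirely mechanical. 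I expect the length-$5$ rigidity (only one shape for the oscillation up to transposition) to keep the number of subcases small, so the proof should close cleanly after enumerating them.
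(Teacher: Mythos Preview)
Your setup is right up to the invocation of \cref{p:surr-t-b-loc}, but the proof then goes off the rails because you have the block layout reversed. In the construction of $S'(P,q)$ (and hence $S(P,q)$), the blocks appear from left to right as $L', R, L, R'$; the block $R'$ (the copy of $P_\rR'$, i.e., the columns of $P$ strictly to the right of $q$) is the \emph{rightmost} block, not the leftmost. Your sentence ``$\phi(r)$ must move rightward out of $R'$, into $R$, $L$, or $L'$'' is therefore impossible: there is nothing to the right of $R'$.

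Once the layout is corrected, the argument collapses to one line and no case analysis is needed. From \cref{p:surr-t-b-loc} you have $\phi(b) \in R'$. Since $r$ is to the right of $b$ and $R'$ is rightmost, necessarily $\phi(r) \in R'$ as well. Now $R'$ is a copy of $P_\rR'$, which consists of the columns of $P$ to the right of $q$, so $\width(R') = \hd(q,r)-1$. But you already observed that $b \lth q$, hence $\hd(b,r) > \hd(q,r) > \width(R')$, contradicting \cref{p:phi_obs}. That is the entire proof in the paper. The elaborate case split over placements of $\phi(t), \phi(\ell), \phi(q)$ is unnecessary; the single width inequality in $R'$ already kills every embedding.
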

	\begin{proof}
		Let $q = x_3$, and note that $x_2 = \ell_P$ and $x_4 = b_P$, so $q$ is above $\ell_P$ and to the right \linebreak of $b_P$.
		Suppose $\phi$ is an embedding of $P$ into $S(P,q)$. By \Cref{p:surr-t-b-loc}, $\phi(b_P) \in R'$. \linebreak But $\width(R') = \hd(q,r_P)-1 < \hd(b_P,r_P)$, a contradiction.
	\end{proof}

	\subsection{Longer spanning oscillations}\label{sec:sur-long}
	
	We now consider the case where $P$ has a wide spanning oscillation $(t_P, x_2, \dots, x_m)$ of length greater than five.
	We first prove a useful property of long spanning oscillations.
	
	\begin{lemma}\label{p:prop-type-2}
		Let $P$ be a permutation matrix and $X = (t_P, x_2, \dots, x_m)$ be a spanning oscillation of $P$ with $m \ge 6$. Then, removing $t = t_P$, the columns to the left of $t$, and the rows above $x_3$ (as well as all newly created rows or columns) does not make $P$ decomposable.
	\end{lemma}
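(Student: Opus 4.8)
The plan is to use the rigid structure of spanning oscillations (\cref{p:spanosc-upper-lower,p:spanosc-rigidity}) together with the observation that an oscillation certifies sum indecomposability. Write $t = t_P$, and let $P^*$ be the matrix obtained from $P$ by deleting $t$, all columns to the left of $t$, and all rows above $x_3$, then removing any empty rows/columns that result. Since $X$ starts with $t_P$, \cref{p:spanosc-upper-lower} tells us $x_2$ is a lower $1$-entry (in fact $x_2 = \ell_P$), $x_3$ is an upper $1$-entry, $x_4$ is lower, $x_5$ upper, and so on; and by \cref{p:spanosc-rigidity} the entries $x_3, x_4, \dots, x_m$ all lie to the right of $t$ and all except $t, x_2$ lie below the row of $x_3$ is not quite right — rather $x_3$ is the topmost among $x_3,\dots,x_m$ by the alternation, so none of $x_4,\dots,x_m$ is deleted, and none of $x_3,\dots,x_m$ lies in a deleted column since $x_2=\ell_P$ is the unique leftmost entry and $t$ sits strictly above it. Hence the truncated sequence $X^* = (x_3, x_4, \dots, x_m)$ survives intact in $P^*$, and it is still an induced path in $G_{P^*}$ because deleting vertices of $G_P$ and the rows/columns described only removes potential edges, never creates them; moreover $G_{P^*}$ is an induced subgraph of $G_P$ on the surviving entries, so $X^*$ remains induced.

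The key point is then that $X^*$ has length $m - 2 \ge 4 \ge 2$, so it is an oscillation in $P^*$ of length at least two, meaning $G_{P^*}$ has an edge; I would argue this already forces $P^*$ to be \emph{sum} indecomposable in the relevant sense, but we need the stronger conclusion that $P^*$ is indecomposable (not of either block form). First I would check that $x_3$ is the \emph{leftmost} $1$-entry of $P^*$: everything to the left of $t$ was deleted, and among the surviving entries, $x_3$ is leftmost because by \cref{p:spanosc-rigidity} every other surviving entry of the oscillation lies to its right, and any non-oscillation surviving entry $y$ lies to the right of $t$; if some such $y$ were to the left of $x_3$ it would be below $x_3$ (again by the rigidity picture, since $x_3$ is an upper entry at the top-left corner of the surviving block) — actually this needs the tall/wide structure, so here I would instead invoke that $X$ is wide and use property \eqref{prop:tall-below}/\eqref{prop:tall-above} transposed. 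Similarly $x_4$ (the bottommost of $X^*$ among small indices) and $r_P = $ the last entry pin down the bottom-right. The cleanest route: show that if $P^*$ were decomposable, say $P^* = \left(\begin{smallmatrix}A&\zeromat\\\zeromat&B\end{smallmatrix}\right)$ or $\left(\begin{smallmatrix}\zeromat&A\\B&\zeromat\end{smallmatrix}\right)$, then the induced path $X^*$ in $G_{P^*}$ would have to stay entirely within one block (a path cannot cross from $A$ to $B$ while staying induced — two entries in different blocks of the sum-decomposition are non-adjacent, and in the skew case any two such are adjacent, so an induced path of length $\ge 3$ forces all its vertices into one block). But $x_3$ is the leftmost and topmost entry of $P^*$ while $x_m$ is its bottommost-or-rightmost extreme, so $x_3$ and $x_m$ cannot lie in the same block of either decomposition — contradiction.

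The main obstacle I anticipate is the bookkeeping showing that $x_3$ really is an extreme ($1$-entry in the first row and first column) of the truncated matrix $P^*$, and correspondingly that $x_m$ (or $x_{m-1}$) is extreme on the opposite side; this is where the wideness hypothesis on $X$ has to be used, via the transposed versions of \eqref{prop:tall-below} and \eqref{prop:tall-above}, to rule out stray $1$-entries of $P$ that would, after truncation, sit to the left of or above $x_3$. Once those extremality facts are in hand, the block-crossing argument for induced paths finishes the proof in a line. A secondary subtlety is checking that no row/column deletion silently merges two oscillation entries into the same row or column — but this is immediate since $P$ is a permutation matrix, so distinct $1$-entries always occupy distinct rows and columns, and deleting rows/columns preserves that.
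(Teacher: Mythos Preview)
Your proposal has a genuine gap: you explicitly rely on the wideness of $X$ to establish that $x_3$ is the leftmost $1$-entry of the truncated matrix $P^*$, but \cref{p:prop-type-2} does \emph{not} assume that $X$ is wide. Worse, the claim itself is false regardless of wideness: since $X$ starts with $t_P$, the entry $x_4$ is a lower entry with $t \lth x_4 \lth x_3$, so $x_4$ survives in $P^*$ and sits strictly to the left of $x_3$. So the ``main obstacle'' you anticipate is not just bookkeeping --- it cannot be overcome, because the desired extremality simply does not hold.

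The good news is that your block-crossing idea does not actually need $x_3$ to be leftmost. What you \emph{do} have for free is that $x_3$ is the topmost entry of $P^*$ (all rows above it were deleted) and that $b_P$, which is one of $x_{m-1}, x_m$ and hence lies on $X^*$, is the bottommost entry of $P^*$. In either block decomposition of $P^*$, the topmost entry and the bottommost entry lie in different blocks, so $X^*$ would have to cross blocks. Your argument that an induced path of length $\ge 4$ (note: $\ge 3$ is not enough in the skew case) must stay in one block then gives the contradiction. With this correction the approach works.

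The paper proceeds differently: after observing that $x_3$ is topmost and to the left of $b_P$, it rules out the skew form directly, so $P_0 = \left(\begin{smallmatrix}A&\zeromat\\\zeromat&B\end{smallmatrix}\right)$ with $x_3 \in A$ and $b_P, r_P \in B$. Then, since $x_4 \lth x_3$, also $x_4 \in A$. The paper now trims once more, deleting the columns to the left of $x_4$, to obtain a matrix $P_1$ that is still sum-decomposable but in which $x_4$ \emph{is} the leftmost entry. At that point $(x_3, x_4, \dots, x_m)$ is a genuine spanning oscillation of $P_1$, and \cref{p:indec-equiv} yields the contradiction. This second trimming step is precisely what replaces your (unavailable) leftmost claim.
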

	\begin{proof}
		Suppose it does, and let $P_0$ be the resulting decomposable pattern. Since $x_3$ is the highest 1-entry in $P_0$ (slightly abusing notation), and $x_3$ is above $r = r_P$ and to the left of $b = b_P$, we know that $P_0$ has the form $\left(\begin{smallmatrix}A&\zeromat\\\zeromat&B\end{smallmatrix}\right)$, where $x_3$ lies in $A$ and $r$, $b$ lie in $B$.
		This means that $x_4$ lies in~$A$, since $t \lth x_4 \lth x_3$. Let $P_1$ be the matrix obtained from $P_0$ by further removing all columns to the right of $x_4$. Clearly, $P_1$ is decomposable, but $(x_3, x_4, \dots, x_m)$ is a spanning oscillation of~$P_1$, a contradiction.
	\end{proof}
	
	We are now ready to prove the main result of this subsection.
	
	\begin{figure}[htbp]
		\centering
		\begin{tikzpicture}[scale=0.3,rotate=180]
			\draw (0,3) rectangle (2,5);
			\draw (2,5) -- (4,5) -- (4,3);
			\draw (2,1) -- (3,1) -- (3,5);
			\draw (2,0) rectangle (4,3);
			\draw[dashed] (2,1) -- (0,1) -- (0,3);
			\node at (1,4) {\footnotesize $A$};
			\node at (2.5,4) {\footnotesize $B$};
			\node at (2.5,2) {\footnotesize $C$};
			\node at (-1,1) {\small $P_0$};
			\tNamedPoint{0,4}{right}{$r$}
			\tNamedPoint{1,5}{below}{$b$}
			\tNamedPoint{2,1}{above right}{$q$}
			\tNamedPoint{3,0}{above}{$t$}
			\tNamedPoint{4,2}{left}{$\ell$}
		\end{tikzpicture}
		\hspace{10mm}
		\begin{tikzpicture}[scale=0.3,rotate=180]
			\draw (0,-1) -- (0,0) -- (4,0) -- (4,-4) -- (2,-4) -- (2,-1) -- (0,-1);
			\draw (0,1) rectangle (4,2);
			\node[right] at (2,-2.5) {$L$};
			
			\draw (7,-2) rectangle (9,0);
			\draw (5,2) -- (5,4) -- (9,4) -- (9,1) -- (7,1) -- (7,2) -- (5,2);
			\node[right] at (7,0.5) {$R$};
			
			\draw (-3,6) rectangle (-1,8);
			\node[above] at (-2,6) {$R'$};
			
			\draw (10,-1) rectangle (12,0);
			\node[left] at (12,0) {$L'_1$};
			\draw (10,1) rectangle (12,2);
			\draw (10,5) rectangle (12,8);
			\node[left] at (12,6) {$L'_2$};
			
			\tNamedPoint{11,-1}{above}{$t'$}
			\tNamedPoint{12,2}{left}{$\ell'$}
		\end{tikzpicture}
		\caption{$P$ and $S(P,q)$ in the case of \Cref{p:mid}.}
		\label{fig:P-between-tbr}
	\end{figure}
	
	\begin{lemma}\label{p:mid}
		Let $X = (t_P, x_2, \dots, x_m)$ be a wide spanning oscillation of $P$ with $m \ge 6$. Then~$P$ has a vertical witness.
	\end{lemma}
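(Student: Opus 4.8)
The plan is to instantiate the construction of \cref{sec:sur-wit} with $q=x_3$ and prove that $S(P,x_3)$ avoids $P$; since $S(P,x_3)$ has a $P$-expandable row by construction, this makes it a vertical witness. First I would verify the hypotheses needed for that construction and for \cref{p:surr-t-b-loc}: by \cref{p:spanosc-upper-lower,p:spanosc-rigidity}, $x_3$ is above $x_2=\ell$, $\ell$ is above $r$, and $t$ is to the left of $b$ (using $m\ge 6$, so that $b,r$ are among $x_4,\dots,x_m$ and rigidity applies to them). Note that, unlike in \cref{p:corner}, here $x_3$ lies to the left of $b$, so the width comparison used there no longer suffices on its own. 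The extra ingredient I would extract from wideness is: the wideness condition at the lower 1-entry $x_2=\ell$ says that $P$ has no 1-entry above $\ell$ and to the right of $x_3$, so the part $R'_1$ of $R'$ lying weakly above $\ell'$ is empty. Hence $R'=R'_2$, and --- by the way the copies $L',R,L,R'$ are aligned and by the downward shift --- every 1-entry of $R'$ lies strictly below and strictly to the right of every 1-entry of $L'_1\cup R\cup L$.

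Now suppose for contradiction that $\phi$ is an embedding of $P$ into $S(P,x_3)$. By \cref{p:surr-t-b-loc}, $\phi(t)\notin L'$ and $\phi(b)\in R'=R'_2$; since $r$ is to the right of $b$ and $R'$ is the rightmost block, $\phi(r)\in R'_2$ as well. Also $\phi(t)\notin R'$: otherwise \cref{p:phi_obs} gives $\vd(t,b)\le\height(R'_2)$, but $R'_2$ consists of copies of 1-entries of $P$ that are strictly below $\ell$, so $\height(R'_2)<\vd(t,b)$. Hence $\phi(t)\in R\cup L$. The next step is to invoke \cref{p:prop-type-2}: let $P_0$ be the indecomposable pattern obtained from $P$ by deleting $t$, the columns to the left of $t$, the rows above $x_3$, and all resulting empty rows and columns. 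Then $x_4,b,r\in E(P_0)$, and $\phi$ restricts to an embedding of $P_0$ into $S(P,x_3)$. I claim $\phi(E(P_0))$ meets $L'_2$. Indeed, if it did not, then $\phi(E(P_0))\subseteq L'_1\cup R\cup L\cup R'_2$; split it into the part $A\subseteq L'_1\cup R\cup L$ and the part $B\subseteq R'_2$. Because every 1-entry of $R'_2$ is strictly below and to the right of every 1-entry of $L'_1\cup R\cup L$, if both $A$ and $B$ are nonempty then $\phi^{-1}(A)$ lies strictly above and to the left of $\phi^{-1}(B)$ in $P_0$, so $P_0$ is sum decomposable --- contradicting \cref{p:prop-type-2}. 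But $B\ne\emptyset$ since $\phi(b)\in R'_2$, and $A\ne\emptyset$ since $\phi(E(P_0))\subseteq R'_2=R'$ would force $\hd(x_4,r)\le\width(R')$ by \cref{p:phi_obs}, whereas $\hd(x_4,r)>\width(R')$ because $x_4$ is to the left of $x_3$ while $R'$ is a copy of the columns of $P$ strictly to the right of $x_3$. So some $z\in E(P_0)$ has $\phi(z)\in L'_2$.

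The conclusion is then immediate: $L'_2\subseteq L'$ is the leftmost block, while $\phi(t)\in R\cup L$ lies in a block to its right, so $\phi(z)\lth\phi(t)$ and hence $z\lth t$; but $z\in E(P_0)$ means $z$ survived the deletion of every column to the left of $t$, and $z\ne t$, so $z$ lies strictly to the right of $t$ --- a contradiction. Therefore no such $\phi$ exists, $S(P,x_3)$ avoids $P$, and it is the desired vertical witness. I expect the main difficulty to lie not in any single step but in pinning down the geometry of the construction precisely --- especially in establishing, from the wideness-derived fact $R'_1=\emptyset$ together with the alignment of $L',R,L,R'$ and the downward shift, that $R'$ collapses to $R'_2$ and lies strictly below and to the right of $L'_1\cup R\cup L$, which is exactly what makes the decomposability argument work.
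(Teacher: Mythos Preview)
Your proof is correct and follows essentially the same approach as the paper: both use the construction $S(P,x_3)$, invoke \cref{p:surr-t-b-loc} for $\phi(t)\notin L'$ and $\phi(b)\in R'$, extract from wideness that $R'$ lies entirely below $\ell'$ (so $R'_2=R'$ is separated from $L'_1\cup R\cup L$), and then contradict the indecomposability of $P_0$ from \cref{p:prop-type-2}. The only difference is organizational --- the paper introduces auxiliary sets $A,B,C\subseteq E(P_0)$ and shows directly that $\phi(P_0)$ avoids $L'_2$ (using $\phi(B)\subseteq R\cup L$), whereas you argue by a case split on whether $\phi(E(P_0))$ meets $L'_2$; both routes yield the same sum-decomposition contradiction.
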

	\begin{proof}
		We write $\ell, t, b, r$ for $\ell_P, t_P, b_P, r_P$ in the following.
		Let $q = x_3$, and let $P_0$ be the set of 1-entries of $P$ that are to the right of $t$ and not above $q$. By \Cref{p:prop-type-2}, $P_0$ does not correspond to a decomposable pattern. Let $A$ denote the set of 1-entries to the right of $q$. Note that $b, r \in A$, and, by wideness of $X$, all 1-entries in $A$ are below $\ell$. Let $x$ be the highest 1-entry in $A$, and let~$B$ be the set of 1-entries below $x$, to the left of $q$ and to the right of $t$. Then $B \neq \varnothing$, otherwise~$P_0$ would be decomposable. Finally, $C = P_0 \setminus (A \cup B)$ consists of the 1-entries to the right of $t$, not above $q$, and above $x$. \Cref{fig:P-between-tbr} shows a sketch of $P$ and $S(P,X)$. Note that $A' = R'$ (recall that $A'$ denotes the copy of $A$ in $P'$).
		
		Suppose $\phi$ is an embedding of $P$ into $S(P,q)$. By \cref{p:surr-t-b-loc}, $\phi(b) \in R'$ and $\phi(t) \notin L'$. Since all 1-entries in $B$ are to the right of $t$, this implies $\phi(y) \notin L'$ for each $y \in B$. Moreover, $\width(R') = \hd(q, r) - 1 < \hd(y,r)$ for each $y \in B$, so we have $\phi(B) \subseteq L \cup R$.
		
		Let $L'_2$ denote the portion of $L'$ below $\ell'$ and let $L'_1 = L' \setminus L'_2$. Note that $L'_2$ is below all 1-entries in $L \cup R$. Since all 1-entries in $C$ are above all 1-entries in $B$, and all 1-entries in $A$ are to the right of all 1-entries in $B$, we have $\phi(P_0) = \phi(A \cup B \cup C) \subseteq L'_1 \cup L \cup R \cup R'$. Since $R' = A'$, all 1-entries in $R'$ are to the right and below all 1-entries in $L'_1 \cup L \cup R$, so~$L'_1 \cup L \cup R \cup R'$ can be decomposed into the two blocks $L'_1 \cup L \cup R$ and $R'$. Further, $\phi(b) \in R'$ by \cref{p:surr-t-b-loc}, and since $\height(R') < \vd(q,b)$, we have $\phi(q) \notin R'$. This means that $P_0$ is decomposable, a contradiction.
	\end{proof}

	\section{Even-length spanning oscillations starting with \texorpdfstring{$\ell$}{l}}\label{sec:complicated}
	
	In this section, we prove:
	\restateTravWit*
	
	For our witness construction to work, we need to define a substructure that generalizes (tall) spanning oscillations of even length that start with $\ell_P$. We call that substructure a \emph{traversal}. Defining our witness construction for traversals instead of spanning oscillations allows us to make a maximality assumption that is required later in the proof.
	
	\subsection{Traversals}
	
	Let $P$ be a permutation matrix and let $m \ge 4$. A \emph{traversal} of $P$ is a sequence $X$ of distinct 1-entries $x_1, x_2, \dots, x_m$ such that
	\begin{enumerate}[(i)]
		\item $x_1 = \ell_P$, $x_2 = t_P$, $x_{m-1} = b_P$, $x_m = r_P$;\label{prop:trav-ltbr}
		\item $x_1 \lth x_3 \lth x_2 \lth x_5 \lth x_4 \lth \dots \lth x_{m-1} \lth x_{m-2} \lth x_m$;\label{prop:trav-hor}
		\item $\ell_P \ltv x_4 \ltv x_6 \ltv \dots \ltv x_m$;\label{prop:trav-upper}
		\item $x_3 \ltv x_5 \ltv \dots \ltv x_{m-3} \ltv r_P$; and\label{prop:trav-lower}
		\item $x_s$ is below $x_{s+1}$ for each odd $s \in [m-1]$.\label{prop:trav-vert}
		\setcounter{resumeEnum}{\value{enumi}}
	\end{enumerate}

	\begin{figure}[tbp]
		\centering
		\begin{tikzpicture}[scale=0.3]
			\draw (1,-1) -- (4,-1) -- (4,-3) -- (6,-3) -- (6,-4) -- (8,-4) -- (8,-8) -- (5,-8) -- (5,-6) -- (3,-6) -- (3,-5) -- (1,-5) -- (1,-1);
			\tNamedPoint{1,-2}{left}{$x_1$}
			\tNamedPoint{3,-1}{above}{$x_2$}
			\tNamedPoint{2,-5}{below}{$x_3$}
			\tNamedPoint{5,-3}{above}{$x_4$}
			\tNamedPoint{4,-6}{below}{$x_5$}
			\tNamedPoint{7,-4}{above}{$x_6$}
			\tNamedPoint{6,-8}{below}{$x_7$}
			\tNamedPoint{8,-7}{right}{$x_8$}
		\end{tikzpicture}
		\caption{A tall traversal. The solid lines indicate the boundary of possible positions for other 1-entries.}\label{fig:trav}
	\end{figure}

	Intuitively, property (\ref{prop:trav-hor}) enforces the same horizontal order on the 1-entries as an even-length spanning oscillation.
	Vertically, however, we are allowed to arrange the 1-entries more freely. There are still \emph{upper} (even) and \emph{lower} (odd) 1-entries as in \cref{p:spanosc-upper-lower} (this is implied by (\ref{prop:trav-upper}), (\ref{prop:trav-lower}), (\ref{prop:trav-vert})), and we keep the order within the upper, resp. lower, 1-entries with (\ref{prop:trav-upper}), (\ref{prop:trav-lower}). But we drop the condition that $x_i$ is above $x_j$ for each odd $i \le m-3$ and even $j \ge i+3$. This means that we are allowed to ``move'' some upper 1-entries upwards, and some lower 1-entries downwards, as long as the vertical order among upper (lower) 1-entries is kept intact. (\ref{prop:trav-upper}), (\ref{prop:trav-lower}) additionally ensure that we cannot move any 1-entries above $\ell_P$ or below $r_P$. \Cref{fig:trav} shows the shortest traversal that is not an oscillation.
	
	We say a traversal $(x_1, x_2, \dots, x_m)$ is \emph{tall} if it satisfies the following two properties for each even $2 \le i \le m-2$.
	\begin{enumerate}[(i)]
		\setcounter{enumi}{\value{resumeEnum}}
		\item $P$ has no 1-entry that is below $x_{i+1}$ and to the left of $x_i$.\label{prop:trav-tall-below}
		\item $P$ has no 1-entry that is above $x_i$ and to the right of $x_{i+1}$.\label{prop:trav-tall-above}
	\end{enumerate}
	
	\begin{observation}\label{p:osc-is-trav}
		Each tall spanning oscillation of even length that starts with $\ell$ is a tall traversal.\qed
	\end{observation}
	
	\subsection{Maximality assumption}
	Let $P$ be a permutation matrix with a tall traversal $X$. We can assume that $X$ is maximal in the sense that no tall traversal of $P$ has $X$ as a proper subsequence. We now show that such a \emph{maximally tall} traversal also cannot be extended to a larger non-tall traversal in the following sense. Call a traversal $(x_1, x_2, \dots, x_m)$ \emph{extendable} if there is an odd $s$ with $5 \le s \le m-5$, and two 1-entries $y_1, y_2$ in $P$ such that $(x_1, x_2, \dots, x_s, y_1, y_2, x_{s+1}, \dots, x_m)$ is a traversal of $P$.
	
	\begin{lemma}\label{p:max-tall}
		Let $X = (x_1, x_2, \dots, x_m)$ be a maximally tall traversal of the permutation matrix~$P$. Then $X$ is non-extendable.
	\end{lemma}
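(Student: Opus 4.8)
The plan is to prove the contrapositive: assuming the tall traversal $X=(x_1,\dots,x_m)$ is extendable, I construct a tall traversal of $P$ that has $X$ as a \emph{proper} subsequence, contradicting maximal tallness. Extendability gives an odd $s$ with $5\le s\le m-5$ and $y_1,y_2\in E(P)$ for which $X'=(x_1,\dots,x_s,y_1,y_2,x_{s+1},\dots,x_m)$ is a traversal; write it as $(z_1,\dots,z_{m+2})$, so $z_i=x_i$ for $i\le s$, $z_{s+1}=y_1$, $z_{s+2}=y_2$, $z_i=x_{i-2}$ for $i\ge s+3$. The key first observation is that $X'$ is tall at every even index except possibly at the single new one, $s+1$: for even $i\le s-1$ the pair $(z_i,z_{i+1})$ is $(x_i,x_{i+1})$, and for even $i\ge s+3$ it is $(x_{i-2},x_{i-1})$, so in both ranges conditions~(\ref{prop:trav-tall-below})--(\ref{prop:trav-tall-above}) for $X'$ are instances of those for $X$, which hold by tallness of $X$. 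Hence if $X'$ is also tall at $s+1$ we are done. Otherwise one of (\ref{prop:trav-tall-below}),(\ref{prop:trav-tall-above}) fails at $s+1$; since the image of a tall traversal under $\rot^2$ (which reverses the sequence) is a tall traversal of $\rot^2(P)$, and this correspondence swaps (\ref{prop:trav-tall-below}) with (\ref{prop:trav-tall-above}) while preserving ``extendable'' and ``maximally tall'', I may assume (\ref{prop:trav-tall-below}) fails: some $w\in E(P)$ is below $y_2$ and to the left of $y_1$.

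The repair step mirrors the proof of \cref{p:min-osc-tall}. Take $w$ bottommost among the $1$-entries below $y_2$ and left of $y_1$, and set $X''=(x_1,\dots,x_s,y_1,w,x_{s+1},\dots,x_m)$. From the traversal axioms of $X'$ one reads $x_{s-1}\lth y_2\lth y_1\lth x_{s+2}\lth x_{s+1}$, $x_s\ltv y_2\ltv x_{s+2}$, $x_{s-1}\ltv x_{s+1}$, and that $y_2$ is below $x_{s+1}$. Using tallness of $X$ at the two even indices flanking the insertion: $w$ cannot be (weakly) left of $x_{s-1}$, for then (as $x_s\ltv y_2$ forces $w$ below $x_s$, and $x_{s-1}\ltv x_{s+1}\ltv y_2$ forces $w\neq x_{s-1}$) $w$ would be below $x_s$ and strictly left of $x_{s-1}$, violating (\ref{prop:trav-tall-below}) for $X$ at $s-1$; and $w$ cannot be below $x_{s+2}$, for then $w$ would be below $x_{s+2}$ and (since $w\lth y_1\lth x_{s+1}$) left of $x_{s+1}$, violating (\ref{prop:trav-tall-below}) for $X$ at $s+1$. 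So $x_{s-1}\lth w$ and $w\ltv x_{s+2}$; with $w\lth y_1$, $w$ below $y_2$, $x_s\ltv w$, and $w$ below $x_{s+1}$, these are exactly the inequalities needed to check that $X''$ satisfies all traversal axioms (\ref{prop:trav-ltbr})--(\ref{prop:trav-vert}). Since $w$ is horizontally strictly between $x_{s-1}$ and $x_{s+2}$, which are consecutive in the horizontal order of $X$, $w$ is none of the $x_j$, so $X''$ still properly contains $X$; $X''$ is tall away from $s+1$ for the same reason as $X'$; and by the choice of $w$, $X''$ satisfies (\ref{prop:trav-tall-below}) at $s+1$. Symmetrically, if (\ref{prop:trav-tall-above}) fails at $s+1$ one replaces $y_1$ by the topmost $1$-entry above $y_1$ and right of $y_2$, restoring (\ref{prop:trav-tall-above}) at $s+1$.

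For termination, let $a$ and $b$ be the row indices of the slot-$(s+1)$ and slot-$(s+2)$ entries of the current traversal; the traversal axioms force $a<b$. A (\ref{prop:trav-tall-below})-repair replaces the slot-$(s+2)$ entry by one strictly below it and leaves slot $s+1$ alone, so it strictly increases $b$; a (\ref{prop:trav-tall-above})-repair strictly decreases $a$. Hence $b-a$ is a positive integer that strictly increases with each repair, and it is bounded by the number of rows of $P$; so alternately repairing (\ref{prop:trav-tall-below}) and (\ref{prop:trav-tall-above}) at $s+1$ must halt, necessarily at a traversal satisfying both conditions there -- a tall traversal of $P$ properly containing $X$. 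This contradicts maximal tallness, proving the lemma. The step I expect to take the most care is the one glossed above as ``these are exactly the inequalities needed to check all traversal axioms'': one must go through (\ref{prop:trav-ltbr})--(\ref{prop:trav-vert}) for the repaired sequence, including the axioms not mentioning the changed slot, and confirm the replacement entry is new; the role of the two tallness conditions of $X$ at the indices $s-1$ and $s+1$ adjacent to the insertion is precisely to supply the positional facts $x_{s-1}\lth w$ and $w\ltv x_{s+2}$ (and their mirrors), just as minimality of the oscillation was used in \cref{p:min-osc-tall}.
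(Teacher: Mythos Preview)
Your proof is correct and follows the same core idea as the paper: observe that the extended traversal $X'$ can only fail tallness at the new even index $s+1$, then repair that failure using tallness of $X$ at the adjacent indices $s-1$ and $s+1$ to keep the replacement entries in the required horizontal and vertical ranges.

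The one difference is in execution. The paper does the repair in a single step: it picks a pair $(y_1',y_2')$ with $y_2' \lth y_1'$, $y_1'$ weakly above $y_1$, $y_2'$ weakly below $y_2$, and $\vd(y_1',y_2')$ \emph{maximal}, and then shows directly that this extremal pair yields a tall traversal (any violation of (\ref{prop:trav-tall-below}) or (\ref{prop:trav-tall-above}) would produce a pair with strictly larger vertical distance). Your argument is the iterative version of the same idea: you alternately push the lower entry down and the upper entry up, with $b-a$ playing the role of the paper's $\vd(y_1',y_2')$, and your termination is exactly the observation that this quantity is bounded and strictly increasing. The paper's one-shot choice is the fixed point of your iteration, so the two arguments are really the same; the paper's version is shorter because it avoids re-verifying the traversal axioms after each individual repair and does not need the $\rot^2$ reduction.
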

	\begin{proof}
		Suppose $X$ is extendable. Then there exists an odd $s$ with $5 \le s \le m-5$ and 1-entries $y_1, y_2 \in E(P)$ such that $Y = (x_1, x_2, \dots, x_s, y_1, y_2, x_{s+1}, \dots, x_m)$ is a traversal of $P$. We show that then $P$ has a tall traversal of length $m+2$ with $X$ as a subsequence. This contradicts our assumption that $X$ is maximally tall.
		
		Note that property (\ref{prop:trav-vert}) of $X$ implies that $x_{s+1}$ is above $x_s$. Further using properties (\ref{prop:trav-hor}), (\ref{prop:trav-upper}), (\ref{prop:trav-lower}) of $Y$, it follows that the relative positions of $x_{s-1}$, $x_s$, $y_1$, $y_2$, $x_{s+1}$, and $x_{s+2}$ are fixed as shown in \cref{fig:max-tall-help}.
		
		\begin{figure}
			\centering\small
			
			\newcommand{\zshade}[4]{\draw[fill, {black!10}] (#1,#2) rectangle (#3,#4);}
			\newcommand{\zshadeb}[4]{\draw (#1,#2) -- (#3,#2);\draw (#1,#2) -- (#1,#4);}
			\tikzset{
				common-max-tall/.pic={code={
						\zshade{0}{6}{6}{8}
						\zshade{4}{4}{6}{8}
						\zshadeb{0}{6}{6}{8}
						\zshadeb{4}{4}{6}{8}
						
						\zshade{1}{3}{-1}{-1}
						\zshade{5}{1}{-1}{-1}
						\zshadeb{1}{3}{-1}{-1}
						\zshadeb{5}{1}{-1}{-1}
						
						\tNamedPoint{1,6}{below}{$x_{s-1}$}
						\tNamedPoint{5,4}{below}{$x_{s+1}$}
						\tNamedPoint{0,3}{above}{$x_s$}
						\tNamedPoint{4,1}{above}{$x_{s+2}$}
				}}
			}
			
			\begin{tikzpicture}[scale=0.4]
				\path (0,0) pic[scale=0.4] {common-max-tall};
				\tNamedPoint{3,5}{below}{$y_1$}
				\tNamedPoint{2,2}{above}{$y_2$}
			\end{tikzpicture}
			\caption{Arrangement of $x_{s-1}, x_s, y_1, y_2, x_{s+1}, x_{s+2}$ in \Cref{p:max-tall}. The shaded areas must be empty, since $X$ is tall.}\label{fig:max-tall-help}
		\end{figure}
	
		Let $y_1'$ and $y_2'$ be 1-entries in $P$ such that
		\begin{enumerate}[(a)]
			\item $y_2'$ is to the left of $y_1'$;\label{prop:max-tall:y-h}
			\item $y_1'$ is above or equal to $y_1$ and $y_2'$ is below or equal to $y_2$; and\label{prop:max-tall:y-v}
			\item $\vd(y_1', y_2')$ is maximal under the previous two conditions.\label{prop:max-tall:y-dv-max}
		\end{enumerate}
		
		Let $Y' = (x_1, x_2, \dots, x_s, y_1', y_2', x_{s+1}, \dots, x_m)$. We first show that $Y'$ is a traversal. $Y'$ clearly satisfies (\ref{prop:trav-ltbr}). Since $y_1'$ is not below $y_1$, it is above $x_{s+1}$, so tallness of $X$ implies that $y_1'$ is to the left of $x_{s+2}$. Symmetrically, $y_2'$ is to the right of $x_{s-1}$, so (\ref{prop:max-tall:y-h}) implies $x_{s-1} \lth y_2' \lth y_1' \lth x_{s+2}$, and thus $Y'$ satisfies (\ref{prop:trav-hor}).
		
		Since $y_1'$ is to the right of $x_{s-1}$, tallness of $X$ implies that $y_1'$ is below $x_{s-1}$. We already \linebreak observed that $y_1'$ is above $x_{s+1}$, so we have $x_{s-1} \ltv y_1' \ltv x_{s+1}$. Similarly, we \linebreak have ${x_s \ltv y_s' \ltv x_{s+2}}$. Together with $x_{s+1} \ltv x_s$, this implies the remaining traversal properties (\ref{prop:trav-upper}), (\ref{prop:trav-lower}), (\ref{prop:trav-vert}).
		
		It remains to show that $Y'$ is tall. Suppose $Y$ violates tallness property (\ref{prop:trav-tall-below}). Since $X$ is tall, the only way this can happen is if there is a 1-entry $z$ below $y_2'$ and to the left of $y_1'$. Then $z$ is also below $y_2$, but $\vd(y_1', z) > \vd(y_1', y_2')$, violating our assumption (\ref{prop:max-tall:y-dv-max}). A symmetric argument shows that $Y$ satisfies (\ref{prop:trav-tall-above}).
	\end{proof}

	\subsection{Construction}\label{sec:compl-constr}
	
	Fix a $k \times k$ permutation matrix $P$. Throughout this subsection, we write $\ell, b, t, r$ for $\ell_P, b_P, t_P, r_P$. For a 1-entry $x = (i,j) \in E(P)$, denote by $P^\rL_x$ the submatrix of $P$ consisting of all columns to the left of $x$ (i.e., the leftmost $j-1$ columns), and denote by $P^\rR_x$ the submatrix of $P$ consisting of all columns to the right of $x$ (i.e., the rightmost $k-j$ columns). Note that in both $P^\rL_x$ and~$P^\rR_x$, the $i$-th row is empty. Also note that the constructions in \cref{sec:4-trav,sec:surround} implicitly used $P^\rL_x, P^\rR_x$, with $x \in \{\ell, r, q\}$.
	
	We first construct a matrix with an expandable row. For this, we take $P^\rL_x$ and $P^\rR_x$ for (almost) every 1-entry $x$ in a traversal of $P$ and arrange them like in \cref{fig:cons_ex}~\emph{k}. Then, we vertically move parts of to arrive at our final construction. A formal description follows.
	
	Let $X = (x_1, x_2, \dots, x_m)$ be a traversal of $P$ with $m \ge 6$, and write $(i_s, j_s) = x_s$ \linebreak for~$s \in [m]$. Then the $(2k-1) \times (m-2)k$ matrix $S'(P, X)$ is constructed as follows.
	Let~$L_s'$ be the $(2k-1) \times (j_s-1)$ matrix consisting of a copy of $P^\rL_{x_s}$ that is shifted down by $k-i_s$ rows (i.e., we prepend $k-i_s$ rows and append $i_s-1$ rows to $P^\rL_{x_s}$). Similarly, let $R_s'$ be the $(2k-1) \times (k-j_s)$ matrix consisting of a copy of $P^\rR_{x_s}$ that is shifted down by $k-i_s$ rows.
	Note that the empty $i_s$-th row of $P^\rL_{x_s}$ ($P^\rR_{x_s}$) corresponds to the $k$-th row of $L_s'$ ($R_s'$).
	Finally, let $S'(P,X)$ be the following horizontal concatenation\footnote{See \cpageref{def:concatenation} for the definition of horizontal concatenations.} of matrices:
	\begin{align*}
		S'(P, X) = (L_3', R_1', L_4', R_3', L_5', R_4', \dots, L_{m-3}', R_{m-4}', L_{m-2}', R_{m-3}', L_m', R_{m-2}').
	\end{align*}
	
	\begin{figure}
		\centering
		\begin{tikzpicture}[
				scale=0.2,
				font=\small,
				point/.style={circle, fill, inner sep=1.2pt}
			]
			\begin{scope}[shift={(-1,6)}]
				\tNamedPoint{1,-2}{left}{$x_1$}
				\tNamedPoint{3,-1}{above}{$x_2$}
				\tNamedPoint{2,-4}{below left}{$x_3$}
				\tNamedPoint{5,-3}{above right}{$x_4$}
				\tNamedPoint{4,-6}{below}{$x_5$}
				\tNamedPoint{6,-5}{right}{$x_6$}
			\end{scope}
			\begin{scope}[shift={(18,0)}]
				\draw[dotted] (0,4) -- (24,4);
				
				\node at (-0.5,-2) {$L_3'$};
				\draw (1,-3) -- (1,8);
				\node at (4,-2) {$R_1'$};
				\draw (7,-3) -- (7,8);
				\node at (9.5,-2) {$L_4'$};
				\draw (12,-3) -- (12,8);
				\node at (15,-2) {$R_3'$};
				\draw (17,-3) -- (17,8);
				\node at (20,-2) {$L_6'$};
				\draw (23,-3) -- (23,8);
				\node at (24.5,-2) {$R_4'$};
				
				\node[point] at (24,2) {};
				\node[point] at (22,6) {};
				\node[point] at (21,3) {};
				\node[point] at (20,8) {};
				\node[point] at (19,5) {};
				\node[point] at (18,7) {};
				\node[point] at (16,3) {};
				\node[point] at (15,5) {};
				\node[point] at (14,2) {};
				\node[point] at (13,7) {};
				\node[point] at (11,1) {};
				\node[point] at (10,6) {};
				\node[point] at (9,3) {};
				\node[point] at (8,5) {};
				\node[point] at (6,1) {};
				\node[point] at (5,3) {};
				\node[point] at (4,0) {};
				\node[point] at (3,5) {};
				\node[point] at (2,2) {};
				\node[point] at (0,6) {};
			\end{scope}
		\end{tikzpicture}
		\caption{A matrix $P$ consisting of a 6-traversal $X$, and the corresponding construction~$S'(P,X)$. Some empty columns in $S'(P,X)$ have been omitted. The dotted line indicates the expandable row.}\label{fig:S'_exm}
	\end{figure}
	
	Note the irregularities at the beginning and the end. Notably, $L_2', R_2', L_{m-1}', R_{m-1}'$ are not used in the construction. $L_1'$ and $R_m'$ are not used, either, but they are empty anyway, since $x_1 = \ell$ and $x_m = r$. See \cref{fig:S'_exm} for an example.
	
	We claim that the $k$-th row of $S'(P,X)$ is expandable. Indeed, for each $i$ with $3 \le i \le m-2$, adding a 1-entry in the $k$-th row between $L_i'$ and $R_i'$ will complete a copy of $P$ with $L_i'$ and $R_i'$. Moreover, adding a 1-entry in the $k$-th row to the left of $R_1'$ or to the right of $L_m'$ will complete a copy of $P$. By construction, this covers the whole $k$-th row.
	
	As in the \cref{sec:sur-wit}, we will not directly use $S'(P,X)$, but rather a modified construction that preserves the expandable row.
	We do this to avoid that two different parts of the construction (i.e., $L_i' \cup R_i'$ for $i \in [m]$) overlap vertically. This reduces the number of ways $P$ could appear in the constructed matrix, and thus makes the analysis much easier.
	
	In the following, we will slightly abuse the notation by writing $L_s'$ ($R_s'$) for the subsets of~$E(S'(P,X))$ that correspond to $L_s'$ ($R_s'$).
	
	Let $S(P,X)$ be a $((2m-6)k+1) \times (m-2)k$ matrix, constructed as follows.
	Start with a copy of $S'(P,X)$, shifted down by $(m-4)k$ rows, such that the expandable $k$-th row of~$S'(P,X)$ corresponds to the $(m-3)k$-th row of $S(P,X)$.
	Now, for each ${s \in \{5,6,\dots, m-1, m-2, m\}}$, take all 1-entries in $L_s' \cup R_s'$ that are above the $((m-3)k-1)$-th row (i.e., at least two rows above the expandable row), and move them up by $(s-4)k$ rows.
	Similarly, for each $s \in \{1, 3, 4, \dots, m-4 \}$, take all 1-entries in $L_s' \cup R_s'$ that are below the $((m-3)k+1)$-th row (i.e., at least two rows below the expandable row), and move them down by $(m-s-3)k$ rows. \Cref{fig:big-constr-sketch} shows the rough structure of $S(P,X)$ when $m = 12$ and $X$ is tall.
	
	Let $L_s$ ($R_s$) denote the the modified set of entries in $S(P,X)$ corresponding to $L_s'$ ($R_s'$). Clearly, $L_s$ and $R_s$ still form a partial occurrence of $P$ with a single 1-entry missing between them in the $(m-3)k$-th row. Similarly, $R_1$ and $L_m$ form occurrences when adding a 1-entry in the left- or rightmost part of that row. Thus:
	\begin{lemma}\label{p:s-exp}
		If $X$ is a traversal of $P$, then $S(P,X)$ has an expandable row.
	\end{lemma}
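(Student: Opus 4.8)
The plan is to verify the two requirements for the $(m-3)k$-th row of $S(P,X)$ to be expandable: that it is empty, and that adding a single $1$-entry anywhere in it creates an occurrence of $P$. Both are inherited from the corresponding fact about the $k$-th row of $S'(P,X)$ recorded above, because the passage from $S'(P,X)$ to $S(P,X)$ only relocates $1$-entries lying at least two rows away from that row, and moves them uniformly and strictly further away from it (upper portions up, lower portions down).

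\emph{Emptiness.} After the initial downward shift by $(m-4)k$ rows, the $k$-th row of $S'(P,X)$ becomes the (still empty) $(m-3)k$-th row of the intermediate matrix. Every subsequent move takes $1$-entries that are at least two rows above (resp.\ below) this row and moves them further up (resp.\ down), so none of them can land in it; hence it is empty in $S(P,X)$.

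\emph{Expandability.} Fix a column $c$. Reading the blocks $L_3', R_1', L_4', R_3', \dots, L_{m-3}', R_{m-4}', L_{m-2}', R_{m-3}', L_m', R_{m-2}'$ from left to right, a quick check of this ordering shows that exactly one of the following holds: $c$ lies in $L_3$; $c$ lies in $R_{m-2}$; or $c$ lies strictly between $L_s$ and $R_s$ for some $s$ with $3 \le s \le m-2$. In the first case, since $x_1 = \ell_P$, the block $R_1$ is a copy of $P$ with its leftmost column removed and its empty row is the $(m-3)k$-th row, so placing a $1$-entry in the $(m-3)k$-th row at column $c$ --- which is to the left of $R_1$ --- yields a copy of $P$; the last case is analogous, with $L_m$ (a copy of $P$ minus its rightmost column, again with empty row the $(m-3)k$-th row) in place of $R_1$. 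In the middle case, $L_s$ and $R_s$ are copies of $P^\rL_{x_s}$ and $P^\rR_{x_s}$ with empty row the $(m-3)k$-th row, and placing a $1$-entry at column $c$ between them completes a copy of $P$, the unused blocks in between being simply discarded.

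In each case the exhibited realization of $P$ has exactly the (correct) horizontal coordinates of the corresponding copy inside $S'(P,X)$, and its vertical coordinates differ from that copy only by the uniform shifts applied to the upper and/or lower portion of $L_s \cup R_s$. Since these shifts preserve the relative vertical order of all entries involved and never cross the empty $(m-3)k$-th row, the configuration is an embedding of $P$ into $S(P,X)$, and hence $P$ is contained in $S(P,X)$ by \cref{p:equiv-containment}. The one point that needs care is the middle case with $5 \le s \le m-4$, where $L_s \cup R_s$ is stretched both upward and downward at once: one checks that the entries exactly one row above or below the expandable row are not moved, the remaining entries are translated uniformly, and nothing crosses the expandable row, so the row order of $L_s \cup R_s$ together with the new entry still agrees with the row order in $P$. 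This bookkeeping is the only delicate part of the argument; the substantive content of the section --- that $S(P,X)$ actually avoids $P$ --- is deferred to the subsequent subsections.
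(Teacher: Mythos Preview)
Your proposal is correct and follows essentially the same approach as the paper. The paper treats this lemma as an immediate consequence of the construction, giving only the one-sentence justification ``Clearly, $L_s$ and $R_s$ still form a partial occurrence of $P$ with a single $1$-entry missing between them in the $(m-3)k$-th row, and $R_1$, $L_m$ similarly form occurrences when adding a $1$-entry in the left- or rightmost part of that row'' before stating the lemma; your write-up simply spells out the same reasoning (emptiness, the column case split, and why the vertical shifts preserve the embedding) in more detail.
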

	
	Note that the construction used in \Cref{sec:4-trav} can be seen as a special case of both $S(P,X)$ and $S'(P,X)$ when $m=4$.
	
	The remainder of this paper is dedicated to the proof that if $X$ is a non-extendable tall traversal of a permutation matrix $P$, then $S(P,X)$ avoids $P$, implying that $S(P,X)$ is a vertical witness of $P$.
	We first fix some notations and make a few observations about $S(P,X)$. Let $T$ denote the set of 1-entries that are above row $(m-3)k-1$ (at least two rows above the expandable row). Similarly, let $B$ denote the set of 1-entries that are below row $(m-3)k+1$, and let $M$ denote the remaining 1-entries. For a subset $A \subseteq E(S(P,X))$, let $A^\rT = A \cap T$, let $A^\rB = A \cap B$ and let $A^\rM = A \cap M$. For a 1-entry $p \neq x_s$, let $p^s$ denote the copy of $p$ in $L_s \cup R_s$.
	
	\begin{observation}\label{p:alignment}
		Let $s, u \in \{1,3,4, \dots, m-3, m-2, m\}$ with $s < u$. If $u \ge 5$, then every 1-entry in $L_s^\rT \cup R_s^\rT$ is below every 1-entry in $L_u^\rT \cup R_u^\rT$.
		Moreover, if $s \le m-4$, then every 1-entry in $L_s^\rB \cup R_s^\rB$ is below every 1-entry in $L_u^\rB \cup R_u^\rB$.\qed
	\end{observation}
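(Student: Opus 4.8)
The plan is a direct computation: write down the horizontal bands of rows occupied by the sets $L_s^\rT\cup R_s^\rT$ and $L_s^\rB\cup R_s^\rB$ after all the shifts in the construction of $S(P,X)$, and observe that these bands are pairwise disjoint and monotone in the index. Only row coordinates enter the statement, so I would track rows only.

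First I would record the vertical bookkeeping of the construction. In $S'(P,X)$ the $1$-entries of $L_s'\cup R_s'$ form a copy of the $1$-entries of $P$ with $x_s$ removed, shifted down by $k-i_s$ rows, so they occupy exactly rows $k-i_s+1,\dots,2k-i_s$ with the single exception of row $k$ (the image of the empty $i_s$-th row of $P$). After shifting $S'(P,X)$ down by $(m-4)k$ rows, the entries of $L_s\cup R_s$, before the local moves, occupy rows $(m-3)k-i_s+1,\dots,(m-2)k-i_s$, and the expandable row is row $(m-3)k$. Consequently $L_s^\rT\cup R_s^\rT$, the entries at least two rows above the expandable row, sits in rows $(m-3)k-i_s+1,\dots,(m-3)k-2$, a range of vertical extent at most $k-3$; symmetrically $L_s^\rB\cup R_s^\rB$ sits in rows $(m-3)k+2,\dots,(m-2)k-i_s$.

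Next I would apply the local moves. For $s\ge 5$ the set $L_s^\rT\cup R_s^\rT$ is raised by $(s-4)k$ rows, landing in rows $(m+1-s)k-i_s+1,\dots,(m+1-s)k-2$; since $1\le i_s\le k$, this range lies strictly between the consecutive multiples $(m-s)k$ and $(m+1-s)k$ of $k$. For $s\in\{1,3,4\}$ there is no move and $L_s^\rT\cup R_s^\rT$ stays strictly between $(m-4)k$ and $(m-3)k$. In every case $L_s^\rT\cup R_s^\rT$ is contained strictly inside one of the open row-intervals cut out by consecutive multiples of $k$; because each such set has vertical extent at most $k-3<k$, these intervals are genuinely disjoint, and the one for a larger index lies strictly above (in smaller-numbered rows than) the one for a smaller index. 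Hence, if $s<u$ and $u\ge 5$, the entries of $L_u^\rT\cup R_u^\rT$ all lie strictly above those of $L_s^\rT\cup R_s^\rT$, which is the first claim. The second claim is entirely symmetric with ``top'' replaced by ``bottom'': for $s\le m-4$ the set $L_s^\rB\cup R_s^\rB$ is lowered by $(m-s-3)k$ rows into rows strictly between $(2m-s-6)k$ and $(2m-s-5)k$, whereas for $s\in\{m-3,m-2,m\}$ it stays strictly between $(m-3)k$ and $(m-2)k$; these bands are again pairwise disjoint, and now a smaller index gives a strictly lower band, so for $s<u$ with $s\le m-4$ every entry of $L_s^\rB\cup R_s^\rB$ lies below every entry of $L_u^\rB\cup R_u^\rB$.

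There is no real obstacle. The only points needing care are keeping straight which indices receive a move and the exact shift amounts, and verifying that each band has vertical extent strictly less than $k$ (it is at most $k-3$), so that the length-$k$ gaps between consecutive multiples of $k$ really do separate consecutive bands — both are immediate from the definition of $S(P,X)$.
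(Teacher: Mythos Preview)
Your proposal is correct; the paper states this as an Observation with a bare \qed\ and no proof, and your explicit row-band computation is precisely the routine verification the authors leave to the reader. The only thing to note is that the remark about vertical extent at most $k-3$ is not actually needed for the argument: once you have placed each $L_s^\rT\cup R_s^\rT$ (resp.\ $L_s^\rB\cup R_s^\rB$) strictly inside an open interval between consecutive multiples of $k$, disjointness and the required ordering follow immediately from comparing those intervals.
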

	
	Since $X$ is tall, there are no 1-entries below and to the left of $x_s$ if $s$ is odd, or above and to the right of $x_s$ if $s$ is even. This implies:
	\begin{observation}\label{p:empty-boxes}
		Let $s$ be odd with $3 \le s \le m-3$. Then $L_s$ contains no 1-entries below the expandable row, and $R_{s+1}$ contains no 1-entries above the expandable row.
		In particular,~${L_s^\rB = \varnothing}$ and $R_{s+1}^\rT = \varnothing$.\qed
	\end{observation}
	
	We now consider the width and height of relevant parts of $S(P,X)$.
	\begin{observation}\label{p:box-sizes}
		For each $s \in \{1, 3,4,\dots,m-3, m-2, m\}$,
		\begin{itemize}
			\item $\width(L_s) = \hd(\ell, x_s)-1$;
			\item $\width(R_s) = \hd(x_s, r)-1$;
			\item $\height(L_s^\rT \cup R_s^\rT) = \vd(t, x_s)-2$, if $L_s^\rT \cup R_s^\rT \neq \varnothing$;
			\item $\height_\phi(L_s^\rM \cup R_s^\rM) \le 1$; and
			\item $\height(L_s^\rB \cup R_s^\rB) = \vd(x_s, b)-2$, if $L_s^\rB \cup R_s^\rB \neq \varnothing$.\qed
		\end{itemize}
	\end{observation}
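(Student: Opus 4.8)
The plan is to exploit that $S(P,X)$ is assembled entirely from shifted but otherwise undistorted copies $L_s', R_s'$ of the submatrices $P^\rL_{x_s}$ and $P^\rR_{x_s}$ of $P$, and that the two modification passes move 1-entries only vertically, and only by translating whole blocks (the $T$-, $M$-, and $B$-portions of each $L_s' \cup R_s'$) by fixed amounts. Hence every column of $S(P,X)$ still corresponds to a column of $P$, so widths are inherited from $P$ unchanged; and within each of the three portions of a given $L_s' \cup R_s'$ the vertical order and spacing of 1-entries is exactly as in $P$. All five bounds then reduce to elementary computations inside $P$.

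For the widths: when $s \ne 1$, the columns of $P^\rL_{x_s}$ are columns $1, \dots, j_s - 1$ of $P$; since $P$ is a permutation matrix every column is nonempty and column $1$ contains $\ell$, so $\width(P^\rL_{x_s}) = (j_s - 1) - 1 = \hd(\ell, x_s) - 1$, which equals $\width(L_s)$ as no 1-entry is ever moved between columns. The identity for $R_s$ is symmetric (column $k$ contains $r$), and for $s = 1$ (resp.\ $s = m$) the block $L_s$ (resp.\ $R_s$) is empty. For the heights, the key observation is that $x_s$ is the unique 1-entry of $P$ in column $j_s$, so every row $r \ne i_s$ of $P$ has its 1-entry in a column $\ne j_s$, hence in exactly one of $L_s', R_s'$. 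Unwinding the construction, a 1-entry in row $r$ of $P^\rL_{x_s}$ or $P^\rR_{x_s}$ lies, before the modification passes, in row $(m-3)k - i_s + r$ of $S(P,X)$, so it belongs to $T$ iff $r \le i_s - 2$, to $M$ iff $r \in \{i_s - 1, i_s + 1\}$ (row $i_s$ being empty in both $P^\rL_{x_s}$ and $P^\rR_{x_s}$), and to $B$ iff $r \ge i_s + 2$. The up-pass translates the whole $T$-portion of $L_s' \cup R_s'$ by a common amount, the down-pass does so for the $B$-portion, and the $M$-portion stays fixed, so heights within each portion are preserved. Combined with the row observation, $L_s^\rT \cup R_s^\rT$ occupies in $S(P,X)$ exactly the translates of rows $1, \dots, i_s - 2$ of $P$ (a nonempty set iff $i_s \ge 3$), so its height is $(i_s - 2) - 1 = \vd(t, x_s) - 2$, using that $t$ lies in row $1$; symmetrically $L_s^\rB \cup R_s^\rB$ has height $\vd(x_s, b) - 2$, using that $b$ lies in row $k$; and $L_s^\rM \cup R_s^\rM$ lies in the two rows $(m-3)k - 1$ and $(m-3)k + 1$, which flank the empty expandable row $(m-3)k$. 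For the last bound, fix an embedding $\phi$ of $P$ into $S(P,X)$; since $P$ is a permutation matrix no two of its 1-entries share a row, so $\phi$ sends at most one 1-entry of $P$ into any one row of $S(P,X)$, and as the row between $(m-3)k - 1$ and $(m-3)k + 1$ is empty, this forces $\vd_\phi(x,y) \le 1$ for all $x, y \in L_s^\rM \cup R_s^\rM$, i.e., $\height_\phi(L_s^\rM \cup R_s^\rM) \le 1$.

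The argument is essentially bookkeeping, so the main thing to get right is tracking which portion of which block is translated by how much across the two passes, and verifying that $t$ and $b$ really do occur in $L_s' \cup R_s'$ for all relevant $s$ --- this holds because $s \notin \{2, m-1\}$, so $x_s$ differs from $t = x_2$ and from $b = x_{m-1}$ and hence lies in a different column --- together with handling the degenerate indices $s \in \{1, m\}$.
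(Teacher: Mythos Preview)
Your proposal is correct and is precisely the kind of direct unpacking of the construction that the paper has in mind; in the paper this statement is recorded as an Observation and given no proof beyond the terminal \qed, so your argument simply fills in the bookkeeping the paper leaves implicit. The key points you identify---that the modification passes move whole blocks only vertically and by a common amount within each $(L_s', R_s')$ pair, that every row $r\neq i_s$ of $P$ contributes exactly one entry to $L_s'\cup R_s'$, and that injectivity of $\phi$ on rows (from $P$ being a permutation matrix) combined with the empty expandable row forces $\height_\phi(L_s^\rM\cup R_s^\rM)\le 1$---are exactly the facts needed, and your treatment of the degenerate indices $s\in\{1,m\}$ is adequate.
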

	
	Let $3 \le s \le m-3$ be odd. Since $X$ is tall, there are no 1-entries in $P$ above $x_{s-1}$ and to the right of $x_s$. Thus, $x_{s-1}^s$ is the topmost 1-entry in $R_s$. Similarly, $x_{s+2}^{s+1}$ is the bottommost 1-entry in $L_{s+1}$. This implies the following improved bounds:
	\begin{observation}\label{p:box-sizes-special}
		For each odd $s \in \{3,4,\dots,m-2\}$:
		\begin{itemize}
			\item $\height(R_s^\rT) \le \vd(x_{s-1}, x_s) - 2$, if $R_s^\rT \neq \varnothing$; and
			\item $\height(L_{s+1}^\rB) \le \vd(x_{s+1}, x_{s+2}) - 2$, if $L_{s+1}^\rB \neq \varnothing$.\qed
		\end{itemize}
	\end{observation}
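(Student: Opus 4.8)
The plan is to peel off the shifts used to build $S(P,X)$ so that each of the two bounds reduces to a short span estimate inside $P$ itself. The two bullets are mirror images under $180^{\circ}$ rotation (which swaps $L\leftrightarrow R$ and $T\leftrightarrow B$ and reverses the traversal), so I would prove the first and appeal to this symmetry for the second.

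First I would observe that $R_s^\rT$ is, up to a single rigid translation, a fixed sub-configuration of $P$. By the construction in \cref{sec:compl-constr}, $R_s$ arises from $P^\rR_{x_s}$ by a uniform downward shift (forming $R_s'$) and then, in passing from $S'(P,X)$ to $S(P,X)$, by a uniform upward shift of exactly those of its 1-entries that land at least two rows above the expandable row. Hence $R_s^\rT = R_s\cap T$ is a translate of the set of 1-entries of $P$ lying to the right of $x_s$ and at least two rows above $x_s$, so $\height(R_s^\rT)$ equals the $\height$ of that set as measured in $P$. (The ``two rows'' threshold matches up because a 1-entry sitting $d$ rows above $x_s$ inside $P^\rR_{x_s}$ ends up $d$ rows above the expandable row inside $R_s'$.)

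Now assume $R_s^\rT\neq\emptyset$. Since $s-1$ is even and $X$ is tall, tallness property (\ref{prop:trav-tall-above}) at index $s-1$ forbids a 1-entry of $P$ that is above $x_{s-1}$ and to the right of $x_s$; and $x_s\lth x_{s-1}$ by property (\ref{prop:trav-hor}), so $x_{s-1}$ is the topmost 1-entry of $P^\rR_{x_s}$ — exactly the fact recorded just before the observation. Therefore the set from the previous paragraph has $x_{s-1}$ as its topmost element (so in particular $x_{s-1}$ is above $x_s$ and $\vd(x_{s-1},x_s)\ge 2$), while all of its elements sit at least two rows above $x_s$; the rows it occupies thus range from $x_{s-1}$'s row down to at most two rows above $x_s$'s row, a span of at most $\vd(x_{s-1},x_s)-2$. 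This gives the first bound. The second is the mirror statement: $L_{s+1}^\rB$ is a translate of the 1-entries of $P$ to the left of $x_{s+1}$ and at least two rows below $x_{s+1}$; by tallness property (\ref{prop:trav-tall-below}) at the even index $s+1$ its bottommost element is $x_{s+2}$; and the same estimate yields $\height(L_{s+1}^\rB)\le\vd(x_{s+1},x_{s+2})-2$.

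The one real obstacle is the bookkeeping behind ``$R_s^\rT$ is a translate of a sub-configuration of $P$'': one must check that the whole of $R_s^\rT$ (resp.\ $L_{s+1}^\rB$) really is displaced as a single block in the passage from $S'(P,X)$ to $S(P,X)$ — for every relevant odd $s$, including the boundary value $s=3$, where $R_3^\rT$ is not displaced at all — and that the ``at least two rows above/below the expandable row'' cut-off defining $T$ and $B$ translates exactly into the stated row condition in $P$. Both are immediate once the row indices of \cref{sec:compl-constr} are unwound, but should be stated carefully to keep the reduction airtight. Conceptually the whole argument is \cref{p:box-sizes} sharpened by replacing the global top row $t_P$ with the top row of $P^\rR_{x_s}$, which tallness pins down as $x_{s-1}$ (and symmetrically for $L_{s+1}$).
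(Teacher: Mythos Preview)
Your proposal is correct and follows essentially the same approach as the paper: the paper records, in the paragraph immediately preceding the observation, that tallness forces $x_{s-1}^s$ to be the topmost 1-entry of $R_s$ (and $x_{s+2}^{s+1}$ the bottommost of $L_{s+1}$), and then treats the bound as immediate. Your write-up is more explicit about the translation bookkeeping, but the key idea---replace $t$ by $x_{s-1}$ as the top of $R_s$ via tallness, then read off the height---is identical.
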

	
	\begin{figure}
		\centering
		\begin{tikzpicture}[
				scale=0.3,
				midline/.style={dashed},
				label/.style={font=\footnotesize}
			]
			\draw[midline] (2,0) -- (3,0);
\draw (2,0.5) rectangle (3,1.5);
\node[label] at (2.5,2.5) {$R_{1}^\rT$};
\draw[midline] (2,-1) -- (3,-1);
\draw (2,-9.5) rectangle (3,-8.5);
\node[label] at (2.5,-10.5) {$R_{1}^\rB$};
\draw[midline] (0,0) -- (1,0);
\draw (0,0.5) rectangle (1,1.5);
\node[label] at (0.5,2.5) {$L_{3}^\rT$};
\draw[midline] (6,0) -- (7,0);
\draw (6,0.5) rectangle (7,1.5);
\node[label] at (6.5,2.5) {$R_{3}^\rT$};
\draw[midline] (6,-1) -- (7,-1);
\draw (6,-8.5) rectangle (7,-7.5);
\node[label] at (6.5,-9.5) {$R_{3}^\rB$};
\draw[midline] (4,0) -- (5,0);
\draw (4,0.5) rectangle (5,1.5);
\node[label] at (4.5,2.5) {$L_{4}^\rT$};
\draw[midline] (4,-1) -- (5,-1);
\draw (4,-7.5) rectangle (5,-6.5);
\node[label] at (4.5,-8.5) {$L_{4}^\rB$};
\draw[midline] (10,-1) -- (11,-1);
\draw (10,-7.5) rectangle (11,-6.5);
\node[label] at (10.5,-8.5) {$R_{4}^\rB$};
\draw[midline] (8,0) -- (9,0);
\draw (8,1.5) rectangle (9,2.5);
\node[label] at (8.5,3.5) {$L_{5}^\rT$};
\draw[midline] (14,0) -- (15,0);
\draw (14,1.5) rectangle (15,2.5);
\node[label] at (14.5,3.5) {$R_{5}^\rT$};
\draw[midline] (14,-1) -- (15,-1);
\draw (14,-6.5) rectangle (15,-5.5);
\node[label] at (14.5,-7.5) {$R_{5}^\rB$};
\draw[midline] (12,0) -- (13,0);
\draw (12,2.5) rectangle (13,3.5);
\node[label] at (12.5,4.5) {$L_{6}^\rT$};
\draw[midline] (12,-1) -- (13,-1);
\draw (12,-5.5) rectangle (13,-4.5);
\node[label] at (12.5,-6.5) {$L_{6}^\rB$};
\draw[midline] (18,-1) -- (19,-1);
\draw (18,-5.5) rectangle (19,-4.5);
\node[label] at (18.5,-6.5) {$R_{6}^\rB$};
\draw[midline] (16,0) -- (17,0);
\draw (16,3.5) rectangle (17,4.5);
\node[label] at (16.5,5.5) {$L_{7}^\rT$};
\draw[midline] (22,0) -- (23,0);
\draw (22,3.5) rectangle (23,4.5);
\node[label] at (22.5,5.5) {$R_{7}^\rT$};
\draw[midline] (22,-1) -- (23,-1);
\draw (22,-4.5) rectangle (23,-3.5);
\node[label] at (22.5,-5.5) {$R_{7}^\rB$};
\draw[midline] (20,0) -- (21,0);
\draw (20,4.5) rectangle (21,5.5);
\node[label] at (20.5,6.5) {$L_{8}^\rT$};
\draw[midline] (20,-1) -- (21,-1);
\draw (20,-3.5) rectangle (21,-2.5);
\node[label] at (20.5,-4.5) {$L_{8}^\rB$};
\draw[midline] (26,-1) -- (27,-1);
\draw (26,-3.5) rectangle (27,-2.5);
\node[label] at (26.5,-4.5) {$R_{8}^\rB$};
\draw[midline] (24,0) -- (25,0);
\draw (24,5.5) rectangle (25,6.5);
\node[label] at (24.5,7.5) {$L_{9}^\rT$};
\draw[midline] (30,0) -- (31,0);
\draw (30,5.5) rectangle (31,6.5);
\node[label] at (30.5,7.5) {$R_{9}^\rT$};
\draw[midline] (30,-1) -- (31,-1);
\draw (30,-2.5) rectangle (31,-1.5);
\node[label] at (30.5,-3.5) {$R_{9}^\rB$};
\draw[midline] (28,0) -- (29,0);
\draw (28,6.5) rectangle (29,7.5);
\node[label] at (28.5,8.5) {$L_{10}^\rT$};
\draw[midline] (28,-1) -- (29,-1);
\draw (28,-2.5) rectangle (29,-1.5);
\node[label] at (28.5,-3.5) {$L_{10}^\rB$};
\draw[midline] (34,-1) -- (35,-1);
\draw (34,-2.5) rectangle (35,-1.5);
\node[label] at (34.5,-3.5) {$R_{10}^\rB$};
\draw[midline] (32,0) -- (33,0);
\draw (32,7.5) rectangle (33,8.5);
\node[label] at (32.5,9.5) {$L_{12}^\rT$};
\draw[midline] (32,-1) -- (33,-1);
\draw (32,-2.5) rectangle (33,-1.5);
\node[label] at (32.5,-3.5) {$L_{12}^\rB$};
			\node[label,left] at (0,-0.5) {$M$};
			\node[label,right] at (35,-0.5) {$M$};
		\end{tikzpicture}
		\caption{A sketch of the block structure of $S(P,X)$ with $|X| = 12$.}\label{fig:big-constr-sketch}
	\end{figure}
	
	\subsection{\texorpdfstring{$S(P,X)$}{S(P,X)} avoids \texorpdfstring{$P$}{P}}
	
	In this section, we show:
	\begin{lemma}\label{p:big-S-avoids}
		Let $P$ be a permutation matrix, $m \ge 6$ be even and let $X = (x_1, x_2, \dots, x_m)$ be a non-extendable tall traversal of $P$. Then $S(P,X)$ avoids $P$.
	\end{lemma}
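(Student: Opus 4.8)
The plan is to assume, for contradiction, that some embedding $\phi$ of $P$ into $S(P,X)$ exists, and to show that this forces $X$ to be extendable, contradicting the hypothesis. Write $\ell,t,b,r$ for $\ell_P,t_P,b_P,r_P$ as in the construction. First I would locate $\phi$ horizontally. By \cref{p:box-sizes}, every block $L_s$ and every block $R_s$ has width strictly less than $\hd(\ell,r)=k-1$, while \cref{p:phi_obs} gives $\hd(\phi(\ell),\phi(r))\ge\hd(\ell,r)$; hence $\phi(\ell)$ and $\phi(r)$ lie in different blocks. Since the blocks of $S(P,X)$, read from left to right, occur in the fixed interleaved order $L_3,R_1,L_4,R_3,L_5,R_4,\dots,L_m,R_{m-2}$, and the columns of $P$ occur in the fixed order prescribed by traversal property (ii), the function sending $p\in E(P)$ to the block containing $\phi(p)$ is weakly monotone; so $\phi(E(P))$ meets at least two blocks, the leftmost of which contains $\phi(\ell)$ and the rightmost $\phi(r)$.

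Next I would locate $\phi$ vertically, using the partition of the 1-entries of $S(P,X)$ into the top staircase $T$, the middle band $M$ (only two nonempty rows), and the bottom staircase $B$, together with the observations prepared for exactly this purpose: \cref{p:alignment} (the $T$-boxes, and separately the $B$-boxes, are totally vertically ordered by block index), \cref{p:empty-boxes} (tallness of $X$ forces $L_s^\rB=\emptyset$ and $R_{s+1}^\rT=\emptyset$ for odd $s$), and the height bounds of \cref{p:box-sizes} and \cref{p:box-sizes-special}. Since $\vd(\phi(t),\phi(b))\ge\vd(t,b)=k-1\ge 5$ while the middle portion of any single block has $\height_\phi$ at most $1$, the image of $P$ cannot sit inside $M$; using the alternation of the upper entries $x_2,x_4,\dots,x_m$ and the lower entries $x_1,x_3,\dots,x_{m-1}$ of the traversal, I would show that $\phi$ places upper entries of $P$ into $T$-boxes (or the top row of $M$) and lower entries into $B$-boxes (or the bottom row of $M$), and that the staircase order then forces $\phi$ to sweep through the copies $L_s\cup R_s$ in increasing order of $s$.

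With the sweep established, the core of the argument is to read off a \emph{seam}: an index $s_0$ at which $\phi$ leaves one copy of the (truncated) pattern for the next. The images under $\phi$ of the traversal entries of $P$ lying just left and just right of the seam pull back to two 1-entries $y_1,y_2\in E(P)$, and I would verify --- using the widths and heights from \cref{p:box-sizes} and \cref{p:box-sizes-special} and the tallness of $X$ --- that $(x_1,\dots,x_{s_0},y_1,y_2,x_{s_0+1},\dots,x_m)$ satisfies traversal axioms (i)--(v), with $s_0$ odd and $5\le s_0\le m-5$ in the generic case (here the parity of the block pattern and the evenness of $m$ are used). This exhibits an extension of $X$, the desired contradiction. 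The remaining possibility is that $\phi$ only touches blocks near one end of the construction ($R_1,L_3,R_3,\dots$ near the left, or $\dots,L_{m-2},R_{m-2},L_m$ near the right), where the block pattern and the vertical shifts are irregular; these few cases I would dispatch directly, by width/height counting together with \cref{p:empty-boxes} and a decomposability argument in the spirit of \cref{p:prop-type-2}.

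The main obstacle is the middle step: showing that $\phi$ cannot ``cheat'' --- neither by routing a chunk of $P$ through the narrow middle band $M$ in a way that sidesteps the staircase, nor by exploiting the irregular boundary blocks --- and then, in the surviving case, checking that the recovered pair $y_1,y_2$ genuinely satisfies all five traversal axioms (in particular the vertical-ordering axioms (iii)--(v), which is where tallness of $X$ and the improved bounds of \cref{p:box-sizes-special} come in). I expect the boundary cases near $s=1$ and near $s=m-2$ to each require a short, separate treatment.
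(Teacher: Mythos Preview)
Your high-level plan gathers the right tools (the $T/M/B$ partition, the width and height bounds of \cref{p:box-sizes,p:box-sizes-special}, the staircase ordering of \cref{p:alignment}), but it inverts the logical structure of the argument in a way that would not close. You treat non-extendability as the \emph{generic} source of contradiction --- find a ``seam'' index $s_0$, read off two 1-entries $y_1,y_2$, and extend $X$ --- with only a few boundary cases handled otherwise. In the paper's proof it is the other way round: almost every possible location of $(\phi(t),\phi(b))$ is eliminated by width/height counting or, crucially, by showing that the hypothetical $\phi$ would force $P$ to be \emph{decomposable}. That indecomposability argument is used not only near the ends but throughout the middle analysis (\cref{p:not-t-L_s-b-R_s-1,p:s_u_x_s_below_exp,p:t_Gs_b_Hu,p:not-L_s-pre,p:not-L_s,p:L_s+1_helper2}). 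Only after a long funnel --- narrowing down to the single configuration $\phi(t)\in L_{s+1}$, $\phi(b)\in R_s$, $\phi(E(P))\subseteq L_{s+1}\cup R_s$ for one odd $s$ --- does the paper actually read off $y_1,y_2$ and invoke non-extendability (\cref{p:not-L_s+1}); and even there one of the two sub-cases is again a decomposability contradiction.

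Concretely, your ``sweep'' picture --- $\phi$ passes through the copies $L_s\cup R_s$ in increasing $s$, placing upper traversal entries in $T$ and lower ones in $B$ --- is not what happens. The configurations that survive to the end of the analysis have $\phi(E(P))$ confined to a single adjacent pair $L_{s+1}\cup R_s$; $\phi$ does not spread across many blocks, so there is no long seam to exploit. Within that pair, the delicate part is precisely that entries of $P$ can land in $M$ or in the ``wrong'' side, and ruling this out is the bulk of \cref{sec:middle_samegroup}. Your sketch also omits the $180$-degree symmetry $\rot^2(S(P,X))=S(\rot^2(P),\rot^2(X))$, which the paper uses systematically to halve the case load. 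Without the decomposability tool and the symmetry reduction, and with the seam heuristic pointing in the wrong direction, the plan as stated does not reach a contradiction.
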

	
	Together with \Cref{p:osc-is-trav,p:max-tall,p:s-exp}, this implies \Cref{p:even-6-l-osc}. For the remainder of this section, fix $P$ and $X$ as in \cref{p:big-S-avoids}, and write $\ell, b, t, r$ for $\ell_P, b_P, t_P, r_P$. We use the notation for parts of $S(P,X)$ as defined in \Cref{sec:compl-constr}. Suppose $\phi$ is an embedding of $P$ into $S(P,X)$. Our overall strategy is to distinguish cases based on the location of $\phi(t)$, and derive a contradiction in each case. While the full proof is long and technical, it only uses a handful of simple arguments that are combined and applied to various situations.
	
	Note that we make no further assumptions on $P,X,\phi$, so each lemma or corollary in this section holds on its own for every choice of $P,X,\phi$ (we only fix $P, X, \phi$ for brevity). This allows us to make use of the following symmetry argument. $S(P,X)$ is not usually symmetric, in the sense that its 180-degree rotation $\rot^2(S(P,X))$ is equal to $S(P,X)$. However, it is easy to see that $\rot^2(S(P,X))$ is equal to $S(\rot^2(P),\rot^2(X))$. Now, in \cref{p:t-not-L_3}, for example, we show that $\phi(t) \notin L_3$ for each choice of $P, X, \phi$, in particular also for every embedding $\phi'$ of~$\rot^2(P)$ into $\rot^2(S(P,X))$.
	
	We also get $\phi(b) \notin R_{m-2}$, since $R_{m-2}$ in $S(P,X)$ corresponds to $L_3$ in \linebreak $\rot^2(S(P,X)) = S(\rot^2(P),\rot^2(X))$, $b$~in $P$ corresponds to $t$~in $\rot^2(P)$, and $\phi$ corresponds to some embedding $\phi'$ of $\rot^2(P)$ into $\rot^2(S(P,X))$.
	
	\subsubsection{\texorpdfstring{$\phi(t)$}{phi(t)} in the front or the back}
	
	In this section, we show $\phi(t)$ and $\phi(b)$ cannot lie in the leftmost or rightmost few ``blocks'' of~$S(P,X)$. The precise results are \cref{p:t-not-borders,p:t-not-R3-or-b} at the end of the section.
	The proofs in this section also serve as a warm-up for the more complex later proofs. Most techniques used in \cref{sec:middle_diffgroup,sec:middle_samegroup} already appear here, where we explain them thoroughly.
	
	\begin{lemma}\label{p:t-not-L_3}
		$\phi(t) \notin L_3$ and $\phi(b) \notin R_{m-2}$.
	\end{lemma}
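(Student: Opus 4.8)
The plan is to prove $\phi(t)\notin L_3$ by a one-line width estimate, and then obtain $\phi(b)\notin R_{m-2}$ from the $\rot^2$-symmetry that was set up in the preamble of this subsection.

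For the first statement I would argue by contradiction. Assume $\phi(t)\in L_3$. Note that $L_3$ is the \emph{leftmost} block of $S(P,X)$: it is the first block $L_3'$ of $S'(P,X)$, and passing from $S'(P,X)$ to $S(P,X)$ only moves 1-entries vertically. By the traversal properties (\ref{prop:trav-ltbr}) and (\ref{prop:trav-hor}) we have $\ell=x_1\lth x_3\lth x_2=t$, so $\phi(\ell)$ lies in a strictly smaller column than $\phi(t)$; since every column of $S(P,X)$ belongs to exactly one block and all columns weakly to the left of a column of $L_3$ are themselves columns of $L_3$, this forces $\phi(\ell)\in L_3$ as well. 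Now apply \cref{p:phi_obs} with $A=L_3$ to the pair $\ell,t$: it gives $\hd(\ell,t)=\hd_\phi(\phi(\ell),\phi(t))\le\hd(\phi(\ell),\phi(t))\le\width(L_3)$. By \cref{p:box-sizes}, $\width(L_3)=\hd(\ell,x_3)-1$, and $\hd(\ell,x_3)\le\hd(\ell,t)-1$ because $x_3$ is strictly to the left of $t$ (again by (\ref{prop:trav-hor})). Chaining these gives $\hd(\ell,t)\le\hd(\ell,t)-2$, a contradiction, so $\phi(t)\notin L_3$.

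For the second statement I would invoke the symmetry remark stated just before the lemma: the argument above uses nothing about $P$, $X$, $\phi$ beyond the standing hypotheses of \cref{p:big-S-avoids}, so it applies verbatim to $\rot^2(P)$, its (again non-extendable, tall) traversal $\rot^2(X)$, and any embedding of $\rot^2(P)$ into $\rot^2(S(P,X))=S(\rot^2(P),\rot^2(X))$. Since $L_3$ in $S(\rot^2(P),\rot^2(X))$ corresponds to $R_{m-2}$ in $S(P,X)$, $t_{\rot^2(P)}$ corresponds to $b_P$, and $\phi$ corresponds to an embedding of $\rot^2(P)$, this says exactly $\phi(b)\notin R_{m-2}$.

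I do not expect a genuine obstacle here; this is essentially a warm-up lemma. The only point needing a moment's care is the observation that $L_3$ is the leftmost block, so that $\phi(\ell)$ is trapped inside it; after that the width bound from \cref{p:box-sizes} closes the argument (this bound is a purely horizontal quantity, hence unaffected by the vertical-shift modification, and is in any case stated directly for $S(P,X)$; moreover $L_3^\rB=\emptyset$ by \cref{p:empty-boxes}, so $L_3$ is not even split by the modification). The second half is boilerplate once the $\rot^2$-dictionary from the subsection preamble is invoked.
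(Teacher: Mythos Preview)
Your proposal is correct and follows essentially the same approach as the paper: assume $\phi(t)\in L_3$, observe that $L_3$ is the leftmost block so $\phi(\ell)\in L_3$ as well, and derive a contradiction from the width bound $\width(L_3)=\hd(\ell,x_3)-1<\hd(\ell,t)$; the second claim is handled by the $\rot^2$-symmetry exactly as you describe. The paper's proof is terser (it omits the explicit references to \cref{p:phi_obs} and \cref{p:box-sizes}), but the logic is identical.
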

	\begin{proof}
		By symmetry, it suffices to show $\phi(t) \notin L_3$. Suppose $\phi(t) \in L_3$. Then \linebreak also $\phi(\ell) \in L_3$, since $S(P,X)$ contains no 1-entries to the left of $L_3$. But \linebreak ${\width(L_3) = \hd(\ell, x_3) -1 < \hd(\ell,t)-1}$, thus we cannot have both $\phi(\ell)$ and $\phi(t)$ in $L_3$, a contradiction.
	\end{proof}
	
	\begin{lemma}\label{p:t-not-R_1}
		$\phi(t) \notin R_1$ and $\phi(b) \notin L_m$.
	\end{lemma}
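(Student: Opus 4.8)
The plan is to follow the template of \Cref{p:t-not-L_3}, with one genuine extra complication: the block $R_1$ is a copy of the whole of $P^\rR_\ell$ rather than of a proper sub-pattern of $P$, so a single width estimate no longer finishes the argument, and one must locate the images of several of $\ell,t,r,b$ among adjacent blocks. First, by the $\rot^2$-symmetry noted above it suffices to prove $\phi(t)\notin R_1$: the block $R_1$ of $S(\rot^2(P),\rot^2(X))=\rot^2(S(P,X))$ is the image under $\rot^2$ of the block $L_m$ of $S(P,X)$ (both are the second block, resp.\ second-to-last block, of the concatenation), and $t_{\rot^2(P)}$ corresponds to $b$, so $\phi(t)\notin R_1$ applied to $\rot^2(P)$ gives $\phi(b)\notin L_m$.

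So assume $\phi(t)\in R_1$. Recall that $R_1$ is a copy of $P^\rR_\ell$ (that is, of $P$ with its leftmost $1$-entry $\ell$ deleted), split vertically so that $R_1^\rB$ lies below every other $1$-entry of $S(P,X)$ while $R_1^\rT$ and $R_1^\rM$ stay near the expandable row, and that the only block lying entirely to the left of $R_1$ is $L_3$. I would first rule out $\phi(t)\in R_1^\rB$: since $t$ is the topmost $1$-entry of $P$, all of $\phi(P)$ would then lie weakly below $\phi(t)$, hence inside $R_1^\rB$ by \Cref{p:alignment}; but then $\phi(t),\phi(b)\in R_1^\rB$, so $\vd(t,b)\le\height(R_1^\rB)=\vd(\ell,b)-2$ by \Cref{p:box-sizes,p:phi_obs}, contradicting $\vd(t,b)=k-1\ge\vd(\ell,b)$.

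Hence $\phi(t)\in R_1^\rT\cup R_1^\rM$. Since $\width(R_1)=\hd(\ell,r)-1=k-2$ by \Cref{p:box-sizes} while $P$ occupies $k$ columns, at least one of $\phi(\ell),\phi(r)$ lies outside the column range of $R_1$; as $L_3$ is the only block to the left of $R_1$, this forces $\phi(\ell)\in L_3$ or $\phi(r)$ into a block to the right of $R_1$. In the first case I would count the $1$-entries of $P$ whose column lies weakly between $\ell$ and $t$ and play the width bounds of \Cref{p:box-sizes} for $L_3$ and $R_1$ against the vertical position of $\phi(t)$, using in particular that the traversal axioms \eqref{prop:trav-upper} and \eqref{prop:trav-vert} force $\ell$ above $x_3$. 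In the second case I would locate $\phi(b)$: it lies below $\phi(t)$, and $\vd_\phi(\phi(t),\phi(b))=k-1$ accounts for all of $P$, so by \Cref{p:alignment,p:empty-boxes} it must land in a particular $B$-block, whose width (\Cref{p:box-sizes}) is then too small for $\phi(\ell)$ or $\phi(r)$ to coexist with it. The step I expect to be the main obstacle is the bookkeeping when $\phi(t)\in R_1^\rM$: there $\phi(t)$ is adjacent to the expandable row, so a priori $\phi(P)$ can spill across $L_3$, $R_1$, and the neighbouring low-index blocks $R_3,L_4,R_4$, and one has to combine \Cref{p:box-sizes,p:box-sizes-special,p:empty-boxes} carefully to exclude this.
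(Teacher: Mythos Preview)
Your proposal is a plan, not a proof: the two main cases are described only as ``I would count\dots'' and ``I would locate\dots'', and you explicitly flag the $R_1^\rM$ subcase as unresolved. Neither case is actually carried to a contradiction, so as written there is a genuine gap.

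More importantly, you are missing the one idea that makes the lemma short: track the traversal entry $x_3$ instead of splitting on where $\phi(\ell)$ or $\phi(r)$ land. The paper's argument is linear and case-free. From $\phi(t)\in R_1$ one uses the height bound $\height_\phi(R_1^\rT\cup R_1^\rM)\le\vd(t,\ell)<\vd(t,x_3)$ (the strict inequality coming from tallness at $i=2$, which forces $\ell\ltv x_3$) to conclude $\phi(x_3)\in B$. Since $x_3$ is to the \emph{left} of $t$ and $L_3^\rB=\emptyset$, this pins $\phi(x_3)\in R_1^\rB$. Now $r$ is below $x_3$, and every $1$-entry of $S(P,X)$ to the right of $R_1$ sits above $R_1^\rB$, so $\phi(r)\in R_1^\rB$ as well. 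The width bound $\width(R_1)<\hd(\ell,r)$ then pushes $\phi(\ell)$ strictly left of $R_1$, and since $\phi(t)\in R_1$ lies at most $\vd(t,\ell)$ rows above the expandable row, $\phi(\ell)$ must be below it. But the only block left of $R_1$ is $L_3$, and $L_3^\rB=\emptyset$; contradiction.

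Notice that this single chain handles $\phi(t)\in R_1^\rT$, $R_1^\rM$, and $R_1^\rB$ uniformly, so your preliminary step ruling out $R_1^\rB$ and your worry about ``bookkeeping when $\phi(t)\in R_1^\rM$'' both evaporate. The dichotomy on $\phi(\ell)\in L_3$ versus $\phi(r)$ right of $R_1$ is also unnecessary: once $x_3$ is located, both $\phi(r)$ and $\phi(\ell)$ are forced in turn.
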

	\begin{proof}
		By symmetry, it suffices to show $\phi(t) \notin R_1$. Suppose $\phi(t) \in R_1$.
		Note \linebreak that $\height(R_1^\rT \cup M) \le \vd(t, \ell)+1$. Since this bound counts the (empty) expandable row, we have $\height_\phi(R_1^\rT \cup M) \le \vd(t, \ell)$. This implies that $\phi(\ell)$ must lie in the lowest row of $M$ or lower, and thus $\phi(\ell)$ is below the expandable row.
		
		Since $x_3$ is below $\ell$, this also implies that $x_3$ at least two rows below the expandable row, so~$\phi(x_3) \in B$. Further, $x_3$ is to the right of $t$ and $L_3^\rB = \varnothing$, so we have $\phi(x_3) \in R_1^\rB$. As~$r$ is below~$x_3$, and all 1-entries in $S(P,X)$ that are to the right of $R_1$ are above $R_1^\rB$, we \linebreak have~$\phi(r) \in R_1^\rB$. Since $\width(R_1) < \hd(\ell,r)$, this implies that $\phi(\ell)$ is to the left of $R_1$. We now know that $\phi(\ell)$ is below the expandable row and to the left of $R_1^\rT$. But $S(P,X)$ has no such 1-entry, a contradiction.
	\end{proof}
	
	If $m=6$ (see \cref{fig:small-constr-sketch}), then the only remaining possibility is $\phi(t), \phi(b) \in L_4 \cup R_3$, which implies $\phi(t) \in L_4$ or $\phi(b) \in R_3$ (since $t$ is to the left of $b$). Thus, the following lemma concludes the case $m=6$.
	
	\begin{figure}
		\centering
		\begin{tikzpicture}[
			scale=0.3,
			midline/.style={dashed},
			label/.style={font=\footnotesize}
			]
			\draw[midline] (2,0) -- (3,0);
\draw (2,0.5) rectangle (3,1.5);
\node[label] at (2.5,2.5) {$R_{1}^\rT$};
\draw[midline] (2,-1) -- (3,-1);
\draw (2,-3.5) rectangle (3,-2.5);
\node[label] at (2.5,-4.5) {$R_{1}^\rB$};
\draw[midline] (0,0) -- (1,0);
\draw (0,0.5) rectangle (1,1.5);
\node[label] at (0.5,2.5) {$L_{3}^\rT$};
\draw[midline] (6,0) -- (7,0);
\draw (6,0.5) rectangle (7,1.5);
\node[label] at (6.5,2.5) {$R_{3}^\rT$};
\draw[midline] (6,-1) -- (7,-1);
\draw (6,-2.5) rectangle (7,-1.5);
\node[label] at (6.5,-3.5) {$R_{3}^\rB$};
\draw[midline] (4,0) -- (5,0);
\draw (4,0.5) rectangle (5,1.5);
\node[label] at (4.5,2.5) {$L_{4}^\rT$};
\draw[midline] (4,-1) -- (5,-1);
\draw (4,-2.5) rectangle (5,-1.5);
\node[label] at (4.5,-3.5) {$L_{4}^\rB$};
\draw[midline] (10,-1) -- (11,-1);
\draw (10,-2.5) rectangle (11,-1.5);
\node[label] at (10.5,-3.5) {$R_{4}^\rB$};
\draw[midline] (8,0) -- (9,0);
\draw (8,1.5) rectangle (9,2.5);
\node[label] at (8.5,3.5) {$L_{6}^\rT$};
\draw[midline] (8,-1) -- (9,-1);
\draw (8,-2.5) rectangle (9,-1.5);
\node[label] at (8.5,-3.5) {$L_{6}^\rB$};
			\node[label,left] at (0,-0.5) {$M$};
			\node[label,right] at (11,-0.5) {$M$};
		\end{tikzpicture}
		\caption{A sketch of the block structure of $S(P,X)$ with $|X| = 6$.}\label{fig:small-constr-sketch}
	\end{figure}

	\begin{lemma}\label{p:m6-t-not-L_4}
		If $m = 6$, then $\phi(t) \notin L_4$ and $\phi(b) \notin R_3$.
	\end{lemma}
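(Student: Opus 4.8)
The plan is to invoke the $\rot^2$-symmetry used throughout this section to reduce the statement to $\phi(t)\notin L_4$: under $\rot^2$ the block $L_4$ of $S(\rot^2(P),\rot^2(X))$ corresponds to $R_3$ of $S(P,X)$ and $t_{\rot^2(P)}$ corresponds to $b$, so establishing $\phi(t)\notin L_4$ for all admissible $P,X,\phi$ also yields $\phi(b)\notin R_3$. So suppose $\phi(t)\in L_4$. Since $t\lth b$, the image $\phi(b)$ lies in $L_4$ or in a block to its right, hence in $L_4\cup R_3\cup L_6\cup R_4$; now \cref{p:t-not-L_3} excludes $\phi(b)\in R_{m-2}=R_4$ and \cref{p:t-not-R_1} excludes $\phi(b)\in L_m=L_6$, leaving the two cases $\phi(b)\in R_3$ and $\phi(b)\in L_4$. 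I would also record three facts for $m=6$: property (\ref{prop:trav-hor}) forces the horizontal order $\ell,x_3,t,b,x_4,r$ of the traversal; property (\ref{prop:trav-vert}) at $s=3$ gives $i_3>i_4$ (writing $x_s=(i_s,j_s)$); and for $m=6$ neither $L_4$ nor $R_3$ is moved by the modification of \cref{sec:compl-constr} (index $4$ occurs in the ``move up'' set but only with shift $0$, and index $3$ occurs in neither set), so $L_4$ and $R_3$ sit in their natural positions --- the $1$-entries of $L_4$ occupy exactly the rows $3k-i_4+1,\dots,4k-i_4$ (the extreme rows realized by the copies $t^4,b^4$ of $t,b$, since $t$ and $b$ lie in columns left of $x_4$), those of $R_3$ occupy rows $3k-i_3+1,\dots,4k-i_3$, and the row $3k=(m-3)k$ of $S(P,X)$ is the (empty) expandable row.

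In the case $\phi(b)\in R_3$ I would argue by a row count. Since $\phi(t)\in L_4$, the row of $\phi(t)$ is at least $3k-i_4+1$; since $\phi(b)$ lies below $\phi(t)$ with $\vd(\phi(t),\phi(b))\ge\vd(t,b)=k-1$ by \cref{p:phi_obs}, the row of $\phi(b)$ is at least $(3k-i_4+1)+(k-1)=4k-i_4$. But $\phi(b)\in R_3$ forces the row of $\phi(b)$ to be at most $4k-i_3$, and $4k-i_3<4k-i_4$ because $i_3>i_4$; contradiction.

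In the case $\phi(b)\in L_4$ I would exploit that $L_4$ is ``vertically full''. Here $\phi(t),\phi(b)\in L_4$ with $\phi(t)$ above $\phi(b)$ and $\height(L_4)=\vd(t,b)=k-1$, so $k-1=\vd(t,b)=\vd_\phi(\phi(t),\phi(b))\le\vd(\phi(t),\phi(b))\le\height(L_4)=k-1$ by \cref{p:phi_obs}, giving $\vd(\phi(t),\phi(b))=k-1$; as the minimal and maximal rows of $L_4$ are exactly $k-1$ apart, this forces $\phi(t)$ into row $3k-i_4+1$ and $\phi(b)$ into row $4k-i_4$. Since $x_4$ is vertically strictly between $t$ and $b$, $\phi(x_4)$ is strictly between $\phi(t)$ and $\phi(b)$, so $\vd(\phi(t),\phi(x_4))+\vd(\phi(x_4),\phi(b))=\vd(\phi(t),\phi(b))=k-1$; but $\vd(\phi(t),\phi(x_4))\ge\vd(t,x_4)=i_4-1$ and $\vd(\phi(x_4),\phi(b))\ge\vd(x_4,b)=k-i_4$ already sum to $k-1$, so both inequalities are equalities and $\phi(x_4)$ lies in row $(3k-i_4+1)+(i_4-1)=3k$. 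Since row $3k$ is empty, this is a contradiction. Both cases being impossible, $\phi(t)\notin L_4$, and $\phi(b)\notin R_3$ follows by the symmetry noted at the start.

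I expect the only real obstacle to be the bookkeeping summarized in the first paragraph: one must check that for the small, ``irregular'' value $m=6$ the blocks $L_4$ and $R_3$ really do sit in their unshifted natural positions --- so that the row ranges above and the identification of the extreme $1$-entries are exactly as stated --- and that the row $(m-3)k$ is globally empty in $S(P,X)$. Once those structural facts are in place, each of the two cases is a one-line distance computation of the same type as in \cref{p:t-not-L_3,p:t-not-R_1}.
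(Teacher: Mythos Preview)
Your proof is correct and follows the same height-counting approach as the paper. The paper handles both cases at once by observing that $b^4$ is the lowest $1$-entry in all of $L_4\cup R_3$ (precisely because $i_3>i_4$, as you also note), so $\phi(b)$ is not below $b^4$ regardless of whether it lies in $L_4$ or $R_3$; then $\vd_\phi(t^4,b^4)<\vd(t,b)$ from the empty expandable row gives the contradiction directly, without case-splitting or locating $\phi(x_4)$ explicitly.
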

	\begin{proof}
		By symmetry, it suffices to show $\phi(t) \notin L_4$. This can be done with essentially the same argument as in the proof of \cref{p:4_trav_wit}. Suppose $\phi(t) \in L_4$. Then $\phi(t)$ is not above $t^4 \in L_4$. By \cref{p:t-not-L_3,p:t-not-R_1}, $\phi(b) \in L_4 \cup R_3$. The lowest 1-entry in $L_4 \cup R_3$ is $b^4$, so $\phi(b)$ is not below $b^4$. But $\vd_\phi(t^4, b^4) < \vd(t,b)$ (note the empty expandable row), a contradiction.
	\end{proof}
	
	We now continue with the case $m \ge 8$.
	
	\begin{lemma}\label{p:t-not-L_4}
		If $m \ge 8$, then $\phi(t) \notin L_4$ and $\phi(b) \notin R_{m-3}$.
	\end{lemma}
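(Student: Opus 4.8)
The plan is to follow the pattern of \cref{p:t-not-L_3,p:t-not-R_1,p:m6-t-not-L_4}. By the $\rot^2$-symmetry used throughout this section --- under which $L_4$ in $S(\rot^2(P),\rot^2(X))$ corresponds to $R_{m-3}$ in $S(P,X)$, and $t$ in $\rot^2(P)$ to $b$ in $P$ --- it suffices to prove $\phi(t)\notin L_4$, so I would assume $\phi(t)\in L_4$ and derive a contradiction.

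The first step is to localize $\phi(t)$ inside $L_4$. Since $t=x_2$ is the topmost 1-entry of $P$ and lies to the left of $x_4$ by traversal property~(\ref{prop:trav-hor}), its copy $t^4$ is the topmost 1-entry of $L_4$; hence $\phi(t)$ lies in the row of $t^4$ or below, and, as $t$ is globally topmost, no 1-entry of $P$ is mapped strictly above the row of $t^4$. Next I would localize $\phi(\ell)$: as $\ell=x_1$ is leftmost and $\ell\lth t$, we have $\phi(\ell)\in L_3\cup R_1\cup L_4$, and as $t$ is above $\ell$, $\phi(\ell)$ lies strictly below $\phi(t)$. I would then split on whether $\phi(t)$ is above the expandable row (in $L_4^\rT\cup L_4^\rM$, where $\height_\phi(L_4^\rT\cup L_4^\rM)\le\vd(t,x_4)$ by \cref{p:box-sizes} together with the empty expandable row) or in the ``deep'' part $L_4^\rB$ that the construction pushes far below, and within each on the location of $\phi(\ell)$.

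The lightweight cases I expect to dispatch by the width/height accounting already seen in \cref{p:t-not-L_3,p:t-not-R_1}: if $\phi(\ell)\in L_3$, then, since $P$ has no 1-entry below $x_3$ and to the left of $t$ by tallness, all of $L_3$ sits strictly above the expandable row, so $\phi(\ell)$ and $\phi(t)$ lie in a band of $\phi$-height too small to realize $\vd(t,\ell)$, contradicting \cref{p:phi_obs}; and if $\phi(\ell)\in R_1$, a width argument in the spirit of \cref{p:t-not-R_1} --- using $L_3^\rB=\emptyset$ from \cref{p:empty-boxes} and the staircase layering of \cref{p:alignment} to force $\phi(x_3)$, hence $\phi(r)$, into $R_1^\rB$ --- contradicts $\width(R_1)=\hd(\ell,r)-1<\hd(\ell,r)$. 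The main obstacle I expect to be the remaining cases $\phi(t)\in L_4^\rB$ and $\phi(\ell)\in L_4$: there a whole sub-pattern of $P$ gets crowded into few blocks of $S(P,X)$ --- if $\phi(t)\in L_4^\rB$ then every 1-entry of $P$ other than $t$ lies below $\phi(t)$, and if $\phi(\ell)\in L_4$ then every 1-entry of $P$ to the left of $t$, together with $x_3$ (which lies horizontally between $\ell$ and $t$), lies inside $L_4$ --- and ruling these out needs the width bounds $\width(L_s),\width(R_s)$ and the improved height bounds of \cref{p:box-sizes-special} played against the layering of \cref{p:alignment}, in essentially the same manner as the $m=6$ argument of \cref{p:m6-t-not-L_4} and the construction in \cref{p:4_trav_wit}, but complicated by the intervening blocks $R_3,L_5,R_4,\dots$ that lie between $L_4$ and the far-right blocks once $m\ge 8$.
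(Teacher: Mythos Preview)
Your plan diverges from the paper's proof in its organizing principle, and the ``lightweight'' cases do not close as cleanly as you suggest. The paper never splits on the location of $\phi(\ell)$; instead it tracks $\phi(x_3)$. From $\phi(t)\in L_4$ one gets $\height_\phi(L_4^\rT\cup L_4^\rM)\le\vd(t,x_4)<\vd(t,x_3)$, which (together with $x_3\lth t$ and $L_3^\rB=\emptyset$) forces $\phi(x_3)\in L_4^\rB$ (the alternative $\phi(x_3)\in R_1^\rB$ would put $\phi(r)$ to the left of $\phi(t)$). That pins $\phi(r)\in L_4^\rB\cup R_3^\rB\cup R_4^\rB$, so nothing is mapped to the right of $R_4$. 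A second height count then shows $\phi(x_3)$ is strictly below $x_3^4$, hence $\phi(t)$ is strictly to the right of $t^4$ and $\phi(x_4)$ is pushed out of $L_4$. Now $x_5$ enters: since $x_5^4$ is the bottommost entry of $L_4^\rB$, one gets $\phi(x_5)\in R_3^\rB\cup R_4^\rB$; the width bound $\width(R_4)<\hd(x_5,r)$ kills $R_4^\rB$, so $\phi(x_5)\in R_3^\rB$, dragging $\phi(b),\phi(r),\phi(x_4)$ into $R_3$. A final height count on $R_3^\rB$ (using $\vd(x_4,b)>\vd(x_3,b)$) gives the contradiction.

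Concretely, your case $\phi(\ell)\in L_3$ does not yield a height contradiction: the block $L_3$ is shifted by $k-i_3$, which is \emph{less} than the shift $k-i_4$ of $L_4$, so $L_3$ actually extends higher than $t^4$; and the band between the row of $t^4$ and the bottom of $L_3$ has height $i_4-2\ge i_1-1=\vd(t,\ell)$, since $x_4$ is below $\ell$. There is no shortage of room for $\vd(t,\ell)$ here. Your case $\phi(\ell)\in R_1$ is likewise not forced the way you describe: nothing in your outline explains why $\phi(x_3)$ must land in $R_1^\rB$ rather than, say, in $L_4$. The real work is the chase through $x_3\to r\to x_5\to b\to x_4$ using the traversal order and tallness, and that idea is absent from your sketch; the reference to ``essentially the same manner as the $m=6$ argument'' does not supply it, because the $m=6$ proof is a one-line height count on $L_4\cup R_3$ that relies on \cref{p:t-not-L_3,p:t-not-R_1} already confining $\phi(b)$, which is exactly what fails for $m\ge 8$.
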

	\begin{proof}
		By symmetry, it suffices to show $\phi(t) \notin L_4$. Suppose $\phi(t) \in L_4$. We have \linebreak $\height_\phi(L_4^\rT \cup M) \le \vd(t,x_4) < \vd(t, x_3)$, implying that $\phi(x_3) \in B$. More precisely, we have $\phi(x_3) \in R_1^\rB \cup L_4^\rB$, because $x_3$ is to the left of $t$.
		
		Since $r$ is below $x_3$ and to the left of $t$, we have $\phi(r) \in L_4^\rB \cup R_3^\rB \cup R_4^\rB$. This means that~$\phi$ maps no 1-entry to the right of $R_4$, and thus maps no 1-entry into the rows between $M$ and~$L_4^\rB \cup R_4^\rB$. This is a very useful observation, since it essentially allows us to pretend that $M$ is directly above~$L_4^\rB \cup R_4^\rB$. Similar observations will be used frequently in subsequent proofs.
		
		From the above, we get $\height_\phi(L_4 \cup R_4) < \vd(t, b)$ (note that $L_4^\rT$ is directly above $M$), so~$\phi(b)$ is below $L_4 \cup R_4$, and thus $\phi(b) \in R_3^\rB$. Moreover, by tallness of $X$, we have \linebreak $\height_\phi(L_4) < \vd(t, x_5)$, so $\phi(x_5)$ is below $L_4$. Since $x_5$ is to the left of $b$, this means \linebreak that~$\phi(x_5) \in R_3^\rB$.
		
		Consider now $\phi(x_4)$. Since $x_5 \lth x_4 \lth b$, we have $\phi(x_4) \in R_3$. Since \linebreak $\height(R_3^\rB) < \vd(x_3,b) < \vd(x_4,b)$, we have $\phi(x_4) \in R_3^\rT \cup R_3^\rM$.
		
		We conclude the proof with a case distinction. First, assume that $\phi(t) \neq t^4$. Since $\phi(t) \in L_4$ and $t^4$ is the highest 1-entry in $L_4$, this means that $\phi(t)$ is below $t^4$, and thus \linebreak $\vd_\phi(\phi(t), \phi(x_4)) < \vd_\phi(t^4, \phi(x_4)) \le \height_\phi(L_4 \cup M) \le \vd(t, x_4)$, a contradiction.
		
		Second, assume that $\phi(t) = t_4$. Recall that $\phi$ maps no 1-entries between $M$ and $L_4^\rB$. Because of this and the fact that the expandable row is empty, we have that $\vd_\phi(t^4, x_3^4) < \vd(t, x_3)$, implying that $\phi(x_3)$ is below $x_3^4$. By tallness of $X$, this also implies that $\phi(x_3)$ is to the right of $x_3^4$. However, since $x_3^4$ is to the left of $t^4$, this means that $\hd(\phi(x_3), \phi(t)) < \hd(x_3^4, t^4) = \hd(x_3, t)$, a contradiction.
	\end{proof}
	
	\begin{lemma}\label{p:t-not-R_3_except}
		Let $m \ge 8$. If $\phi(t) \in R_3$, then $\phi(b)$ is to the right of $R_4$. Moreover, if $\phi(b) \in L_{m-2}$, then $\phi(t)$ is to the left of $L_{m-3}$.
	\end{lemma}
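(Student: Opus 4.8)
The plan is to first reduce to the first assertion by the $\rot^2$-symmetry argument recorded just before \cref{p:t-not-L_3}: since $\rot^2(S(P,X)) = S(\rot^2(P),\rot^2(X))$, and under this $180^\circ$ rotation the block $R_3$ of $S(P,X)$ corresponds to $L_{m-2}$, the block $R_4$ to $L_{m-3}$, and $t_P$ to $b_P$, the statement ``if $\phi(b) \in L_{m-2}$ then $\phi(t)$ lies to the left of $L_{m-3}$'' is exactly the first assertion applied to $\rot^2(P)$ and $\rot^2(X)$. So I would only prove: if $\phi(t) \in R_3$, then $\phi(b)$ lies to the right of $R_4$.

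Assume $\phi(t) \in R_3$. Because $t = x_2$ lies strictly to the left of $b = x_{m-1}$ in $P$ (traversal property~(ii), using $m \ge 8$), $\phi(b)$ lies to the right of $\phi(t)$, so the conclusion can fail only if $\phi(b) \in R_3 \cup L_5 \cup R_4$; I would rule out each of these three cases. The common first step is to locate $\phi(x_3)$: from $\phi(t) \in R_3$, the improved bound $\height(R_3^\rT) \le \vd(t,x_3)-2$ of \cref{p:box-sizes-special} together with $\height_\phi(R_3^\rM)\le 1$ from \cref{p:box-sizes} gives (exactly as in the proof of \cref{p:t-not-L_4}) that $\phi(x_3)$ cannot lie in the $\rT$- or $\rM$-part of $R_3$ nor in $R_1^\rT$; since $x_3 \lth t$ and $L_3^\rB = \emptyset$ (\cref{p:empty-boxes}), this leaves $\phi(x_3) \in R_1^\rB \cup L_4^\rB \cup R_3^\rB$. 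Using that $r$ is below $x_3$ (property~(iv)) and that $R_1^\rB$ is the single lowest $\rB$-block (\cref{p:alignment}), $\phi(x_3) \in R_1^\rB$ forces $\phi(r) \in R_1^\rB$, which is impossible by a width estimate on $R_1$ together with the forced position of $\phi(\ell)$, as in \cref{p:t-not-R_1}; a shorter variant excludes $\phi(x_3) \in L_4^\rB$. Hence $\phi(x_3) \in R_3^\rB$.

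With $\phi(x_3) \in R_3^\rB$ fixed, I would propagate along the two interleaved staircases $x_3 \ltv x_5 \ltv \cdots \ltv x_{m-3} \ltv r$ and $\ell \ltv x_4 \ltv x_6 \ltv \cdots \ltv x_m$, using the height bounds of \cref{p:box-sizes,p:box-sizes-special,p:empty-boxes} and the block ordering of \cref{p:alignment} to confine $\phi(x_4), \phi(x_5), \ldots, \phi(b), \phi(r)$ to the small cluster of blocks $R_3, L_5, R_4$ and their $\rT/\rM/\rB$ parts; the contradiction in each case then comes from either a height count (too many lower 1-entries of $P$ must lie strictly between $\phi(t)$ and $\phi(b)$ in the $\rB$-blocks) or a width count (too many 1-entries between $\ell$ and $r$ crammed into three adjacent blocks), according to whether $\phi(b)$ landed in $R_3$, $L_5$, or $R_4$. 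I expect the bookkeeping of this last step to be the main obstacle: one must track precisely how the up-shifted blocks $L_s^\rT,R_s^\rT$ with $s\ge 5$ and the down-shifted blocks $R_s^\rB$ with small $s$ interleave vertically, so that every $\height_\phi$- and $\width$-inequality comes out \emph{strict} exactly where the contradiction requires it; the subcase $\phi(b)\in L_5$ looks tightest and, for $m\ge 10$, may need the non-extendability of $X$ (\cref{p:max-tall}) to exhibit two 1-entries that could be inserted between some $x_s$ and $x_{s+1}$.
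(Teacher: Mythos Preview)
Your symmetry reduction to the first assertion is correct and matches the paper. After that, however, the plan takes a harder road than necessary and leaves the main work undone.

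The chief oversight is the case $\phi(b)\in L_5$, which you flag as ``tightest'' and possibly requiring non-extendability. In fact it is vacuous: once you know $\phi(x_3)$ lies below the expandable row (a simple row count from $\phi(t)\in R_3$, since $t^3$ is the topmost entry of $R_3$ and sits exactly $\vd(t,x_3)$ rows above that row), you get $\phi(b)\in B$, and then $L_5^\rB=\emptyset$ by \cref{p:empty-boxes}. So $\phi(b)\in R_3^\rB\cup R_4^\rB$, and non-extendability is never needed.

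Your intermediate goal of pinning $\phi(x_3)$ specifically into $R_3^\rB$ is also an unnecessary detour. Excluding $\phi(x_3)\in L_4^\rB$ is not the ``shorter variant'' you suggest: $L_4^\rB$ lies to the left of $R_3$ and at the same vertical level as $R_4^\rB$, so neither the $\phi(r)$-to-the-left-of-$\phi(t)$ trick nor an obvious height bound kills it. The paper avoids this entirely by only using that $\phi(x_3)$ is below the expandable row.

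The paper's argument is short: introduce $q_3$, the $1$-entry directly below $x_3$, which tallness forces to the right of $t$; then split on whether $\phi(r)\in R_3^\rB$, $\phi(r)\in R_4^\rB$, or $\phi(r)$ lies to the right of $R_4$. The first case dies on $\height(R_3^\rB)=\vd(q_3,b)-1$; the second on $\width(R_4)<\hd(x_5,r)$ after trapping $\phi(x_5)\in R_4^\rB$; the third because $\phi(x_5)$ would have to lie in $B$, above $L_4^\rB\cup R_4^\rB$, and not to the right of $R_4$, where there are no $1$-entries. No propagation along staircases and no use of \cref{p:max-tall}.
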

	\begin{proof}
		By symmetry, proving the first statement suffices. Let $\phi(t) \in R_3$ and suppose $\phi(b)$ not to the right of $R_4$. Since $\phi(R_3^\rT) \le \vd(t, x_3)$, we know that $\phi(x_3)$ is below the expandable row. Let $q_3$ be the 1-entry directly below $x_3$ in $P$. Clearly, $\phi(q_3), \phi(b), \phi(r) \in B$, and since $\phi(b)$ is to the right of $\phi(t)$ and not to the right of $R_4$, we have $\phi(b) \in R_3^\rB \cup R_4^\rB$. We separately consider three cases.
		\begin{casedist}
			\item $\phi(r) \in R_3^\rB$. Since $X$ is tall, $q_3$ is to the right of $t$, so $\phi(q_3) \in R_3^\rB$. But \linebreak $\height(R_3^\rB) = \vd(x_3,b) - 2 = \vd(q_3,b)-1$, a contradiction.
			\item $\phi(r) \in R_4^\rB$. Consider $x_5$. Since $x_5$ is below $x_3$, we have $\phi(x_5) \in B$. Since $x_5$ is \linebreak to the right of $t$, and above and to the left of $r$, we have $\phi(x_5) \in R_4^\rB$. But \linebreak $\width(R_4) = \hd(x_4, r)-1 < \hd(x_5,r)$, a contradiction.
			\item $\phi(r)$ is to the right of $R_4$. Then $\phi(r)$ is also above $L_4^\rB \cup R_4^\rB$. Consider again $x_5$. We know that $\phi(x_5)$ is below $M$ and above $L_4^\rB \cup R_4^\rB$. Since $x_5$ is to the left of $b$, we also know that $\phi(x_5)$ is not to the right of $R_4$. But there are no such 1-entries in $S(P,X)$, a contradiction.\qedhere
		\end{casedist}
	\end{proof}
	
	We proceed with some more special cases, showing that $\phi(t)$ also cannot lie in the rightmost few blocks of $S(P,X)$.
	
	\begin{lemma}\label{p:t-left-of-L_m-2}
		Let $m \ge 8$. Then, $\phi(t)$ lies to the left of $L_{m-2}$, and $\phi(b)$ lies to the right of $R_3$.
	\end{lemma}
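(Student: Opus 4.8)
By the symmetry argument explained above (replacing $(P,X,\phi)$ by $(\rot^2(P),\rot^2(X),\phi')$, under which $L_{m-2}$ corresponds to $R_3$ and $t$ to $b$), it suffices to prove that $\phi(t)$ lies to the left of $L_{m-2}$; the statement about $\phi(b)$ then follows. The column blocks of $S(P,X)$ lying at or to the right of $L_{m-2}$ are, from left to right, $L_{m-2},R_{m-3},L_m,R_{m-2}$, so we must exclude $\phi(t)$ from each of these. Since $t=x_2\lth x_{m-1}=b$ by traversal property~(\ref{prop:trav-hor}), whenever $\phi(t)$ lies in a block $C$ the image $\phi(b)$ lies in $C$ or to the right of it. As $\phi(b)\notin L_m$ by \cref{p:t-not-R_1} and $\phi(b)\notin R_{m-2}$ by \cref{p:t-not-L_3}, this already rules out $\phi(t)\in L_m$ and $\phi(t)\in R_{m-2}$, and it shows that $\phi(t)\in L_{m-2}$ forces $\phi(b)\in L_{m-2}\cup R_{m-3}$. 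Moreover, if $\phi(b)\in L_{m-2}$ then \cref{p:t-not-R_3_except} forces $\phi(t)$ to the left of $L_{m-3}$, contradicting $\phi(t)\in L_{m-2}$. Hence the only remaining case is $\phi(b)\in R_{m-3}$ together with $\phi(t)\in R_{m-3}\cup L_{m-2}$.

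In this case I would argue in the style of the proofs of \cref{p:t-not-L_4,p:t-not-R_3_except}. Since $b$ is the bottommost 1-entry of $P$ and $R_{m-3}=P^\rR_{x_{m-3}}$ contains the copy $b^{m-3}$, which (as $x_{m-3}\lth b$) is the bottommost 1-entry of $R_{m-3}$, while $\height(R_{m-3}^\rT)$ and $\height_\phi(R_{m-3}^\rM)$ are small by \cref{p:box-sizes,p:box-sizes-special}, the image $\phi(b)$ must lie low inside $R_{m-3}^\rB$; as $r$ is to the right of and above $b$, the image $\phi(r)$ is then confined to $R_{m-3}\cup L_m\cup R_{m-2}$. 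On the other side, $m\ge 8$ guarantees that both $\ell=x_1$ and $t=x_2$ lie to the left of $x_{m-3}$, so $R_{m-3}$ and every later block contain no copy of $\ell$ or $t$, and $x_{m-4}^{m-3}$ is the topmost 1-entry of $R_{m-3}$ by tallness (cf.\ the remark preceding \cref{p:box-sizes-special}). I would then play the width bound $\width(R_{m-3})=\hd(x_{m-3},r)-1$ off against the height bounds of \cref{p:box-sizes,p:box-sizes-special} and the block alignment of \cref{p:alignment}, additionally tracking $\phi(x_3)$, $\phi(x_5)$ and $\phi(x_{m-3})$ and splitting on whether $\phi(r)$ lies in $R_{m-3}$, in $L_m$, or in $R_{m-2}$, to derive a contradiction in each sub-case, exactly as was done for $\phi(t)\in L_4$ and $\phi(t)\in R_3$.

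The routine part is the elimination of $L_m$, $R_{m-2}$ and of the sub-case $\phi(b)\in L_{m-2}$, all of which follow at once from \cref{p:t-not-R_1,p:t-not-L_3,p:t-not-R_3_except}. The step I expect to be the main obstacle is the surviving case $\phi(b)\in R_{m-3}$ with $\phi(t)\in R_{m-3}\cup L_{m-2}$: there no single width- or height-inequality suffices, since $R_{m-3}$ is wide enough to accommodate $\ell$, $b$, $r$ and the staircase shift makes it vertically very tall, so the contradiction has to be obtained by following the images of several of the $x_i$ simultaneously and exploiting the tallness constraints (\cref{p:empty-boxes,p:box-sizes-special}) against the alignment of the blocks (\cref{p:alignment}).
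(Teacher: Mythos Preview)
You have overlooked one fact that makes the lemma immediate: \cref{p:t-not-L_4} already states, as its second conclusion, that $\phi(b)\notin R_{m-3}$ (this is precisely the $180$-degree rotation of ``$\phi(t)\notin L_4$''). With this in hand, your ``surviving case $\phi(b)\in R_{m-3}$'' simply does not occur, and the entire second and third paragraphs of your proposal are unnecessary.

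The paper's proof is exactly your routine part, organized slightly differently: instead of constraining $\phi(t)$ and pushing $\phi(b)$ to the right, it directly notes that $\phi(b)\notin R_{m-3}\cup L_m\cup R_{m-2}$ by \cref{p:t-not-L_3,p:t-not-R_1,p:t-not-L_4}, so either $\phi(b)$ lies to the left of $L_{m-2}$ (whence so does $\phi(t)$), or $\phi(b)\in L_{m-2}$, in which case \cref{p:t-not-R_3_except} forces $\phi(t)$ to the left of $L_{m-3}$. That is the whole argument. Your sketch for the phantom case would essentially be reproving the symmetric half of \cref{p:t-not-L_4}, which is correct but redundant.
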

	\begin{proof}
		By symmetry, it suffices to prove that $\phi(t)$ lies to the left of $L_{m-2}$. If $\phi(b)$ lies to the left of $L_{m-2}$, then $\phi(t)$ does, too. $\phi(b) \notin R_{m-3} \cup L_m \cup R_{m-2}$ by \Cref{p:t-not-L_3,p:t-not-R_1,p:t-not-L_4}. The only remaining possibility is that $\phi(b) \in L_{m-2}$, where \Cref{p:t-not-R_3_except} implies that $\phi(t)$ lies to the left of $L_{m-3}$, and thus to the left of $L_{m-2}$.
	\end{proof}
	
	To show that $\phi(t) \notin L_{m-3} \cup R_{m-4}$, we use the following more general lemma. \Cref{fig:big-constr-middle-sketch} is useful to visualize the proof.
	\begin{lemma}\label{p:not-t-L_s-b-R_s-1}
		Let $s$ be odd with $5 \le s \le m-3$. If $\phi(t) \in L_s \cup R_{s-1}$, then $\phi(b)$ lies to the right of $R_{s-1}$.
	\end{lemma}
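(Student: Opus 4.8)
The plan is to argue by contradiction: assume $\phi(b)$ is not to the right of $R_{s-1}$. Since $x_2$ precedes $x_{m-1}$ in the horizontal order of $P$ (traversal property~(\ref{prop:trav-hor})), $t$ is to the left of $b$, so $\phi(t)$ is to the left of $\phi(b)$; as $L_s$ and $R_{s-1}$ are consecutive blocks of $S(P,X)$, the hypothesis $\phi(t)\in L_s\cup R_{s-1}$ then forces $\phi(b)\in L_s\cup R_{s-1}$ too. So it suffices to rule out $\phi(t),\phi(b)\in L_s\cup R_{s-1}$, and using $t\lth b$ again this splits into three cases: $\phi(t),\phi(b)\in L_s$; $\phi(t)\in L_s$ and $\phi(b)\in R_{s-1}$; and $\phi(t),\phi(b)\in R_{s-1}$.

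Before the case work I would record the shape of the two blocks. Since $s$ is odd with $3\le s\le m-3$, \cref{p:empty-boxes} gives $L_s^\rB=\emptyset$; since $s-2$ is odd with $3\le s-2\le m-3$ (this is where $5\le s$ is used), \cref{p:empty-boxes} also gives $R_{s-1}^\rT=R_{(s-2)+1}^\rT=\emptyset$. Thus $L_s=L_s^\rT\cup L_s^\rM$ lies at and above the expandable row, with $L_s^\rT$ in a band far above it (\cref{p:alignment}), while $R_{s-1}=R_{s-1}^\rM\cup R_{s-1}^\rB$ lies at and below it, with $R_{s-1}^\rB$ far below. From \cref{p:box-sizes,p:box-sizes-special} I would note $\width(L_s)=\hd(\ell,x_s)-1<\hd(\ell,x_{s-1})$ (as $x_s\lth x_{s-1}$), $\width(R_{s-1})=\hd(x_{s-1},r)-1<\hd(x_s,r)$, and the height bounds $\height(L_s^\rT)=\vd(t,x_s)-2$, $\height(R_{s-1}^\rB)\le\vd(x_{s-1},b)-2$ (when nonempty), $\height_\phi(L_s^\rM)\le 1$, $\height_\phi(R_{s-1}^\rM)\le 1$. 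Since $t\lth x_s$, one does \emph{not} have $\width(L_s)<\hd(\ell,t)$, so the quick argument of \cref{p:t-not-L_3} is unavailable and the oscillation must be traced further. Finally I would observe that $x_s$ is at horizontal position $s-1$ and $x_{s-1}$ at position $s$ in $P$, both strictly between the positions of $t$ (position $3$) and $b$ (position $m-2$); consequently $\phi(x_3),\dots,\phi(x_{m-2})$ — in particular $\phi(x_s)$ and $\phi(x_{s-1})$ — all lie in $L_s\cup R_{s-1}$, even though $x_s$ and $x_{s-1}$ themselves occupy the ``gap'' columns $j_s,\dots,j_{s-1}$, which are deleted from both $P^\rL_{x_s}$ and $P^\rR_{x_{s-1}}$ and hence have no copy in $L_s\cup R_{s-1}$.

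In each case I would then follow $\phi(x_1),\phi(x_2),\dots,\phi(x_m)$ block by block, in the style of \cref{p:t-not-L_4,p:t-not-R_3_except}: the horizontal order determines which of $L_s,R_{s-1}$ and which $\rT/\rM/\rB$ sub-block can contain each image, and the width/height bounds above, combined with \cref{p:phi_obs} and the fact that $\vd_\phi$ decreases by one across the empty expandable row, then force either a width or height violation or an image into one of the empty regions $L_s^\rB,R_{s-1}^\rT$. For instance, when $\phi(t),\phi(b)\in L_s$ one uses that $b$ lies to the right of $x_s$, so $P^\rL_{x_s}$ has no $1$-entry in row $k$ and hence cannot realize the full span $\vd(t,b)=k-1$; tracing $x_3,x_5,\dots$ (forced leftward and downward out of $L_s^\rT$) should yield $\vd_\phi(\phi(t),\phi(b))<k-1$, contradicting \cref{p:phi_obs}. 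In the remaining two cases, following $x_3,x_5,\dots$ downward and $x_4,x_6,\dots$ around, the gap columns should force $\phi(x_s)$ or $\phi(x_{s-1})$ — which must sit at the extreme right of $L_s$ or the extreme left of $R_{s-1}$ — past the right edge of $R_{s-1}$, or else force $\phi(b)$ and $\phi(r)$ simultaneously into $R_{s-1}$, contradicting $\width(R_{s-1})<\hd(x_s,r)$.

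I expect the main obstacle to be the bookkeeping caused by the split structure of $L_s$ and $R_{s-1}$: each of these blocks is broken into a part far above and a part far below the expandable row with nothing between, so heights cannot be bounded by the height of one contiguous region, and one must track, for each oscillation element in turn, which of the several disconnected vertical bands of $S(P,X)$ its image occupies — keeping this consistent across all three cases while invoking tallness of $X$ (and, where a configuration needs to be excluded by enlarging the traversal, its non-extendability) only through \cref{p:empty-boxes,p:alignment,p:box-sizes-special}. The individual width and height estimates, by contrast, are routine applications of \cref{p:phi_obs}.
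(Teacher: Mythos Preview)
Your proposal is more of a plan than a proof, and the plan diverges from the paper's in a way that leaves a real gap. You split on the pair $(\phi(t),\phi(b))$ into three sub-configurations inside $L_s\cup R_{s-1}$; the paper instead splits on whether $\phi(\ell)$ lies in $L_s\cup R_{s-1}$. That second split is the one that exposes the decisive mechanism: when $\phi(\ell)\notin L_s\cup R_{s-1}$ the paper traces $x_4,x_5$ (not the longer chain you suggest) to force $\phi(x_5)\in R_{s-1}^\rB$ and obtain a width contradiction against $r$; when $\phi(\ell)\in L_s\cup R_{s-1}$ the paper uses \emph{indecomposability} of $P$ to produce some $y$ with $\phi(y)\in R_s^\rT$, i.e.\ an image \emph{outside} $L_s\cup R_{s-1}$, and then a short width argument on $L_s^\rT$ finishes. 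Your sketch never leaves $L_s\cup R_{s-1}$ and never invokes indecomposability, and I do not see how your purely internal width/height tracing can close the sub-case $\phi(\ell)\in L_s\cup R_{s-1}$ (which is hidden inside all three of your cases). The phrases ``should yield'' and ``should force'' are exactly where the argument needs to be.

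A couple of smaller points. Your claim that $\phi(x_3),\dots,\phi(x_{m-2})$ all lie in $L_s\cup R_{s-1}$ is false at the endpoints: $x_3\lth t$ and $b\lth x_{m-2}$ by property~(\ref{prop:trav-hor}), so $\phi(x_3)$ may sit to the left of $L_s$ and $\phi(x_{m-2})$ to the right of $R_{s-1}$. (Your conclusion for $x_s$ and $x_{s-1}$ is fine.) Also, non-extendability plays no role in this lemma; it is only used much later, in \cref{p:not-L_s+1}. Dropping it here simplifies the picture considerably.
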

	\begin{proof}
		Suppose not. Then, $\phi(t), \phi(b) \in L_{s} \cup R_{s-1}$.
		
		\begin{casedist}
			\item $\phi(\ell) \notin L_s \cup R_{s-1}$. Since $\ell$ is to the left of $t$, this means that $\phi(\ell)$ is to the left of $L_{s}$. \linebreak This implies that $\phi(\ell)$ is also below $L_s^\rT$, and thus $\phi(x_4)$ is below $L_s^\rT$. \linebreak Since $x_4$ is to the right of $t$, we have $\phi(x_4) \in M \cup B$, which implies $\phi(x_5) \in B$, as \linebreak $\height_\phi(M) \le 1 < \vd(x_4, x_5)$. Since $x_5$ is to the right of $t$ and to the left of $b$, we further know $\phi(x_5) \in R_{s-1}^\rB$. Since $\width(R_{s-1}) < \hd(x_5, r)$, this implies that $\phi(r)$ is to the right of $R_{s-1}$. But then $\phi(r)$ is above $\phi(x_5)$, a contradiction.
			
			\item $\phi(\ell) \in L_s \cup R_{s-1}$. Then $\phi$ maps no 1-entry to the left of $L_s$.
			Since $P$ is indecomposable, there must be some $y, z \in E(P)$ such that $\phi(y) \in L_s$, and $\phi(z)$ is above~$\phi(y)$ and to the right of $L_s$. Note that $L_s$ contains no 1-entries below the expandable row (by tallness of $X$), so $\phi(z) \in T$. Further, $\phi(t) \in L_s$ implies that~$\phi(z) \in R_s^\rT$.
			Since~$\phi(b) \in L_s \cup R_{s-1}$, we know that $b$ is to the left of $z$. Tallness of~$X$ implies that $z$ is not above $x_{m-2}$. Now consider $x_{s-1}$. We know $x_{s-1} \leh x_{m-4} \lth b$ \linebreak and~$x_{s-1} \lev x_{m-4} \ltv x_{m-2} \lev z$. Thus, $\phi(x_{s-1}) \in L_s^\rT$. But \linebreak $\width(L_s^\rT) < \vd(\ell, x_s) < \vd(\ell, x_{s-1})$, a contradiction.\qedhere
		\end{casedist}
	\end{proof}
	
	\begin{corollary}\label{p:t-not-L_m-3}
		If $m \ge 8$, then $\phi(t) \notin L_{m-3} \cup R_{m-4}$ and $\phi(b) \notin L_5 \cup R_4$.
	\end{corollary}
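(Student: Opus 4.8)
The plan is to derive the corollary from \cref{p:not-t-L_s-b-R_s-1}, together with the restrictions on $\phi(b)$ obtained in \cref{p:t-not-L_3,p:t-not-R_1,p:t-not-L_4} and the second half of \cref{p:t-not-R_3_except}, in the same spirit as \cref{p:t-left-of-L_m-2}. By the symmetry argument used throughout this section it suffices to prove the first assertion, $\phi(t) \notin L_{m-3} \cup R_{m-4}$; the second assertion then follows because $\rot^2$ sends $S(P,X)$ to $S(\rot^2(P),\rot^2(X))$, swaps $t$ with $b$, and identifies $L_{m-3}$ with $R_4$ and $R_{m-4}$ with $L_5$.

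So I would suppose $\phi(t) \in L_{m-3} \cup R_{m-4}$ and derive a contradiction. Since $m$ is even and $m \ge 8$, the index $s = m-3$ is odd and satisfies $5 \le s \le m-3$, so \cref{p:not-t-L_s-b-R_s-1} applies with this $s$ and gives that $\phi(b)$ lies to the right of $R_{m-4}$. Reading off the concatenation defining $S'(P,X)$ (hence $S(P,X)$), the only blocks occurring to the right of $R_{m-4}$ are $L_{m-2}$, $R_{m-3}$, $L_m$, and $R_{m-2}$, so $\phi(b)$ lies in one of these four blocks.

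It then remains to exclude all of them except $L_{m-2}$: \cref{p:t-not-L_3} rules out $R_{m-2}$, \cref{p:t-not-R_1} rules out $L_m$, and \cref{p:t-not-L_4} (which needs $m \ge 8$) rules out $R_{m-3}$. Hence $\phi(b) \in L_{m-2}$, and then the second part of \cref{p:t-not-R_3_except} forces $\phi(t)$ to lie to the left of $L_{m-3}$, contradicting $\phi(t) \in L_{m-3} \cup R_{m-4}$.

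I do not expect a real obstacle here: the statement is a bookkeeping step chaining \cref{p:not-t-L_s-b-R_s-1} with the blocks already forbidden for $\phi(b)$. The two points to be careful about are checking that $s = m-3$ has the correct parity and lies in the admissible range for \cref{p:not-t-L_s-b-R_s-1} — exactly where the hypotheses ``$m$ even'' and ``$m \ge 8$'' enter — and correctly enumerating the blocks of $S(P,X)$ lying to the right of $R_{m-4}$ from the explicit definition of $S'(P,X)$.
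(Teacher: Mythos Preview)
Your proof is correct and follows essentially the same approach as the paper: apply \cref{p:not-t-L_s-b-R_s-1} with $s=m-3$, then use \cref{p:t-not-L_3,p:t-not-R_1,p:t-not-L_4,p:t-not-R_3_except} to exclude all four blocks to the right of $R_{m-4}$. The only cosmetic difference is the order in which you invoke the lemmas (you apply \cref{p:not-t-L_s-b-R_s-1} first and then exclude the four blocks, whereas the paper excludes the blocks first and then cites \cref{p:not-t-L_s-b-R_s-1} for the contradiction); you are also more explicit about verifying the parity and range of $s$ and about how the $\rot^2$ symmetry identifies $L_{m-3}, R_{m-4}$ with $R_4, L_5$.
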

	\begin{proof}
		By symmetry, it suffices to prove that $\phi(t) \notin L_{m-3} \cup R_{m-4}$. Suppose \linebreak ${\phi(t) \in L_{m-3} \cup R_{m-4}}$. By \Cref{p:t-not-L_3,p:t-not-R_1,p:t-not-L_4,p:t-not-R_3_except}, $\phi(b)$ cannot lie in $L_{m-2}$ or further right. This contradicts \Cref{p:not-t-L_s-b-R_s-1}.
	\end{proof}
	
	We now consolidate and reformulate the above results. For the more involved proofs in \linebreak \cref{sec:middle_diffgroup,sec:middle_samegroup}, it will be convenient to organize the ``middle'' blocks $L_i, R_i$ \linebreak of $S(P,X)$ into two sets of groups, as follows. For each odd $s$ with $5 \le s \le m-5$, \linebreak let $G_s = L_s \cup R_{s-1} \cup L_{s+1} \cup R_s$, and let $H_s = L_{s+1} \cup R_s \cup L_{s+2} \cup R_{s+1}$. \cref{fig:big-constr-middle-sketch} illustrates~$G_s$ and $H_s$. Combining \cref{p:t-not-L_3,p:t-not-R_1,p:t-not-L_4,p:t-left-of-L_m-2,p:t-not-L_m-3} yields:
	\begin{corollary}\label{p:t-not-borders}
		If $m \ge 8$, then:
		\begin{itemize}
			\item $\phi(t)$ lies to the right of $L_4$ and to the left of $L_{m-3}$. In other words, $\phi(t) \in R_3$ or $\phi(t) \in G_s$ for some odd $s$ with $5 \le s \le m-5$; and
			\item $\phi(b)$ lies to the right of $R_4$ and to the left of $R_{m-3}$. In other words, $\phi(b) \in L_{m-2}$ \linebreak or~$\phi(b) \in H_s$ for some odd $s$ with $5 \le s \le m-5$.
		\end{itemize}
	\end{corollary}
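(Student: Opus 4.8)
The plan is to expand the one-line proof indicated by the statement: combine the five cited lemmas and do the bookkeeping to determine which blocks of $S(P,X)$ survive. The starting observation is that $S'(P,X)$, and hence $S(P,X)$, is a horizontal concatenation, so its columns are partitioned among the blocks appearing in the definition of $S'(P,X)$; consequently $\phi(t)$ (and likewise $\phi(b)$) lies in exactly one of the blocks
\[
L_3,\ R_1,\ L_4,\ R_3,\ L_5,\ R_4,\ \dots,\ L_{m-3},\ R_{m-4},\ L_{m-2},\ R_{m-3},\ L_m,\ R_{m-2},
\]
which are listed here in left-to-right order: after $L_3, R_1$ the blocks come in consecutive pairs $L_i, R_{i-1}$ for $i = 4, 5, \dots, m-2$, followed by the pair $L_m, R_{m-2}$.

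For $\phi(t)$ I would eliminate blocks one by one. \Cref{p:t-left-of-L_m-2} removes $L_{m-2}, R_{m-3}, L_m, R_{m-2}$; \Cref{p:t-not-L_3,p:t-not-R_1,p:t-not-L_4} remove $L_3, R_1, L_4$; and \Cref{p:t-not-L_m-3} removes $L_{m-3}, R_{m-4}$. What remains is exactly
\[
R_3,\ L_5,\ R_4,\ L_6,\ R_5,\ \dots,\ L_{m-4},\ R_{m-5},
\]
and by the block order every one of these lies to the right of $L_4$ and to the left of $L_{m-3}$, which is the first half of the first bullet. For the reformulation I would just count indices: since $G_s = L_s \cup R_{s-1} \cup L_{s+1} \cup R_s$, the union $\bigcup_s G_s$ over odd $s$ with $5 \le s \le m-5$ uses precisely the $L$-blocks $L_5, \dots, L_{m-4}$ and the $R$-blocks $R_4, \dots, R_{m-5}$, so $R_3 \cup \bigcup_s G_s$ is exactly the list above. (When $m = 8$ there is no admissible $s$, and the list collapses to $R_3$ alone, consistent with the statement.)

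The statement for $\phi(b)$ follows in the same way from the $\phi(b)$-parts of the same five lemmas: \Cref{p:t-left-of-L_m-2} places $\phi(b)$ to the right of $R_3$, \Cref{p:t-not-L_3,p:t-not-R_1,p:t-not-L_4} remove $R_{m-2}, L_m, R_{m-3}$, and \Cref{p:t-not-L_m-3} removes $L_5$ and $R_4$, leaving exactly $L_6, R_5, L_7, R_6, \dots, L_{m-3}, R_{m-4}, L_{m-2}$; an index count as above shows this set equals $L_{m-2} \cup \bigcup_s H_s$, where $H_s = L_{s+1} \cup R_s \cup L_{s+2} \cup R_{s+1}$ and the union runs over odd $s$ with $5 \le s \le m-5$. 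One could also derive this bullet from the first using the $\rot^2$-symmetry of the construction, noting $\rot^2(S(P,X)) = S(\rot^2(P),\rot^2(X))$ and that $\rot^2$ carries the groups $H_s$ to groups of type $G$ of $\rot^2(P)$, but the direct computation is just as short.

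There is no real obstacle here beyond careful combinatorial bookkeeping: one must pin down the left-to-right order of the blocks of $S(P,X)$, paying attention to the irregular ends $L_3, R_1, L_4, R_3$ and $L_{m-2}, R_{m-3}, L_m, R_{m-2}$, and then verify that forming $R_3 \cup \bigcup_s G_s$ (respectively $L_{m-2} \cup \bigcup_s H_s$) neither omits nor repeats a block, including the degenerate case $m = 8$. No fresh argument about the embedding $\phi$ is needed, since the five cited lemmas already supply everything.
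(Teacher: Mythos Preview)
Your proposal is correct and is precisely the expansion of the paper's one-line proof, which just says the corollary follows by combining \cref{p:t-not-L_3,p:t-not-R_1,p:t-not-L_4,p:t-left-of-L_m-2,p:t-not-L_m-3}. Your block-by-block bookkeeping and the index count for $\bigcup_s G_s$ and $\bigcup_s H_s$ are accurate, including the degenerate case $m=8$.
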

	
	At this stage, we cannot easily show that both $\phi(t) \notin R_3$ and $\phi(b) \notin L_{m-2}$, but we can show that at least one of the two must be true.
	\begin{lemma}\label{p:t-not-R3-or-b}
		If $m \ge 8$, then $\phi(t) \notin R_3$ or $\phi(b) \notin L_{m-2}$
	\end{lemma}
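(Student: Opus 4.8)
The plan is to suppose, for a contradiction, that $\phi(t)\in R_3$ and $\phi(b)\in L_{m-2}$ hold simultaneously, and then to pin $\phi(t),\phi(b)$ close to the expandable row and count rows.

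By \cref{p:box-sizes-special} the topmost 1-entry of $R_3$ is $x_2^3=t^3$, which lies exactly $\vd(t,x_3)$ rows above the expandable row, and the bottommost 1-entry of $L_{m-2}$ is $x_{m-1}^{m-2}=b^{m-2}$, which lies exactly $\vd(x_{m-2},b)$ rows below it (these positions are unaffected by the shifts in the construction of $S(P,X)$, since for $m\ge 8$ the block $R_3^\rT$ is not moved up and $L_{m-2}^\rB$ is not moved down). Hence $\phi(t)$ lies at most $\vd(t,x_3)$ rows above, and $\phi(b)$ at most $\vd(x_{m-2},b)$ rows below, the expandable row, so $\vd(\phi(t),\phi(b))\le\vd(t,x_3)+\vd(x_{m-2},b)$, while \cref{p:phi_obs} gives $\vd(\phi(t),\phi(b))\ge\vd(t,b)$. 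Since $t$ occupies the first and $b$ the last of the $k$ rows of the $k\times k$ matrix $P$, this yields $\vd(t,x_3)+\vd(x_{m-2},b)\ge\vd(t,b)=k-1$, which forces the row of $x_{m-2}$ to lie strictly above the row of $x_3$; in every other case the inequality already fails. Combining this with $x_4\ltv x_6\ltv\dots\ltv x_{m-2}$ and $x_3\ltv x_5\ltv\dots\ltv x_{m-3}$, every one of $x_4,x_6,\dots,x_{m-2}$ then lies above every one of $x_3,x_5,\dots,x_{m-3}$; in particular $x_4$ lies above $x_5$.

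It remains to rule out this last configuration, and I expect this to be the main obstacle. The plan here is a finer analysis of $\phi$ near the expandable row, in the spirit of the proof of \cref{p:t-not-R_3_except}. First one localizes $\phi(t),\phi(b)$ horizontally: from $\phi(t)\in R_3$, the height argument of \cref{p:t-not-R_3_except} applied to $x_3$ and the 1-entry of $P$ directly below it shows that $\phi(x_3)$ lies below the expandable row and hence $\phi(r),\phi(b)\in B$; and by the $\rot^2$-symmetry of $S(P,X)$ — which exchanges $R_3$ with $L_{m-2}$, $t$ with $b$, $x_3$ with $x_{m-2}$, and $B$ with $T$ — one dually obtains $\phi(\ell),\phi(t)\in T$, so in fact $\phi(t)\in R_3^\rT$ and $\phi(b)\in L_{m-2}^\rB$. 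Then one tracks the 1-entries near the start of $X$: since $x_1\lth x_3\lth x_2$, the images $\phi(\ell),\phi(x_3)$ lie in or to the left of $R_3$; since $x_2\lth x_5\lth x_4$, the images $\phi(x_4),\phi(x_5)$ lie just to the right of $R_3$; and $\phi(x_4)$ lies above $\phi(x_5)$ by the vertical order established above. The aim is to use tallness of $X$ together with this vertical order to force one of these images into a provably empty region of $S(P,X)$, mirroring the third case of the proof of \cref{p:t-not-R_3_except}. The delicate point is that for $m=8$ the non-extendability hypothesis on $X$ is vacuous — there is no admissible index $s$ with $5\le s\le m-5$ — so this final contradiction must be extracted purely from the width--height bookkeeping of the blocks adjacent to the expandable row, rather than by producing a longer tall traversal.
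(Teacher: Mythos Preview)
Your proposal is incomplete. The row-counting in the first paragraph is sound and does yield $x_{m-2}\ltv x_3$, but the punchline you draw from it, ``in particular $x_4$ lies above $x_5$'', is a triviality that holds in \emph{every} traversal: property~(\ref{prop:trav-vert}) at $s=3$ gives $x_4\ltv x_3$, and property~(\ref{prop:trav-lower}) gives $x_3\ltv x_5$. So nothing is ruled out at that point, and the first paragraph produces no contradiction. The second paragraph is a plan rather than a proof: you do not actually pin down the columns of $\phi(x_4),\phi(x_5)$ (``just to the right of $R_3$'' is not justified; they could lie inside $R_3$ or far to the right), and you never name the ``provably empty region'' into which some image is forced. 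The explicit worry about non-extendability being vacuous for $m=8$ is a sign that the argument has drifted: the paper's proof uses neither non-extendability nor tallness at a specific index, nor the inequality $x_{m-2}\ltv x_3$.

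The missing idea is a one-line \emph{horizontal} contradiction. From $\phi(t)\in R_3$ and $\height(R_3^\rT)<\vd(t,x_3)$ one gets $\phi(x_3)\in M$, hence $\phi(x_5)\in B$. Since $\phi(x_5)$ must sit above $\phi(b)\in L_{m-2}$ and the shifted bottom blocks $L_s^\rB,R_s^\rB$ with $s\le m-4$ all lie strictly below $L_{m-2}^\rB$ (\cref{p:alignment}), one obtains $\phi(x_5)\in L_{m-2}^\rB\cup R_{m-3}^\rB\cup L_m^\rB\cup R_{m-2}^\rB$, the four rightmost blocks of $S(P,X)$. By the $\rot^2$-symmetry you already invoke, the dual argument puts $\phi(x_{m-4})\in L_3^\rT\cup R_1^\rT\cup L_4^\rT\cup R_3^\rT$, the four leftmost blocks. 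But $x_5\lth x_4\leh x_{m-4}$ in $P$, so $\phi(x_5)$ cannot be to the right of $\phi(x_{m-4})$; contradiction.
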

	\begin{proof}
		Suppose $\phi(t) \in R_3$ and $\phi(b) \in L_{m-2}$. Since $\height_\phi(R_3^\rT \cup M) \le \vd(t, x_3) < \vd(t,x_5)$, we have $\phi(x_5) \! \in \! B$. More precisely, as $b \! \in \! L_{m-2}$, we have ${\phi(x_5) \! \in \! L_{m-2}^\rB \cup R_{m-3}^\rB \cup L_m^\rB \cup R_{m-2}^\rB}$. Similarly, $\phi(x_{m-4}) \in L_3^\rT \cup R_1^\rT \cup L_4^\rT \cup R_3^\rT$. In particular, $\phi(x_5)$ is to the right of $\phi(x_{m-4})$. But~$x_5 \lth x_4 \leh x_{m-4}$, a contradiction.
	\end{proof}
	
	Note that \Cref{p:t-not-borders,p:t-not-R3-or-b} completely resolve the case $m = 8$.
	
	In the following two subsections, we show that the remaining possibilities also lead to a contradiction. In \cref{sec:middle_diffgroup}, we treat the easier case, where $\phi(t) \in G_s$ for some odd $s$ with~$5 \le s \le m-5$, and $\phi(b)$ is to the right of $R_{s+1}$ (i.e., to the right of $H_s$). This also handles the symmetric case where $\phi(b) \in H_s$ and $\phi(t)$ is to the left of $G_s$. In \cref{sec:middle_samegroup}, we consider the case where $\phi(t) \in G_s$ and $\phi(b) \in H_s$.
	
	\begin{figure}
		\centering
		\begin{tikzpicture}[
			scale=0.3,
			midline/.style={densely dashed},
			rest/.style={densely dashed},
			label/.style={font=\footnotesize}
			]

\begin{scope}[shift={(-7,0)}]
	\begin{scope}[shift={(0,1)}]
		\draw (8,1.5) rectangle (9,2.5);
		\node[label] at (8.5,3.5) {$L_s^\rT$};
		\draw (12,2.5) rectangle (13,3.5);
		\node[label] at (12.5,4.5) {$L_{s+1}^\rT$};
		\draw (14,1.5) rectangle (15,2.5);
		\node[label] at (14.5,3.5) {$R_s^\rT$};
		\draw (16,3.5) rectangle (17,4.5);
		\node[label] at (16.5,5.5) {$L_{s+2}^\rT$};
		
		\draw[|-|] (8,6) -- (15,6);
		\node[label, above] at (11.5, 6) {$G_s$};
	\end{scope}
	\begin{scope}[shift={(0,1)}]
		\draw (10,-7.5) rectangle (11,-6.5);
		\node[label] at (10.5,-8.5) {$R_{s-1}^\rB$};
		\draw (12,-5.5) rectangle (13,-4.5);
		\node[label] at (12.5,-6.5) {$L_{s+1}^\rB$};
		\draw (14,-6.5) rectangle (15,-5.5);
		\node[label] at (14.5,-7.5) {$R_s^\rB$};
		\draw (18,-5.5) rectangle (19,-4.5);
		\node[label] at (18.5,-6.5) {$R_{s+1}^\rB$};
		
		\draw[|-|] (12,-9) -- (19,-9);
		\node[label, below] at (15.5, -9) {$H_s$};
	\end{scope}
	\draw[midline] (8,0) -- (9,0);
	\draw[midline] (12,0) -- (13,0);
	\draw[midline] (14,0) -- (15,0);
	\draw[midline] (16,0) -- (17,0);
	\draw[midline] (10,-1) -- (11,-1);
	\draw[midline] (12,-1) -- (13,-1);
	\draw[midline] (14,-1) -- (15,-1);
	\draw[midline] (18,-1) -- (19,-1);
\end{scope}

\draw[rest] (-2,2) -- (0,2) -- (0,0) -- (-2,0);
\draw[rest] (13,7) -- (13,5) -- (15,5);
\draw[rest] (15,-3) -- (13,-3) -- (13,-1) -- (15,-1);
\draw[rest] (-2,-7) -- (0,-7) -- (0,-9);
			
			\begin{scope}[shift={(22, -1)}, font=\footnotesize]
				\tNamedPoint{-1,4}{left}{$\ell$}
				\tNamedPoint{0,2}{below left}{$x_3$}
				\tNamedPoint{1,5}{above right}{$t$}
				\tNamedPoint{2,0}{below left}{$x_5$}
				\tNamedPoint{3,3}{above right}{$x_4$}
				\draw (-1,5) -- (-1,2) -- (1,2) -- (1,0) -- (3,0);
				\draw (-1,5) -- (0,5) -- (2,5) -- (2,3) -- (4,3);
			\end{scope}
			\begin{scope}[shift={(30, -3)}, font=\footnotesize]
				\tNamedPoint{0,2}{below left}{$x_s$}
				\tNamedPoint{1,5}{above right}{$x_{s-1}$}
				\tNamedPoint{2,0}{below left}{$x_{s+2}$}
				\tNamedPoint{3,3}{above right}{$x_{s+1}$}
				\draw (-1,2) -- (1,2) -- (1,0) -- (3,0);
				\draw (0,5) -- (2,5) -- (2,3) -- (4,3);
			\end{scope}
			\begin{scope}[shift={(38, -5)}, font=\footnotesize]
				\tNamedPoint{0,2}{below left}{$x_{m-3}$}
				\tNamedPoint{1,5}{above right}{$x_{m-4}$}
				\tNamedPoint{2,0}{below left}{$b$}
				\tNamedPoint{3,3}{above right}{$x_{m-2}$}
				\tNamedPoint{4,1}{right}{$r$}
				\draw (-1,2) -- (1,2) -- (1,0) -- (4,0);
				\draw (0,5) -- (2,5) -- (2,3) -- (4,3) -- (4,0);
			\end{scope}
		\end{tikzpicture}
		\caption{\emph{(left)} A sketch of parts of $S(P,X)$. Here, $s$ is odd and $5 \le s \le m-5$. The dashed lines and open rectangles indicate $M$ and the rest of $S(P,X)$. \emph{(right)} Sketches of three (not necessarily disjoint) parts of $P$. The solid lines illustrate tallness.}\label{fig:big-constr-middle-sketch}
	\end{figure}

	\subsubsection{\texorpdfstring{$\phi(t)$}{phi(t)}, \texorpdfstring{$\phi(b)$}{phi(b)} in the middle and far from each other}\label{sec:middle_diffgroup}
	
	The following lemma is central to this subsection and will also be useful later on.
	
	\begin{lemma}\label{p:x_s_above_implies}
		Let $s$ be odd with $5 \le s \le m-5$ such that $\phi(t) \in G_s$. Then $\phi(x_s)$ is below the expandable row, or $\phi(\ell), \phi(t), \phi(x_s) \in L_{s+1}$.
	\end{lemma}
	\begin{proof}
		Assume that $\phi(\ell), \phi(t), \phi(x_s) \in L_{s+1}$ does not hold. We show that then $\phi(x_s)$ is below the expandable row. Note that $\phi(t) \in G_s$ implies that $\phi$ maps no 1-entry into $G_u^\rT$ for $u > s$.
		\begin{casedist}
			\item $\phi(\ell) \notin L_s \cup L_{s+1} \cup R_s$. Then $\phi(\ell)$ is below $L_s^\rT \cup R_s^\rT$. Since $x_4$ is to the right of $t$ and below $\ell$, this implies that $x_4 \in M \cup B$. Since $x_4$ is above $x_s$, this means that $\phi(x_s)$ is in the bottom row of $M$ or further below, so $\phi(x_s)$ is below the expandable row.\label{case:s_u_x_s_below_exp:easy}
			
			\item $\phi(t) \notin L_s \cup L_{s+1} \cup R_s$. Then $\phi(t) \in R_{s-1}$, so $\phi(t)$ is below the expandable row, implying the same for $\phi(x_s)$.
			
			\item $\phi(\ell), \phi(t) \in L_s \cup R_s$. Since $\phi$ does not map any 1-entry to a position below $L_s^\rT \cup R_s^\rT$ and above $M$, we have $\height_\phi(L_s^\rT \cup L_s^\rM \cup R_s^\rT \cup R_s^\rM) \le \vd(t, x_s)$. Thus, $\phi(x_s)$ is in the bottom row of $M$ or further below.
			
			\item $\phi(\ell) \in L_s$ and $\phi(t) \in L_{s+1}$. Since $x_4$ is below $\ell$ and to the right of $t$, we have either~$\phi(x_4) \in M \cup B$ or $\phi(x_4) \in R_s^\rT$. In the former case, we are done, as in case~\ref{case:s_u_x_s_below_exp:easy}. In the latter case, note that $\phi$ does not map any 1-entry of $P$ into a row below $L_S^\rT \cup R_s^\rT$ and above $M$, thus $\height_\phi(R_s^\rT \cup R_s^\rM) = \vd(x_{s-1}, x_s) \le \vd(x_4, x_s)$ by \cref{p:box-sizes-special}. Thus, $\phi(x_s)$ is below the expandable row.
			
			\item $\phi(\ell), \phi(t) \in L_{s+1}$ and $\phi(x_s) \notin L_{s+1}$. Since $x_s$ is to the right of $t$, this means that $\phi(x_s)$ is to the right of $L_{s+1}$. Suppose $x_s$ is above the expandable row. Then all 1-entries in~$L_{s+1}^\rM \cup L_{s+1}^\rB$ are below or in the same row as $x_s$. Tallness of $X$, together with the fact that $\phi$ maps no two 1-entries into the same row, implies that $\phi$ maps no 1-entries into $L_{s+1}^\rM \cup L_{s+1}^\rB$.
			But then $\phi$ maps every 1-entry of $P$ either to $L_{s+1}^\rT$ or to the right and below $L_{s+1}^\rT$, and $\phi(t) \in L_{s+1}^\rT, \phi(b) \notin L_{s+1}^\rT$. This means that $P$ is decomposable, a contradiction.\qedhere
		\end{casedist}
	\end{proof}
	
	\begin{lemma}\label{p:s_u_x_s_below_exp}
		For each odd $s$ with $5 \le s \le m-5$, if $\phi(t) \in G_s$ and $\phi(b)$ is to the right of $H_s$, then $\phi(x_s)$ is below the expandable row.
	\end{lemma}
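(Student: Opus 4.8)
The plan is a proof by contradiction. Assume $\phi(x_s)$ is not below the expandable row; since that row is empty, $\phi(x_s)$ then lies strictly above it, i.e.\ in $T$ or in the row of $M$ directly above the expandable row. I would start from a few structural facts, all immediate from the traversal properties: $t=x_2$ is the topmost 1-entry of $P$, and, chasing properties (iii) and (v), $\ell=x_1$ lies above $x_4$, which lies above $x_3$, which by (iv) lies above $x_s$; hence both $\phi(t)$ and $\phi(\ell)$ lie above $\phi(x_s)$, so strictly above the expandable row. Moreover $x_s$ lies to the right of $t$ (horizontal order from (ii)) while $\ell$ is leftmost, so $\phi(\ell)$ lies to the left of $\phi(t)\in G_s$ and $\phi(x_s)$ lies to the right of $\phi(t)$; in particular $\phi(x_s)$ is in the part of $G_s$ to the right of $\phi(t)$, or in some block strictly to the right of $G_s$.

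The heart of the proof is a case analysis on which of the four blocks $L_s,R_{s-1},L_{s+1},R_s$ contains $\phi(t)$, subdivided by whether $\phi(x_s)$ lies inside $G_s$ or strictly to its right. In each branch I would pin down the images of a handful of anchor 1-entries---chiefly $\ell$, $x_3$, $x_4$, $x_5$, $x_{s-1}$, $x_{s+1}$, $b$, $r$---by repeatedly invoking three tools. First, the width bounds $\width(L_u)=\hd(\ell,x_u)-1$ and $\width(R_u)=\hd(x_u,r)-1$ from \cref{p:box-sizes} together with \cref{p:phi_obs}, to rule out configurations in which, say, $\phi(\ell)$ and $\phi(r)$, or $\phi(x_5)$ and $\phi(r)$, would need more columns than the blocks available to them supply. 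Second, the alignment and emptiness facts \cref{p:alignment,p:empty-boxes}: the $T$-parts of blocks of index $<s$ sit below those of index $\ge s$, $L_s^\rB=R_{s-1}^\rT=\emptyset$, and so on; these force a ``high'' anchor such as $\phi(x_4)$ into the $M$- or $B$-region, which (using $\height_\phi(M)\le 1<\vd(x_4,x_5)$ from \cref{p:box-sizes} and $x_4$ above $x_5$) drags $\phi(x_5)$ into $B$, and then---since $\phi(b)$, hence $\phi(r)$, lies to the right of $R_{s+1}$ while $\width(R_{s+1})<\hd(x_5,r)$---forces $\phi(x_5)$ below $\phi(r)$, contradicting that $x_5$ lies above $r$. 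Third, the refined height bounds of \cref{p:box-sizes-special}, e.g.\ $\height(R_s^\rT)\le\vd(x_{s-1},x_s)-2$, to eliminate the sub-cases where the relevant images try to stay near the expandable row. Where no width or height contradiction is immediate, the forced placement of $\phi$ leaves a whole band of columns of $S(P,X)$ untouched, which exhibits $P$ as a sum-decomposition---contradicting indecomposability of $P$, which itself follows from $X$ being a traversal.

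The main obstacle is keeping this case distinction manageable: the four positions of $\phi(t)$ behave rather differently, and the two borderline ones---$\phi(t)\in R_s$, where $t$ has no copy in $R_s$ at all, and $\phi(t)\in L_{s+1}$, whose $T$-part sits one ``level'' above those of $L_s$ and $R_s$ by \cref{p:alignment}---seem to require first locating $\phi(x_{s-1})$ or $\phi(x_{s+1})$ before the contradiction surfaces. I would use the $180^\circ$-rotation symmetry of the construction noted earlier only to trim symmetric sub-cases, and I do not expect non-extendability of $X$ to be needed here, since the statement concerns a fixed embedding and a fixed group $G_s$.
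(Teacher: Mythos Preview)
Your framework—case analysis on the block containing $\phi(t)$, driven by the width/height bounds of \cref{p:box-sizes,p:box-sizes-special}, the emptiness facts of \cref{p:empty-boxes}, and indecomposability—is exactly what the paper does, and your remark that non-extendability is not needed here is correct. Two concrete problems, however. First, your sample chain is broken: the inequality $\height_\phi(M)\le 1<\vd(x_4,x_5)$ can fail (nothing prevents $x_4,x_5$ from being in adjacent rows), and the subsequent step ``$\width(R_{s+1})<\hd(x_5,r)$ forces $\phi(x_5)$ below $\phi(r)$'' is a non sequitur—$\phi(x_5)$ need not be anywhere near $R_{s+1}$. The good news is that this whole detour is unnecessary: once you have $\phi(x_4)\in M\cup B$, you are already done, since $x_4\ltv x_3\lev x_s$ forces $\phi(x_s)$ strictly below $\phi(x_4)$, hence below the expandable row. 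This is precisely how the paper dispatches its Case~2.

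Second, and more seriously, the device you are missing is that the paper cases on $\phi(\ell)$ as well as $\phi(t)$. When $\phi(\ell)\notin L_s\cup L_{s+1}\cup R_s$ one gets $\phi(x_4)\in M\cup B$ for free and the argument above finishes; when $\phi(\ell)$ lies in $L_s$ or $R_s$ (paper's Cases~3--4) a direct height count on $L_s^\rT\cup L_s^\rM$ or $R_s^\rT\cup R_s^\rM$ does the job. The genuinely hard case is $\phi(t),\phi(\ell)\in L_{s+1}$, which you flag but do not handle: here $\phi(x_4)$ may sit in $L_{s+1}^\rT$ and your sketch gives no purchase. The paper resolves it by first forcing $\phi(x_s)\in L_{s+1}^\rM$, then pinning $\phi(x_{s+1})\in L_{s+1}^\rT\cup R_s^\rT$ and $\phi(x_{s+2})\in L_{s+1}^\rM\cup R_s^\rM$ (this is where the hypothesis ``$\phi(b)$ is to the right of $R_{s+1}$'' is actually used, to exclude $L_{s+1}^\rB\cup R_s^\rB$), and observing that $\phi$ then maps everything either into $A=L_{s+1}^\rT\cup L_{s+1}^\rM\cup R_s^\rT\cup R_s^\rM$ or below and to the right of $A$, contradicting indecomposability. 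Your outline does not supply this argument, and without it the case $\phi(t)\in L_{s+1}$ remains open.
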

	\begin{proof}
		Suppose $\phi(x_s)$ is above the expandable row. By \cref{p:x_s_above_implies}, ${\phi(\ell), \phi(t), \phi(x_s) \in L_{s+1}}$. Since $\height(L_{s+1}^\rT) < \vd(t, x_s)$, we know that $\phi(x_s)$ is below $L_{s+1}^\rT$, so $\phi(x_s)$ must be in the top row of $L_{s+1}^\rM$.
		
		$x_{s+1}$ is above and to the right of $x_s$, implying that $\phi(x_{s+1}) \in L_{s+1}^\rT \cup R_s^\rT$. Further, $x_{s+2}$ is below $x_s$, so $\phi(x_{s+2}) \in M \cup B$, and $x_s \lth x_{s+2} \lth x_{s+1}$, so $\phi(x_{s+2}) \in L_{s+1} \cup R_s$. Since $\phi(b)$ is to the right of $H_s$ by assumption, we know that $\phi$ maps no 1-entry to $H_s^\rB$. \linebreak Thus,~$\phi(x_{s+2}) \in L_{s+1}^\rM \cup R_s^\rM$.
		
		But now $\phi(x_s), \phi(x_{s+2}) \in L_{s+1}^\rM \cup R_s^\rM$, so $\phi$ maps no further 1-entries to $M$. Therefore, $\phi$ maps every 1-entry either to $A = L_{s+1}^\rT \cup L_{s+1}^\rM \cup R_s^\rT \cup R_s^\rM$, or below and to the right of $A$ (and~$\phi(t) \in A$, $\phi(b) \notin A$). This means $P$ is decomposable, a contradiction.
	\end{proof}
	
	We now consider a simple special case.
	
	\begin{lemma}\label{p:t_Gs_b_right}
		If $\phi(t) \in G_s$ for some odd $s$ with $5 \le s \le m-5$, then $\phi(b) \notin L_{m-2}$.
		
		Moreover, if $\phi(b) \in H_s$ for some odd $s$ with $5 \le s \le m-5$, then $\phi(t) \notin R_3$.
	\end{lemma}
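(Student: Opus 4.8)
The plan is to first reduce to one of the two statements by symmetry, and then combine \cref{p:s_u_x_s_below_exp} with the block geometry of $S(P,X)$. By the $180^\circ$-rotation symmetry argument used for \cref{p:t-not-L_3} and the lemmas following it, the two assertions of the lemma are interchanged: a traversal point $x_s$ with $5\le s\le m-5$ corresponds to a point with the same index range in $\rot^2(P)$, the group $G_s$ of $\rot^2(P)$ corresponds to $H_{m-s}$ of $P$, and the block $R_3$ of $\rot^2(P)$ corresponds to $L_{m-2}$ of $P$. So it suffices to prove the first assertion. Suppose $\phi(t)\in G_s$ for some odd $s$ with $5\le s\le m-5$ and, for contradiction, $\phi(b)\in L_{m-2}$. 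Since $s+1\le m-4$, the block $L_{m-2}$ lies to the right of $R_{s+1}$ in the concatenation defining $S(P,X)$, so $\phi(b)$ lies to the right of $R_{s+1}$; hence \cref{p:s_u_x_s_below_exp} applies and $\phi(x_s)$ lies below the expandable row.

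Next I would propagate this downward. As $x_s$ is a lower (odd-index) traversal point, traversal properties \ref{prop:trav-hor} and \ref{prop:trav-lower} place the lower traversal points of larger index, $x_{s+2},x_{s+4},\dots,x_{m-3}$, as well as $b$ and $r$, strictly below and strictly to the right of $x_s$ in $P$; so their $\phi$-images lie strictly below the expandable row, that is, in $B$. In particular $\phi(b)\in L_{m-2}^\rB$ and $\phi(r)\in B$. Each of $x_{s+2},\dots,x_{m-3}$ is above $b$ but to the left of $b$ in $P$, so its image is above and to the left of $\phi(b)$. By \cref{p:empty-boxes} the only possibly-nonempty $\rB$-blocks strictly to the left of $L_{m-2}$ have index at most $m-4$, and by \cref{p:alignment} every such block lies strictly below $L_{m-2}^\rB$; an image landing there would therefore lie below $\phi(b)$, which is impossible. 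Hence $\phi(x_{s+2}),\dots,\phi(x_{m-3})$ all lie inside $L_{m-2}^\rB$, to the left of $\phi(b)$.

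The remaining step — deriving the contradiction — is where I expect the real work, and it is the \emph{main obstacle}. Having crowded $\phi(x_{s+2}),\dots,\phi(x_{m-3}),\phi(b)$ into $L_{m-2}^\rB$, I would locate $\phi(x_{m-2})$ and the upper traversal points near the right end of $P$ — probably after a short case split on whether $\phi(t)$ lies in $L_s$, $R_{s-1}$, $L_{s+1}$ or $R_s$, in the style of \cref{p:t-not-R_1} and \cref{p:t-not-L_4} — and argue that $\phi(r)$ is then also forced into $L_{m-2}^\rB$. With $\phi(x_{s+2}),\dots,\phi(x_{m-3}),\phi(b),\phi(r)$ all confined to the single block $L_{m-2}^\rB$, the estimates $\width(L_{m-2})=\hd(\ell,x_{m-2})-1$ and $\height(L_{m-2}^\rB)=\vd(x_{m-2},b)-2$ of \cref{p:box-sizes}, together with \cref{p:phi_obs}, should make the horizontal or vertical extent of $\{x_{s+2},\dots,x_{m-3},b,r\}$ in $P$ exceed what $L_{m-2}^\rB$ can accommodate. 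The delicate point is to check that one of these bounds is genuinely violated; I expect the tallness of $X$ (via \cref{p:box-sizes-special}) and the identity of $\phi(t)$'s block to be exactly what forces it.
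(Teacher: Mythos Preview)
Your opening moves --- the symmetry reduction and the application of \cref{p:s_u_x_s_below_exp} --- match the paper exactly, and your argument placing $\phi(x_{s+2}),\dots,\phi(x_{m-3}),\phi(b)$ into $L_{m-2}^\rB$ is correct. The gap is precisely the one you flag as the ``main obstacle'': those lower traversal points alone cannot produce a contradiction. None of them is guaranteed to lie above $x_{m-2}$ in $P$ (indeed $x_{m-3}$ is \emph{below} $x_{m-2}$ by~(\ref{prop:trav-vert})), so the height bound $\height(L_{m-2}^\rB)=\vd(x_{m-2},b)-2$ of \cref{p:box-sizes} is never violated; and since $\ell\lth x_{s+2}$ and $b\lth x_{m-2}$, the width bound $\hd(\ell,x_{m-2})-1$ is not violated either. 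Your proposed case split on the block containing $\phi(t)$ and the chase after $\phi(r)$ would be real additional work, and it is unnecessary.

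The missing idea is to pivot from the lower points to the single \emph{upper} point $x_{s+1}$. You already have $\phi(x_{s+2})\in L_{m-2}^\rB$ and $\phi(b)\in L_{m-2}$; since $x_{s+2}\lth x_{s+1}\lth b$ by~(\ref{prop:trav-hor}), this squeezes $\phi(x_{s+1})$ into $L_{m-2}$ horizontally. Moreover $\phi(t)\in G_s$ lies below every entry of $L_{m-2}^\rT$ by \cref{p:alignment} (the $\rT$-indices in $G_s$ are at most $s+1<m-2$), so $\phi(x_{s+1})\in L_{m-2}^\rM\cup L_{m-2}^\rB$. Now property~(\ref{prop:trav-upper}) supplies exactly the vertical comparison the lower points could not: $x_{s+1}\lev x_{m-4}\ltv x_{m-2}$, hence
\[
\vd(x_{s+1},b)\ \ge\ \vd(x_{m-4},b)\ >\ \vd(x_{m-2},b)\ \ge\ \height_\phi\bigl(L_{m-2}^\rM\cup L_{m-2}^\rB\bigr),
\]
an immediate contradiction with no case analysis. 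The paper reaches the same endpoint by first claiming $\phi(x_s)\in L_{m-2}$ and then sandwiching $\phi(x_{s+1})$ between $\phi(x_s)$ and $\phi(b)$; your established fact $\phi(x_{s+2})\in L_{m-2}^\rB$ serves the same purpose.
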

	\begin{proof}
		By symmetry, it suffices to show the first statement. Suppose $\phi(b) \in L_{m-2}$. Then, $\phi(b)$ is to the right of $R_{m-4}$, and thus to the right of $R_{s+1}$, so \cref{p:s_u_x_s_below_exp} implies that $\phi(x_s)$ is below the expandable row. Since $x_s$ is to the left and above $b$, we have $\phi(x_s) \in L_{m-2}$. \linebreak Since $x_s \lth x_{s+1} \lth b$, and $x_{s+1}$ is below $t$, we have $\phi(x_{s+1}) \in L_{m-2}^\rM \cup L_{m-2}^\rB$. But \linebreak $\height_\phi(L_{m-2}^\rM \cup L_{m-2}^\rB) \le \vd(x_{m-2}, b) < \vd(x_{m-4}, b) \le \vd(x_{s+1}, b)$, a contradiction.
	\end{proof}
	
	We proceed with the main case of this subsection.
	
	\begin{lemma}\label{p:t_Gs_b_Hu}
		Let $s, u$ be odd such that $5 \le s < u \le m-5$. If $\phi(t) \in G_s$, then $\phi(b) \notin H_u$.
	\end{lemma}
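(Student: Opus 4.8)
The plan is to reduce to \cref{p:s_u_x_s_below_exp}, applied both directly and in its $\rot^2$-rotated form, pin down that $\phi(x_s)$ lies below and $\phi(x_{u+1})$ lies above the expandable row, and then rule out the resulting configuration using tallness and non-extendability of $X$.

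First I would observe that, since $u \ge s+2$, the leftmost block $L_{u+1}$ of $H_u$ comes after $R_{s+1}$ in the horizontal concatenation defining $S(P,X)$ (it is $L_{s+3}$ or further right), so $H_u$ lies entirely to the right of $R_{s+1}$. Hence $\phi(b) \in H_u$ lies to the right of $R_{s+1}$, and \cref{p:s_u_x_s_below_exp} yields that $\phi(x_s)$ is below the expandable row. For the symmetric statement, I would apply \cref{p:s_u_x_s_below_exp} to $\rot^2(P)$ with the traversal $\rot^2(X)$ and the embedding $\phi'$ of $\rot^2(P)$ into $\rot^2(S(P,X)) = S(\rot^2(P),\rot^2(X))$ induced by $\phi$, exactly as in the $\rot^2$-symmetry argument introduced after the statement of \cref{p:big-S-avoids}. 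Under the $180$-degree rotation, $G_{s'}$ of the rotated construction corresponds to $H_{m-s'}$ of $S(P,X)$, the block $R_{s'+1}$ corresponds to $L_{m-s'}$, and $x_{s'}$ corresponds to $x_{m+1-s'}$; taking $s' = m-u$ and noting that $\phi(t) \in G_s$ lies to the left of $L_u$ (since $u \ge s+2$, the block $L_u$ is $L_{s+2}$ or further right, hence to the right of all of $G_s$) while $\phi(b) \in H_u$, the rotated lemma gives that $\phi(x_{u+1})$ lies above the expandable row.

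Since $\phi$ respects the vertical order, $\phi(x_s)$ below the expandable row and $\phi(x_{u+1})$ above it force $x_s$ to lie below $x_{u+1}$ in $P$, and it remains to contradict this. I would first note that each image $\phi(x_i)$ lies strictly above or strictly below the (empty) expandable row, and that for even $i$ the 1-entry $x_i$ is above $x_{i+1}$ while for odd $i$ it is below; hence the ``below-to-above'' transition along $\phi(x_s), \phi(x_{s+1}), \dots, \phi(x_{u+1})$ must occur across an odd index, so after renaming we may assume an odd $j$ with $5 \le j \le m-5$, $\phi(x_j)$ below the expandable row, and $\phi(x_{j+1})$ above it. Now $x_{j+1}$ is above and to the right of $x_j$, so $\phi(x_{j+1})$ lies in a staggered $T$-block strictly right of and above the $B$-block (or $M$-block) containing $\phi(x_j)$; locating both precisely via \cref{p:alignment,p:empty-boxes,p:box-sizes,p:box-sizes-special}, tallness of $X$, and the already-known positions of $\phi(t),\phi(b)$, one reaches in each sub-case either a width or height violation (in the spirit of the proofs of \cref{p:t-not-L_4,p:t-not-R_3_except}), a decomposition of $P$ (as in the decomposability branches of \cref{p:not-t-L_s-b-R_s-1,p:s_u_x_s_below_exp}), or a pair of 1-entries of $P$ whose insertion between $x_j$ and $x_{j+1}$ produces a strictly longer traversal, contradicting non-extendability of $X$.

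The step I expect to be the main obstacle is this last one. For an oscillation the configuration ``$x_j$ below $x_{j+1}$ but $\phi$ jumps the expandable row there'' cannot arise, because rigidity (\cref{p:spanosc-rigidity}) forces $x_j$ above $x_{j+2}, x_{j+3}, \dots$; in the traversal setting this rigidity is deliberately dropped, which is precisely why the contradiction must ultimately invoke non-extendability, and organizing the sub-cases so that the correct contradiction is visible in each is where the real work lies.
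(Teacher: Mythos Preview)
Your opening is correct and actually more symmetric than the paper's approach: applying \cref{p:s_u_x_s_below_exp} once directly and once in its $\rot^2$ form cleanly yields both $\phi(x_s)$ below the expandable row and $\phi(x_{u+1})$ above it. The verification that $H_u$ lies to the right of $R_{s+1}$ and $G_s$ lies to the left of $L_u$ is fine, and your index translation $s'=m-u$, $x'_{s'}\leftrightarrow x_{u+1}$ is accurate.

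The problem is everything after that. From ``$\phi(x_s)$ below, $\phi(x_{u+1})$ above'' you only extract that $x_s$ lies below $x_{u+1}$, and then try to chase a below-to-above transition at some odd $j$ to a contradiction via tallness, decomposability, or non-extendability. This is not a proof: the final paragraph is a list of tools, not an argument, and you yourself flag it as ``where the real work lies''. In particular, invoking non-extendability here is a misdirection; the paper never uses it for this lemma (it is needed only much later, in \cref{p:not-L_s+1}).

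The paper proceeds quite differently. After the same first step ($\phi(x_s)$ below the expandable row), it stays on the $H_u$ side: first it shows $\phi(r)\in H_u$, then uses $\phi(b),\phi(r)\in H_u$ together with $\phi(x_u)\in B$ to force $\phi(x_u),\phi(x_{u-1})\in H_u$; since nothing is mapped to $H_u\cap T$, this gives $\phi(x_{u-1})\in M\cup B$ and hence $\phi(x_{u+1})$ \emph{below} the expandable row. The contradiction then comes from a short three-case split on whether $\phi(b),\phi(r)$ lie in $L_{u+1}\cup R_{u+1}$, both in $R_u$, or in $R_u$ and $R_{u+1}$ respectively.

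There is an irony here: your rotation step actually furnishes a cleaner finish than either you or the paper wrote. The paper establishes $\phi(x_{u+1})$ below the expandable row; your $\rot^2$-application of \cref{p:s_u_x_s_below_exp} establishes it is above. Together these are an immediate contradiction, with no three-case analysis needed. So if you replace your transition-chasing paragraph by the paper's short chain $\phi(r)\in H_u \Rightarrow \phi(x_u)\in H_u \Rightarrow \phi(x_{u-1})\in M\cup B \Rightarrow \phi(x_{u+1})$ below the expandable row, you get a complete proof that is shorter than the paper's.
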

	\begin{proof}
		Suppose $\phi(b) \in H_u$. Note that then $\phi$ maps no 1-entry to $G_s^\rB$ or $H_u^\rT$.
		
		\Cref{p:s_u_x_s_below_exp} implies that $\phi(x_s)$ is below the expandable row. Since $x_u$ is below $x_s$, we have~$\phi(x_u) \in B$. Moreover, $\phi(x_u)$ is to the left and above $\phi(b) \in H_u$, so $\phi(x_u) \in H_u^\rB$.
		
		Since $x_u \lth x_{u-1} \lth x_{u+1} \lth b$, we have $\phi(x_{u-1}), \phi(x_{u+1}) \in H_u$. Note that $\phi$ maps nothing to $H_u^\rT$, so $\phi(x_{u-1}) \in H_u^\rM \cup H_u^\rB$, and $\phi(x_{u+1})$ is below the expandable row, as $x_{u+1}$ is below $x_{u-1}$.
		
		Further, $x_u \ltv r \ltv b$, so $\phi(r) \in H_u^\rB$. Thus, $\phi$ does not map any 1-entries to the rows between $M$ and $H_u^\rB$, so $\height_\phi(M \cup L_{u+1}^\rB \cup R_{u+1}^\rB) \le \vd(x_{u+1}, b)$. Since $\phi(x_{u+1})$ is not in the top row of $M$, this means that $\phi(b)$ is below $L_{u+1}^\rB \cup R_{u+1}^\rB$, so $\phi(b) \in R_u^\rB$.
		
		Consider now $\phi(x_{u+2})$. Since $\phi(x_{u+1})$ is below the expandable row and, by \cref{p:box-sizes,p:box-sizes-special}, $\height_\phi(L_{u+1}^\rM \cup L_{u+1}^\rB) \le \vd(x_{u+1}, x_{u+2})$, we know that $\phi(x_{u+2})$ is below $L_{u+1}^\rB$. Further, $x_{u+2}$ is to the left of $b$, so $\phi(x_{u+2}) \in R_u^\rB$. Since $r$ is below $x_{u+2}$, this implies $\phi(r) \in R_u^\rB$.
		
		This means that $\phi(x_{u+1}) \in L_{u+1} \cup R_u$. Since $\height(R_u^\rB) < \vd(x_u, b) < \vd(x_{u+1}, b)$, we have $\phi(x_{u+1})$ above $R_u^\rB$, so $\phi(x_{u+1}) \in L_{u+1}^\rB \cup L_{u+1}^\rM \cup R_u^\rM$. We distinguish two cases:
		
		\begin{casedist}
			\item $\phi(x_{u+1}) \in L_{u+1}^\rB \cup L_{u+1}^\rM$. Since $\phi(x_{u+1})$ is below the expandable row, tallness of $X$ implies that $\phi$ maps no 1-entry to $R_u^\rM$. But then $\phi$ maps all 1-entries to $R_u^\rB$ or above and to the left of $R_u^\rB$ (recall that $\phi$ maps no 1-entries to $H_u^\rT$). Thus, $P$ is decomposable (since $\phi(b) \in R_u^\rB$, $\phi(t) \notin R_u^\rB$), a contradiction.
			
			\item $\phi(x_{u+1}) \in R_u^\rM$. Since $\phi(x_{u-1}) \in H_u^\rM \cup H_u^\rB$ is above $\phi(x_{u+1})$, this means \linebreak that~$\phi(x_{u-1}) \in L_{u+1}^\rM \cup R_u^\rM$. Note that $\phi$ cannot map any further 1-entries to $M$.
			But this means that $\phi$ maps all 1-entries either to $H_u^\rM \cup H_u^\rB$ or above and to the left of~$H_u^\rM \cup H_u^\rB$, again contradicting that $P$ is indecomposable.\qedhere
		\end{casedist}
	\end{proof}
	
	\begin{corollary}\label{p:t_Gs_b_Hs}
		There is some odd $s$ with $5 \le s \le m-5$ such that $\phi(t) \in G_s$ and $\phi(b) \in H_s$.
	\end{corollary}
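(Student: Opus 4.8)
The plan is to combine the location restrictions on $\phi(t)$ and $\phi(b)$ already established in \cref{p:t-not-borders,p:t-not-R3-or-b,p:t_Gs_b_right,p:t_Gs_b_Hu}, together with the (easily read off) left-to-right order $L_3, R_1, L_4, R_3, L_5, R_4, \dots$ of the blocks of $S(P,X)$. For $m \le 8$ there is in fact no embedding $\phi$ at all (this follows already from the lemmas up to \cref{p:t-not-R3-or-b}), so the statement is vacuous there and I would assume $m \ge 10$, which makes all index ranges below non-empty. First I would show $\phi(t) \notin R_3$: otherwise \cref{p:t-not-R3-or-b} forces $\phi(b) \notin L_{m-2}$, so \cref{p:t-not-borders} gives $\phi(b) \in H_u$ for some odd $u$ with $5 \le u \le m-5$, and then the second half of \cref{p:t_Gs_b_right} gives $\phi(t) \notin R_3$, a contradiction. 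Hence \cref{p:t-not-borders} yields $\phi(t) \in G_s$ for some odd $s$ with $5 \le s \le m-5$, whereupon the first half of \cref{p:t_Gs_b_right} rules out $\phi(b) \in L_{m-2}$, so \cref{p:t-not-borders} also gives $\phi(b) \in H_u$ for some odd $u$ with $5 \le u \le m-5$.

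It remains to identify $s$ with $u$. Since $t \lth b$ we have $\phi(t) \lth \phi(b)$, and reading off the block order shows that $G_s$ lies entirely to the right of $H_u$ whenever $u \le s-4$, so that possibility is excluded and (as $s,u$ are both odd) we are left with $u = s-2$ or $u \ge s$. If $u > s$, then \cref{p:t_Gs_b_Hu} applies verbatim and gives $\phi(b) \notin H_u$, a contradiction. If $u = s-2$, then $G_s \cap H_{s-2} = L_s \cup R_{s-1}$, and a short look at the block order shows that $\phi(t) \lth \phi(b)$ confines both $\phi(t)$ and $\phi(b)$ to $L_s \cup R_{s-1}$; but \cref{p:not-t-L_s-b-R_s-1} (applicable since $s$ is odd and $5 \le s \le m-3$) says that $\phi(t) \in L_s \cup R_{s-1}$ forces $\phi(b)$ strictly to the right of $R_{s-1}$, again a contradiction. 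Therefore $u = s$, which is the assertion.

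Every step is a direct appeal to a lemma already proved in this section (or to the transparent block layout of $S(P,X)$), so I expect no real obstacle beyond bookkeeping. The one place calling for a little care is the subcase $u = s-2$: one has to check that $\phi(t) \lth \phi(b)$ genuinely confines both images to $L_s \cup R_{s-1}$ (and does not, e.g., permit $\phi(t) \in L_{s+1}$), which is the modest column-order computation indicated above.
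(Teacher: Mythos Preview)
Your proposal is correct and follows essentially the same route as the paper's proof: rule out $\phi(t)\in R_3$ and $\phi(b)\in L_{m-2}$ via \cref{p:t-not-borders,p:t-not-R3-or-b,p:t_Gs_b_right}, then compare the resulting indices $s,u$ and use \cref{p:t_Gs_b_Hu} to exclude $u>s$. The paper compresses your subcase $u=s-2$ into the single phrase ``Clearly, $s\le u$''; you are right that this needs the short block-order check together with \cref{p:not-t-L_s-b-R_s-1}, and your handling of it is more explicit than the paper's.
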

	\begin{proof}
		Suppose first that $\phi(t) \in R_3$. Then \cref{p:t-not-borders,p:t-not-R3-or-b} imply \linebreak that $\phi(b) \in H_s$ for some odd $s$ with $5 \le s \le m-5$. But then \cref{p:t_Gs_b_right} implies \linebreak that $\phi(t) \notin R_3$, a contradiction. A similar argument shows that $\phi(b) \notin L_{m-2}$.
		
		As such, there are odd $s, u$ with $5 \le s, u \le m-5$ such that $\phi(t) \in G_s$ and $\phi(b) \in H_u$. \Cref{p:t_Gs_b_Hu} implies $u \le s$. If $u < s$, then $\phi(t), \phi(b) \in L_s \cup R_{s-1}$, contradicting \cref{p:not-t-L_s-b-R_s-1}. Thus, $s = u$.
	\end{proof}

	\subsubsection{\texorpdfstring{$\phi(t)$}{phi(t)}, \texorpdfstring{$\phi(b)$}{phi(b)} in the middle and close to each other}\label{sec:middle_samegroup}
	
	In this subsection, we show that \cref{p:t_Gs_b_Hs} also leads to a contradiction, which shows that our assumption that $S(P,X)$ contains $P$ must have been false. \Cref{fig:big-constr-middle-sketch} will be useful throughout this subsection.
	We start with the case $\phi(t) \in L_s$.
	
	\begin{lemma}\label{p:not-L_s}
		$\phi(t) \notin L_s$ and $\phi(b) \notin R_{s+1}$ for each odd $s$ with $5 \le s \le m-5$.
	\end{lemma}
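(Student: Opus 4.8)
The plan is to prove $\phi(t)\notin L_s$; the statement $\phi(b)\notin R_{s+1}$ then follows by the $\rot^2$-symmetry used throughout \cref{sec:middle_samegroup} (note $\rot^2(S(P,X))=S(\rot^2(P),\rot^2(X))$, $L_s$ corresponds to $R_{s+1}$, and $t$ to $b$). So suppose for contradiction that $\phi(t)\in L_s$. By \cref{p:not-L_s-pre} we then have $\phi(\ell)\in L_s^\rT$, and by \cref{p:t_Gs_b_Hs} we have $\phi(b)\in H_s=L_{s+1}\cup R_s\cup L_{s+2}\cup R_{s+1}$. Since $t=t_P$ is above $\ell=\ell_P$, the entry $\phi(t)$ is above $\phi(\ell)\in L_s^\rT$; as $L_s^\rM$ lies below $L_s^\rT$ (and $L_s^\rB=\emptyset$ for odd $s$ by \cref{p:empty-boxes}), this already forces $\phi(t)\in L_s^\rT$.

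Next I would pin down more images. Because $X$ is tall, $\ell$ lies above $x_3$ (otherwise $\ell$ is a 1-entry below $x_3$ and to the left of $x_2=t$, violating tallness at $i=2$); hence all traversal entries $x_3,\dots,x_m$ lie below $\ell$, while $x_4,\dots,x_m$ all lie to the right of $t$. Using $\phi(\ell),\phi(t)\in L_s^\rT$ together with the height bound $\height(L_s^\rT)=\vd(t,x_s)-2$ from \cref{p:box-sizes}, the sharpened bound $\height(R_s^\rT)\le\vd(x_{s-1},x_s)-2$ from \cref{p:box-sizes-special}, and the emptiness facts $R_{s-1}^\rT=L_s^\rB=R_{s+1}^\rT=L_{s+2}^\rB=\emptyset$ from \cref{p:empty-boxes}, I would show that the images of the entries directly to the right of $t$ are squeezed: $\phi(x_4)$ can only lie in $R_s^\rT$ or in $M\cup B$, and in either case $\phi(x_5)$ is forced below the expandable row, so $\phi(x_5)\in B$ (for the relevant $m\ge 10$ one also has $\phi(x_6)\in B$).

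Since $x_5$ is to the left of $b$ and $\phi(b)\in H_s$, it follows that $\phi(x_5)$ lies in a $\rB$-block of $H_s$, i.e. $\phi(x_5)\in L_{s+1}^\rB\cup R_s^\rB\cup R_{s+1}^\rB$. Each option is then eliminated exactly as in \cref{p:not-L_s-pre}: if $\phi(x_5)\in R_{s+1}^\rB$ then also $\phi(r)\in R_{s+1}^\rB$ but $\width(R_{s+1})=\hd(x_{s+1},r)-1<\hd(x_5,r)$; if $\phi(x_5)\in R_s^\rB$ then, after checking that $\phi(b)$ cannot escape to the right of $R_s$, one obtains $\phi(x_6)\in R_s^\rB$ while $\height(R_s^\rB)<\vd(x_6,b)$; and if $\phi(x_5)\in L_{s+1}^\rB$ then tallness forces $\phi$ to map every 1-entry of $P$ either into a single block $A$ (the $\rT\cup\rM$ part of $L_{s+1}\cup R_s$, with $\phi(t)\in A$) or strictly below and to the right of $A$ (with $\phi(b)\notin A$), so that $P$ would be decomposable, contradicting indecomposability.

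The main obstacle is the bookkeeping in the branch where $\phi(x_4)$ lands in $R_s^\rT$ rather than being pushed straight down. For a general traversal the vertical order of $x_4$ and $x_5$ (and more generally of $x_s$ and $x_{s+1}$) is not pinned down, so the height counts do not automatically close; this is precisely where the non-extendability of $X$ (via \cref{p:max-tall}) enters, since any configuration that survives all the width and height estimates would exhibit a vertical gap into which two new 1-entries $y_1,y_2$ could be inserted between two consecutive middle entries, contradicting that $X$ is non-extendable. Keeping this branch from ballooning into a long subcase analysis, rather than any individual estimate, is the delicate part.
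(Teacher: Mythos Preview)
Your setup is right: the $\rot^2$-symmetry reduction, the appeal to \cref{p:not-L_s-pre} to get $\phi(\ell)\in L_s^\rT$, and the appeal to \cref{p:t_Gs_b_Hs} to confine $\phi(b)$ to $H_s$ are exactly how the paper begins. The deduction $\phi(t)\in L_s^\rT$ is also fine.

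Where your plan diverges from the paper --- and where it runs into real trouble --- is the choice to track $x_4,x_5,x_6$ rather than the ``local'' traversal entries $x_{s-1},x_s,x_{s+1},x_{s+2}$. The traversal axioms do \emph{not} fix the vertical order of $x_4$ and $x_5$ (only $x_3\ltv x_4$ and $x_5\ltv x_6$ are guaranteed by (\ref{prop:trav-vert})), so your claim that ``$\phi(x_5)$ is forced below the expandable row'' in the branch $\phi(x_4)\in R_s^\rT$ is not justified: the bound $\height(R_s^\rT)\le \vd(x_{s-1},x_s)-2$ tells you nothing useful about $\vd(x_4,x_5)$. Likewise your intended contradiction $\height(R_s^\rB)<\vd(x_6,b)$ needs $\vd(x_s,b)-2<\vd(x_6,b)$, which again compares an odd and an even traversal entry whose vertical order is unconstrained. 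You correctly identify this as the hard branch, but your proposed fix --- invoking non-extendability --- is misplaced: the paper does \emph{not} use non-extendability anywhere in the proof of \cref{p:not-L_s}; that hypothesis is only needed later, in \cref{p:not-L_s+1}.

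The paper's cleaner route is to track $x_s$ itself. From $\phi(\ell),\phi(t)\in L_s^\rT\cup L_s^\rM$ one gets $\height_\phi(L_s^\rT\cup L_s^\rM)\le \vd(t,x_s)$, which forces $\phi(x_s)$ below the expandable row directly (this is the point of the block $L_s$: it is missing exactly the column of $x_s$). Then $x_s\ltv x_{s+2}$ (property~(\ref{prop:trav-lower})) gives $\phi(x_{s+2}),\phi(b)\in B\cap H_s=L_{s+1}^\rB\cup R_s^\rB\cup R_{s+1}^\rB$, and the argument proceeds by a four-way case split on the location of $\phi(b)$ (and $\phi(r)$), using width and height bounds together with tallness; each case closes without any appeal to non-extendability. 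Tracking the entries whose indices match the block index $s$ is what makes all the height and width comparisons line up.
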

	\begin{proof}
		By symmetry, it suffices to prove the first statement. Suppose $\phi(t) \in L_s$.
		\Cref{p:x_s_above_implies} implies that $\phi(x_s)$ is below the expandable row and thus to the right of $L_s$.
		
		We will consider several possibilities for the location of $\phi(b)$ and $\phi(r)$. Before, we make some observations. \Cref{p:t_Gs_b_Hs} implies that $\phi(b) \in H_s$. Since $\phi(x_s)$ is below the expandable \linebreak row, $\phi(x_{s+2}), \phi(b) \in B$, and thus $\phi(x_{s+2}), \phi(b) \in H_s^\rB = L_{s+1}^\rB \cup R_s^\rB \cup R_{s+1}^\rB$. Since \linebreak $x_{s+2} \lev x_{m-3} \ltv r$, this also implies $\phi(r) \in H_s^\rB$. This means that $\phi$ does not map any 1-entry into the rows between $M$ and $L_{s+1}^\rB \cup R_{s+1}^\rB$, so $\height_\phi(M \cup L_{s+1}^\rB) \le \vd(x_{s+1}, x_{s+2})$ and $\height_\phi(M \cup R_{s+1}^\rB) \le \vd(x_{s+1}, b)$.
		\begin{casedist}
			\item $\phi(b) \in L_{s+1}^\rB$. Then we have $\phi(r) \in L_{s+1}^\rB \cup R_{s+1}^\rB$. Since \linebreak $\height_\phi(M \cup L_{s+1}^\rB) \le \vd(x_{s+1}, x_{s+2}) < \vd(x_{s+1}, b)$, we have $\phi(x_{s+1}) \in T$. \linebreak Since~$x_{s+1}$ is to the left of~$b$, we have $\phi(x_{s+1}) \in L_s^\rT$. Moreover, $t \ltv \ell \ltv x_{s+1}$ implies $\phi(\ell) \in L_s$. But $\width(L_s) = \hd(\ell, x_s) - 1 < \hd(\ell, x_{s+1})$, a contradiction.
			
			\item $\phi(b) \in R_{s+1}^\rB$. Then $\phi(r) \in R_{s+1}^\rB$. Since $\height_\phi(M \cup R_{s+1}^\rB) \le \vd(x_{s+1}, b)$, we know that $\phi(x_{s+1})$ is above the expandable row, and therefore to the left of $R_{s+1}$ (by \cref{p:empty-boxes}).
			
			Since $x_{s-1}$ is above $x_{s+1}$, we have $\phi(x_{s-1}) \in T$, implying $\phi(x_{s-1}) \in L_s^\rT \cup R_s^\rT$ and thus $\phi(\ell) \in L_s$. Further, $\width(L_s^\rT) < \hd(\ell, x_{s-1})$, so $\phi(x_{s-1}) \in R_s^\rT$.
			
			Finally, since $\phi(x_{s+2}) \in B$ and $x_{s+2}$ is to the left of $x_{s+1}$, we have $\phi(x_{s+2}) \in L_{s+1}^\rB$. But then $\phi(x_{s+2})$ is to the left of $\phi(x_{s-1}) \in R_s^\rT$, while $x_{s+2}$ is to the right of $x_{s-1}$, a contradiction.
			
			\item $\phi(b), \phi(r) \in R_s^\rB$. We consider the location of $\phi(x_{s-1})$. Note that $\phi(x_{s-1}) \in G_s$, \linebreak since $t \lth x_{s-1} \lth r$.
			
			First, suppose that $\phi(x_{s-1}) \in R_s$. Let $q_s$ be the 1-entry of $P$ in the row below $x_s$. We have $\phi(q_s) \in B$, because $\phi(x_s)$ is below the expandable row. Since $X$ is tall, $q_s$ is to the right of $x_{s-1}$, so $\phi(q_s) \in R_s^\rB$. But $\height(R_s^\rB) \le \vd(x_s, b)-2 = \vd(q_s, b) - 1$, a contradiction.
			
			Second, suppose $\phi(x_{s-1}) \in L_{s+1}$. Since $\phi(t) \in L_s$, this means $\phi(x_{s-1}) \in M \cup B$. By tallness of $X$, there are no 1-entries in $P$ that are above and to the right of $x_{s-1}$, so $\phi$ does not map any 1-entry to $R_s^\rT$. Note that $\phi$ must map some 1-entry $y$ to $R_s^\rM$. Otherwise, $\phi$ maps all 1-entries to $R_s^\rB$ or above and to the left of $R_s^\rB$ (and $\phi(t) \notin R_s^\rB$, $\phi(b) \in R_s^\rB$), so $P$ is decomposable.
			
			By tallness of $X$, and since $y$ is to the left of $x_{s-1}$, we know that $y$ must be below~$x_{s-1}$. Thus, $\phi(x_{s-1})$ is in the top row of $M$, and $\phi(y)$ is in the bottom row of $M$. But since~$M$ only consists of two rows, $\phi$ maps no further 1-entries to $M$, so $\phi$ maps all 1-entries either to $H_s^\rM \cup H_s^\rB$ or to the left and above $H_s^\rM \cup H_s^\rB$. This again implies that $P$ is decomposable, a contradiction.
			
			Third, suppose $\phi(x_{s-1}) \in R_{s-1}^\rM$. Then $\phi(x_{s-1})$ is below the expandable row (by \cref{p:empty-boxes}), so $\phi(x_s) \in B$.
			But $x_s$ also lies to the left of $x_{s-1}$ and above $b$, a contradiction.
			
			Finally, suppose $\phi(x_{s-1}) \in L_s$. Since $t \lth x_s \lth x_{s-1}$, this implies $\phi(x_s) \in L_s$. But~$\phi(x_s)$ is below the expandable row, contradicting \cref{p:empty-boxes}.
			
			\item $\phi(b) \in R_s^\rB$ and $\phi(r) \in R_{s+1}^\rB$. Then $\phi(x_{s+2})$ is above and not to the right of $R_s^\rB$. Together with the fact that $\phi(x_{s+2}) \in B$, this implies $\phi(x_{s+2}) \in L_{s+1}^\rB$.
		
			Since $\height_\phi(L_{s+1}^\rB \cup M) \le \vd(x_{s+1}, x_{s+2})$, we know that $\phi(x_{s+1})$ is above the expandable row, and thus $\phi(x_{s-1}) \in T$, implying $\phi(x_{s-1}) \in L_s^\rT \cup R_s^\rT$.
			Moreover, since $\width(L_s^\rT) < \hd(\ell, x_{s-1})$, we have $\phi(x_{s-1}) \in R_s^\rT$. Now $\phi(x_{s-1}) \in R_s^\rT$ is to the right of $\phi(x_{s+2}) \in L_{s+1}^\rB$, but $x_{s-1}$ is to the left of $x_{s+2}$, a contradiction.\qedhere
		\end{casedist}
	\end{proof}
	
	The next few lemmas deal with the case that $\phi(t) \in L_{s+1}$.
	
	\begin{lemma}\label{p:not_t-b-L_s+1}
		Let $s$ be odd with $5 \le s \le m-5$. If $\phi(t) \in L_{s+1}$, then $\phi(b) \notin L_{s+1}$.
	\end{lemma}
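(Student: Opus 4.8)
The plan is to argue by contradiction, assuming $\phi(t)\in L_{s+1}$ and $\phi(b)\in L_{s+1}$. The first, purely horizontal, observation is that the columns occupied by $L_{s+1}$ contain no 1-entries of $S(P,X)$ outside $L_{s+1}$, so, since $t$ lies to the left of $b$ in $P$, every 1-entry of $P$ lying horizontally between $t$ and $b$ is mapped by $\phi$ into $L_{s+1}$; in particular $\phi(x_i)\in L_{s+1}$ for every $i$ with $4\le i\le m-3$. I would then reduce the statement to the single claim that $\phi(\ell)\in L_{s+1}$: if this holds, then since $\ell$ is the leftmost 1-entry of $P$, every 1-entry $z$ with $z\leh b$ has $\phi(z)\in L_{s+1}$, and so, using \cref{p:phi_obs} and \cref{p:box-sizes} together with the fact that $x_{s+1}$ lies strictly to the left of $b$ (traversal property (\ref{prop:trav-hor})),
\[
\hd(\ell,b)=\hd_\phi(\phi(\ell),\phi(b))\le\width(L_{s+1})=\hd(\ell,x_{s+1})-1<\hd(\ell,b),
\]
a contradiction.

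It remains to prove $\phi(\ell)\in L_{s+1}$. Suppose not; since $\ell$ lies to the left of $t$, then $\phi(\ell)$ lies strictly to the left of $L_{s+1}$, and by \cref{p:alignment} the $\rT$-part of every block to the left of $L_{s+1}$ lies below $L_{s+1}^\rT$, so $\phi(\ell)$ lies below $L_{s+1}^\rT$; consequently $\phi(x_4)$, which lies below $\phi(\ell)$ because $\ell\ltv x_4$ by property (\ref{prop:trav-upper}), lies below $L_{s+1}^\rT$ while being contained in $L_{s+1}$. From here I would distinguish cases according to which of the three bands $L_{s+1}^\rT$, $L_{s+1}^\rM$, $L_{s+1}^\rB$ contains $\phi(t)$ (equivalently, whether $\phi$ meets the top region $T$ at all) and, within each, the analogous position of $\phi(b)$.

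The ``light'' cases -- where $\phi$ avoids $T$ entirely, or where $\phi(t)$ and $\phi(b)$ land in the same band -- are closed purely by height counting in the style of \cref{p:t-not-L_3,p:not-L_s}: in each of them the image of all of $P$ would be forced into a region whose $\phi$-height is bounded by $\vd(t,x_{s+1})-2$ or $\vd(x_{s+1},b)-2$ (using \cref{p:box-sizes,p:box-sizes-special}), both strictly less than $\vd(t,b)=\vd_\phi(\phi(t),\phi(b))$, contradicting \cref{p:phi_obs}. The genuinely delicate case is $\phi(t)\in L_{s+1}^\rT$ with $\phi(\ell)$ sitting in a block strictly to the left of $L_{s+1}$: this cannot be ruled out by counting, and I would instead argue, exactly as in \cref{p:not-L_s,p:not-L_s-pre}, that $\phi$ is forced to map $E(P)$ into a union of two blocks of the form $\decmat$ (or $\rdecmat$), with $\phi(t)$ in the first and $\phi(b)$ in the second, using the tallness of $X$ to show that the two off-diagonal corners receive no 1-entries -- so that $P$ would be decomposable, a contradiction. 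I expect this last decomposability argument to be the main obstacle.
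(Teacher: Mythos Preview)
Your opening reduction is sound and arguably cleaner than the paper's: the width count $\hd(\ell,b)\le\width(L_{s+1})=\hd(\ell,x_{s+1})-1<\hd(\ell,b)$ disposes of the case $\phi(\ell)\in L_{s+1}$ immediately. The paper arrives at $\phi(\ell)\notin L_{s+1}$ only later, as a byproduct of a column-shift argument applied to $x_3$.

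The gap is in your ``delicate case''. A decomposability argument does not work when $\phi(t)\in L_{s+1}^\rT$, $\phi(\ell)$ sits strictly to the left and below $L_{s+1}^\rT$, and $\phi(b)\in L_{s+1}^\rM\cup L_{s+1}^\rB$. The sets $L_{s+1}^\rT$ and $L_{s+1}^\rM\cup L_{s+1}^\rB$ occupy the \emph{same} range of columns, so they do not form a $\decmat$-shape; moreover $\phi(\ell)$ lies to the left while $\phi(b)$ lies directly below $\phi(t)$, so the image spreads in three directions and no tallness argument will empty the ``off-diagonal corners''. The lemmas you cite (\cref{p:not-L_s-pre,p:not-L_s}) only invoke decomposability in sub-cases where the image is genuinely confined to two diagonally separated blocks; that confinement is what fails here.

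The paper's mechanism is different and avoids any case split. The key extra ingredient is $x_{s+1}$ itself: since $\height(L_{s+1}^\rT)<\vd(t,x_{s+1})$ and $\height(L_{s+1}^\rB)\le\vd(x_{s+1},x_{s+2})-2<\vd(x_{s+1},b)$ (the sharper bound from \cref{p:box-sizes-special}), the entry $\phi(x_{s+1})$ is squeezed into $L_{s+1}^\rM$. Independently, because $\phi(b)$ cannot lie right of the rightmost column of $L_{s+1}$, a horizontal shift forces $\phi(t)$ strictly left of $t^{s+1}$, hence strictly below it; this cascades to $\phi(x_3)$ being strictly below and left of $x_3^{s+1}$, and tallness leaves no such 1-entry in $L_{s+1}$, so $\phi(x_3)\notin L_{s+1}$. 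Now $\phi(\ell)$ is below $L_{s+1}^\rT$, which pins $\phi(x_4)\in L_{s+1}^\rM$, and then $\phi(x_3)\in M$. Three entries $x_3,x_4,x_{s+1}$ in distinct rows of $P$ all land in $M$, which has only two nonempty rows --- contradiction. The decisive trick is ``three entries forced into $M$'', not decomposability.
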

	\begin{proof}
		Suppose $\phi(t), \phi(b) \in L_{s+1}$. Since $\width(L_{s+1}) < \hd(\ell, x_{s+1}) < \hd(\ell, b)$, we know that~$\phi(\ell)$ is to the left of $L_{s+1}$.
		
		$t \lth x_4 \lth x_s \lth x_{s+1} \lth b$ implies that $\phi(x_4), \phi(x_s) \phi(x_{s+1}) \in L_{s+1}$. Moreover, $\phi(x_4)$ is below $L_{s+1}^\rT$, since $x_4$ is below $\ell$. This implies that $x_{s+1}$ is below the expandable row, which in turn implies that $x_s \in L_{s+1}^\rB$.
		
		Since $r$ is below $x_s$ and above $b$, we have $\phi(r) \in L_{s+1}^\rB \cup R_{s+1}^\rB$, implying that $\phi$ maps no 1-entry into the rows between $M$ and $L_{s+1}^\rB \cup R_{s+1}^\rB$. But then $\height_\phi(L_{s+1}^\rB \cup M) \le \vd(x_{s+1}, b)$, so $\phi(x_{s+1})$ is above the expandable row, a contradiction.
	\end{proof}
	
	\begin{lemma}\label{p:L_s+1_b_R_s}
		Let $s$ be odd with $5 \le s \le m-5$. If $\phi(t) \in L_{s+1}$, then $\phi(b) \in R_s$.
	\end{lemma}
	\begin{proof}
		Assume $\phi(t) \in L_{s+1}$. By \Cref{p:t_Gs_b_Hs}, we have $\phi(b) \in H_s$. \Cref{p:not-L_s,p:not_t-b-L_s+1} imply that $\phi(b) \notin L_{s+1} \cup R_{s+1}$. If $\phi(b) \in L_{s+2}$, then $\phi(b)$ is above the expandable row. But then $\phi(r)$ is to the right of $R_s$, below $L_{s+2}^\rT$, and in $T$, which is impossible. The only remaining possibility is that $\phi(b) \in R_s$.
	\end{proof}
	
	\begin{lemma}\label{p:L_s+1_helper}
		Let $s$ be odd with $5 \le s \le m-5$. If $\phi(t) \in L_{s+1}$, then $\phi(x_{s-1}) \in L_{s+1}$ and~$\phi(x_{s+2}) \in R_s$.
	\end{lemma}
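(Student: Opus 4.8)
The plan is to prove only the first assertion, $\phi(x_{s-1}) \in L_{s+1}$; the second, $\phi(x_{s+2}) \in R_s$, will follow by the $\rot^2$-symmetry used throughout \cref{sec:middle_samegroup} (see the discussion before \cref{p:t-not-L_3}). Concretely, applying the first assertion to $\rot^2(P)$, $\rot^2(X)$ and the index $s' := m-s$ (again odd and in $[5,m-5]$, as $m$ is even), and noting that under $\rot^2(S(P,X)) = S(\rot^2(P),\rot^2(X))$ the objects $t$, $x_{s'-1}$, $L_{s'+1}$ correspond to $b$, $x_{s+2}$, $R_s$ in $S(P,X)$ while $b$, $R_{s'}$ correspond to $t$, $L_{s+1}$, one checks that the hypotheses transport correctly and the conclusion becomes $\phi(x_{s+2}) \in R_s$. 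To locate $\phi(x_{s-1})$, I first use traversal property (\ref{prop:trav-hor}) to get $t = x_2 \lth x_{s-1} \lth x_{m-1} = b$ (since $5 \le s \le m-5$, $x_{s-1}$ lies strictly between $x_2$ and $x_{m-1}$ in the horizontal order). As $L_{s+1}$ is immediately followed by $R_s$ in the column order of $S(P,X)$, and $\phi(x_{s-1})$ lies strictly horizontally between $\phi(t)\in L_{s+1}$ and $\phi(b)\in R_s$, we get $\phi(x_{s-1}) \in L_{s+1}\cup R_s$. So it remains to rule out $\phi(x_{s-1}) \in R_s$, which I assume henceforth.

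I would then collect a few facts. Every $1$-entry of $P$ lying horizontally between $x_{s-1}$ and $b$ is mapped into $R_s$ as well; in particular, by (\ref{prop:trav-hor}), $\phi(x_{s+1}) \in R_s$. Since $\vd(t,b) = k-1$ and $|E(P)| = k$, \cref{p:phi_obs} gives $\vd_\phi(\phi(t),\phi(b)) = k-1$, so every $z \ne t$ satisfies $\phi(t) \ltv \phi(z) \lev \phi(b)$; thus $\phi(t)$ is the unique topmost image, $\phi(b)$ the unique bottommost image, and all of $\phi(E(P))$ lies weakly between their rows. Finally, tallness of $X$ makes $x_{s-1}^s$ the topmost $1$-entry of $R_s$ (as in the discussion preceding \cref{p:box-sizes-special}), so $\phi(x_{s-1})$ is at or below $x_{s-1}^s$; symmetrically $b^s$ is the bottommost $1$-entry of $R_s$. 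Note also that properties (\ref{prop:trav-upper}) and (\ref{prop:trav-vert}) give $x_{s-1} \ltv x_{s+1} \ltv x_s$, hence $x_{s-1}$ is above $x_s$ and $\vd(x_{s-1},x_s) = \mathrm{row}(x_s)-\mathrm{row}(x_{s-1})$.

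The next step is a case split on which of $R_s^\rT$, $R_s^\rM$, $R_s^\rB$ contains $\phi(x_{s-1})$, using the explicit row offsets of the blocks of $S(P,X)$ together with \cref{p:alignment,p:empty-boxes,p:box-sizes,p:box-sizes-special}. If $\phi(x_{s-1}) \in R_s^\rB$, then (as $R_s^\rB$ is the lowest layer of $R_s$ and $\phi(b)\in R_s$ is below $\phi(x_{s-1})$) we get $\phi(b) \in R_s^\rB$, and $\vd(x_{s-1},b) \le \height(R_s^\rB) \le \vd(x_s,b)-2$ by \cref{p:box-sizes}; substituting $\vd(x_{s-1},b) = k - \mathrm{row}(x_{s-1})$ and $\vd(x_s,b) = k-\mathrm{row}(x_s)$ yields $\mathrm{row}(x_s) \le \mathrm{row}(x_{s-1})-2$, contradicting that $x_s$ is below $x_{s-1}$. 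If $\phi(x_{s-1}) \in R_s^\rT$, the block offsets force $L_{s+1}^\rM$ and $L_{s+1}^\rB$ to lie strictly below $R_s^\rT$, so $\phi(t)$, being above $\phi(x_{s-1})$, must lie in $L_{s+1}^\rT$; if moreover $\phi(b) \in R_s^\rT$, then $\vd(x_{s-1},b) \le \height(R_s^\rT) \le \vd(x_{s-1},x_s)-2$ by \cref{p:box-sizes-special}, giving $\mathrm{row}(x_s) \ge k+2$, impossible. This leaves the delicate cases — $\phi(x_{s-1}) \in R_s^\rM$, or $\phi(x_{s-1}) \in R_s^\rT$ with $\phi(b) \in R_s^\rM \cup R_s^\rB$ — where images can no longer be separated by row range alone. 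There I would argue in the style of \cref{p:not-L_s-pre,p:not-L_s}: track $\phi(\ell),\phi(x_3),\phi(x_4)$ (each below or to the right of $\ell$ or $t$) along with $\phi(x_{s-1}),\phi(x_{s+1})$, use that $M$ has only two nonempty rows so $\height_\phi(\cdot)\le 1$ there (\cref{p:box-sizes}), and use tallness, to force either that $\phi(t)$ and $\phi(b)$ both land in $M$ — whence $\vd_\phi(\phi(t),\phi(b)) \le 1 < k-1$, a contradiction — or that $\phi$ splits $E(P)$ into one block mapped into some $L_u^\rT\cup L_u^\rM$ or $R_u^\rT\cup R_u^\rM$ and a complementary block mapped strictly below and to the right of it, making $P$ decomposable. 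Every case being contradictory, $\phi(x_{s-1}) \in L_{s+1}$, and the symmetry argument gives $\phi(x_{s+2}) \in R_s$.

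The main obstacle is precisely this last block of cases: once $\phi(t)$, $\phi(b)$ (and hence $\phi(x_{s-1})$) are pushed into or near the two narrow rows of $M$, the clean global row bookkeeping fails, and one must chase individual $1$-entries through the blocks of $S(P,X)$ as in the rest of \cref{sec:middle_samegroup}, carefully exploiting tallness and the $\le 1$ height of $M$ to extract either the $\vd_\phi \le 1$ contradiction or a decomposition of $P$. The earlier steps (the symmetry reduction, the containment $\phi(x_{s-1}) \in L_{s+1}\cup R_s$, and the $\vd(t,b)=k-1$ bookkeeping) are routine; it is this terminal entry-chasing that carries the real weight.
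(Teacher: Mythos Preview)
Your setup is right: the symmetry reduction is exactly what the paper does, and the observation $\phi(x_{s-1}) \in L_{s+1} \cup R_s$ from $t \lth x_{s-1} \lth b$ is the correct starting point. Your two ``easy'' sub-cases ($\phi(x_{s-1}) \in R_s^\rB$, and $\phi(x_{s-1}), \phi(b)$ both in $R_s^\rT$) are also fine, though the paper does not bother with this split.

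The gap is precisely where you say it is. For the remaining cases you only sketch a strategy (chase $\phi(\ell), \phi(x_3), \phi(x_4)$, hope for $\phi(t),\phi(b) \in M$ or a decomposition of $P$), and this is not a proof. In fact that route is unlikely to close: in the main case $\phi(t) \in L_{s+1}^\rT$, $\phi(b) \in R_s^\rB$, neither lands in $M$, and there is no evident block split. The paper's argument is different and short: introduce the auxiliary entry $q_s$, the $1$-entry of $P$ in the row directly below $x_s$. One first shows $\phi(x_s)$ is below the expandable row (trivial if $\phi(x_{s-1}) \in M \cup B$; if $\phi(x_{s-1}) \in R_s^\rT$ then $\phi(\ell) \in L_s \cup L_{s+1}$, so $\height_\phi(R_s^\rT \cup R_s^\rM) \le \vd(x_{s-1},x_s)$). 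Hence $\phi(q_s) \in B$. But $\height(R_s^\rB) = \vd(x_s,b)-2 = \vd(q_s,b)-1$ forces $\phi(q_s) \notin R_s^\rB$, and tallness gives $q_s$ to the right of $x_{s-1}$, so $\phi(q_s)$ is to the right of $R_s$; hence $\phi(r)$ is to the right of $R_s$ and therefore above $R_s^\rB$. Now $x_{s-1} \lth x_{s+2} \lth b$ gives $\phi(x_{s+2}) \in R_s$, and $x_{s+2}$ below $x_s$ gives $\phi(x_{s+2}) \in R_s^\rB$, which is below $\phi(r)$ --- contradiction.

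The missing idea is exactly this use of $q_s$ to push $\phi(r)$ out of $R_s$; once you have it, no case analysis on the layer of $\phi(x_{s-1})$ is needed at all.
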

	\begin{proof}
		By \cref{p:L_s+1_b_R_s} and symmetry, it suffices to show that $\phi(x_{s-1}) \in L_{s+1}$. Suppose not. By \cref{p:L_s+1_b_R_s}, $\phi(b) \in R_s$, so $t \lth x_{s-1} \lth b$ implies that $\phi(x_{s-1}) \in R_s$. Let $q_s \in E(P)$ be the 1-entry of $P$ in the row directly below $x_s$.
		
		We claim that $\phi(x_s)$ is below the expandable row, and thus $\phi(q_s) \in B$.
		If ${\phi(x_{s-1}) \in M \cup B}$, then $\phi(x_s)$ is indeed below the expandable row, since $x_s$ is below $x_{s-1}$. Otherwise, ${\phi(x_{s-1}) \! \in \! R_s^\rT}$, which implies that $\phi(\ell) \in L_s^\rT \cup L_{s+1}^\rT$, so $\phi$ maps no 1-entry into the rows between $L_s^\rT \cup R_s^\rT$ and~$M$. Thus, $\height_\phi(R_s^\rT \cup R_s^\rM) \le \vd(x_{s-1}, x_s)$, implying that $\phi(x_s)$ is below the expandable row. This proves the claim.
		
		We have height $\height(R_s^\rB) \le \vd(x_s, b)-2 = \vd(q_s,b) - 1$, which implies \linebreak that $\phi(q_s) \notin R_s^\rB$. Since $X$ is tall, $q_s$ is to the right of $x_{s-1}$ and thus $\phi(q_s)$ is not to the left of $R_s$. Since ${\phi(q_s) \in B \setminus R_s^\rB}$, this implies that $\phi(q_s)$ is to the right of $R_s$, so $\phi(r)$ is to the right of $R_s$, and thus above $R_s^\rB$.
		
		Consider now $x_{s+2}$. First, $x_{s-1} \lth x_{s+2} \lth b$ implies that $\phi(x_{s+2}) \in R_s$. Since $x_s$ is below the expandable row, $\phi(x_{s+2}) \in R_s^\rB$. But then $\phi(x_{s+2})$ is below $\phi(r)$, a contradiction.
	\end{proof}
	
	\begin{lemma}\label{p:L_s+1_helper2}
		Let $s$ be odd with $5 \le s \le m-5$. If $\phi(t) \in L_{s+1}$, then $\phi(E(P)) \subseteq L_{s+1} \cup R_s$.
	\end{lemma}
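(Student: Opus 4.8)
The plan has three stages: locate $\phi(b)$ inside $H_s$, rule out the possibility $\phi(b)\in L_{s+2}$, and in the remaining case ``squeeze'' all of $\phi(E(P))$ into the two adjacent blocks $L_{s+1},R_s$. For the first stage, note that $L_{s+1}$ is contained in $G_u$ only when $u=s$, so \Cref{p:t_Gs_b_Hs} gives $\phi(b)\in H_s=L_{s+1}\cup R_s\cup L_{s+2}\cup R_{s+1}$; by \Cref{p:not_t-b-L_s+1} we have $\phi(b)\notin L_{s+1}$, and the second half of \Cref{p:not-L_s} gives $\phi(b)\notin R_{s+1}$, so $\phi(b)\in R_s\cup L_{s+2}$.

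To rule out $\phi(b)\in L_{s+2}$ I would argue in the style of the other lemmas of this section, splitting on the position of $\phi(\ell)$. If $\phi(\ell)$ lies to the left of $L_{s+1}$, then it lies below $L_{s+1}^\rT$ (every block to the left of $L_{s+1}$ has index at most $s$, so \Cref{p:alignment} applies); chasing down and to the right from $\ell$, using property~(\ref{prop:trav-tall-below}) with $i=2$ to get $\ell\ltv x_3$ together with properties~(\ref{prop:trav-upper}) and~(\ref{prop:trav-lower}), one finds that $\phi(x_3),\phi(x_4),\phi(x_5)$ are forced into a region of bounded height (using \Cref{p:box-sizes} and $\height_\phi(M)\le1$) that cannot hold three 1-entries of a permutation matrix, a contradiction. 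If instead $\phi(\ell)\in L_{s+1}$, then $\phi$ maps nothing to the left of $L_{s+1}$, and examining $x_{s+1}$ (which lies below $t$ and satisfies $t\lth x_{s+1}\lth b$) together with $x_{s+2}$ shows that $\phi$ maps $E(P)$ into $L_{s+1}\cup R_s\cup L_{s+2}$ with the $L_{s+1}$-part strictly above and to the left of the rest, so $P$ would be decomposable, a contradiction.

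It remains to treat $\phi(b)\in R_s$, which already gives the first conclusion of the lemma. For the second, observe that $L_{s+1}$ is immediately followed by $R_s$ in $S(P,X)$, so $L_{s+1}\cup R_s$ occupies a contiguous set of columns; hence it suffices to prove $\phi(\ell)\in L_{s+1}$ and $\phi(r)\in R_s$, because then every $p\in E(P)$ satisfies $\ell\leh p\leh r$, so $\phi(\ell)\leh\phi(p)\leh\phi(r)$, forcing $\phi(p)\in L_{s+1}\cup R_s$. To get $\phi(\ell)\in L_{s+1}$: by \Cref{p:L_s+1_helper} we have $\phi(x_{s-1})\in L_{s+1}$; if $\phi(\ell)$ lay to the left of $L_{s+1}$ it would lie below $L_{s+1}^\rT$, and then $\ell\ltv x_{s-1}$ (property~(\ref{prop:trav-upper})) would put $\phi(x_{s-1})\in L_{s+1}^\rM\cup L_{s+1}^\rB$, which contradicts $\width(L_{s+1}^\rT)=\hd(\ell,x_{s})-1<\hd(\ell,x_{s-1})$ after one checks (via the usual height bounds) that $\phi(x_{s-1})$ cannot in fact escape $L_{s+1}^\rT$; alternatively a decomposability argument applies, since $\phi(t)\in L_{s+1}$ while $\phi(b)\in R_s$ lies to its right. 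The statement $\phi(r)\in R_s$ is symmetric, using $\phi(x_{s+2})\in R_s$ from \Cref{p:L_s+1_helper} and property~(\ref{prop:trav-lower}) in place of $x_{s-1}$ and property~(\ref{prop:trav-upper}), or follows by applying the first statement to $\rot^2(P),\rot^2(X)$ and the embedding induced by $\phi$ via $\rot^2(S(P,X))=S(\rot^2(P),\rot^2(X))$.

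The main obstacle is the bookkeeping in the two places where an image must be pinned to a specific block — eliminating $\phi(b)\in L_{s+2}$, and forcing $\phi(\ell)\in L_{s+1}$ and $\phi(r)\in R_s$. Each requires a case split on whether the relevant $\phi(x_i)$ fall into $\rT$-, $\rM$-, or $\rB$-parts, invoking the matching width/height bounds from \Cref{p:box-sizes,p:box-sizes-special}, the alignment facts \Cref{p:alignment,p:empty-boxes}, and the recurring ``$P$ is indecomposable'' contradiction; no individual step is hard, but identifying the correct decomposition into two blocks in each sub-case is delicate.
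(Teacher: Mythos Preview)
Your three-stage outline matches the paper's, but two of the stages have real gaps.

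\textbf{Ruling out $\phi(b)\in L_{s+2}$.} Your case split on $\phi(\ell)$ is unnecessary and, in the second case, incorrect: the claim that ``the $L_{s+1}$-part is strictly above and to the left of the rest'' is false (for instance $L_{s+1}^\rT$ lies \emph{below} $L_{s+2}^\rT$ by \Cref{p:alignment}, and $L_{s+1}^\rB$ lies below parts of $R_s$), so no decomposition of $P$ follows. The paper's argument is a two-liner: since $s+2$ is odd with $3\le s+2\le m-3$, \Cref{p:empty-boxes} gives $L_{s+2}^\rB=\emptyset$, and in fact $L_{s+2}$ has no entry below the expandable row; hence $\phi(b)$ is above the expandable row, and then $\phi(r)$ must lie to the right of $R_s$, below $L_{s+2}^\rT$, and in $T$, which is impossible.

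\textbf{Forcing $\phi(\ell)\in L_{s+1}$.} Your width argument is broken. You write $\width(L_{s+1}^\rT)=\hd(\ell,x_s)-1$, but $L_{s+1}$ consists of columns to the left of $x_{s+1}$, not $x_s$; in particular $x_{s-1}^{s+1}$ itself can lie in $L_{s+1}^\rT$ (it is to the right of $x_s$ and to the left of $x_{s+1}$), so $\width(L_{s+1}^\rT)\ge\hd(\ell,x_{s-1})$ and no contradiction arises. Worse, you first deduce $\phi(x_{s-1})\in L_{s+1}^\rM\cup L_{s+1}^\rB$ and then say ``$\phi(x_{s-1})$ cannot in fact escape $L_{s+1}^\rT$'', which is self-contradictory. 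The paper does use the decomposability route you allude to, but the work is in setting up the decomposition: from $\phi(x_{s-1})\in L_{s+1}^\rM\cup L_{s+1}^\rB$ one first uses $\height(L_{s+1}^\rB)<\vd(x_{s+1},x_{s+2})<\vd(x_{s-1},r)$ to force $\phi(r)\in R_s^\rB$, and then tallness (no entry of $P$ is above $x_{s-1}$ and to the right of $x_s$) to conclude that nothing maps to $R_s^\rT\cup R_s^\rM$; only then does $\phi(E(P))$ split into $R_s^\rB$ and entries above-and-left of it, contradicting indecomposability.
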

	\begin{proof}
		We show that $\phi(\ell) \in L_{s+1}$ and $\phi(r) \in R_s$. By \cref{p:L_s+1_b_R_s}, we have $\phi(b) \in R_s$. Thus, by symmetry, it suffices to prove $\phi(\ell) \in L_{s+1}$. Suppose $\phi(\ell) \notin L_{s+1}$. Then $\phi(\ell)$ is below $L_{s+1}^\rT$. Since $x_{s-1}$ is below $\ell$ and $\phi(x_{s-1}) \in L_{s+1}$ by \cref{p:L_s+1_helper}, we have $\phi(x_{s-1}) \in L_{s+1}^\rM \cup L_{s+1}^\rB$.
		
		Since $\vd(x_{s-1}, x_{s+2}) > 1$, this means $\phi(x_{s+2}) \in B$. More precisely, by \cref{p:L_s+1_helper}, we have $\phi(x_{s+2}) \in R_s^\rB$. Since $r$ is below $x_{s+2}$, we also have $\phi(r) \in R_s^\rB$.
		
		Now consider $\phi(x_{s+1})$. Since $\height(R_s^\rB) < \vd(x_{s+1}, b)$, we know that $\phi(x_{s+1})$ is above $R_s^\rB$. Further, $x_{s+2} \lth x_{s+1} \lth b$ implies $\phi(x_{s+1}) \in R_s^\rT \cup R_s^\rM$.
		
		$x_{s-1}$ is above $x_{s+1}$, so we have $\phi(x_{s-1}), \phi(x_{s+1}) \in M$. This implies that $\vd(x_{s-1},x_{s+1}) \le \vd_\phi(\phi(x_{s-1}), \phi(x_{s+1})) = 1$, so $x_{s-1}$ is in the row directly above $x_{s+1}$ in $P$. Note that this means that the top row of $L_{s+1}^\rM$ contains precisely $x_{s-1}^{s+1}$, and thus $\phi(x_{s-1}) = x_{s-1}^{s+1}$.
		
		Finally, consider $\phi(x_s)$. We know that $\phi(x_s)$ is not to the right of $x_s^{s+1} \in L_{s+1}$, otherwise $\hd(\phi(x_s),\phi(x_{s-1})) < \hd(x_s^{s+1}, x_{s-1}^{s+1}) = \hd(x_s, x_{s-1})$. Moreover, $\phi(x_s)$ must be below $x_s^{s+1}$. Indeed, $\phi(r) \in R_s$ implies that $\phi$ maps no 1-entries between $L_{s+1}^\rB \cup R_{s+1}^\rB$ and $M$, implying $\vd_\phi(x_{s-1}^{s+1}, x_s^{s+1}) \le \vd(x_{s-1}, x_s) - 1$. So $\phi(x_s)$ is below and to the left of $x_s^{s+1}$. But then tallness of $X$ implies that $\phi(x_s)$ is below $\phi(b) \in H_s^\rB$, a contradiction.
	\end{proof}
	
	\begin{lemma}\label{p:not-L_s+1}
		$\phi(t) \notin L_{s+1}$ and $\phi(b) \notin R_s$ for each odd $s$ with $5 \le s \le m-5$.
	\end{lemma}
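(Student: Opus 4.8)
The plan is to follow the template of the other proofs in this subsection: reduce to one half by the $\rot^2$-symmetry, apply the two helper lemmas to pin $\phi$ down to a very rigid configuration, and then obtain a contradiction by a case analysis on which of the sub-blocks $L_{s+1}^\rT,L_{s+1}^\rM,L_{s+1}^\rB,R_s^\rT,R_s^\rM,R_s^\rB$ contain the images of the traversal entries near index $s$.

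By the $\rot^2$-symmetry argument used throughout this section (together with the fact that the claim ranges over all odd $s$ with $5\le s\le m-5$), it suffices to prove $\phi(t)\notin L_{s+1}$. So suppose $\phi(t)\in L_{s+1}$. Then \cref{p:L_s+1_helper2} gives $\phi(b)\in R_s$ and $\phi(E(P))\subseteq L_{s+1}\cup R_s$, and \cref{p:L_s+1_helper} gives $\phi(x_{s-1})\in L_{s+1}$ and $\phi(x_{s+2})\in R_s$. Since $x_s\lth x_{s-1}$, $\ell\lth x_s$, $x_{s+2}\lth x_{s+1}$, $x_{s+1}\lth r$, and since $L_{s+1}$ is the leftmost and $R_s$ the rightmost of the two blocks in which the image lies, we also obtain $\phi(\ell),\phi(x_s)\in L_{s+1}$ and $\phi(x_{s+1}),\phi(r)\in R_s$.

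I would then record the vertical layout of $L_{s+1}\cup R_s$ inside $S(P,X)$: by \cref{p:alignment}, $R_s^\rT$ lies below $L_{s+1}^\rT$ and $R_s^\rB$ below $L_{s+1}^\rB$, so, read from top to bottom, the $1$-entries of $L_{s+1}\cup R_s$ form the bands $L_{s+1}^\rT$, then $R_s^\rT$, then $L_{s+1}^\rM\cup R_s^\rM$ (which together with the empty expandable row occupies only three consecutive rows of $S(P,X)$, the middle one empty), then $L_{s+1}^\rB$, then $R_s^\rB$. Because $t\ltv x_{s-1}\ltv x_{s+1}\ltv x_s\ltv x_{s+2}\ltv b$ (from the traversal properties), the images $\phi(t),\phi(x_{s-1}),\phi(x_{s+1}),\phi(x_s),\phi(x_{s+2}),\phi(b)$ appear in this vertical order, with $\phi(t)$ topmost and $\phi(b)$ bottommost, while the containing blocks run $L_{s+1},L_{s+1},R_s,L_{s+1},R_s,R_s$; in particular $\phi(t)\in L_{s+1}^\rT$ and $\phi(b)\in R_s^\rB$, which one checks from $\phi(\ell)$ sitting at least two rows above the expandable row (using $\ell\ltv x_4\ltv\dots\ltv x_{s+1}$ with all entries distinct).

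The remaining work is the case analysis. I would split on which band of $L_{s+1}$ contains $\phi(x_{s-1})$ and $\phi(x_s)$, and which band of $R_s$ contains $\phi(x_{s+1})$ and $\phi(x_{s+2})$, noting the forced ``left jump'' from $\phi(x_{s+1})\in R_s$ to the strictly lower and strictly more leftward $\phi(x_s)\in L_{s+1}$. In each surviving case one of two contradictions appears: either a height budget is exceeded — bounding the relevant $\height_\phi$ of a sub-block by $\vd(t,x_{s+1})-2$, $\vd(x_{s-1},x_s)-2$, $\vd(x_s,b)-2$, and so on, via \cref{p:box-sizes,p:box-sizes-special} and \cref{p:phi_obs}, where $\height_\phi(L_s^\rM\cup R_s^\rM)\le 1$ makes the two middle rows too tight to absorb all the images that must pass through them; or else, once those two rows have been used up (for instance by $\phi(x_s)$ and $\phi(x_{s+2})$), every other image lies either inside $L_{s+1}^\rT$ or strictly below-and-to-the-right of it, so $\phi$ exhibits a decomposition of $P$, contradicting that $P$ is indecomposable. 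I expect this case analysis — keeping track of which $\rT/\rM/\rB$ band each image occupies and which combinations are not immediately excluded — to be the main obstacle, just as it is in the proof of \cref{p:not-L_s}.
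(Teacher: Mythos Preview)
Your setup via the symmetry argument and the two helper lemmas is correct and matches the paper. However, the proposed endgame has a genuine gap: you plan to finish with a case analysis that only invokes height budgets (\cref{p:box-sizes,p:box-sizes-special}) and indecomposability of $P$, as in \cref{p:not-L_s}. That cannot work here. This lemma is precisely the place in the whole argument where the hypothesis that $X$ is \emph{non-extendable} (from \cref{p:max-tall}) is used, and you never mention it. Without non-extendability the configuration you have pinned down --- $\phi(E(P))\subseteq L_{s+1}\cup R_s$ with $\phi(x_s)\in L_{s+1}$, $\phi(x_{s+1})\in R_s$ --- is in general \emph{consistent}: the blocks $L_{s+1}$ and $R_s$ overlap in the columns strictly between $x_s$ and $x_{s+1}$, and when $P$ has enough entries in that strip an embedding can exist, so no purely metric or decomposability contradiction is available.

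What the paper does instead is turn the supposed embedding into an extension of the traversal. It introduces the $1$-entries $p_{s+1}$ (directly above $x_{s+1}$) and $q_s$ (directly below $x_s$), shows they and $x_s,x_{s+1}$ are forced into the narrow region $L_{s+1}^\rM\cup L_{s+1}^\rB\cup R_s^\rT\cup R_s^\rM$, and then produces $y_1,y_2\in E(P)$ with $x_{s-1}\lth y_2\lth y_1\lth x_{s+2}$, $y_1\ltv x_{s+1}$, $x_s\ltv y_2$, so that $(x_1,\dots,x_s,y_1,y_2,x_{s+1},\dots,x_m)$ is a traversal. In the easy case one takes $(y_1,y_2)=(p_{s+1},q_s)$; otherwise one reads $y_1,y_2$ off from the positions of $\phi(x_{s-1})$ and $\phi(x_s)$ inside $L_{s+1}$. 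Either way this contradicts non-extendability, not height bounds or indecomposability. Your plan is missing exactly this idea.
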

	\begin{proof}
		By symmetry, it suffices to show the first statement. Suppose $\phi(t) \in L_{s+1}$. \linebreak By \cref{p:L_s+1_helper,p:L_s+1_helper2}, we have $\phi(x_{s-1}) \in L_{s+1}$ as well as $\phi(x_{s+2}), \phi(b) \in R_s$ \linebreak and $\phi(P) \subseteq L_{s+1} \cup R_s$.
		
		Since $t \lth x_s \lth x_{s-1}$, we also have $\phi(x_s) \in L_{s+1}$. Symmetrically, $\phi(x_{s+1}) \in R_s$. \linebreak
		Let $p_{s+1} \in E(P)$ be the 1-entry of $P$ in the row directly above $x_{s+1}$, and let $q_s \in E(P)$ be the 1-entry in the row directly below $x_s$. Since $\height(L_{s+1}^\rT) = \vd(t,p_{s+1})-1$, we know that~$\phi(p_{s+1})$ is below $L_{s+1}^\rT$, and, symmetrically, $\phi(q_s)$ is above $R_s^\rB$. With $p_{s+1} \ltv x_{s+1} \ltv x_s \ltv q_s$, we have~$\phi(x_s), \phi(x_{s+1}), \phi(q_s), \phi(p_{s+1}) \in L_{s+1}^\rM \cup L_{s+1}^\rB \cup R_s^\rT \cup R_s^\rM$.
		
		Our goal for the remainder of the proof is to find two 1-entries $y_1, y_2 \in E(P)$ such that the sequence $Y = x_1, x_2, \dots, x_s, y_1, y_2, x_{s+1}, \dots, x_m$ is a traversal of $P$.
		For this, we have to show that (i) $y_2 \lth y_1$, as well as (ii) $y_1 \ltv x_{s+1}$, and (iii) $x_s \ltv y_2$ (note that then ${x_{s-1} \lth y_2 \lth y_1 \lth x_{s+2}}$ and ${x_{s-1} \ltv y_1 \ltv y_2 \ltv x_{s+2}}$ follow from tallness of $X$). The existence of such a traversal implies that $X$ is extendable, contradicting our assumption.
		
		We consider two cases. First, assume that $q_s$ is to the left of $p_{s+1}$. Then we simply \linebreak choose~$y_1 = p_{s+1}$ and $y_2 = q_s$. By definition, $p_{s+1}$ is above $x_{s+1}$ and $q_s$ is below $x_s$, and (i) follows by assumption.
		
		Second, assume that $p_{s+1}$ is to the left of $q_s$. Then either $\phi(p_{s+1}) \in L_{s+1}$ \linebreak or $\phi(q_s) \in R_s$. By symmetry, we can assume the former, which implies $\phi(p_{s+1}) \in L_{s+1}^\rM \cup L_{s+1}^\rB$. Since ${\phi(x_{s+1}) \in R_s^\rT \cup R_s^\rM}$ and $x_{s+1}$ is below $p_{s+1}$, we have $\phi(p_{s+1}), \phi(x_{s+1}) \in M$. More precisely, $\phi(p_{s+1}) = p_{s+1}^{s+1} \in L_{s+1}^\rM$ and $\phi(x_{s+1}) = q_s^s \in R_s^\rM$.
		
		Now choose $y_1, y_2 \in E(P)$ such that $y_1^{s+1} = \phi(x_{s-1})$ and $y_2^{s+1} = \phi(x_s)$. Note that this is well-defined, since $\phi(x_{s-1}), \phi(x_s) \in L_{s+1}$. We immediately have (i) from $x_s \lth x_{s-1}$.
		
		We now show (ii), that $y_1$ is above $x_{s+1}$.
		Since $x_{s-1}$ is above $x_{s+1}$ and $\phi(x_{s+1}) = q_s^s$, we know that $\phi(x_{s-1}) = y_1^{s+1}$ is above the expandable row. Since the expandable row in $L_{s+1}$ corresponds to the row containing $x_{s+1}$ in $P$, this means that $y_1$ is above $x_{s+1}$.
		
		Finally, we show (iii), that $y_2$ is below $x_s$.
		Since $\phi(r) \in R_s$, we know that $\phi$ maps no 1-entry into the rows between $M$ and $L_{s+1}^\rB$. Since $\phi$ also maps no 1-entry into the expandable row, we have $\vd_\phi(p_{s+1}^{s+1}, x_s^{s+1}) \le \vd(p_{s+1}, x_s) - 1$. As $\phi(p_{s+1}) = p_{s+1}^{s+1}$, this means that $\phi(x_s) = y_1^{s+1}$ is below $x_s^{s+1}$, implying (iii).
	\end{proof}
	
	The remaining cases are now easy:
	\begin{lemma}\label{p:not-remainder}
		$\phi(t) \notin R_{s-1} \cup R_s$ and $\phi(b) \notin L_{s+1} \cup L_{s+2}$ for each odd $s$ with $5 \le s \le m-5$.
	\end{lemma}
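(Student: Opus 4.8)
The plan is to use the $\rot^2$-symmetry of this section — recalling $\rot^2(S(P,X)) = S(\rot^2(P),\rot^2(X))$, under which $R_{s-1}, R_s$ correspond to $L_{m-s+2}, L_{m-s+1}$ and $t$ to $b$ — to reduce the statement to showing $\phi(t) \notin R_{s-1}$ and $\phi(t) \notin R_s$. In both cases I would first locate $\phi(b)$: since $\phi(t) \in R_{s-1}\cup R_s \subseteq G_s$ and the sets $G_s$ are pairwise disjoint, \cref{p:t_Gs_b_Hs} forces $\phi(b) \in H_s$, and then \cref{p:not-L_s,p:not-L_s+1} rule out $R_{s+1}$ and $R_s$, leaving $\phi(b) \in L_{s+1} \cup L_{s+2}$.

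For $\phi(t) \in R_{s-1}$: by \cref{p:empty-boxes} we have $R_{s-1}^\rT = \emptyset$, so $\phi(t) \in R_{s-1}^\rM \cup R_{s-1}^\rB$, and since $t$ is topmost every image lies weakly below $\phi(t)$. If $\phi(t) \in R_{s-1}^\rB$, then by \cref{p:alignment} everything weakly below it lies in $\rB$-blocks of index at most $s-1$, so $\phi(b)$ can reach neither $L_{s+1}^\rB$ nor the (higher) $\rM$- and $\rT$-parts of $L_{s+1}, L_{s+2}$, a contradiction. Hence $\phi(t) \in R_{s-1}^\rM$; here tallness at the even index $s-1$ (property \cref{prop:trav-tall-above}, applied with $x_s \lth x_{s-1}$ from property \cref{prop:trav-hor}) shows that $R_{s-1}$ has no $1$-entry in the row just above the expandable row, so $\phi(t)$ lies in the row just below it. Then $\phi(b)$, being strictly lower, lies in $B$, so (using $L_{s+2}^\rB = \emptyset$) $\phi(b) \in L_{s+1}^\rB$, and the whole image lies in that $M$-row together with $\rB$-blocks of index $\ge s+1$. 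But $\phi(\ell)$ is strictly below and strictly left of $\phi(t)$, hence lies in one of those $\rB$-blocks of index $\ge s+1$, all of which sit strictly to the right of $R_{s-1}$, contradicting $\ell \lth t$.

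For $\phi(t) \in R_s$: since $b$ is to the right of $t$ while $L_{s+1}$ lies to the left of $R_s$, we must have $\phi(b) \in L_{s+2}$, hence (as $L_{s+2}^\rB = \emptyset$) $\phi(b) \in L_{s+2}^\rM \cup L_{s+2}^\rT$. The case $\phi(b) \in L_{s+2}^\rT$ is impossible because by \cref{p:alignment} that block lies strictly above every part of $R_s$, hence above $\phi(t)$. So $\phi(b) \in L_{s+2}^\rM$, and tallness at the even index $s+1$ (property \cref{prop:trav-tall-below}, applied with $x_{s+2} \lth x_{s+1}$) shows $L_{s+2}$ has no $1$-entry in the row just below the expandable row, so $\phi(b)$ lies in the row just above it; since $b$ is bottommost, the whole image lies in that $M$-row together with $T$. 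Now $\phi(t) \notin R_s^\rB$ (that block lies in $B$) and $\phi(t) \notin R_s^\rM$ (its only available entry shares a row with $\phi(b)$, impossible as $t$ is strictly above $b$), so $\phi(t) \in R_s^\rT$; then, $t$ being topmost, $\phi$ misses all $\rT$-blocks of index $> s$. But $\phi(r)$ is strictly above $\phi(b)$ (so in $T$) and strictly to the right of $\phi(b) \in L_{s+2}$, and every such slot lies in a $\rT$-block of index $> s$ — a contradiction, which together with \cref{p:max-tall,p:s-exp} completes the proof of \cref{p:big-S-avoids,p:even-6-l-osc}.

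The step I expect to be most delicate is pinning down exactly which of the two nonempty rows of $M$ contains $\phi(t)$ (resp.\ $\phi(b)$): this is where tallness is needed in the sharp form ``a specific $\rM$-entry of a block does not exist'', and it is what makes the closing horizontal contradiction via $\ell$ or $r$ work. Careful bookkeeping of the column order $L_s, R_{s-1}, L_{s+1}, R_s, L_{s+2}, R_{s+1}$ of the blocks and of the matching order $x_s \lth x_{s-1} \lth x_{s+2} \lth x_{s+1}$ of the $1$-entries of $P$ is the main technical burden.
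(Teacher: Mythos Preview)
Your proof is correct and follows essentially the same approach as the paper: the same symmetry reduction, the same localization of $\phi(b)$ via \cref{p:t_Gs_b_Hs,p:not-L_s,p:not-L_s+1}, and the same final contradictions via $\phi(\ell)$ (for $R_{s-1}$) and $\phi(r)$ (for $R_s$). The paper is more concise because it observes directly that tallness forces all of $R_{s-1}$ below the expandable row (and all of $L_{s+2}$ above it), which lets it skip your $\rM$/$\rB$ sub-case split and finish each case in a single line.
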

	\begin{proof}
		By symmetry, it suffices to show the first statement. Suppose $\phi(t) \in R_{s-1} \cup R_s$. By \Cref{p:t_Gs_b_Hs,p:not-L_s,p:not-L_s+1}, we have $\phi(b) = L_{s+1} \cup L_{s+2}$.
		
		Suppose first that $\phi(t) \in R_{s-1}$. Then $\phi(t)$ is below the expandable row, meaning that $\phi(\ell)$ is below $M$, but not to the right of $R_{s-1}$. But $\phi(b)$ is above $R_{s-1}^\rB$, implying that $\phi(\ell)$ is also above~$R_{s-1}^\rB$, a contradiction.
		
		Second, if $\phi(t) \in R_s$, then $\phi(b) \in L_{s+2}$, since $b$ is to the right of $t$. A symmetric argument shows that $\phi(r)$ is above $M$, below $L_{s+2}^\rT$, and not to the right of $L_{s+2}$, a contradiction.
	\end{proof}
	
	\Cref{p:not-L_s,p:not-L_s+1,p:not-remainder} imply that $\phi(t) \notin G_s$, contradicting \cref{p:t_Gs_b_Hs}. As such, our assumption that $\phi$ is an embedding of $P$ into $S(P,X)$ must be false. This concludes the proof of \cref{p:big-S-avoids}.
	
	\section{Conclusion and open problems}

	We showed that each indecomposable permutation matrix has bounded saturation function, thereby completing the classification of saturation functions of permutation matrices. Our proofs imply the upper bound $\sat(P,n) \le 9k^4$ for an indecomposable $k \times k$ permutation matrix $P$ (note that the largest witness $S(P,X)$ is not larger than $2k^2 \times k^2$, and \cref{p:vert_hor_wit} combines it with its 90-degree rotation, resulting in a $3k^2 \times 3k^2$ matrix). It would be interesting to improve this bound, especially if a simpler construction for patterns satisfying the conditions of \cref{p:big-S-avoids} can be found. Note that for general patterns with bounded saturation functions, no upper bound for $\sat(P,n)$ in terms of $P$ is known, as noted by Fulek and Keszegh~\cite{FulekKeszegh2021}.
	
	We also characterized a large class of non-permutation patterns with bounded saturation function, including very dense matrices (\cref{p:4-trav-gen}). Still, a full characterization of the saturation functions of all matrices remains out of reach. Note that there are indecomposable patterns without spanning oscillations, see, e.g., \cref{fig:indec-no-trav}. Thus, new techniques are likely required to fully resolve this problem.
	
	\begin{figure}
		\begin{align*}
			\begin{smallbulletmatrix}
				\o&  &  &  &  \\
				\o&  &  &  &  \\
				\o&  &  &  &  \\
				\o&  &  &  &  \\
				\o&\o&\o&\o&\o
			\end{smallbulletmatrix}
		\end{align*}
		\caption{An indecomposable non-permutation matrix without a spanning oscillation.}\label{fig:indec-no-trav}
	\end{figure}

	Our results trivially imply that every permutation matrix with a vertical witness also has a horizontal witness. It would be interesting to determine whether this is true for arbitrary patterns.
	
	It is also possible to consider the saturation functions of \emph{sets} of patterns. If $\fP$ is a set of patterns, let a matrix $M$ be $\fP$-saturating if $M$ avoids each $P \in \fP$, and adding a single 1-entry in $M$ creates an occurrence of some $P \in \fP$ in $M$. Let $\sat(\fP, n)$ be the minimum weight of $\fP$-saturating matrices. Since our witnesses for $k \times k$ permutation matrices have size at most $3k^2 \times 3k^2$, they avoid all patterns with one side of side length more than $3k^2$. Thus, if $\fP$ contains one indecomposable permutation matrix, and arbitrarily many much larger patterns, our results imply that $\sat(\fP,n) \in \fO(1)$.
	
	It would be interesting to determine the saturation functions for, say, all pairs of two permutation matrices of the same size. Gerbner, Nagy, Patkós, and Vizer~\cite{GerbnerEtAl2022} observed that certain saturation problems for two-dimensional posets can be reduced to saturation problems for sets of matrix patterns. However, these sets usually contain both permutation matrices and non-permutation matrices (of similar size).

	\appendix
	\section{Proof of \texorpdfstring{\cref{p:equiv-containment}}{Lemma \ref{p:equiv-containment}}}\label{app:proof-equiv-containment}
	
	\restateEquivContainment*
	\begin{proof}
		Say $P$ is $q \times s$ and $M$ is $m \times n$. Suppose $P = (p_{i,j})_{i,j}$ is contained in $M$. Then there are rows $r_1 < r_2 < \dots < r_q$ and columns $c_1 < c_2 < \dots < c_r$ such that $p_{i,j} \le m_{r_i,c_j}$ for each~$i \in [q], j \in [r]$. Now simply define $\phi(i,j) = (r_i, c_j)$. Clearly, $\phi(E(P)) \subseteq E(M)$. Moreover, consider $(i,j), (i',j') \in E(P)$. We have $i < i'$ if and only if $r_i < r_{i'}$, and $j < j'$ if and only if $r_j < r_{j'}$. Thus $\phi$ is an embedding of $P$ into $M$.
		
		Now suppose $\phi \colon E(P) \rightarrow E(M)$ is an embedding of $P$ into $M$. Note that $x, y \in E(P)$ are in the same row (resp. column) if and only if $\phi(x), \phi(y)$ are in the same row (resp. column). \linebreak Thus, $\phi(E(P))$ intersects exactly $q$ rows and $s$ columns. Let $r_1 < r_2 < \dots < r_q$ be those rows and $c_1 < c_2 < \dots < c_r$ be those columns. We show that $\phi(i,j) = (r_i, c_j)$ for \linebreak each $(i,j) \in E(P)$. Let $x_1, x_2, \dots, x_m \in E(P)$ such that $x_i$ is in the $i$-th row for \linebreak each $i \in [m]$, and let $r_i'$ be the row of $M$ containing $\phi(x_i)$. Clearly $r_1' \ge r_1$. By induction, we further have $r'_i \ge r_i$ for each $i \in [m]$. Similarly, $r_m' \le r_m$, and, again by induction, $r'_i \le r_i$ for each $i \in [m]$. This implies that $\phi(i,j)$ is in the $r_i$-th row of $M$ for every $(i,j) \in E(P)$. An analogous argument shows that $\phi(i,j)$ is in the $c_j$-th column of $M$.
		
		Since $\phi$ is an embedding, we have $(r_i, c_j) = \phi(i,j) \in E(M)$ for each $(i,j) \in E(P)$. \linebreak Thus, $p_{i,j} \le m_{r_i, c_j}$ for each $(i,j) \in [q] \times [s]$, so $P$ is contained in $M$.
	\end{proof}
	
	
	
	\section*{Acknowledgements}
	
	The author would like to thank Jesse Geneson, L\'aszl\'o Kozma, and the anonymous reviewers for their helpful comments.
	
	
	%
	%
	
	\bibliographystyle{alphaurl}
	\bibliography{saturation2}
\end{document}